\theoremstyle{plain}
\newtheorem{thm}{Theorem}[section]
\newtheorem{prop}[thm]{Proposition}
\newtheorem{lem}[thm]{Lemma}
\newtheorem{cor}[thm]{Corollary}
\theoremstyle{definition}
\newtheorem{defn}[thm]{Definition}
\newtheorem{rmk}[thm]{Remark}
\numberwithin{equation}{section}
\numberwithin{figure}{section}
\newcommand{\NN}{\mathcal{N}}
\newcommand{\C}{\mathbb{C}}
\newcommand{\R}{\mathbb{R}}
\DeclareMathOperator{\re}{Re}
\DeclareMathOperator{\im}{Im}
\DeclareDocumentCommand{\abs}{s m}{
  \operatorname{}
  \IfBooleanTF{#1}{#2}{\left|#2\right|}}
\DeclareDocumentCommand{\norm}{s m}{
  \operatorname{}
  \IfBooleanTF{#1}{#2} {\left\| #2\right\|}}
\DeclareDocumentCommand{\inner}{s m}{
  \operatorname{}
  \IfBooleanTF{#1}{#2}{\left \langle#2\right \rangle}}
\DeclareDocumentCommand{\parenthese}{s m}{
  \operatorname{}
  \IfBooleanTF{#1}{#2}{\left(#2\right)}}
\DeclareDocumentCommand{\square}{s m}{
  \operatorname{}
  \IfBooleanTF{#1} {#2}{\left[#2\right]}}
\DeclareDocumentCommand{\bracket}{s m}{
  \operatorname{}
  \IfBooleanTF{#1}{#2}{\left\{#2\right\}}}
\begin{document}

\title[GWP $\dot{H}^{\frac{1}{2}}$-critical quintic NLS  in 2D]{Global well-posedness and scattering for the defocusing $\dot{H}^{\frac{1}{2}}$-critical nonlinear Schr\"odinger equation in $\R^2$}

\author[Xueying Yu]{Xueying Yu} 
\thanks{The author's current address is Department of Mathematics, MIT, Cambridge, MA.  This work was written at Department of Mathematics, University of Massachusetts Amherst, MA}

\date{}
\begin{abstract}
In this paper we consider the Cauchy initial value problem for the defocusing quintic nonlinear Schr\"odinger equation in two dimensions with general data in the critical space $\dot{H}^{\frac{1}{2}} (\R^2)$. We show that if a solution remains bounded in $\dot{H}^{\frac{1}{2}} (\R^2)$ in its maximal interval of existence, then the interval is infinite and the solution scatters.
\end{abstract}
\maketitle

\section{Introduction}
In the past two decades, there has been significant progress in the understanding of the long time dynamics (existence, uniqueness and scattering) in various regimes of the defocusing nonlinear Schr\"odinger equation (NLS) on $\R^d$
\begin{equation}\label{pNLS}
\begin{cases}
i\partial_t u + \Delta u =  \abs{u}^{p-1} u , \\
u(0, x) = u_0(x)   ,
\end{cases}
\end{equation}
where $u : \R_t \times \R_x^d \to \C$ is a complex-valued function of time and space and $p >1$.

The equation \eqref{pNLS} enjoys several symmetries and invariances among which the most important one is the scaling symmetry. That is, if $u$ solves \eqref{pNLS} with initial data $u_0$, then $u_{\lambda}(t, x)= \lambda^{\frac{2}{p-1}} u(\lambda^2 t , \lambda x)$ solves \eqref{pNLS} with initial data $u_{0,\lambda}(x)= \lambda^{\frac{2}{p-1}} u_0 (\lambda x)$ and $\norm{u_{0,\lambda}}_{\dot{H}^{s}}= \norm{u_0}_{\dot{H}^{s}}$, for any $\lambda \in \R$. The problem becomes scale invariant when the data belongs to the homogenous Sobolev space $\dot{H}^{\frac{d}{2} - \frac{2}{p-1}} (\R^d)$. We refer to this regularity index as {\it critical} relative to scaling and denote it by $s_c:=\frac{d}{2} - \frac{2}{p-1}$.

For data in $H^s(\R^d)$ with $ s> s_c$ (sub-critical regime) local well-posedness (local in time existence, uniqueness and continuous dependence of the data to solution map) follows from the Strichartz estimates and a fixed point argument with a time of existence depending solely on the $H^s$ norm of the data. A similar argument gives also local well-posedness for data in $\dot{H}^{s_c}(\R^d)$ but in this case the time of existence depends also on the profile of the data \cite{CW1, CW2, CW3, Ca}.

The equation \eqref{pNLS} conserves the mass, 
\begin{equation}\label{intro Mass}
M(u(t)) : = \int_{\R^d} \abs{u(t,x)}^2 \, dx = M(u_0) ,
\end{equation} 
and the energy,
\begin{equation}\label{intro Energy}
E(u(t)) :  = \int_{\R^d} \frac{1}{2} \abs{\nabla u(t,x)}^2 + \frac{1}{p+1} \abs{u(t,x)}^{p+1} \, dx = E(u_0) .
\end{equation}
The equation \eqref{pNLS} also conserves momentum,
\begin{equation}\label{intro Momentum}  
\mathcal{P}(u(t)): =  \int_{\R^d} \im [\bar{u}(t,x) \nabla u(t,x)] \, dx = \mathcal{P}(u_0) ,
\end{equation}
which plays a fundamental role in the derivation of the interaction Morawetz estimates \cite{CKSTT1, PV, CGT1, CGT2}. It should be noted however that it does not control the $\dot{H}^{\frac{1}{2}}$ norm globally in time.

In the energy-subcritical regime ($s_c <1$), the conservation of energy gives global well-posedness in $H^1$ by iteration, but this does not give scattering (see Definition \ref{defn Scattering} for details). In the energy-critical regime (see \cite{CW3, Ca}), as we mentioned, the time of existence depends on the profile of the data as well. In this case, if the energy of the initial data is given to be small enough, hence the profile also sufficiently small, then the solution is known to exist globally in time, and scatters.

In the energy-critical regime ($s_c =1$, $p=1 + \frac{4}{d-2}$) with large initial data, one cannot, however, iterate the time of existence to obtain a global solution, because the time of existence depends also on the profile of the data (not a conserved quantity), which may lead to a shrinking interval of existence when iterating the local well-posedness theorem.

Similarly, in the mass-subcritical regime ($s_c < s=0$), the conservation of mass gives global well-posedness in $L^2$ (eg. cubic NLS on $\R$, $s_c= -\frac{1}{2}$) while if $s_c =0$, $p = 1+ \frac{4}{d}$, one cannot extend the local given solution to a global solution by iteration due to the same reason.

The two special cases when the equation becomes scale invariant at the level of one of the conserved quantities \eqref{intro Mass} and \eqref{intro Energy} have received special attention in the past. These are called the mass-critical NLS  ($s _c =0$,\, $p= 1+ \frac{4}{d}$) and the energy-critical NLS ($s _c =1$, \, $p= 1+ \frac{4}{d-2}$). In these two regimes, it is sufficient to prove a uniform {\it a priori} bound for the spacetime $L_{t,x}^{\frac{2(d+2)}{d-2s_c}}$ norm of solutions to the critical NLS, since it is now standard (see \cite{CW2, CW3, Ca}) to show that such a bound gives global well-posedness and scattering for general data.

In the energy-critical case, Bourgain \cite{B1} first introduced an inductive argument on the size of the energy and a refined Morawetz inequality to prove global existence and scattering in three dimensions for large finite energy data which is assumed to be radial. A different proof of the same result is given in \cite{G3}. A key ingredient in the latter proof is an {\it a priori} estimate of the time average of local energy over a parabolic cylinder, which plays a similar role as Bourgain's refined Morawetz inequality. Then, a breakthrough was made by Colliander-Keel-Staffilani-Takaoka-Tao \cite{CKSTT2}. They removed the radial assumption and proved global well-posedness and scattering of the energy-critical problem in three dimensions for general large data. They relied on Bourgain's induction on energy technique to find minimal blow-up solutions that concentrate in both physical and frequency spaces, and proved new interaction Morawetz-type estimates to rule out this kind of minimal blow-up solutions. Later \cite{RV, V} extended the result in \cite{CKSTT2} to higher dimensions.

In \cite{KM1} Kenig and Merle proposed a new methodology, a deep and broad road map to tackle critical problems. In fact, using a contradiction argument they first proved the existence of a critical element such that the global well-posedness and scattering fail. Then relying on a concentration compactness argument they showed that this critical element enjoys a compactness property up to the symmetries of this equation. This final step was reduced to a rigidity theorem that precluded the existence of such critical element. In this form they were able to prove in particular the global well-posedness and scattering for the focusing radially symmetric energy critical Schr\"odinger equation in dimensions three four and five under suitable conditions on the data (Namely, that the energy and the $\dot H^1$ norm of the data are less than those of the ground state). Following their road map Kenig and Merle also showed \cite{KM2} the global well-posedness and scattering for the focusing energy-critical wave equation. It is worth mentioning that the concentration compactness method that they applied was first introduced in the context of Sobolev embeddings in \cite{Ge}, nonlinear wave equations in \cite{BG} and Schr\"odinger equations in \cite{MV, K1, K2}.

The mass-critical global well-posedness and scattering problem was also first studied in the radial case as in \cite{TVZ1, KTV}. Then Dodson proved the global well-posedness of the mass-critical problem in any dimension for nonradial data \cite{D1, D2, D3}. A key ingredient in Dodson's work is to prove a long time Strichartz estimate to rule out minimal blow-up solutions. Such estimate also helps to derive a frequency localized Morawetz-type estimate. Furthermore, in dimension one \cite{D2} and in dimensions two \cite{D3}, Dodson introduced suitable versions of atomic spaces to deal with the failure of the endpoint Strichartz estimates in these cases. It should be noted that the interaction Morawetz estimates proved in \cite{PV} played a fundamental role in ruling out one type of minimal blow-up solutions. Moreover the bilinear estimates in \cite{D3} that gave a logarithmic improvement over Bourgain's bilinear estimteas also relied on the interaction Morawetz estimates of \cite{PV}. For dispersive equations, low dimension settings are less favorable due to the weaker time decay. This paper in fact deals with this  type of setup.

Unlike the energy- and mass-critical problems, for any other $s_c \neq 0,1$, there are no conserved quantities that control the growth in time of the $\dot{H}^{s_c}$ norm of the solutions. In \cite{KM3}, Kenig and Merle showed for the first time that if a solution of the defocusing cubic NLS in three dimensions remains bounded in the critical norm $\dot{H}^{\frac{1}{2}}$ in the maximal time of existence, then the interval of existence is infinite and the solution scatters using concentration compactness and rigidity argument. \cite{Mu2} extended the $\dot{H}^{\frac{1}{2}}$ critical result in \cite{KM3} to dimensions four and higher (some other inter-critical problems were also treated in \cite{Mu1, Mu3}). However, the analogue of the $\dot{H}^{\frac{1}{2}}$-critical result in dimensions two remained open. This was because: 
\begin{enumerate}
\item
the interaction Morawetz estimates in two dimensions are significantly different from those in dimensions three and above,
\item
and the endpoint Strichartz estimate fails.
\end{enumerate}

In this paper, we focus on how to close this gap. More precisely, we consider the Cauchy problem for the defocusing $\dot{H}^{\frac{1}{2}}$-critical quintic Schr\"odinger equation in $\R^{1+2}$:
\begin{equation}\label{NLS}
\begin{cases}
i\partial_t u + \Delta u = \abs{u}^4 u , \\
u(0, x) = u_0(x) \in \dot{H}^{\frac{1}{2}}(\R^{2}),
\end{cases}
\end{equation}
with $u : \R_t \times \R_x^2 \to \C$. 

The main result is:
\begin{thm}[Main theorem] \label{thm Main}
Let  $u : I \times \R^2 \to \C$  be a maximal-lifespan solution to \eqref{NLS} such that $u \in L_t^{\infty}  \dot{H}_x^{\frac{1}{2}} (I \times \R^2 )$. Then $u$ is global and scatters (see Definition \ref{defn Scattering}), with 
\begin{equation}\label{eq Main thm}
\int_{\R} \int_{\R^2} \abs{u(t, x)} ^8 \, dxdt  \leq C \parenthese{\norm{u}_{L_t^{\infty} \dot{H}_x^{\frac{1}{2}}  (\R \times \R^2)} }
\end{equation}
for some function $C : [0, \infty) \to [0,\infty) $.
\end{thm}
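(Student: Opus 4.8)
The plan is to follow the now-standard concentration-compactness-and-rigidity road map of Kenig and Merle, adapted to the two-dimensional quintic setting, and to overcome the two obstacles flagged in the introduction — the different shape of the interaction Morawetz estimate in $\R^2$ and the failure of the endpoint Strichartz estimate — by importing Dodson-type atomic function spaces and a long-time Strichartz estimate. First I would establish the small-data theory: local well-posedness, stability/perturbation theory, and a scattering criterion stating that a solution with finite $L_{t,x}^8(I\times\R^2)$ norm extends globally and scatters. This reduces Theorem \ref{thm Main} to proving a uniform spacetime bound \eqref{eq Main thm}. Arguing by contradiction, I would suppose the bound fails; then there is a sequence of solutions with bounded $L_t^\infty\dot H_x^{1/2}$ norm but diverging $L_{t,x}^8$ norm, and I want to extract a \emph{minimal} counterexample.

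The heart of the compactness step is a linear and then nonlinear profile decomposition in $\dot H^{1/2}(\R^2)$, adapted to the symmetries of \eqref{NLS} (translations in space and frequency, scaling, time translation, and Galilean boosts). Using the perturbation theory together with the decoupling of the profiles, I would run the standard Palais-Smale-type argument to produce an \emph{almost periodic} (in the sense of having a compact orbit modulo symmetries) minimal blow-up solution $u$ with $N(t)$, $x(t)$, $\xi(t)$ its frequency scale, spatial center, and frequency center. A further reduction, via the fact that the $\dot H^{1/2}$ norm is not controlled by any conservation law combined with a standard argument bounding frequency excursions (as in \cite{KM3, Mu2}), should let me reduce to the case where $N(t) \equiv 1$ and $\xi(t) \equiv 0$ on, say, a forward time interval — i.e.\ a soliton-like solution with frequency localization at unit scale. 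The main work in this step is checking that the two-dimensional function-space framework (atomic $U^p$/$V^p$ spaces replacing the missing endpoint Strichartz estimate, following \cite{D3}) is robust enough to carry the profile decomposition and the nonlinear perturbation estimates for the quintic nonlinearity.

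The rigidity step is where the two-dimensional difficulties really bite, and I expect it to be the main obstacle. The goal is to show that such an almost periodic solution with $u\in L_t^\infty\dot H_x^{1/2}$ must be identically zero, contradicting that it is a blow-up solution. The plan is: (i) prove a \emph{long-time Strichartz estimate} controlling high-frequency portions of $u$ on long time intervals in terms of $N(t)$, in the spirit of Dodson, using the atomic spaces to handle the endpoint; (ii) use it to establish a \emph{frequency-localized interaction Morawetz inequality} — here the $\R^2$ version of the interaction Morawetz identity from \cite{PV, CGT1, CGT2} must be used, which has a genuinely different structure (a logarithmic or different-weight kernel) than in dimensions $\ge 3$; (iii) combine the Morawetz bound with the almost periodicity to force $N(t)$ and hence $\|u(t)\|_{\dot H^{1/2}}$ to decay, and then run a standard argument (additional regularity/decay from the compactness, plus the sharp Morawetz bound) to conclude $u\equiv 0$.

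A few supporting ingredients will be needed along the way: negative regularity for the minimal solution (showing the compact orbit lies in $L^2$ or better, so the interaction Morawetz functional is finite and the frequency-localization error terms are controllable), and a careful bookkeeping of the error terms generated by frequency-localizing the nonlinear equation, since in $\R^2$ the weaker dispersion makes these terms harder to absorb. The expected payoff is that the long-time Strichartz estimate makes the frequency-localized Morawetz error terms summable, exactly as in Dodson's mass-critical argument, and the two-dimensional Morawetz geometry is handled by the $\R^2$-specific interaction Morawetz estimate rather than the higher-dimensional one — closing the gap left open after \cite{KM3} and \cite{Mu2}.
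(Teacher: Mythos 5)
Your high-level outline correctly identifies the ingredients (Kenig--Merle road map, profile decomposition, Dodson-style atomic spaces and long-time Strichartz to bypass the missing $L^2_t L^\infty_x$ endpoint, and a frequency-localized $\R^2$ interaction Morawetz inequality), and that part matches the paper. However, there are two concrete problems in your rigidity plan that represent genuine gaps rather than cosmetic differences.

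First, you plan to prove ``negative regularity'' for the minimal solution --- showing the compact orbit lies in $L^2$ or better so that the interaction Morawetz functional is a priori finite. This is precisely the step the paper explains \emph{cannot} be carried out at $s_c = \tfrac12$: because of scaling, the long-time Strichartz estimate at this regularity does not yield any extra decay, so the orbit of a minimal blow-up solution cannot be upgraded from $\dot H^{1/2}$ to $L^2$. The paper is forced to handle this differently on both branches of the dichotomy. For the finite-time blow-up case, instead of showing zero mass or energy via additional regularity (which is what works when $s_c \neq \tfrac12$, as in \cite{KV3, D1, D2, D3, Mu1}), it argues directly by computing $\frac{d}{dt}\int \chi_R(x)\abs{u}^2\,dx$, showing this derivative is $O(R^{-1/2})$ uniformly, and combining with the fact that $N(t)\to\infty$ as $t\to T_{\max}$ to force $u_0\equiv 0$. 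For the quasi-soliton case, it never claims $u\in L^2$: it truncates to $w = P_{\geq 2^{-k_0}} u_\lambda$, which \emph{does} lie in $L^\infty_t L^2_x$ since $u\in L^\infty_t\dot H^{1/2}_x$, runs the $\R^2$ interaction Morawetz on $w$, and uses the long-time Strichartz estimate only to control the commutator errors from the truncation. Your plan, as written, hinges on a lemma that does not hold here.

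Second, the symmetry group you propose for the profile decomposition includes Galilean boosts. Galilean transformations preserve $L^2$ but not $\dot H^{1/2}$, so they are \emph{not} symmetries of this problem; the paper is explicit that the frequency center $\xi(t)$ is identically the origin (this is why the $\tilde X_{k_0}$ norm is built from $P_{2^{-i}}$ rather than $P_{\xi(t),2^{-i}}$ as in Dodson's mass-critical construction). This also affects your proposed reduction to ``$N(t)\equiv 1$ and $\xi(t)\equiv 0$'': $\xi(t)\equiv 0$ is automatic, and the paper does not reduce to constant $N(t)$; instead, after normalizing $N(0)=1$ and $N(t)\geq 1$, it splits into $T_{\max}<\infty$ versus $\int_0^\infty N(t)\,dt=\infty$ (the case $T_{\max}=\infty$ with $\int N(t)\,dt<\infty$ is shown to be vacuous), and these two scenarios are excluded by entirely different arguments.
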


In order to prove a uniform {\it a priori} bound for the spacetime $L_{t,x}^{8}$ norm of solutions, following the road map by Kenig and Merle, one proceeds by contradiction as follows: 
\begin{enumerate}
\item[Step 1:]
First assume that the spacetime norm in \eqref{eq Main thm} is unbounded for large data. Then the fact that for sufficiently small initial data the solutions are globally well-posed and their spacetime norms are bounded, imply the existence of a special class of solutions (called {\it minimal blow-up solutions}, see Definition \ref{defn AP}) that are concentrated in both space and frequency.

\item[Step 2:]
One then precludes the existence of minimal blow-up solutions by conservation laws and suitable (frequency localized interaction) Morawetz estimates.
\end{enumerate}

It is shown in \cite{KM3, Mu2} that a minimal blow-up solution $u : I \times \R^d \to \C$ ($d \geq 3$) must concentrate around some spatial center $x(t)$ and at some frequency scale $N(t)$ at any time $t$ in the interval of existence (Step 1). We can extend this to $\R^2$. Then in the step to rule out the possibility of minimal blow-up solutions (Step 2), we employ the effective tools given by Morawetz estimates. For dimensions three and higher, the Morawetz estimates introduced in \cite{LS} are given by:
\begin{equation}\label{intro Morawetz}
\int_I \int_{\R^d} \frac{\abs{u(t,x)}^{p+1}}{\abs{x}} \, dxdt \lesssim \norm{u}_{L_t^{\infty} \dot{H}_x^\frac{1}{2}(I \times \R^d)}^2 .
\end{equation}
\noindent Note that the upper bound on the right-hand side depends only on the $\dot{H}^{\frac{1}{2}}$ norm of the solutions, so it is relatively easy to handle in the $\dot{H}^{\frac{1}{2}}$-critical regime. These were the Morawetz estimates used in \cite{KM3, Mu2}. However in dimensions two, \eqref{intro Morawetz} does not hold. We employ instead the interaction Morawetz estimates, which were first introduced in \cite{CKSTT1} in dimensions three and then extended to dimensions four and higher in \cite{RV}: 
\begin{align*}
- \int_I \iint_{\R^d \times \R^d} \abs{u(t,x)}^2 \Delta\parenthese{\frac{1}{\abs{x-y}}} \abs{u(t,y)}^2 \, dxdydt \\
\lesssim \norm{u}_{L_t^{\infty} L_x^2 (I \times \R^d)}^2 \norm{u}_{L_t^{\infty} \dot{H}_x^\frac{1}{2} (I \times \R^d)}^2 .
\end{align*}

\noindent In dimensions two, the following interaction Morawetz estimate was proved as in \cite{PV, CGT1, CGT2}:
\begin{equation}\label{intro Inter Morawetz}
\norm{ \abs{\nabla}^{\frac{1}{2}} \abs{u(t,x)}^2 }_{L_{t,x}^2 (I \times \R^2)}^2 \lesssim \norm{u}_{L_{t}^{\infty} L_{x}^{2} (I \times \R^2)}^2   \norm{u}_{L_{t}^{\infty} \dot{H}_{x}^{\frac{1}{2}} (I \times \R^2)}^2  . 
\end{equation}
Note that the upper bound above depends on the $\dot{H}^{\frac{1}{2}}$ norm as well as on the $L^2$ norm of the solutions. Hence in our case the right hand side does not need to be finite since there is no {\it a priori} bound on $\norm{u(t)}_{L_x^2}$. In contrast, the upper bounds of the Morawetz estimates \eqref{intro Morawetz} used in \cite{KM3, Mu2} depend only on $\dot{H}^{\frac{1}{2}}$ norm. If we were able to prove an analogue of \eqref{intro Morawetz} in two dimensions (which never holds), we would obtain a bounded spacetime norm immediately and complete Step 2 in the road map by simply adapting the argument in \cite{KM3} to two dimensions. So the difference in the Morawetz estimates raises a headache issue. However, if we truncate the solutions to high frequencies, the right-hand side of \eqref{intro Inter Morawetz} will be finite. As a result, we need to have a good estimate for the error produced in this procedure. Moreover, \eqref{intro Inter Morawetz} scales like $\int_I N(t) \, dt$ (here recall $N(t)$ is the concentration radius of minimal blow-up solutions in the frequency space). Intuitively, the interaction Morawetz estimates are expected to help us to rule out the existence for the solutions satisfying $\int_0^{T_{max}} N(t) \, dt = \infty$.

More precisely, we use the criteria whether $T_{max} (: =\sup I)$ and $\int_0^{T_{max}} N(t) \, dt$ are finite or infinite to classify minimal blow-up solutions ($I$ is the maximal time interval, see Definition \ref{defn Solution}):
\renewcommand\arraystretch{1.5}
\begin{table}[!hbp]
\begin{tabular}{|c|c|c|}
\hline
 &  $T_{max} < \infty$ & $T_{max}= \infty$  \\
\hline
$\int_0^{T_{max}} N(t) \, dt < \infty $  &I & II \\
\hline
$\int_0^{T_{max}} N(t) \, dt  =\infty$  &III & IV \\
\hline
\end{tabular}
\end{table}

\noindent where 
\begin{itemize}
\item
I, III are called finite-time blow-up solutions
\item
I, II are called frequency cascade solutions
\item
III, IV are called quasi-soliton solutions.
\end{itemize}

\noindent In $\dot{H}^{\frac{1}{2}}$ critical regime, it happens that $\int_0^{T_{max}} N(t) \, dt < \infty $ implies $T_{max} < \infty$, hence all frequency cascade solutions are also finite-time blow-up solutions, i.e. there is no {\it Case II} in this setting. Now we proceed to rule out the existence of minimal blow-up solutions case by case.

\noindent {\em Cases III, IV (quasi-soliton solutions)}

These are the cases that we expect the interaction Morawetz estimates will help us to rule out. To deal with these cases, we truncate the solutions to high frequencies, just as it was done in \cite{CKSTT2, RV, V} for the energy-critical problem in dimensions three and above. As a result, we need to derive the more involved good estimate for the low frequency component of the solutions. Here, we recall some ideas and strategies from \cite{D1, D2, D3}.

In the mass-critical problem, Dodson \cite{D1, D2, D3} truncated the solutions to low frequency, since the low frequency component of the solutions was bounded under the $\dot{H}^{\frac{1}{2}}$ norm. (The cutoff in the mass-critical problem and the cutoff in the $\dot{H}^{\frac{1}{2}}$-critical regime are opposite.) To estimate the errors produced by truncating to low frequency, a suitable bound over the high frequency is needed, hence Dodson \cite{D1} introduced the long time Strichartz estimates in dimensions three and higher:
\begin{equation}\label{intro Long time}
\norm{P_{\abs{\xi -\xi(t)} > N} u}_{L_t^2 L_x^{\frac{2d}{d-2}} (J \times \R^d)} \lesssim \parenthese{ \frac{K}{N}}^{\frac{1}{2}} +1,
\end{equation}
where $J$ is an interval satisfying 
\begin{equation*}
\int_J N(t)^3 \, dt = K .
\end{equation*}
Note that $\int_J N(t)^3 \, dt $ scales like \eqref{intro Inter Morawetz} in the mass-critical regime, and it plays the same role as $\int_J N(t) \, dt $ in our setting. 

To obtain \eqref{intro Long time} one requires the endpoint Strichartz estimate $L_t^2 L_x^{\frac{2d}{d-2}}$. For $d \geq 3$ the endpoint estimate  holds, see \cite{KT}, however in dimensions two, as in \cite{T2, T4}, it was shown that the $L_t^2 L_x^{\infty}$ endpoint fails. This failure causes the great difficulty for defining and proving the long time Strichartz estimates. To conquer this defect, Dodson \cite{D3} constructed a new function space out of a certain atomic spaces in dimensions two. This construction captured the essential features of the long time Strichartz estimates \eqref{intro Long time} in dimensions three and higher.

Back to our case, we follow Dodson's idea in \cite{D3} and construct a similar but `upside-down' version of a long time Strichartz estimate adapted to the $\dot{H}^{\frac{1}{2}}$-critical setting. Our `upside-down' method is because in the mass-critical regime, Dodson \cite{D3} lost the {\it a priori} control in the $\dot{H}^{\frac{1}{2}}$-norm and used the long time Strichartz to quantify how bad $\dot{H}^{\frac{1}{2}}$-norm is out of control, while we lose the {\it a priori} control of $L^2$-norm. Hence we define a long time Strichartz estimate over low frequency and expect that it gives a good control of the low frequency components of the solutions. Moreover, in our proof of the long time Strichartz estimate, we should be very careful with the high frequency and high frequency interaction into low frequency terms in Littlewood-Paley decomposition. It is because such terms require more summability due to the construction of the atomic spaces where the long time Strichartz estimate lives. This forces us to gain more decay than the mass-critical case to sum over the high frequency terms. In contrast, these terms were not problematic in mass-critical \cite{D3}, because with the opposite cutoff the worse case was all low frequencies interaction into high frequency. However, this case never happens since the contribution of all low frequencies remains low. With the error terms settled, the frequency localized Morawetz estimate will help us to preclude the existence of the quasi-soliton solutions.

\noindent {\em Case I (finite-time blow-up solutions)}

Next, we are left to rule out {\it Case I}. In $\dot{H}^{s_c}$ regime ($s_c \neq \frac{1}{2}$), see \cite{KV3, D1, D2, D3, Mu1}, long time Strichartz estimates help to prove either additional decay or additional regularity for the solutions belonging to {\it Cases I, II}, as a result, the solutions in these cases can be shown to have zero mass or energy, which contradicts the fact they are blow-up solutions. However, in the $\dot{H}^{\frac{1}{2}}$ critical regime, due to the scaling, the long time Strichartz estimates do not provide any additional decay as we would like. This forces us to treat {\it Case I} as finite-time blow-up solutions, instead of as frequency cascade solutions. Since we do not need the additional information from $\int N(t)\, dt$, we are allowed to consider the finite-time blow-up solutions ({\it Cases I, III}) together. In these cases, by considering the rate of change (in time) of the mass of solutions restricted within a spacial bump and using the finiteness of blow-up time, we can see the impossibility of this type of minimal blow-up solutions.

\subsection{A more formal outline of the proof}
We first define
\begin{defn}[Solution]\label{defn Solution}
A function $u : I \times  \R^2 \to C$ on a time interval $I (\ni 0)$ is a solution to \eqref{NLS} if it belongs to $C_t \dot{H}_x^{\frac{1}{2}} (K \times \R^2 ) \cap L_{t,x}^8 (K \times \R^2 ) $ for every compact $K \subset I$ and obeys the Duhamel formula 
\begin{equation*}
u(t)= e^{it\Delta}u_0 -i\int_0^t   e^{i(t-t^{\prime})\Delta}  \parenthese{ \abs{u}^4 u} (t^{\prime},x) \, dt^{\prime}
\end{equation*}  
for all $t \in I$. We call $ I$ the lifespan of $u$; we say $u$ is a maximal-lifespan solution if it cannot be extended to any strictly larger interval. If $I = R$, we say $u$ is global.
\end{defn}

\begin{defn}[Scattering]\label{defn Scattering}
A solution to \eqref{NLS} is said to scatter forward (or backward) in time if there exist $u_{\pm} \in \dot{H}^{\frac{1}{2}} (\R^2)$ such that
\begin{equation*}
\lim\limits_{t \to \pm \infty}  \norm{u(t) -e^{it \Delta} u_{\pm}}_{\dot{H}_x^{\frac{1}{2}}(\R^2)}=0.
\end{equation*}
\end{defn}

\begin{defn}[Scattering size and blow up]
We define the scattering size of a solution $u$ to \eqref{NLS} on a time interval $I$ by
\begin{equation*}
S_I(u) =\int_I \int_{\R^2} \abs{u(t, x)} ^8 \, dxdt .
\end{equation*}

If there exists $t \in I$ such that $S_{[t, \sup I )} (u) = \infty$, then we say $u$ blows up forward in time. Similarly, if there exists $t \in I$ such that $S_{(\inf I,t]}(u) = \infty$, then we say $u$ blows up backward in time.
\end{defn}

The local theory for \eqref{NLS} has been worked out by Cazenave-Weissler \cite{CW1, CW2, CW3, Ca}.

\begin{thm}[Local well-posedness]
Assume $u_0 \in \dot{H}^{\frac{1}{2}}(\R^2)$. Then there exists a unique maximal-lifespan solution $u: I \times \R^2 \to \C$ to \eqref{NLS}  such that: 
\begin{enumerate}[(i)]
\item 
(Local existence) $I$ is an open interval containing $0$.

\item
(Blowup criterion) If $\sup I $ is finite, then the solution $u$ blows up forward in time. If $\inf I $ is finite, then the solution $u$ blows up backward in time.

\item
(Scattering) If $u$ does not blow up forward in time then $\sup I =\infty$ and u scatters forward in time. If $u$ does not blow up backward in time then $\inf I =\infty$ and u scatters backward in time. 

\item
(Small-data global existence) If $\norm{ u_0}_{  \dot{H}_x^{\frac{1}{2}}}$ is sufficiently small then the solution $u$ is global, scatters and does not blow up either forward or backward in time, with $S_{\R} (u) \lesssim \norm{u_0}_{  \dot{H}_x^{\frac{1}{2}}}^{8}$

\end{enumerate}
\end{thm}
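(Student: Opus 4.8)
The plan is to run the standard Banach fixed‑point argument in the scaling‑critical Strichartz spaces; the only genuine subtlety, as the introduction anticipates, is the profile‑dependence of the time of existence inherent to critical problems. Here $s_c=\tfrac12$ and $p=5$, and the pairs $(4,4)$, $(8,\tfrac83)$, $(\infty,2)$ are all $\dot H^0$‑admissible in two dimensions, none of them the forbidden endpoint $(2,\infty)$. For an interval $I\ni 0$ I would work with the norm
\[
\|u\|_{X(I)} := \big\||\nabla|^{\frac12}u\big\|_{L_t^4L_x^4(I\times\R^2)} + \big\||\nabla|^{\frac12}u\big\|_{L_t^8L_x^{8/3}(I\times\R^2)} + \big\||\nabla|^{\frac12}u\big\|_{L_t^\infty L_x^2(I\times\R^2)},
\]
the last term being exactly $\|u\|_{L^\infty_t\dot H^{1/2}_x}$. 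Since the Sobolev embedding $\dot W^{1/2,8/3}(\R^2)\hookrightarrow L^8(\R^2)$ holds (not the failing endpoint case), the scattering norm is controlled, $\|u\|_{L^8_{t,x}(I\times\R^2)}\lesssim\|u\|_{X(I)}$; in particular $\|e^{it\Delta}u_0\|_{L^8_{t,x}(\R\times\R^2)}\lesssim\|u_0\|_{\dot H^{1/2}}<\infty$ by the homogeneous Strichartz estimate.

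The engine is the nonlinear estimate
\[
\big\||\nabla|^{\frac12}\big(|u|^4u-|v|^4v\big)\big\|_{L_t^{4/3}L_x^{4/3}(I\times\R^2)} \lesssim \big(\|u\|_{L^8_{t,x}(I)}^4+\|v\|_{L^8_{t,x}(I)}^4\big)\,\|u-v\|_{X(I)},
\]
together with its $v=0$ special case. As $|u|^4u=u^3\bar u^2$ is a polynomial in $u,\bar u$, this follows from the fractional Leibniz rule and Hölder's inequality in $(t,x)$, distributing four of the undifferentiated factors into $L^2_{t,x}$ (which equals $\|u\|_{L^8_{t,x}}^4$) and the factor carrying $|\nabla|^{1/2}$ into $L^4_{t,x}$, using $\tfrac12+\tfrac14=\tfrac34=(4/3)^{-1}$. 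Combining this with the inhomogeneous Strichartz estimate, which yields $\|\Phi(u)\|_{X(I)}\lesssim\|u_0\|_{\dot H^{1/2}}+\big\||\nabla|^{\frac12}(|u|^4u)\big\|_{L^{4/3}_{t,x}(I)}$ for the Duhamel map $\Phi(u)(t)=e^{it\Delta}u_0-i\int_0^t e^{i(t-s)\Delta}(|u|^4u)(s)\,ds$, one checks that $\Phi$ is a contraction on the ball $\{\|u\|_{X(I)}\le 2C\|u_0\|_{\dot H^{1/2}},\ \|u\|_{L^8_{t,x}(I)}\le\eta\}$ in the metric $\|u-v\|_{X(I)}$, provided $\eta=\eta(\|u_0\|_{\dot H^{1/2}})$ is taken small and $I\ni 0$ is chosen so that $\|e^{it\Delta}u_0\|_{L^8_{t,x}(I\times\R^2)}\le\eta/2$. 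Such an $I$ exists by absolute continuity of the integral, since $\|e^{it\Delta}u_0\|_{L^8_{t,x}(\R\times\R^2)}$ is finite — but its length depends on the profile of $u_0$, not merely on $\|u_0\|_{\dot H^{1/2}}$. The fixed point is the solution on $I$; uniqueness in $C_t\dot H^{1/2}\cap L^8_{t,x}$, continuity in $t$, and Lipschitz dependence on the data come out of the contraction as usual, and gluing maximal such intervals gives the maximal‑lifespan solution of~(i).

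For the remaining assertions I would argue as follows. If $\sup I=T^*<\infty$ but $\|u\|_{L^8_{t,x}([0,T^*)\times\R^2)}<\infty$, then $[0,T^*)$ splits into finitely many subintervals on each of which the $L^8_{t,x}$ norm is below the threshold, so the estimates above bootstrap to $\|u\|_{X([0,T^*))}<\infty$; the Duhamel formula then shows $e^{-it\Delta}u(t)$ converges in $\dot H^{1/2}$ as $t\uparrow T^*$, and restarting the local theory from that limit at time $T^*$ extends $u$ beyond $T^*$, contradicting maximality — hence $u$ blows up forward in time, which is~(ii). For~(iii), if $u$ does not blow up forward then~(ii) forces $\sup I=\infty$, the same bootstrap gives $\|u\|_{X([0,\infty))}<\infty$, and therefore $e^{-it\Delta}u(t)$ is Cauchy in $\dot H^{1/2}$ as $t\to\infty$, so $u_+:=\lim_{t\to\infty}e^{-it\Delta}u(t)$ is the scattering state; the backward case is symmetric. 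For~(iv), if $\|u_0\|_{\dot H^{1/2}}$ is small then $\|e^{it\Delta}u_0\|_{L^8_{t,x}(\R\times\R^2)}\lesssim\|u_0\|_{\dot H^{1/2}}$ is itself below the threshold on all of $\R$, so the fixed‑point argument runs with $I=\R$ and gives $S_\R(u)=\|u\|_{L^8_{t,x}}^8\lesssim\|u_0\|_{\dot H^{1/2}}^8$.

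I do not expect a deep obstruction — this is soft compared with the rest of the paper — but the step demanding care is exactly the one the introduction flags: because $\dot H^{1/2}(\R^2)\not\hookrightarrow L^\infty$ and $L^8_{t,x}$ is a scaling‑critical Strichartz norm, no positive power of $|I|$ can be extracted by Hölder in time, so the length of the interval produced by the contraction is controlled by the profile of $u_0$ (through the absolute‑continuity step) and not by its $\dot H^{1/2}$ norm, which is precisely why the local theorem cannot be iterated to a global one. The one intrinsically two‑dimensional hazard — the failure of the $L^2_tL^\infty_x$ endpoint — is avoided altogether by never using an endpoint pair, working only with $(4,4)$, $(8,\tfrac83)$, and $(\infty,2)$.
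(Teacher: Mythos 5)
Your argument is correct and follows the standard Cazenave--Weissler fixed-point scheme in scaling-critical Strichartz spaces, which is precisely what the paper outsources by citation to \cite{CW1,CW2,CW3,Ca} (and to \cite{KM1} in Section~\ref{sec LWP and PTB}, which uses the admissible realization $L^{12}_tL^6_x$, $L^3_tL^6_x$ of the same critical norm instead of your $L^8_{t,x}$, $(4,4)$, $(8,\tfrac83)$ --- the choice is immaterial). One displayed inequality is stated too strongly: the bound $\||\nabla|^{1/2}(|u|^4u-|v|^4v)\|_{L^{4/3}_{t,x}}\lesssim(\|u\|_{L^8_{t,x}}^4+\|v\|_{L^8_{t,x}}^4)\|u-v\|_{X(I)}$ cannot hold as written, because when the fractional Leibniz rule puts $|\nabla|^{1/2}$ on a $u$ or $v$ factor rather than on $u-v$ one picks up a term of size $(\|u\|_{L^8_{t,x}}^3+\|v\|_{L^8_{t,x}}^3)\big(\||\nabla|^{1/2}u\|_{L^4_{t,x}}+\||\nabla|^{1/2}v\|_{L^4_{t,x}}\big)\|u-v\|_{L^8_{t,x}}$, so the right side must also charge $\|u\|_{X(I)}+\|v\|_{X(I)}$. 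This is harmless for the contraction --- on the ball that quantity is $\lesssim\|u_0\|_{\dot H^{1/2}}$, and you already take $\eta=\eta(\|u_0\|_{\dot H^{1/2}})$, which is exactly what absorbs the extra factor --- but the displayed estimate itself should be corrected.
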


To prove Theorem \ref{thm Main}, we, following the road map in \cite{KM1}, argue by contradiction. First, define $L: [0, \infty) \to [0, \infty]$ by 
\begin{equation*}
L(E) := \sup \bracket{ S_I(u) \big|  u: I \times \R^2 \to \C   \text{ solving }  \eqref{NLS}  \text{ with }  \norm{u}_{L_{t}^{\infty} \dot{H}_x^{\frac{1}{2}} (I \times \R^2) }^2  \leq E } .
\end{equation*}

\noindent Note that $L(E)$ is a non-negative, non-decreasing and continuous function satisfying $L(E) \leq E^4 $ for $E$ sufficiently small. Then there must exist a unique critical threshold $E_c \in (0, \infty]$ such that 
\begin{equation*}
L(E)
\begin{cases}
< \infty  \quad \text{ if } E<E_c \\
=\infty \quad \text{ if } E\geq E_c .
\end{cases}
\end{equation*}
The failure of main theorem implies that $0 < E_c < \infty$. Moreover, 
\begin{thm}[Existence of minimal counterexamples]\label{thm Exist minimal}
Suppose Theorem \ref{thm Main} fails to be true. Then there exists a critical energy $0 < E_c <\infty$ and a maximal-lifespan solution $u : I  \times \R^2 \to  \C$ to \eqref{NLS} with $\norm{u}_{L_{t}^{\infty} \dot{H}_x^{\frac{1}{2}} (I \times \R^2) }^2 = E_c$, which blows up in both time directions in the sense that
\begin{equation*}
S_{\geq 0 } (u) = S_{\leq 0} (u)= \infty
\end{equation*}
and whose orbit $\{u(t) : t \in \R \}$ is precompact in $\dot{H}^{\frac{1}{2}}$ modulo scaling and spatial translations.
\end{thm}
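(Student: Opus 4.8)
The plan is to follow the now-standard concentration-compactness/rigidity scheme of Kenig--Merle \cite{KM1}, adapted to the $\dot H^{1/2}$-critical setting in $\R^2$ exactly as in \cite{KM3, Mu2}. First I would exploit the definition of $E_c$: by hypothesis $0 < E_c < \infty$, so there is a sequence of solutions $u_n : I_n \times \R^2 \to \C$ with $\norm{u_n}_{L_t^\infty \dot H_x^{1/2}}^2 \to E_c$ and $S_{I_n}(u_n) \to \infty$. After a time translation I may assume, say, $S_{\geq 0}(u_n) = S_{\leq 0}(u_n) \to \infty$ (splitting each $I_n$ at a point where half the scattering size lies on each side). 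The next step is to apply a linear profile decomposition for the propagator $e^{it\Delta}$ in $\dot H^{1/2}(\R^2)$ to the sequence $u_n(0)$: one writes $u_n(0) = \sum_{j=1}^J e^{it_n^j \Delta} \big[ (\lambda_n^j)^{-1/2} \phi^j\big((\cdot - x_n^j)/\lambda_n^j\big)\big] + w_n^J$, with the usual orthogonality of the parameters $(\lambda_n^j, x_n^j, t_n^j)$, asymptotic decoupling of the $\dot H^{1/2}$ norms, and smallness of the Strichartz norm of the remainder $e^{it\Delta} w_n^J$ as $J \to \infty$. In two dimensions one must use the correct $\dot H^{1/2}$-admissible exponents and the Strichartz estimates available despite the failure of the $L_t^2 L_x^\infty$ endpoint; the relevant profile decomposition in this range is already in the literature (Keraani-type, cf.\ \cite{Ge, BG, MV, K1, K2}) and I would cite/adapt it.

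The heart of the argument is a \emph{nonlinear profile decomposition} plus a \emph{stability/perturbation theory} for \eqref{NLS}. To each linear profile $\phi^j$ (with its time shift $t_n^j$, which after passing to a subsequence either converges or tends to $\pm\infty$) I associate a nonlinear solution $v^j$: if $t_n^j \to 0$ it solves \eqref{NLS} with data $\phi^j$; if $t_n^j \to \pm\infty$ it is the solution that scatters to $e^{it\Delta}\phi^j$ in that time direction, which exists by the scattering clause of the local theory. Rescaling each $v^j$ by its parameters and summing, I build an approximate solution $\tilde u_n^J := \sum_{j\le J} v_n^j + e^{it\Delta} w_n^J$ to \eqref{NLS}. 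The key dichotomy: if \emph{every} profile had $\norm{v^j}_{L_t^\infty \dot H_x^{1/2}}^2 < E_c$, then each $v^j$ is global with finite scattering size bounded by $L\big(\norm{v^j}_{L_t^\infty\dot H_x^{1/2}}^2\big)$, the orthogonality gives $\sum_j \norm{v^j}_{L_t^\infty\dot H_x^{1/2}}^2 \le E_c$ hence only finitely many profiles are non-negligible and the total approximate scattering size is finite, and then the long-time perturbation lemma forces $S(u_n) < \infty$ for $n$ large --- contradicting $S(u_n)\to\infty$. Therefore there is exactly one profile, $J=1$ must capture essentially all the energy, $w_n \to 0$ in $\dot H^{1/2}$ (and its Strichartz norm $\to 0$), and the single nonlinear profile $u_c$ has $\norm{u_c}_{L_t^\infty\dot H_x^{1/2}}^2 = E_c$ with infinite scattering size in both directions. (One also checks $t_n^1$ can be taken $\to 0$, else $u_c$ would scatter in one direction and have finite scattering size there.)

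Finally I would extract the precompactness. Running the same single-profile argument starting from \emph{any} time sequence $t_n \in I$ in the lifespan of $u_c$ shows that $u_c(t_n)$, after rescaling by some $\lambda_n$ and translating by some $x_n$, has a subsequence converging in $\dot H^{1/2}(\R^2)$; this is precisely precompactness of $\{u_c(t)\}$ in $\dot H^{1/2}$ modulo scaling and spatial translations. The symmetry group here is only scaling and translation (no Galilean boosts, since $\dot H^{1/2}$ is not boost-invariant), so no frequency-center parameter $\xi(t)$ appears at this stage --- the frequency scale $N(t)$ and spatial center $x(t)$ that are used later are read off from this compact orbit. I expect the \textbf{main obstacle} to be the perturbation/stability theory and the summation of the nonlinear profiles in two dimensions: the weaker dispersive decay and the failure of the endpoint Strichartz estimate mean the exponents must be chosen delicately, the error term $\abs{\tilde u_n^J}^4 \tilde u_n^J - \sum_j \abs{v_n^j}^4 v_n^j$ has many cross terms that must be controlled using the orthogonality of parameters, and one must ensure the tail $\sum_{j>J}$ is genuinely small in the relevant Strichartz-type norm. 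All of this is by now routine in $d\ge 3$ (\cite{KM3, Mu2}) and the two-dimensional modifications, while technically heavier, do not require new ideas beyond careful bookkeeping with the admissible pairs available in $\R^2$.
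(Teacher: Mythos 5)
Your proposal follows exactly the concentration--compactness route that the paper itself invokes: the paper does not reprove this theorem but cites Propositions 3.3 and 3.4 of \cite{KM3} and Theorem 1.7 of \cite{Mu2}, together with the profile decomposition of Lemma \ref{lem Profile decomp}, which is precisely the linear decomposition, nonlinear profiles, stability argument, and single-profile reduction you describe. Your sketch is correct and is essentially the same approach.
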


The proof of Themreom \ref{thm Exist minimal} follows from Proposition 3.3 and Proposition 3.4 in \cite{KM3}. See also the proof of Theorem 1.7 in \cite{Mu2}. The key ingredient in the proof is a profile decomposition argument (see Lemma \ref{lem Profile decomp}).

The maximal-lifespan solution found in Theorem \ref{thm Exist minimal} is almost periodic modulo symmetries:
\begin{defn}[Almost periodicity]\label{defn AP}
A solution $u$ to \eqref{NLS} with lifespan $I$ is said to be almost periodic (modulo symmetries) if $u \in L_{t}^{\infty} \dot{H}_x^{\frac{1}{2}} (I \times \R^2)$ and there exist (possibly discontinuous) functions: $N: I \to \R^+$, $x: I \to \R^2$, $C: \R^+ \to \R^+$ such that:
\begin{equation}\label{eq Defn AP1}
\int_{\abs{x-x(t)} \geq \frac{C(\eta)}{N(t)}}  \abs{\abs{\nabla}^{\frac{1}{2}} u(t,x)}^2 \, dx + \int_{\abs{\xi} \geq C(\eta)N(t)}  \abs{\xi} \abs{\hat{u}(t,\xi)}^2  \, d\xi \leq \eta
\end{equation}
for all $t \in I $ and $\eta > 0$. We refer to the function $N$
 as the frequency scale function, $x$ is the spatial center function, and $C$ is the compactness modulus function.
\end{defn}
Notice that the Galilean transformation only preserves the $L^2$ norm of $u$, not the $\dot{H}^{s_c}$  norm where  $s_c > 0 $. Hence we have no Galilean transformation in our case and the frequency center $\xi (t)$ is the origin. We will use this later in Definition \ref{defn X}.

Another consequence of the precompactness in $\dot{H}^{\frac{1}{2}}$ modulo symmetries of the orbit of the solution found in Theorem \ref{thm Exist minimal} is that for every $\eta > 0$ there exists $c(\eta) >0$ such that
\begin{equation}\label{eq Defn AP2}
\int_{\abs{x-x(t)} \leq \frac{c(\eta)}{N(t)}}  \abs{\abs{\nabla}^{\frac{1}{2}} u(t,x)}^2 \, dx + \int_{\abs{\xi} \leq c(\eta)N(t)}  \abs{\xi} \abs{\hat{u}(t,\xi)}^2 \, d\xi \leq \eta
\end{equation}
uniformly for all $t \in I$.

For nonnegative quantities $X$ and $Y$, we write $X \lesssim Y$ to denote the inequality $X \leq CY$ for some constant $C >0$. If $X \lesssim Y \lesssim X$, we write $X \sim Y$. The dependence of implicit constants on parameters will be indicated by subscripts, for example, $X \lesssim_u Y$ denotes $X \leq CY$ for some $C = C(u)$. 

For these almost period solutions, $N(t)$ enjoys the following properties: Lemma \ref{lem Local const N(t)}, Corollary \ref{cor N(t) blow-up}, Lemma \ref{lem Local bounded} and Lemma \ref{lem Strichartz norms N(t)} (see \cite{KV1} for details):

\begin{lem}[Local constancy of $N(t)$ and $x(t)$]\label{lem Local const N(t)}
Let $u : I \times \R^2 \to \C$ be a non-zero almost periodic modulo symmetries solution to \eqref{NLS} with parameters $N(t)$ and $x(t)$. Then there exists a small number $\delta$, depending on $u$, such that for every $t_0 \in I$ we have
\begin{equation*}
[t_0 - \delta N(t_0)^{-2}, t_0 + \delta N(t_0)^{-2}] \subset I
\end{equation*}
and
\begin{equation*}
N(t) \sim N(t_0) \text{ and } \abs{x(t) - x(t_0)} \lesssim N(t_0)^{-1}
\end{equation*}
whenever $\abs{t-t_0} \leq \delta N(t_0)^{-2}$.
\end{lem}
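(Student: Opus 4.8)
The plan is to prove the local constancy of $N(t)$ and $x(t)$ by combining the stability theory for \eqref{NLS} with the almost periodicity of $u$, running a compactness/contradiction argument to transfer smallness of the solution on a short time scale from one time to nearby times. First I would normalize at a fixed $t_0 \in I$ by rescaling so that $N(t_0) = 1$; it suffices then to find $\delta = \delta(u) > 0$, independent of $t_0$ after this rescaling, such that $[t_0 - \delta, t_0 + \delta] \subset I$ and $N(t) \sim 1$, $|x(t) - x(t_0)| \lesssim 1$ for $|t - t_0| \le \delta$. The key input is that, by \eqref{eq Defn AP1} together with the boundedness of $\|u\|_{L_t^\infty \dot H_x^{1/2}}$, the orbit $\{u(t) : t \in I\}$ lies in a fixed precompact subset $\mathcal{K}$ of $\dot H^{1/2}(\R^2)$ modulo the scaling and translation symmetries. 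On such a precompact set the $\dot H^{1/2}$-norms are uniformly small outside a fixed frequency/spatial window, so the local well-posedness / stability theory (the Cazenave--Weissler theory quoted above, together with the standard persistence-of-regularity and small-data bounds) yields a uniform lower bound $T_0 > 0$ on the length of the lifespan of the solution with data $u(t_0)$ measured in the rescaled variables, together with a uniform bound on $\|u\|_{L_{t,x}^8}$ on $[t_0 - T_0, t_0 + T_0] \times \R^2$; this gives $[t_0 - \delta, t_0 + \delta] \subset I$ for $\delta = T_0$ and uniform control of $u$ there.

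The heart of the argument is the comparison of $N(t)$ with $N(t_0)$. I would argue by contradiction: if no uniform $\delta$ worked, there would be sequences $t_n, t_n' \in I$ with $|t_n - t_n'| N(t_n)^2 \to 0$ but $N(t_n')/N(t_n) \to 0$ or $\to \infty$ (or the analogous failure for $x$). Rescaling and translating so that $N(t_n) = 1$, $x(t_n) = 0$, the functions $u_n(t_n)$ lie in the precompact set $\mathcal{K}$, so along a subsequence they converge strongly in $\dot H^{1/2}$; by the uniform local theory the rescaled solutions converge in $L_{t,x}^8$ on a fixed time interval around $t_n$. But almost periodicity at time $t_n'$ forces $u_n(t_n')$ to be concentrated at spatial scale $\sim N(t_n')^{-1} = N(t_n)^{-1} \cdot (N(t_n)/N(t_n'))$ and frequency scale $\sim N(t_n')$, both of which run off to $0$ or $\infty$ relative to the fixed window in which $\mathcal{K}$ is essentially supported — this contradicts the strong convergence on the short time interval, because the $L_{t,x}^8$ (equivalently $\dot H^{1/2}$) mass cannot simultaneously be trapped in a fixed compact set of scales and escape to the degenerate scales dictated by $N(t_n')$. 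The spatial-center statement $|x(t) - x(t_0)| \lesssim N(t_0)^{-1}$ is handled identically, using the translation parameter in place of the scaling parameter.

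The step I expect to be the main obstacle is making the compactness-plus-stability machinery genuinely \emph{uniform} in $t_0$ in the two-dimensional setting, i.e. producing the single $\delta = \delta(u)$ rather than a $t_0$-dependent one. This is where the failure of the $L_t^2 L_x^\infty$ endpoint Strichartz estimate in $\R^2$ (noted in the introduction) makes the local theory more delicate than in $d \ge 3$, so one must run the fixed-point and stability arguments purely in terms of the admissible exponents that do survive (built around $L_{t,x}^8$ and the $\dot H^{1/2}$-admissible pairs), and verify that the resulting time-of-existence bound depends only on the precompact set $\mathcal{K}$ — hence only on $u$ — and not on the particular profile $u(t_0)$. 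Once that uniformity is in hand, the contradiction argument for $N(t)$ and $x(t)$ is routine, essentially as in \cite{KV1}; I would quote their Corollary/Lemma for the precise extraction of subsequences. A secondary technical point is keeping careful track of how the scaling acts on the frequency weight $|\xi| |\hat u(t,\xi)|^2$ versus the physical weight $||\nabla|^{1/2} u|^2$ in \eqref{eq Defn AP1}, but both transform compatibly under $u \mapsto \lambda^{1/2} u(\lambda^2 t, \lambda x)$, so this causes no real difficulty.
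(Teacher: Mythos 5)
Your approach --- rescale to $N(t_0)=1$, use the precompactness of the orbit modulo scaling and translation to extract a uniform time of existence and uniform $L^8_{t,x}$ bound from the local theory, and then argue by contradiction against almost periodicity to compare $N(t)$, $x(t)$ with $N(t_0)$, $x(t_0)$ --- is exactly the compactness-plus-stability argument of \cite{KV1}, which the paper cites for this lemma without reproducing the proof. The one point worth flagging is that the obstacle you anticipate (the failure of the $L_t^2 L_x^\infty$ Strichartz endpoint in $\R^2$) is not actually in play here: the local well-posedness and stability results, Theorems \ref{thm LWP} and \ref{thm Stability}, are already formulated entirely in non-endpoint admissible pairs, and the uniformity of $\delta$ in $t_0$ follows from a finite subcover of the fixed precompact set $\mathcal{K} \subset \dot H^{1/2}$, a compactness step that is indifferent to which Strichartz exponents appear; the endpoint failure matters elsewhere in the paper (the long-time Strichartz estimate of Section \ref{sec LTS}), not for this local constancy lemma.
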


\begin{rmk}\label{rmk N(J)}
If $J$ is an interval with
\begin{equation*}
\norm{u}_{L_t^8 L_x^8 (J \times \R^2)} =1,
\end{equation*}
then for $t_1, t_2 \in J$, 
\begin{equation*}
N(t_1) \sim N(t_2) .
\end{equation*}
Then combine with Lemma \ref{lem Local const N(t)}, we can choose $N(t)$ such that
\begin{equation*}
\abs{\frac{d}{dt}N(t) }  \lesssim N(t)^3   \parenthese{\implies  \abs{ \frac{d}{dt} \parenthese{\frac{1}{N(t)} }}  \lesssim N(t) } .
\end{equation*} 
Define 
\begin{equation*}
N(J) = \inf_{t \in J } N(t),
\end{equation*}
then
\begin{equation*}
\frac{1}{N(J)} \sim \int_J N(t) \, dt  .
\end{equation*}
\end{rmk}

\begin{cor}[$N(t)$ at blow-up]\label{cor N(t) blow-up}
Let $u : I \times \R^2 \to \C$ be a non-zero maximal-lifespan solution to \eqref{NLS} that is almost periodic modulo symmetries with frequency scale function $N : I \to \R^+$. If $T$ is any finite endpoint of the lifespan $I$, then $N(t)\gtrsim   \abs{T -t}^{-\frac{1}{2}}$; in particular, $\lim_{t \to T} N(t) = \infty$. If $ I$ is infinite or semi-infinite, then for any $t_0 \in I$ we have $N (t) \gtrsim  \min \{N (t_0 ), \abs{t -t_0}^{-\frac{1}{2} } \}$.
\end{cor}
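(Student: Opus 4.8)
The plan is to obtain both assertions as soft consequences of the local constancy of $N(t)$, i.e. of Lemma~\ref{lem Local const N(t)}; no new analytic estimate enters. At the outset I fix the good representative of the frequency scale function provided by Remark~\ref{rmk N(J)}, so that in particular $N(t)^{-2}$ is locally Lipschitz with $\abs{\frac{d}{dt}N(t)^{-2}}\lesssim 1$ (although the finite-endpoint part below does not even use this).

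For a finite endpoint, say $T=\sup I<\infty$ (the case $T=\inf I>-\infty$ being identical), fix $t_0\in I$. By Lemma~\ref{lem Local const N(t)} the closed interval $[t_0-\delta N(t_0)^{-2},\,t_0+\delta N(t_0)^{-2}]$ is contained in $I$. Since its right endpoint $t_0+\delta N(t_0)^{-2}$ is then an element of the open interval $I$ whose supremum is $T$, it must be strictly smaller than $T$, hence $\delta N(t_0)^{-2}<T-t_0$, i.e. $N(t_0)>\sqrt{\delta}\,(T-t_0)^{-\frac{1}{2}}$. As $t_0$ was arbitrary this is exactly $N(t)\gtrsim\abs{T-t}^{-\frac{1}{2}}$ on $I$, and letting $t\to T$ forces $N(t)\to\infty$.

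For the infinite or semi-infinite case, fix $t_0\in I$ and an arbitrary $t\in I$; by time reflection assume $t>t_0$. I would split according to the size of $\delta N(t)^{-2}$. If $\abs{t-t_0}\le\delta N(t)^{-2}$, then Lemma~\ref{lem Local const N(t)} applied with reference time $t$ gives $N(t_0)\sim N(t)$, hence $N(t)\gtrsim N(t_0)$. If instead $\abs{t-t_0}>\delta N(t)^{-2}$, this rearranges directly to $N(t)>\sqrt{\delta}\,\abs{t-t_0}^{-\frac{1}{2}}$. Either way $N(t)\gtrsim\min\{N(t_0),\abs{t-t_0}^{-\frac{1}{2}}\}$, which is the claim. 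Equivalently, integrating $\abs{\frac{d}{dt}N(t)^{-2}}\lesssim 1$ from Remark~\ref{rmk N(J)} yields $N(t)^{-2}\lesssim N(t_0)^{-2}+\abs{t-t_0}$, the same conclusion.

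I do not expect a genuine obstacle here: the only points that need care are the use of the openness of $I$ in the finite-endpoint step (which is part of the local well-posedness theory) and keeping a single constant $\delta$ consistent across the two inequalities in the second step. The dichotomy is arranged precisely so that constants do not compound, unlike a naive repeated application of Lemma~\ref{lem Local const N(t)}.
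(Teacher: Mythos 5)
Your argument is correct and is the standard one; the paper itself does not spell out a proof but refers to \cite{KV1}, where exactly this dichotomy (use the lower endpoint of the $\delta N(t_0)^{-2}$ window against $\sup I$ for the finite-endpoint claim, and split on whether $\abs{t-t_0}\le\delta N(t)^{-2}$ for the global claim, applying Lemma~\ref{lem Local const N(t)} with reference time $t$) appears. The alternative integration of $\abs{\frac{d}{dt}N(t)^{-2}}\lesssim1$ using the representative from Remark~\ref{rmk N(J)} is also fine and gives the same bound in one stroke.
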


\begin{lem}[Local quasi-boundedness of $N(t)$]\label{lem Local bounded}
Let $u$ be a non-zero solution to \eqref{NLS} with lifespan $I$ that is almost periodic modulo symmetries with frequency scale function $N: I \to \R^+$. If $K$ is any compact subset of $I$, then
\begin{equation*}
0 < \inf_{t \in K} N(t) \leq \sup_{t \in K} N(t) < \infty  .
\end{equation*}

\end{lem}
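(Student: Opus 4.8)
The plan is to argue by contradiction, using compactness of $K$ together with the local constancy of the frequency scale function supplied by Lemma \ref{lem Local const N(t)}.

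First I would prove the upper bound. Suppose toward a contradiction that $\sup_{t \in K} N(t) = \infty$. Then there is a sequence $t_n \in K$ with $N(t_n) \to \infty$, and since $K$ is compact we may pass to a subsequence so that $t_n \to t_\ast$ for some $t_\ast \in K \subset I$. Applying Lemma \ref{lem Local const N(t)} at the base point $t_0 = t_\ast$, there is $\delta = \delta(u) > 0$ with $N(t) \sim N(t_\ast)$ whenever $|t - t_\ast| \le \delta N(t_\ast)^{-2}$. Since $t_n \to t_\ast$, for all sufficiently large $n$ we have $|t_n - t_\ast| \le \delta N(t_\ast)^{-2}$, hence $N(t_n) \sim N(t_\ast) < \infty$, contradicting $N(t_n) \to \infty$.

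The lower bound is handled symmetrically. If $\inf_{t \in K} N(t) = 0$, choose $t_n \in K$ with $N(t_n) \to 0$ and pass to a convergent subsequence $t_n \to t_\ast \in K$. Since $N$ takes values in $\R^+$ and $u$ is non-zero, $N(t_\ast) > 0$. Lemma \ref{lem Local const N(t)} applied at $t_\ast$ again forces $N(t_n) \sim N(t_\ast)$ for all large $n$, so $\liminf_n N(t_n) \gtrsim N(t_\ast) > 0$, a contradiction. Combining the two bounds gives the claim.

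I do not expect a genuine obstacle here; the only subtlety is that the length $\delta N(t_0)^{-2}$ of the interval of local constancy depends on the base point $t_0$, so one cannot immediately extract a covering of $K$ by intervals of a single uniform radius. Passing to a convergent subsequence and invoking Lemma \ref{lem Local const N(t)} exactly once, at the limit point $t_\ast$, avoids this issue. (Alternatively one may cover $K$ by the open intervals $(t_0 - \tfrac12 \delta N(t_0)^{-2},\, t_0 + \tfrac12 \delta N(t_0)^{-2})$ for $t_0 \in K$, extract a finite subcover by compactness, and observe that on $K$ the function $N$ is then comparable to the finite collection of values $N(t_0)$ at the centers of the chosen intervals, which yields the same conclusion.)
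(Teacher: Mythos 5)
Your argument is correct and is essentially the standard proof: the paper itself does not spell out a proof, deferring to \cite{KV1}, and the argument given there is the same compactness-plus-local-constancy reasoning you use (extract a convergent subsequence $t_n \to t_\ast$, apply Lemma \ref{lem Local const N(t)} once at $t_\ast$, and observe that $N(t_n) \sim N(t_\ast)$ for large $n$ contradicts $N(t_n) \to \infty$ or $N(t_n) \to 0$). Your parenthetical remark correctly identifies the only subtlety — the radius of local constancy depends on the base point — and either of your two resolutions (a single application at the limit point, or a finite subcover) is fine.
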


\begin{lem}[Strichartz norms via $N(t)$]\label{lem Strichartz norms N(t)}
Let $u : I \times \R^2 \to \C$ be a non-zero almost periodic modulo symmetries solution to \eqref{NLS} with frequency scale function $N : I \to  \R^+$. Then
\begin{equation*}
\int_I N(t)^2 \, dt \lesssim \int_I \int_{\R^2} \abs{u(t,x)}^8 dxdt \lesssim 1+ \int_I N(t)^2 \,  dt .
\end{equation*}

\end{lem}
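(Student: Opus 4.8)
The plan is to obtain both inequalities from a single device: subdividing $I$ into intervals on which $u$ carries unit $L^8_{t,x}$-norm. Since $u \in L^8_{t,x}(K \times \R^2)$ for every compact $K \subset I$ (Definition \ref{defn Solution}), the function $t \mapsto S_{(\inf I,\,t]}(u)$ is continuous and exhausts $[0, S_I(u))$; hence we may write $I = \bigcup_k J_k$ as a disjoint union of consecutive intervals with $\|u\|_{L^8_{t,x}(J_k \times \R^2)}^8 = 1$, possibly together with one or two shorter ``boundary'' intervals on which this quantity is $< 1$. The number of unit intervals is then $S_I(u) + O(1)$. By Remark \ref{rmk N(J)} (with Lemma \ref{lem Local const N(t)}) the frequency scale is essentially constant on each such interval, $N(t) \sim N(J_k) := \inf_{t \in J_k} N(t)$, so $\int_{J_k} N(t)^2\,dt \sim N(J_k)^2 |J_k|$, and the lemma reduces to the two-sided bound $N(J_k)^2 |J_k| \sim_u 1$ on every unit interval.

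For the lower bound $N(J_k)^2 |J_k| \gtrsim_u 1$ — giving the right-hand inequality $S_I(u) \lesssim_u 1 + \int_I N(t)^2\,dt$ — I would show that short intervals carry small scattering size: there is $\tau_0 = \tau_0(u) > 0$ with $\|u\|_{L^8_{t,x}(J \times \R^2)} \le 1$ whenever $J \ni t_0$ satisfies $|J| \le \tau_0 N(t_0)^{-2}$. Rescaling so that $N(t_0) = 1$ and $x(t_0) = 0$, the datum $u(t_0)$ runs, by almost periodicity (Definition \ref{defn AP}), over a fixed precompact subset $\mathcal{K} \subset \dot H^{1/2}(\R^2)$, and the local well-posedness theory for \eqref{NLS} — which in this critical regime depends on the profile but is effectively uniform once $\mathcal{K}$ is fixed — supplies a single such $\tau_0$. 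Thus every unit interval satisfies $|J_k| \ge \tau_0 N(J_k)^{-2}$, and summing $\int_{J_k} N(t)^2\,dt \gtrsim_u 1$ over the $S_I(u) + O(1)$ of them yields the bound.

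For the upper bound $N(J_k)^2 |J_k| \lesssim_u 1$ — giving the left-hand inequality $\int_I N(t)^2\,dt \lesssim_u S_I(u)$ — I would prove the matching \emph{lower} bound on the local scattering size: there is $c_1 = c_1(u) > 0$ with $\|u\|_{L^8_{t,x}([t_0,\,t_0 + \delta N(t_0)^{-2}] \times \R^2)}^8 \ge c_1$ for every $t_0 \in I$, where $\delta$ is the constant of Lemma \ref{lem Local const N(t)} (so the interval lies in $I$). This follows by the same rescaling and a compactness argument: if it failed along $t_n$, the rescaled solutions $v_n$ would have data converging in $\dot H^{1/2}$ to some $\phi$, hence by the stability theory $v_n \to v$ in $L^8_{t,x}([0,\delta] \times \R^2)$ with $v$ solving \eqref{NLS} from $\phi$, whence $\|v\|_{L^8_{t,x}([0,\delta] \times \R^2)} = 0$ and $\phi = 0$; but a non-zero almost periodic solution has $\inf_{t \in I} \|u(t)\|_{\dot H^{1/2}} \ge c_0 > 0$, since otherwise, evaluating at a time where the norm falls below the small-data threshold would force $S_I(u) \lesssim \|u(t)\|_{\dot H^{1/2}}^8$, hence $S_I(u) = 0$ and $u \equiv 0$. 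Tiling each unit interval $J_k$ by $\sim N(J_k)^2 |J_k|$ intervals of length $\sim \delta N(J_k)^{-2}$ and summing these lower bounds gives $1 = \|u\|_{L^8_{t,x}(J_k)}^8 \gtrsim_u N(J_k)^2 |J_k|$; summing over $k$ and absorbing the boundary intervals (using Lemma \ref{lem Local bounded} to handle the degenerate case $S_I(u) < 1$) completes the proof.

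The one genuinely nontrivial ingredient is the pair of \emph{uniform-in-$t_0$} local estimates producing $\tau_0(u)$ and $c_1(u)$: this is where almost periodicity is essential, as it turns the orbit of $u$ into a precompact family of initial data to which the (profile-dependent) local well-posedness and stability theory of \eqref{NLS} can be applied with constants depending only on $u$. I note that the Strichartz space used throughout is $L^8_{t,x}$, a non-endpoint $\dot H^{1/2}$-admissible pair in two dimensions, so the failure of the $L^2_t L^\infty_x$ endpoint estimate — the source of difficulty elsewhere in this paper — does not interfere here; the bookkeeping with the boundary intervals is routine and parallels \cite{KV1}, to which we also refer for the related statements.
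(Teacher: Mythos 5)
The paper does not supply its own proof of this lemma; it is stated together with Lemma \ref{lem Local const N(t)}, Corollary \ref{cor N(t) blow-up}, and Lemma \ref{lem Local bounded} with the single citation ``see \cite{KV1} for details.'' Your argument is precisely the one in \cite{KV1}: partition $I$ into unit $L^8_{t,x}$-intervals, invoke Lemma \ref{lem Local const N(t)} / Remark \ref{rmk N(J)} to make $N(t)$ effectively constant on each piece, and reduce to the two-sided bound $N(J_k)^2\abs{J_k}\sim_u 1$, which you derive from two uniform local estimates — one upper (profile-dependent local well-posedness on a precompact family of rescaled data) and one lower (a compactness/stability contradiction argument) — so the proposal is correct and follows the same route as the cited reference.

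Two small imprecisions worth tightening in a final write-up. First, in the step showing $\inf_{t\in I}\norm{u(t)}_{\dot H^{1/2}}>0$, a \emph{single} time with small $\dot H^{1/2}$-norm only gives $S_I(u)\lesssim \eta_0^8$, not $S_I(u)=0$; you need to apply the small-data bound along an infimizing sequence $t_n$ with $\norm{u(t_n)}_{\dot H^{1/2}}\to 0$, obtaining $S_I(u)\lesssim\norm{u(t_n)}_{\dot H^{1/2}}^8\to 0$, hence $S_I(u)=0$ and $u\equiv 0$. Second, for the lower bound $\abs{J_k}\gtrsim N(J_k)^{-2}$ on a unit interval, choosing $\tau_0$ so that $\norm{u}_{L^8(J)}\le 1$ on short intervals is not enough to rule out $\abs{J_k}<\tau_0 N(J_k)^{-2}$, since $\norm{u}_{L^8(J_k)}=1$ is compatible with that. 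You should shrink $\tau_0$ so that short intervals carry $L^8_{t,x}$-norm at most $1/2$ (say), which then contradicts unit scattering size and gives the strict lower bound on $\abs{J_k}$. Neither point affects the overall correctness of the approach.
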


With the above setup and properties in hand, we arrive at the following theorem:

\begin{thm}[two special scenarios for blow-up]\label{thm 2 types}
Suppose Theorem \ref{thm Main} failed. Then there exists an almost periodic solution $u : [0, T_{max}) \times   \R^2   \to \C $, such that \eqref{eq Defn AP1}, \eqref{eq Defn AP2},
\begin{equation*}
\norm{u}_{L_{t,x}^8 ( [0, T_{max}) \times   \R^2)} = + \infty,
\end{equation*}
$N(0)=1$, and $N(t) \geq 1$ on $[0, \infty)$, $\abs{\frac{d}{dt} N(t)} \lesssim N(t)^3$.

Furthermore, one of the following holds:
\begin{enumerate}
\item The finite-time blow-up solutions,
\begin{equation*}
T_{max} < \infty ,
\end{equation*}

\item The quasi-soliton,
\begin{equation*}
\int_0^{\infty} N(t)  \,  dt =\infty .
\end{equation*}
\end{enumerate}

\end{thm}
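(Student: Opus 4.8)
The plan is to produce the required solution by extracting and renormalizing a minimal counterexample. Assume Theorem~\ref{thm Main} fails. By Theorem~\ref{thm Exist minimal} there is a maximal-lifespan solution $u$ on an interval $I\ni 0$ that is almost periodic modulo scaling and spatial translations, with $\norm{u}_{L_t^\infty\dot{H}_x^{1/2}}^2=E_c$ and $S_{\geq 0}(u)=S_{\leq 0}(u)=\infty$; since the $\dot{H}^{1/2}$ norm is not Galilean invariant, the frequency centre $\xi(t)$ is the origin, so $u$ comes with parameters $N(t)$, $x(t)$, $C(\eta)$ satisfying \eqref{eq Defn AP1}, and precompactness of the orbit additionally yields \eqref{eq Defn AP2}. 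Restricting to the forward half of the lifespan, $I\cap[0,\infty)=[0,T_{max})$ with $T_{max}=\sup I$, we retain $\norm{u}_{L_{t,x}^8([0,T_{max})\times\R^2)}=\infty$, almost periodicity, and both cutoff inequalities.

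Next I would regularize $N$ following Remark~\ref{rmk N(J)}: partition $[0,T_{max})$ into consecutive ``characteristic'' intervals $J_k$ with $\norm{u}_{L_{t,x}^8(J_k\times\R^2)}=1$ — possible since the total scattering norm is infinite yet finite on compact subsets, so the $J_k$ exhaust $[0,T_{max})$. On each $J_k$, $N$ is comparable to $N(J_k):=\inf_{J_k}N$, consecutive $N(J_k)$ are comparable, and $1/N(J_k)\sim\int_{J_k}N(t)\,dt$; replacing $N$ by a smoothed comparable function we may assume $\bigl|\tfrac{d}{dt}N(t)\bigr|\lesssim N(t)^3$, at the cost only of enlarging $C(\eta)$ by a bounded factor, so \eqref{eq Defn AP1} and \eqref{eq Defn AP2} persist.

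Then I would split on whether $T_{max}<\infty$. In that case Corollary~\ref{cor N(t) blow-up} gives $N(t)\gtrsim|T_{max}-t|^{-1/2}\to\infty$ as $t\uparrow T_{max}$, while Lemma~\ref{lem Local bounded} keeps $N$ bounded below on compact subsets, so $m:=\inf_{[0,T_{max})}N>0$ is attained at some $t_\ast$ ($N$ being continuous). Translating the origin to $t_\ast$ — the forward lifespan stays finite and $S$ stays infinite on it, being finite on the compact complement — and applying $u\mapsto m^{-1/2}u(m^{-2}\,\cdot\,,m^{-1}\,\cdot\,)$, which is scale-covariant for $N$ and preserves almost periodicity, the critical norm and the blow-up of $S$, we reach $N(0)=1$, $N(t)\geq 1$, $\bigl|\tfrac{d}{dt}N(t)\bigr|\lesssim N(t)^3$: this is alternative~(1). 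If instead $T_{max}=\infty$, first $\int_0^\infty N(t)\,dt=\infty$: otherwise $\sum_k 1/N(J_k)\sim\int_0^\infty N<\infty$ forces $N(J_k)\to\infty$, hence $N(t)\to\infty$ (the $J_k$ exhaust $[0,\infty)$), hence $\int_0^\infty N=\infty$, a contradiction — this is exactly the assertion that Case~II is empty. It then remains to arrange $N(t)\geq 1$. Choose $T_n\uparrow\infty$ and $t_n\in[0,T_n]$ with $N(t_n)=\inf_{[0,T_n]}N$; if the $t_n$ stay bounded their limit attains $\inf_{[0,\infty)}N>0$ and we finish as in the finite-time case, while if $t_n\to\infty$ we pass to the rescaled solutions $u_n(t,x)=N(t_n)^{-1/2}u\bigl(t_n+N(t_n)^{-2}t,\ x(t_n)+N(t_n)^{-1}x\bigr)$, each a minimal-energy solution, almost periodic with the same $C(\eta)$ (scaling and translation are symmetries) and with frequency scale $\geq 1$ on an interval expanding to $\R$. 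By precompactness of the orbit of $u$ the data $u_n(0)$ converge in $\dot{H}^{1/2}$ along a subsequence, and the limit launches a global minimal-energy almost periodic solution $u_\infty$ with $N_{u_\infty}(0)=1$, $N_{u_\infty}(t)\geq 1$ and (as above) $\int_0^\infty N_{u_\infty}=\infty$, which still blows up in both time directions; replacing $u$ by $u_\infty$ gives alternative~(2).

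I expect the compactness–rescaling extraction of $u_\infty$ to be the main obstacle: one must check that the rescaled lifespans genuinely expand to all of $\R$ (using Lemma~\ref{lem Local const N(t)} and Corollary~\ref{cor N(t) blow-up} to bound how fast $N$ moves near the near-minimal times $t_n$), that the perturbation theory applies on each fixed window where $N_{u_n}\sim 1$ so $u_\infty$ is a genuine solution there, that $u_\infty$ inherits almost periodicity with frequency scale bounded below by~$1$, and that it does not scatter in the limit, so it is still a blow-up solution. The other ingredients — regularizing $N$, normalizing in the finite-time case, and emptying Case~II — are comparatively routine consequences of Lemma~\ref{lem Local const N(t)}, Corollary~\ref{cor N(t) blow-up}, Lemma~\ref{lem Local bounded} and Remark~\ref{rmk N(J)}.
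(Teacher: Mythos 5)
The paper does not write out a proof of Theorem \ref{thm 2 types}; it is the standard reduction borrowed from \cite{KM3, Mu2, KV1}, and your sketch follows the same route. The essential steps are all present: extract a minimal counterexample via Theorem \ref{thm Exist minimal}, regularize $N(t)$ via Lemma \ref{lem Local const N(t)} and Remark \ref{rmk N(J)} to arrange $\abs{\tfrac{d}{dt}N(t)}\lesssim N(t)^3$, restrict to forward time, normalize $N(0)=1$ and $N(t)\geq 1$ by translation/scaling (or by a compactness extraction when $\inf N=0$), and discard Case~II. Your Case-II exclusion via Remark \ref{rmk N(J)} is valid; the more direct variant, consistent with the paper's emphasis on Corollary \ref{cor N(t) blow-up}, is that $T_{\max}=\infty$ already forces $N(t)\gtrsim t^{-1/2}$, whence $\int_0^\infty N(t)\,dt\gtrsim\int_1^\infty t^{-1/2}\,dt=\infty$ without any partition into characteristic intervals.

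The one place that needs to be tightened is the compactness extraction when $\inf_{[0,\infty)}N=0$, which you correctly identify as the main work. As written, with $t_n\in[0,T_n]$ minimizing $N$, the rescaled solution $u_n$ has $N_{u_n}\geq 1$ only on the window $N(t_n)^2\bigl([0,T_n]-t_n\bigr)$, and nothing in the construction forces the forward piece $N(t_n)^2(T_n-t_n)$ to tend to infinity; Corollary \ref{cor N(t) blow-up} gives only $N(t_n)\gtrsim t_n^{-1/2}$, so $N(t_n)^2 t_n$ is merely bounded below. Thus the half-line on which you control $N_{u_n}$ from below may fail to expand forward, and you should either take the $t_n$ to be record lows (giving the controlled window $[-N(t_n)^2 t_n,0]$) and time-reverse and conjugate the limit ($u_\infty(t)\mapsto\overline{u_\infty(-t)}$, which stays in the class of minimal almost periodic blow-up solutions), or invoke the worked-out version of this extraction from \cite{KV1, TVZ2}. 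Relatedly, the extracted $u_\infty$ need not be global --- but if its forward lifespan is finite you simply land in alternative (1) rather than (2), so the dichotomy is unaffected. These are precisely the gaps you flag; they do not change the structure of the argument, which matches the intended reduction.
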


For finite time blow-up solutions, we compute the rate of change in time of the mass restricted to a spatial bump and use the mass conservation law to rule out the existence.

To preclude the second case, we follow the following steps: 
\begin{enumerate}
\item
Step 1: Prove a suitable long time Strichartz estimate: $\norm{u}_{\tilde{X}_{k_0}} \lesssim 1$,

\item
Step 2: Derive a frequency localized Morawetz estimate with error terms estimated by the long time Strichartz estimate,

\item
Step 3: Using the frequency localized Morawetz estimate, rule out the quasi-soliton solutions (recall that Morawetz inequality scales like $\int_I N(t)  \,  dt$).
\end{enumerate}

The rest of this paper is organized as follows:
In Section \ref{sec Preliminaries}, we collect some useful tools in harmonic analysis and a profile decomposition argument, and recall the local well-posedness theory and a perturbation lemma. In Section \ref{sec No finite-time blow-up}, we show the impossibility of finite-time blow-up solutions. Next, in Sections \ref{sec Atomic spaces}, we review some basic definitions and properties of the atomic spaces, and then prove a decomposition lemma, which is used in the proof of the long time Strichartz estimate in Section \ref{sec LTS}. In Section \ref{sec LTS}, we derive a long time Strichartz estimate adapted in our setting. Finally, we prove the frequency-localized interaction Morawetz estimates, then rule out the existence of quasi-soliton solutions in Section \ref{sec No quasi-soliton}, which completes the proof of Theorem \ref{thm 2 types}.

\section{Preliminaries}\label{sec Preliminaries}
In this section we recall Littlewood-Paley theory, some useful estimates from harmonic analysis and a profile decomposition argument, and state the local well-posedness theory and a perturbation lemma.

\subsection{Littlewood-Paley theory}
\begin{lem}[Littlewood-Paley theorem]
For $1 < p < \infty$, 
\begin{equation*}
\norm{f}_{L^p(\R^2)} \sim_p \norm{ \parenthese{\sum_{j=-\infty}^{\infty} \abs{P_{2^j} f}^2}^{\frac{1}{2}}}_{L^p(\R^2)} .
\end{equation*}
\end{lem}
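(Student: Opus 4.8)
The plan is to follow the classical randomization argument via Khintchine's inequality and the Mikhlin--H\"ormander multiplier theorem. Fix a Littlewood--Paley bump $\psi$ so that $\widehat{P_{2^j}f}(\xi) = \psi(2^{-j}\xi)\hat f(\xi)$, and for a sequence of signs $\epsilon = (\epsilon_j)_{j\in\Z} \in \{-1,1\}^{\Z}$ consider the Fourier multiplier operator $T_\epsilon f = \sum_{j} \epsilon_j P_{2^j} f$ with symbol $m_\epsilon(\xi) = \sum_{j} \epsilon_j \psi(2^{-j}\xi)$. Since the functions $\psi(2^{-j}\cdot)$ have finitely overlapping supports and are uniformly bounded together with their derivatives under the scaling $\xi \mapsto 2^{-j}\xi$, the symbol $m_\epsilon$ satisfies $\abs*{\partial^\alpha m_\epsilon(\xi)} \lesssim \abs{\xi}^{-\abs{\alpha}}$ for $\abs{\alpha}\le 2$, uniformly in $\epsilon$. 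Hence by the Mikhlin--H\"ormander theorem $T_\epsilon$ is bounded on $L^p(\R^2)$ for $1<p<\infty$ with operator norm independent of $\epsilon$.

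Next, replace $\epsilon$ by the Rademacher functions $\{r_j(t)\}_j$ on $[0,1]$ and integrate the inequality $\norm{T_{r(t)}f}_{L_x^p}^p \lesssim_p \norm{f}_{L_x^p}^p$ in $t$. Fubini together with Khintchine's inequality, which gives $\int_0^1 \abs*{\sum_j r_j(t) a_j}^p \, dt \sim_p \parenthese{\sum_j \abs{a_j}^2}^{p/2}$, converts the left side into $\norm{\parenthese{\sum_j \abs{P_{2^j}f}^2}^{1/2}}_{L_x^p}^p$, and yields $\norm{\parenthese{\sum_j \abs{P_{2^j}f}^2}^{1/2}}_{L^p} \lesssim_p \norm{f}_{L^p}$. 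To be rigorous one first runs this with the sum restricted to $\abs{j}\le M$ and then lets $M\to\infty$ by monotone convergence.

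For the reverse inequality use duality. Choose a slightly fattened projection $\widetilde{P}_{2^j}$ with $\widetilde{P}_{2^j}P_{2^j} = P_{2^j}$ and still finite overlap, so that for $f\in L^p$ and $g\in L^{p'}$ with $\norm{g}_{L^{p'}}=1$ one has $\inner{f,g} = \sum_j \inner{P_{2^j}f,\widetilde{P}_{2^j}g}$. Applying the Cauchy--Schwarz inequality in $j$ pointwise and then H\"older's inequality in $x$ gives $\abs{\inner{f,g}} \le \norm{\parenthese{\sum_j \abs{P_{2^j}f}^2}^{1/2}}_{L^p} \norm{\parenthese{\sum_j \abs{\widetilde{P}_{2^j}g}^2}^{1/2}}_{L^{p'}}$, and the second factor is $\lesssim_p \norm{g}_{L^{p'}} = 1$ by the forward estimate already established (applied to the fattened projections, whose symbols again verify Mikhlin). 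Taking the supremum over such $g$ yields $\norm{f}_{L^p} \lesssim_p \norm{\parenthese{\sum_j \abs{P_{2^j}f}^2}^{1/2}}_{L^p}$.

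The main technical point is the uniform verification of the Mikhlin condition for $m_\epsilon$ over all choices of signs, together with the routine but slightly delicate bookkeeping needed to justify interchanging sums, integrals and limits, which is handled by first truncating to finitely many frequencies; everything else is a direct application of Khintchine's inequality, H\"older's inequality, and duality.
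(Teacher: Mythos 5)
Your proof is correct and follows the standard classical argument (Khintchine plus Mikhlin--H\"ormander for the square function bound, duality with fattened projections for the reverse), which is exactly the argument the paper implicitly appeals to: the paper states this Littlewood--Paley theorem as a known lemma and does not supply a proof. The only small point worth noting is that the identity $\sum_j P_{2^j} f = f$ used in the duality step requires the usual caveat for the homogeneous decomposition on $L^p$ (it holds in $L^p$ for $1<p<\infty$ after the truncation-and-limit argument you already invoke), so nothing is missing.
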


\begin{lem}[Bernstein inequalities]
For $1 \leq r \leq q \leq \infty$ and $s \geq 0$,
\begin{align*}
\norm{\abs{\nabla}^{\pm s} P_N f}_{L_x^r (\R^2)} & \lesssim N^{\pm s} \norm{ P_N f}_{L_x^r (\R^2)} \\
\norm{\abs{\nabla}^{ s} P_{\leq N} f}_{L_x^r (\R^2)} & \lesssim N^{ s} \norm{ P_{\leq N} f}_{L_x^r (\R^2)} \\
\norm{ P_{\geq N} f}_{L_x^r (\R^2)} & \lesssim N^{- s} \norm{ \abs{\nabla}^{s} P_{\geq N} f}_{L_x^r (\R^2)} \\
\norm{ P_{\leq N} f}_{L_x^q (\R^2)} & \lesssim N^{\frac{2}{r}-\frac{2}{q}} \norm{  P_{\leq N} f}_{L_x^r (\R^2)} .
\end{align*}

\end{lem}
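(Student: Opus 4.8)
\emph{Proof idea.} The plan is to reduce each of the four inequalities to Young's convolution inequality by writing the Littlewood--Paley operators as convolution with rescaled kernels. Fix bump functions $\phi,\chi\in C_c^\infty(\R^2)$ with $\phi$ supported in the annulus $\{\frac12\le\abs{\xi}\le2\}$ and with $\chi\equiv1$ on $\{\abs{\xi}\le1\}$, $\supp\chi\subset\{\abs{\xi}\le2\}$, so that $P_N$, $P_{\le N}$, $P_{\ge N}$ are the Fourier multipliers with symbols $\phi(\xi/N)$, $\chi(\xi/N)$, $1-\chi(\xi/N)$, respectively. The basic scaling fact I would record first is that a multiplier of the form $m(\xi/N)$ acts by convolution with $N^2\check m(N\,\cdot)$, whose $L^p(\R^2)$ norm equals $N^{2(1-1/p)}\norm{\check m}_{L^p}$. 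Since $\check\phi$ and $\check\chi$ are Schwartz, this already shows that $P_N$, $P_{\le N}$, $P_{\ge N}$ are bounded on every $L^r(\R^2)$, $1\le r\le\infty$, uniformly in $N$; I will use this throughout.

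For the fourth inequality, write $P_{\le N}=\tilde P_{\le N}P_{\le N}$ where $\tilde P_{\le N}$ has symbol $\tilde\chi(\xi/N)$ with $\tilde\chi\in C_c^\infty$ equal to $1$ on $\supp\chi$. Then $\tilde P_{\le N}$ is convolution with $N^2\check{\tilde\chi}(N\,\cdot)$, and Young's inequality with $1+\tfrac1q=\tfrac1p+\tfrac1r$ gives $\norm{P_{\le N}f}_{L^q}=\norm{\tilde P_{\le N}(P_{\le N}f)}_{L^q}\lesssim N^{2(1-1/p)}\norm{P_{\le N}f}_{L^r}=N^{2/r-2/q}\norm{P_{\le N}f}_{L^r}$, using $1-\tfrac1p=\tfrac1r-\tfrac1q$. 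For the first inequality I would peel off a fattened projection: $\abs{\nabla}^{\pm s}P_N=\big(\abs{\nabla}^{\pm s}\tilde P_N\big)P_N$ where $\tilde P_N$ has symbol $\tilde\phi(\xi/N)$ with $\tilde\phi\in C_c^\infty$ supported in $\{\frac14\le\abs{\xi}\le4\}$ and $\tilde\phi\equiv1$ on $\supp\phi$. The symbol of $\abs{\nabla}^{\pm s}\tilde P_N$ is $\abs{\xi}^{\pm s}\tilde\phi(\xi/N)=N^{\pm s}\psi_\pm(\xi/N)$ with $\psi_\pm(\eta):=\abs{\eta}^{\pm s}\tilde\phi(\eta)$; since $\tilde\phi$ vanishes near the origin, $\psi_\pm\in C_c^\infty(\R^2)$, so its inverse Fourier transform is Schwartz and $\abs{\nabla}^{\pm s}\tilde P_N$ is convolution with a kernel of $L^1$ norm $\lesssim N^{\pm s}$; Young's inequality applied to $h=P_Nf$ finishes this case.

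The second and third inequalities I would then deduce from the first by a dyadic decomposition and a geometric series, the case $s=0$ being trivial. For the second, write $P_{\le N}=\sum_{M\le N}P_M$ over dyadic $M$; combining the first inequality with the uniform $L^r$-boundedness of $P_M$ and the identity $P_MP_{\le N}=P_M$ for $M\le N$ gives $\norm{\abs{\nabla}^sP_{\le N}f}_{L^r}\le\sum_{M\le N}\norm{\abs{\nabla}^sP_Mf}_{L^r}\lesssim\big(\sum_{M\le N}M^s\big)\norm{P_{\le N}f}_{L^r}\lesssim N^s\norm{P_{\le N}f}_{L^r}$, the sum over dyadic $M\le N$ converging since $s>0$. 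For the third, set $h=P_{\ge N}f$, so $\widehat h$ is supported where $\abs{\xi}\gtrsim N$ and $h=\sum_{M\gtrsim N}P_Mh$; applying the first inequality with exponent $-s$ (and again the uniform boundedness of $P_M$) gives $\norm{h}_{L^r}\le\sum_{M\gtrsim N}\norm{P_Mh}_{L^r}\lesssim\big(\sum_{M\gtrsim N}M^{-s}\big)\norm{\abs{\nabla}^sh}_{L^r}\lesssim N^{-s}\norm{\abs{\nabla}^sP_{\ge N}f}_{L^r}$, a convergent geometric sum for $s>0$. The only point requiring any care — minor, for an estimate this standard — is exactly this last reduction: the natural symbols $\abs{\xi}^s\chi(\xi/N)$ and $\abs{\xi}^{-s}(1-\chi)(\xi/N)$ are respectively not smooth at the origin and not compactly supported, so a one-step kernel estimate does not apply to them directly, and routing through the annulus estimate of the first inequality is what makes the argument clean; everything else is a routine application of Young's inequality and the scaling of $L^p$ norms under dilations in $\R^2$.
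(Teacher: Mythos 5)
The paper states these Bernstein inequalities without proof, as standard background material in the preliminaries, so there is no argument in the paper to compare against. Your proof is correct and is the usual textbook route: realize each multiplier as convolution with a dilated kernel, use the scaling of $L^p$ norms under dilation in $\R^2$ to quantify the kernel size, and apply Young's inequality; for the second and third estimates, where the combined symbol is not a compactly supported smooth bump (non-smooth at the origin, respectively unbounded support), decompose dyadically and sum a geometric series in $M^{\pm s}$. Your closing observation — that $\abs{\xi}^s\chi(\xi/N)$ and $\abs{\xi}^{-s}(1-\chi)(\xi/N)$ do not admit a one-step Schwartz-kernel bound and that routing through the annulus estimate is the clean fix — is the one genuinely non-mechanical point, and you handled it correctly.

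One small point worth tightening: the identity $P_M P_{\le N}=P_M$ for all dyadic $M\le N$ depends on the precise choice of $\chi$ and typically fails for $M$ comparable to $N$ (e.g.\ $M=N$), since $\chi$ need not be identically $1$ on the entire annulus $\{M/2\le\abs{\xi}\le 2M\}$ rescaled by $N$. This is harmless: for every dyadic $M\lesssim N$ the product symbol $\phi(\xi/M)\chi(\xi/N)$ is still smooth, supported in an annulus $\{\abs{\xi}\sim M\}$, and has kernel of uniformly bounded $L^1$ norm, so $\|P_M P_{\le N} f\|_{L^r}\lesssim\|P_{\le N} f\|_{L^r}$ uniformly in $M,N$, which is all the geometric series actually requires. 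Alternatively one can apply the kernel estimate directly to $\abs{\xi}^s\phi(\xi/M)\chi(\xi/N)$, bypassing the appeal to the first inequality. Either fix is routine and does not affect the validity of the argument.
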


\subsection{Estimates from harmonic analysis}
Next we recall some fractional calculus estimates that appear originally in \cite{CW}. For a textbook treatment, one can refer to \cite{Ta}.
\begin{lem}[Fractional product rule]
Let $s > 0$ and let $1 < 1, r_1, r_2, q_1, q_2 < \infty$ satisfy $\frac{1}{q}=\frac{1}{r_1} + \frac{1}{r_2}$, $\frac{1}{q} = \frac{1}{p_1} + \frac{1}{p_2}$. Then
\begin{equation*}
\norm{\abs{\nabla}^s (fg)}_{L_x^q} \lesssim \norm{f}_{L_x^{r_1}} \norm{\abs{\nabla}^s g}_{L_x^{r_2}} + \norm{\abs{\nabla}^s f}_{L_x^{p_1}} \norm{ g}_{L_x^{p_2}}  . 
\end{equation*}
\end{lem}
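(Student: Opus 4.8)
The plan is the standard paraproduct argument; see \cite{CW} for the original treatment and \cite{Ta} for a textbook exposition. Using the Littlewood--Paley projections I would split $fg$ into three pieces according to which factor carries the higher frequency,
\begin{equation*}
fg \;=\; \sum_{N} (P_N f)(P_{\leq N/8}\, g) \;+\; \sum_{N} (P_{\leq N/8}\, f)(P_N g) \;+\; \sum_{N} (P_N f)(P_{\sim N}\, g) \;=:\; \mathrm{I} + \mathrm{II} + \mathrm{III},
\end{equation*}
the ``high--high'' term $\mathrm{III}$ collecting the interactions of comparable frequencies. I would bound $\norm{\abs{\nabla}^s \mathrm{I}}_{L_x^q}$ by $\norm{\abs{\nabla}^s f}_{L_x^{p_1}}\norm{g}_{L_x^{p_2}}$, and $\norm{\abs{\nabla}^s \mathrm{II}}_{L_x^q}$ and $\norm{\abs{\nabla}^s \mathrm{III}}_{L_x^q}$ by $\norm{f}_{L_x^{r_1}}\norm{\abs{\nabla}^s g}_{L_x^{r_2}}$. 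Since $\mathrm{I}$ is handled exactly as $\mathrm{II}$ after interchanging the roles of $f$ and $g$ (and of $(r_1,r_2)$ with $(p_1,p_2)$), it suffices to treat $\mathrm{II}$ and $\mathrm{III}$.

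For $\mathrm{II}$, each summand $(P_{\leq N/8}f)(P_N g)$ has Fourier support in the annulus $\abs{\xi}\sim N$ (this is the point of the $N/8$ separation), so $\abs{\nabla}^s$ acts on it as $N^s$ times a Fourier multiplier adapted to that annulus; such multiplier operators are bounded on $L_x^q$ uniformly in $N$ for $1<q<\infty$, and are pointwise dominated by the Hardy--Littlewood maximal operator $\mathcal{M}$. Combining this with the Littlewood--Paley square function characterization $\norm{\abs{\nabla}^s g}_{L_x^p}\sim\norm{(\sum_N N^{2s}\abs{P_N g}^2)^{1/2}}_{L_x^p}$, the pointwise bound $\abs{P_{\leq N/8}f}\lesssim \mathcal{M}f$, and H\"older's inequality, I expect
\begin{equation*}
\norm{\abs{\nabla}^s \mathrm{II}}_{L_x^q} \;\lesssim\; \Big\| (\mathcal{M}f)\,\big(\sum_N N^{2s}\abs{P_N g}^2\big)^{1/2}\Big\|_{L_x^q} \;\lesssim\; \norm{f}_{L_x^{r_1}}\,\norm{\abs{\nabla}^s g}_{L_x^{r_2}},
\end{equation*}
using $1<r_1<\infty$ for the boundedness of $\mathcal{M}$ and $1<r_2<\infty$ for the square function estimate.

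The high--high term $\mathrm{III}$ is where I expect the real work to lie. Now $(P_N f)(P_{\sim N}g)$ has Fourier support only in $\abs{\xi}\lesssim N$, so the output frequency is no longer comparable to $N$ and one cannot simply extract a factor $N^s$. Instead I would localize the output: $P_M \abs{\nabla}^s \mathrm{III}$ only sees the terms with $N\gtrsim M$, and $P_M\abs{\nabla}^s$ acts as $M^s$ times an adapted multiplier, so that $\abs{P_M\abs{\nabla}^s\mathrm{III}}\lesssim M^s\sum_{N\gtrsim M}\mathcal{M}\big[(P_N f)(P_{\sim N}g)\big]$. Writing $M^s=(M/N)^s N^s$ and running Schur's test over the pair $(M,N)$ — which converges precisely because $s>0$, and is used once in the sum over $N\gtrsim M$ and once in the sum over $M\lesssim N$ — one reduces matters, after the Fefferman--Stein vector-valued maximal inequality and the square function estimate, to
\begin{equation*}
\norm{\abs{\nabla}^s \mathrm{III}}_{L_x^q} \;\lesssim\; \Big\|\big(\sum_N \abs{P_N f}^2\, N^{2s}\abs{P_{\sim N}g}^2\big)^{1/2}\Big\|_{L_x^q} \;\lesssim\; \Big\| (\mathcal{M}f)\,\big(\sum_N N^{2s}\abs{P_{\sim N} g}^2\big)^{1/2}\Big\|_{L_x^q} \;\lesssim\; \norm{f}_{L_x^{r_1}}\,\norm{\abs{\nabla}^s g}_{L_x^{r_2}}.
\end{equation*}

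Summing the bounds for $\mathrm{I}$, $\mathrm{II}$, and $\mathrm{III}$ then yields the lemma. The main obstacle, as indicated, is the high--high interaction: since the output of $\mathrm{III}$ is spread over all frequencies below $N$, the derivative cannot be absorbed frequency-by-frequency, and the positivity $s>0$ must be used in an essential way to make the double frequency summation converge.
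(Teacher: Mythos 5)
The paper does not give a proof of this lemma; it simply states it as a known fractional calculus estimate and cites \cite{CW} and \cite{Ta}. Your paraproduct sketch is the standard argument found in those references (low–high/high–low pieces absorbed frequency-by-frequency, the high–high piece handled by localizing the output, running Schur's test on the kernel $(M/N)^s\mathbf{1}_{N\gtrsim M}$ using $s>0$, then Fefferman--Stein and the square-function characterization), and it is correct as outlined.
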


\begin{lem}[Chain rule for fractional derivatives]\label{lem Chain rule}
If $F \in C^{2}$, with $F(0)=0$, $F^{\prime}=0$, and $\abs{F^{\prime \prime}(a+b)} \leq C \square{ \abs{F^{\prime \prime}(a)}+\abs{F^{\prime \prime}(b)} }$, and $\abs{F^{\prime}(a+b)} \leq \square{ \abs{F^{ \prime}(a)}+\abs{F^{\prime}(b)} }$, we have, for $0 < \alpha <1$,
\begin{equation*}
\norm{\abs{\nabla}^{\alpha} F(u)}_{L_{x}^{q}} \leq C \norm{F^{\prime}(u)}_{L_{x}^{p_1}}  \norm{\abs{\nabla}^{\alpha} u}_{L_{x}^{p_2}}, \quad \text{where } \frac{1}{q}= \frac{1}{p_1}+ \frac{1}{p_2}. 
\end{equation*}

\end{lem}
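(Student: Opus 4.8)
The plan is to decompose $F(u)$ dyadically in frequency as a superposition $\sum_N m_N\,P_N u$, recognize this as (essentially) a Littlewood--Paley decomposition of $F(u)$, and then — in the spirit of the fractional product rule above — control the amplitudes $m_N$ by a Hardy--Littlewood maximal function while handling the $u$-factor by the Littlewood--Paley theorem. Concretely, writing $u_{\le N}=P_{\le N}u$ for dyadic $N$ and using $F(0)=0$ together with the continuity of $F$, I would telescope
\[
F(u)=\sum_{N}\parenthese{F(u_{\le N})-F(u_{\le N/2})}=\sum_{N}m_N\,P_N u,\qquad
m_N:=\int_0^1 F'\parenthese{u_{\le N/2}+\theta P_N u}\,d\theta .
\]
Because $u_{\le N/2}$ and $P_N u$ are Fourier supported in $\{\abs{\xi}\lesssim N\}$, the amplitude $m_N$ is Fourier supported essentially in $\{\abs{\xi}\lesssim N\}$ as well — \emph{exactly} so for the quintic nonlinearity $F(u)=\abs{u}^4u=u^3\bar u^2$, and up to rapidly decaying tails for a general $C^2$ nonlinearity — so that $f_N:=m_N P_N u$ is (essentially) Fourier supported in $\{\abs{\xi}\sim N\}$ and $\{f_N\}_N$ behaves like a Littlewood--Paley decomposition of $F(u)$. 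Hence, by almost orthogonality, the Bernstein inequalities, and the Littlewood--Paley theorem,
\[
\norm{\abs{\nabla}^{\alpha}F(u)}_{L_x^q}\lesssim\norm{\Big(\sum_N N^{2\alpha}\abs{f_N}^2\Big)^{1/2}}_{L_x^q}\le\norm{\big(\sup_M\abs{m_M}\big)\Big(\sum_N N^{2\alpha}\abs{P_N u}^2\Big)^{1/2}}_{L_x^q}.
\]

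I would then estimate the two factors separately. The Littlewood--Paley theorem gives $\norm{(\sum_N N^{2\alpha}\abs{P_N u}^2)^{1/2}}_{L_x^{p_2}}\sim\norm{\abs{\nabla}^{\alpha}u}_{L_x^{p_2}}$. For the amplitude, since the Littlewood--Paley projections are dominated by the Hardy--Littlewood maximal function $\mathcal{M}$, one has $\abs{u_{\le M/2}(x)}+\abs{P_M u(x)}\lesssim(\mathcal{M}u)(x)$ pointwise; feeding this into $m_M$ and using the structural hypotheses on $F'$ (subadditivity up to a constant, so that $\Phi(r):=\sup_{\abs{v}\le r}\abs{F'(v)}$ is nondecreasing, doubling, and of polynomial growth — all enjoyed by the quintic) yields $\sup_M\abs{m_M}(x)\lesssim\Phi\big((\mathcal{M}u)(x)\big)$, whence $\norm{\sup_M\abs{m_M}}_{L_x^{p_1}}\lesssim\norm{F'(u)}_{L_x^{p_1}}$ by the boundedness of $\mathcal{M}$ on a suitable Lebesgue space. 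Hölder's inequality with $\tfrac1q=\tfrac1{p_1}+\tfrac1{p_2}$ then combines these two estimates and closes the proof.

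The main obstacle is the claim that $\{f_N\}$ behaves like a Littlewood--Paley decomposition of $F(u)$: for a merely $C^2$ nonlinearity the $m_N$ are not exactly frequency-localized and their high-frequency tails must be controlled (for instance by splitting $m_N=P_{\le 4N}m_N+P_{>4N}m_N$ and absorbing the rapidly decaying piece into acceptable errors). For the quintic nonlinearity of interest this difficulty disappears, since $m_N$ is then genuinely Fourier supported in $\{\abs{\xi}\lesssim N\}$. As an alternative — and the route originally taken in \cite{CW} — one may instead start from the difference-quotient characterization $\norm{\abs{\nabla}^{\alpha}g}_{L^q}\sim\norm{\big(\int_{\R^2}\abs{g(\cdot+y)-g(\cdot)}^2\abs{y}^{-2-2\alpha}\,dy\big)^{1/2}}_{L^q}$ (valid since $0<\alpha<1$), use the mean value theorem and the hypotheses on $F$ to bound $\abs{F(u(x+y))-F(u(x))}\lesssim\abs{u(x+y)-u(x)}\big(\abs{F'(u(x))}+\abs{F'(u(x+y))}\big)$, and split into a diagonal term — for which $\abs{F'(u(x))}$ factors directly out of the square function — and an off-diagonal term, handled by Cauchy--Schwarz on dyadic annuli together with the maximal function, which is where the bulk of the work lies.
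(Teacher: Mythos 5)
The paper does not prove this lemma; it is stated as a recalled result from Christ--Weinstein \cite{CW}, with \cite{Ta} offered as a textbook reference, so there is no in-paper argument for you to match.

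As a freestanding argument, your telescoping/paraproduct route is workable in spirit, but the ``almost orthogonality'' step is the delicate one and your parenthetical aside misidentifies where the care is needed. Each $f_N = m_N\,P_N u$ is Fourier supported in $\{\abs{\xi}\lesssim N\}$, not in $\{\abs{\xi}\sim N\}$: the product of a low-frequency amplitude with a frequency-$N$ piece can radiate into \emph{low} output frequencies, so $\{f_N\}$ is not a Littlewood--Paley decomposition of $F(u)$ and the square-function bound does not follow from the Littlewood--Paley theorem alone. One has to write $P_K F(u)=\sum_{N\gtrsim K}P_K f_N$, observe that $K^{\alpha}\abs{P_K f_N}=(K/N)^{\alpha}N^{\alpha}\abs{P_K f_N}$ so that the low-output/high-input terms come with a geometric weight $(K/N)^{\alpha}$ (this is precisely where $\alpha>0$ is used), sum by Cauchy--Schwarz, and then invoke a vector-valued (Fefferman--Stein) maximal inequality to replace $P_K f_N$ by $f_N$. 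The problem is not high-frequency tails of $m_N$ (for the quintic there are none); it is the low-frequency output of the sum. Separately, the final step $\norm{\sup_M\abs{m_M}}_{L^{p_1}}\lesssim\norm{F'(u)}_{L^{p_1}}$ via $\sup_M\abs{m_M}\lesssim\Phi(\mathcal{M}u)$ needs $\Phi$ to be of power type \emph{and} a lower bound of the form $\Phi(\abs{u})\lesssim\abs{F'(u)}$; both hold for $F(u)=\abs{u}^4u$, but neither is a direct consequence of the stated structural hypotheses on $F$.

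Your alternative route at the end --- the difference-quotient characterization of $\abs{\nabla}^{\alpha}$ together with the mean-value bound $\abs{F(u(x+y))-F(u(x))}\lesssim\abs{u(x+y)-u(x)}\parenthese{\abs{F'(u(x))}+\abs{F'(u(x+y))}}$ --- is the one taken in \cite{CW}, and it is the cleaner path to the lemma exactly as stated (general $C^2$ nonlinearity with the given subadditivity hypotheses, not just polynomials). If you intend a complete proof, flesh that one out.
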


\begin{lem} [Sobolev embedding]
For $\forall v \in C^{\infty}_{0} (\R ^d)$, $\frac{1}{p}-\frac{1}{q}=\frac{s}{d}$ and $s>0$, we have 
\begin{equation*}
\norm{  v }_{L^{q}_{x}(\R^d)} \leq C \norm{\abs{\nabla}^{s} v}_{L^{p}_{x}(\R^d)},  \text{ i.e. }  \dot{W}^{s, p}(\R^d) \hookrightarrow L^{q}(\R^d)   .
\end{equation*}
\end{lem}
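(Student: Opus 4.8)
This is the classical Hardy--Littlewood--Sobolev embedding, so the plan is the standard one, carried out for $1 < p < q < \infty$ (the range in which the embedding holds). First I would represent $v$ through its fractional derivative by the Riesz potential: setting $g := \abs{\nabla}^s v$, one has $\hat v(\xi) = \abs{\xi}^{-s}\hat g(\xi)$, so that $v = I_s g$, where $I_s g := c_{d,s}\,\abs{x}^{s-d} * g$ for a dimensional constant $c_{d,s}$. This representation is legitimate here because the hypotheses force $0 < s < d$ (from $\frac{s}{d} = \frac1p - \frac1q < \frac1p \le 1$), and because for $v \in C^{\infty}_0(\R^d)$ the function $g$ is smooth with $\abs{x}^{-d-s}$ decay at infinity, hence lies in $L^p$. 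Thus the lemma reduces to the Hardy--Littlewood--Sobolev inequality $\norm{I_s g}_{L^q(\R^d)} \lesssim \norm{g}_{L^p(\R^d)}$.

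To prove the latter I would use a pointwise bound by the Hardy--Littlewood maximal function $Mg$. Fix $x$ and a scale $R > 0$, and split $\int_{\R^d}\abs{x-y}^{s-d} g(y)\,dy$ into the regions $\abs{x-y} < R$ and $\abs{x-y} \ge R$. Summing over dyadic annuli bounds the near part by $C_{d,s}\,R^s\, Mg(x)$, while Hölder's inequality bounds the far part by $\norm{g}_{L^p}$ times the $L^{p'}$ norm of the kernel over $\{\abs{x-y}\ge R\}$, which is $\sim R^{\,s-\frac dp}$ --- and this is exactly where the condition $s < \frac dp$ (equivalently $q < \infty$) enters. Optimizing $\abs{I_s g(x)} \lesssim R^s\, Mg(x) + R^{\,s-\frac dp}\norm{g}_{L^p}$ over $R$ and using $1 - \frac{sp}{d} = \frac pq$ gives
\begin{equation*}
\abs{I_s g(x)} \lesssim \norm{g}_{L^p}^{\,1-\frac pq}\,\parenthese{Mg(x)}^{\frac pq}.
\end{equation*}
Raising this to the power $q$, integrating in $x$, and invoking the Hardy--Littlewood maximal inequality on $L^p$ (valid since $p > 1$) yields $\norm{I_s g}_{L^q}^q \lesssim \norm{g}_{L^p}^{\,q-p}\norm{Mg}_{L^p}^p \lesssim \norm{g}_{L^p}^q$. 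A routine density argument then promotes the bound from $C^{\infty}_0(\R^d)$ to $\dot W^{s,p}(\R^d)$, which is the stated embedding.

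The only genuinely delicate feature is the exclusion of the endpoints: the argument needs $p > 1$ for the maximal inequality and $q < \infty$ for the $L^{p'}$-integrability of the kernel away from the origin, matching the hypotheses precisely. An alternative route, closer to the tools already assembled above, is purely Littlewood--Paley: Bernstein gives $\norm{P_N v}_{L^q} \lesssim N^{\frac dp - \frac dq}\norm{P_N v}_{L^p} = N^s\norm{P_N v}_{L^p} \lesssim \norm{P_N\abs{\nabla}^s v}_{L^p}$ uniformly in the dyadic parameter $N$, and one then reassembles $v = \sum_N P_N v$ via the Littlewood--Paley square-function characterization of $L^q$ together with $\ell^p\hookrightarrow\ell^q$ in $N$ (again using $p \le q$). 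In either route the substance is bookkeeping rather than a real obstacle.
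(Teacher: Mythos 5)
The paper does not prove this lemma; it is stated in Section \ref{sec Preliminaries} as a standard fact from harmonic analysis, alongside the fractional chain rule, Gagliardo--Nirenberg, Hardy, and Hardy--Littlewood--Sobolev, all recalled without proof. Your argument --- reduce to $\norm{I_s g}_{L^q}\lesssim\norm{g}_{L^p}$ via the Riesz potential representation, then run the Hedberg pointwise bound $\abs{I_s g(x)}\lesssim\norm{g}_{L^p}^{1-p/q}(Mg(x))^{p/q}$ and invoke the maximal theorem --- is the classical proof and is correct, with the endpoint exclusions ($p>1$ for the maximal inequality, $q<\infty$ for the far-field kernel integrability) identified precisely; the Littlewood--Paley alternative you sketch is also viable, though the reassembly step requires slightly more care than a bare $\ell^p\hookrightarrow\ell^q$ when $p>2$.
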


\begin{lem}[Gagliardo-Nirenberg interpolation inequality]\label{lem GN ineq}
Let $1 < p < q \leq \infty$ and $s>0$ be such that $\frac{1}{q}= \frac{1}{p}- \frac{s \theta}{d}$ for some $0< \theta=\theta(d,p,q,s) < 1$. Then for any $u \in \dot{W}^{s,p} (\R^d) $, we have 
\begin{equation*}
\norm{u}_{L^q (\R^d)} \lesssim_{d,p,q,s} \norm{u}_{L ^p(\R^d)}^{1-\theta} \norm{u}_{\dot{W}^{s,p}(\R^d)}^{\theta} .
\end{equation*} 
	
\end{lem}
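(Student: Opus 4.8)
The plan is to prove Lemma~\ref{lem GN ineq} by a dyadic (Littlewood--Paley) decomposition combined with the Bernstein inequalities recorded above; this is the standard route and, pleasantly, it keeps all admissible parameters $(d,p,q,s)$ on the same footing. Since $u\in\dot W^{s,p}(\R^d)$ is given, the only preliminary reduction is to note that if $\norm{u}_{L^p(\R^d)}=\infty$ then, as $1-\theta>0$, the right-hand side is infinite and there is nothing to prove; so I may assume $\norm{u}_{L^p}<\infty$ as well. Next I would decompose $u=\sum_{N\in 2^{\Z}}P_N u$ and estimate a single block in $L^q$ by Bernstein in $\R^d$: the hypothesis $\frac1q=\frac1p-\frac{s\theta}{d}$ gives $\frac dp-\frac dq=s\theta$, so $\norm{P_N u}_{L^q}\lesssim N^{s\theta}\norm{P_N u}_{L^p}$, a form of Bernstein valid for every $q$ with $p\le q\le\infty$; in particular the endpoint $q=\infty$ (where $s\theta=d/p$) requires no special treatment and the well-known failure of the square-function characterization at $q=\infty$ is never invoked. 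For the block in $L^p$ I would record the two complementary bounds $\norm{P_N u}_{L^p}\lesssim\norm{u}_{L^p}$ (here the hypothesis $1<p<\infty$ enters, via the $L^p$-boundedness of the Littlewood--Paley projections) and $\norm{P_N u}_{L^p}\lesssim N^{-s}\norm{\abs{\nabla}^s P_N u}_{L^p}\lesssim N^{-s}\norm{u}_{\dot W^{s,p}}$, whence $\norm{P_N u}_{L^p}\lesssim\min\{\norm{u}_{L^p},\ N^{-s}\norm{u}_{\dot W^{s,p}}\}$.

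Then I would sum over the dyadic frequencies. By the triangle inequality in $L^q$,
\[
\norm{u}_{L^q(\R^d)}\ \le\ \sum_{N}\norm{P_N u}_{L^q}\ \lesssim\ \sum_{N}N^{s\theta}\,\min\{\norm{u}_{L^p},\ N^{-s}\norm{u}_{\dot W^{s,p}}\},
\]
and I would split this sum at the balancing scale $N_0:=(\norm{u}_{\dot W^{s,p}}/\norm{u}_{L^p})^{1/s}$: for $N\le N_0$ use the first entry of the minimum and sum the geometric series (convergent because $s\theta>0$), contributing $\lesssim N_0^{s\theta}\norm{u}_{L^p}$; for $N>N_0$ use the second entry and sum (convergent because $\theta-1<0$), contributing $\lesssim N_0^{s(\theta-1)}\norm{u}_{\dot W^{s,p}}$. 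A one-line computation shows that both quantities equal $\norm{u}_{L^p}^{1-\theta}\norm{u}_{\dot W^{s,p}}^{\theta}$, which is the asserted bound.

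I do not expect a genuine obstacle; the argument is essentially bookkeeping, and the ``hard part'' is only to be careful about the hypotheses. The three points deserving attention are: the preliminary reduction to $u\in L^p\cap\dot W^{s,p}$; the $L^p$-boundedness of the projections $P_N$ for $1<p<\infty$ (Mikhlin--H\"ormander multiplier theorem), which is precisely what legitimizes passing from $\norm{u}_{L^q}$ to $\sum_N\norm{P_N u}_{L^q}$ and is the reason the endpoints $p=1,\infty$ are excluded; and verifying that the two exponent conditions used in the geometric summation, $s\theta>0$ and $\theta<1$, are exactly what $s>0$ and $0<\theta<1$ furnish. As an alternative I could instead interpolate (via H\"older) between $L^p$ and the Sobolev endpoint $L^{q_1}$ with $\frac1{q_1}=\frac1p-\frac sd$ and then apply the Sobolev embedding stated above; but that shortcut requires $s\le d/p$ so that $q_1\le\infty$ is meaningful, and hence does not cover the full range $0<\theta<1$, whereas the Littlewood--Paley computation handles all admissible $(d,p,q,s)$ at once.
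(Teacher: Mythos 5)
The paper states this lemma as a background tool in Section 2.2 and does not prove it, so there is no in-paper argument to compare against. Your Littlewood--Paley proof is correct and is the standard one: Bernstein gives $\norm{P_N u}_{L^q}\lesssim N^{s\theta}\min\{\norm{u}_{L^p},\,N^{-s}\norm{u}_{\dot W^{s,p}}\}$, and splitting the dyadic sum at the balancing frequency $N_0$ produces the claimed interpolation bound, with the two geometric tails converging precisely because $s\theta>0$ and $s(\theta-1)<0$, i.e.\ $0<\theta<1$ and $s>0$. One small correction to your commentary: boundedness of a single Littlewood--Paley projection $P_N$ on $L^p$ holds for all $1\le p\le\infty$ (its symbol has a Schwartz kernel, so the convolution is bounded on every $L^p$); it is the square-function characterization, not the boundedness of a single $P_N$, that requires $1<p<\infty$, and since you correctly avoid the square function, your argument actually goes through verbatim at $p=1$, so the restriction $p>1$ is not forced by this proof. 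Finally, to justify the very first step $\norm{u}_{L^q}\le\sum_N\norm{P_N u}_{L^q}$ you should note that $u=\sum_N P_N u$ as tempered distributions modulo polynomials and that the polynomial component vanishes because $u\in L^p$ with $p<\infty$; once $\sum_N\norm{P_N u}_{L^q}<\infty$ is established, the partial sums converge in $L^q$ to the same distribution, and the triangle inequality is legitimately applied.
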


\begin{lem}[Hardy's inequality]
For any $0 \leq s< d/2$, there exists $c=c(s,d)>0$, such that 
\begin{equation*}
\norm{\frac{f(x )}{\abs{x}^s}}_{L^2(\R^d)} \leq c(s,d) \norm{f}_{\dot{H}^s (\R^d)} .
\end{equation*}

\end{lem}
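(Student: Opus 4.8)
The plan is to recast Hardy's inequality as the $L^{2}$-boundedness of a single integral operator whose kernel is homogeneous of degree $-d$, and then to run Schur's test against a power weight. The case $s=0$ is trivial, so assume $0<s<d/2$. First I would substitute $g:=\abs{\nabla}^{s}f$; then $g\in L^{2}(\R^{d})$, $f=\abs{\nabla}^{-s}g$, and $\norm{f}_{\dot H^{s}}=\norm{g}_{L^{2}}$, so it suffices to prove $\norm{\,\abs{x}^{-s}\abs{\nabla}^{-s}g\,}_{L^{2}(\R^{d})}\lesssim\norm{g}_{L^{2}(\R^{d})}$. Since $0<s<d$, the Riesz potential $\abs{\nabla}^{-s}$ is convolution with $\gamma_{d,s}\abs{\cdot}^{s-d}$ for a finite constant $\gamma_{d,s}$, so the operator in question has kernel $K(x,y)=\gamma_{d,s}\,\abs{x}^{-s}\abs{x-y}^{s-d}$, which satisfies $K(\lambda x,\lambda y)=\lambda^{-d}K(x,y)$ and $K(Rx,Ry)=K(x,y)$ for every $\lambda>0$ and every rotation $R$.

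Next I would apply Schur's test with the weight $w(x)=\abs{x}^{-d/2}$. Using the homogeneity and rotation invariance of $K$, the substitution $y=\abs{x}z$ gives
\begin{equation*}
\int_{\R^{d}}\abs{K(x,y)}\,w(y)\,dy=A\,w(x),\qquad A:=\gamma_{d,s}\int_{\R^{d}}\abs{z}^{-s-\frac d2}\abs{e_{1}-z}^{s-d}\,dz,
\end{equation*}
for any fixed unit vector $e_{1}$, and the symmetric computation yields $\int_{\R^{d}}\abs{K(x,y)}\,w(x)\,dx=A\,w(y)$. Hence $\norm{\,\abs{x}^{-s}\abs{\nabla}^{-s}\,}_{L^{2}\to L^{2}}\le A$, and it remains only to check $A<\infty$. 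Splitting the defining integral into the regions $\abs{z}\le\frac12$, $\frac14\le\abs{z}\le4$ and $\abs{z}\ge2$: near the origin the integrand is comparable to $\abs{z}^{-s-d/2}$, which is integrable exactly because $s+\tfrac d2<d$, i.e.\ $s<\tfrac d2$; near $z=e_{1}$ it is comparable to $\abs{z-e_{1}}^{s-d}$, integrable because $s>0$; and for $\abs{z}$ large it is comparable to $\abs{z}^{-3d/2}$, which is integrable. This gives $A<\infty$ and finishes the argument.

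The only content beyond routine bookkeeping is identifying the Riesz kernel and justifying the weighted Schur test for kernels homogeneous of degree $-d$; both are classical, and in fact the whole statement is the diagonal endpoint $p=q=2$, $\beta=0$ of the Stein--Weiss fractional integration inequality, which one could simply invoke instead. A more elementary alternative is a Littlewood--Paley argument: write $f=\sum_{j}P_{2^{j}}f$, decompose $\R^{d}$ into dyadic annuli, bound near-diagonal contributions using $\norm{P_{2^{j}}f}_{L^{2}}\lesssim 2^{-js}\norm{f}_{\dot H^{s}}$ (a consequence of Bernstein's inequality) and far-from-diagonal contributions using the rapid decay of the kernels of $P_{2^{j}}$, then sum the resulting geometric series; this avoids the explicit potential kernel but is notation-heavy, so I would favour the Schur-test proof above.
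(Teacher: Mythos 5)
The paper does not prove this lemma; it simply records Hardy's inequality as a classical fact among the harmonic-analysis tools (citing textbook sources earlier in the section). So there is no in-paper proof to compare against, and your argument should be judged on its own terms.

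Your Schur-test argument is the standard one and is essentially correct, but there is a small bookkeeping error worth flagging: the kernel $K(x,y)=\gamma_{d,s}\,\abs{x}^{-s}\abs{x-y}^{s-d}$ is \emph{not} symmetric in $x$ and $y$, so the ``symmetric computation'' does not produce the same constant $A$. Running the substitution in each variable gives two different constants,
\begin{equation*}
A_{1}=\gamma_{d,s}\int_{\R^{d}}\abs{z}^{-\frac d2}\abs{e_{1}-z}^{s-d}\,dz,
\qquad
A_{2}=\gamma_{d,s}\int_{\R^{d}}\abs{z}^{-s-\frac d2}\abs{e_{1}-z}^{s-d}\,dz,
\end{equation*}
with $\int K(x,y)w(y)\,dy=A_{1}w(x)$ and $\int K(x,y)w(x)\,dx=A_{2}w(y)$; the Schur test then gives $\norm{T}_{L^{2}\to L^{2}}\le\sqrt{A_{1}A_{2}}$, and the finiteness check you carried out (which is actually the check for $A_{2}$) works for both integrals, each requiring exactly $0<s<d/2$. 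So the conclusion stands, but the assertion that both integrals equal the same $A$ by symmetry is false and should be replaced by the two-sided version of Schur's test. Apart from that, the reduction to the Riesz potential, the choice of weight $\abs{x}^{-d/2}$, and the identification with the Stein--Weiss endpoint are all correct and represent the expected proof of this classical estimate; the alternative Littlewood--Paley route you sketch would also work.
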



\begin{lem}[Hardy-Littlewood-Sobolev inequality]
Let $0 < r< d$, $1 < p < q< \infty $, $\frac{1}{p} -\frac{1}{q} =1-\frac{\gamma}{d}$. then if $R_{\gamma}(x) =\frac{1}{\abs{x}^{\gamma}}$,
\begin{equation*}
\norm{ \abs{ \,  \cdot  \, }^{- \gamma}  \ast f}_{L^q(\R^d)} = \norm{R_{\gamma} \ast f}_{L^q(\R^d)} \lesssim \norm{f}_{L^p (\R^d)} .
\end{equation*}
\end{lem}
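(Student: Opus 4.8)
The plan is to reduce the inequality to a pointwise domination of the convolution by the Hardy--Littlewood maximal function $Mf$ --- the Hedberg trick --- and then to invoke the $L^p$-boundedness of $M$, which is exactly where the hypothesis $p>1$ enters. Throughout I take $0<\gamma<d$ and use the scaling relation $\tfrac1p-\tfrac1q=1-\tfrac\gamma d$. Since everything is homogeneous I may assume $f\ge 0$ and $f\not\equiv 0$, so that $0<Mf(x)<\infty$ for a.e.\ $x$.

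First I would fix $x\in\R^d$ and a radius $R>0$ to be chosen later, and split
\[
\abs{(\abs{\,\cdot\,}^{-\gamma}\ast f)(x)}\le\int_{\abs{y}\le R}\abs{y}^{-\gamma}\abs{f(x-y)}\,dy+\int_{\abs{y}> R}\abs{y}^{-\gamma}\abs{f(x-y)}\,dy=:A+B.
\]
For the near part $A$, I would decompose $\{\abs{y}\le R\}$ into dyadic shells $\{2^{-k-1}R<\abs{y}\le 2^{-k}R\}$, $k\ge0$; on such a shell $\abs{y}^{-\gamma}\lesssim(2^{-k}R)^{-\gamma}$, while $\int_{\abs{y}\le 2^{-k}R}\abs{f(x-y)}\,dy\lesssim(2^{-k}R)^d\,Mf(x)$ by the very definition of $Mf$, so summing the geometric series $\sum_{k\ge0}(2^{-k}R)^{d-\gamma}$, which converges because $d-\gamma>0$, gives $A\lesssim R^{d-\gamma}Mf(x)$. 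For the far part $B$, Hölder's inequality in $y$ gives $B\le\norm{\abs{y}^{-\gamma}\mathbf{1}_{\{\abs{y}>R\}}}_{L^{p'}(\R^d)}\norm{f}_{L^p(\R^d)}$, and a computation in polar coordinates shows $\int_{\abs{y}>R}\abs{y}^{-\gamma p'}\,dy\sim R^{d-\gamma p'}$, finite precisely when $\gamma p'>d$. Here I would note that $\gamma p'>d$ is \emph{not} an extra assumption: it rearranges to $\tfrac\gamma d>1-\tfrac1p$, and combined with $\tfrac1p-\tfrac1q=1-\tfrac\gamma d$ this is just $\tfrac1q>0$. Hence $B\lesssim R^{d/p'-\gamma}\norm{f}_{L^p(\R^d)}$.

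Combining, $\abs{(\abs{\,\cdot\,}^{-\gamma}\ast f)(x)}\lesssim R^{d-\gamma}Mf(x)+R^{d/p'-\gamma}\norm{f}_{L^p(\R^d)}$ for every $R>0$. I would then balance the two terms by taking $R=\bigl(\norm{f}_{L^p(\R^d)}/Mf(x)\bigr)^{p/d}$ (legitimate for a.e.\ $x$) and use $p\bigl(1-\tfrac\gamma d\bigr)=p\bigl(\tfrac1p-\tfrac1q\bigr)=1-\tfrac pq$ to reach the pointwise bound
\[
\abs{(\abs{\,\cdot\,}^{-\gamma}\ast f)(x)}\lesssim\norm{f}_{L^p(\R^d)}^{1-\frac pq}\,\bigl(Mf(x)\bigr)^{\frac pq}.
\]
Finally, raising to the power $q$, integrating, and using $\norm{(Mf)^{p/q}}_{L^q(\R^d)}=\norm{Mf}_{L^p(\R^d)}^{p/q}$ together with the maximal inequality $\norm{Mf}_{L^p(\R^d)}\lesssim_p\norm{f}_{L^p(\R^d)}$ (valid since $p>1$) yields $\norm{\abs{\,\cdot\,}^{-\gamma}\ast f}_{L^q(\R^d)}\lesssim\norm{f}_{L^p(\R^d)}$.

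I expect the only genuinely delicate point to be the bookkeeping of exponents: one must see that the single scaling relation $\tfrac1p-\tfrac1q=1-\tfrac\gamma d$ simultaneously makes the far integral converge ($\gamma p'>d$) and makes the optimization in $R$ collapse to the exponent pair $\bigl(1-\tfrac pq,\tfrac pq\bigr)$ --- as a sanity check, the inequality is forced to be invariant under $f\mapsto f(\lambda\,\cdot)$ exactly under this relation, so no other relation could work. If one prefers to bypass the maximal function, an alternative is to extract the weak-type estimate $\abs{\{x:\abs{(\abs{\,\cdot\,}^{-\gamma}\ast f)(x)}>\lambda\}}\lesssim\bigl(\norm{f}_{L^p(\R^d)}/\lambda\bigr)^q$ from the same kernel splitting with $R=R(\lambda)$ chosen to force the far term below $\lambda/2$, and then apply the Marcinkiewicz interpolation theorem between two such endpoints; this route is equally standard but somewhat longer.
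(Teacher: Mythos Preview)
Your argument is correct: this is Hedberg's pointwise inequality, and you have carried out the exponent bookkeeping accurately --- in particular the observation that $\gamma p'>d$ is equivalent to $1/q>0$ under the scaling relation, and that the optimized exponents collapse to $(1-p/q,\,p/q)$.

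There is nothing to compare against, however: the paper states the Hardy--Littlewood--Sobolev inequality as a standard preliminary lemma with no proof, alongside the other harmonic-analysis facts in Section~\ref{sec Preliminaries} (fractional product rule, Sobolev embedding, Hardy's inequality, etc.). Your Hedberg-type proof is one of the standard textbook routes and would be perfectly appropriate if a proof were required; the Marcinkiewicz-interpolation alternative you mention at the end is the other common approach.
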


\subsection{Strichartz estimates}
First, we define the following spaces:
\begin{defn}[Strichartz spaces]
We call a pair of exponents $(q,r)$ admissible if 
\begin{equation*}
\frac{1}{q} + \frac{1}{r} =\frac{1}{2}
\end{equation*}
for all $2 < q \leq \infty$, $2 \leq r < \infty$. Notice that we do not have the endpoint in two dimensions, i.e. $(q,r) \neq (2, \infty)$.

For an interval $I$ and $s \geq 0$, we define the Strichartz norm by
\begin{equation*}
\norm{u}_{\dot{S}^s (I)} : = \sup \bracket{ \norm{\abs{\nabla }^s u}_{L_t^q L_x^r (I \times \R^2)} : (q,r) \text{ is admissible}}.
\end{equation*}
\end{defn}

For a time interval $I$, we write $L_t^q L_x^r (I \times \R^2)$ for the Banach space of functions $u : I \times \R^2 \to \C$ equipped with the norm
\begin{equation*}
\norm{u}_{L_t^q L_x^r (I \times \R^2)} : = \parenthese{\int_I \norm{u(t)}_{L_x^r (\R^2)}^q \, dt}^{\frac{1}{q}} .
\end{equation*}
For $1 \leq r \leq \infty$, we denote the conjugate or dual exponent of $r$ by $r^{\prime}$, i.e. $r^{\prime}$ satisfies $\frac{1}{r} + \frac{1}{r^{\prime}} =1$.

\begin{lem}[Strichartz estimates]
For any admissible exponents $(q,r)$ and $(\tilde{q}, \tilde{r})$ we have Strichartz estimates:
\begin{align*}
&\norm{e^{it\Delta} u_0}_{L_t^q  L_x^r  (\R \times \R^2)} \lesssim_{q,r} \norm{u_0}_{L_x^2 (\R^2)}\\
&\norm{\int_{\R} e^{-is\Delta} F(s) \, ds }_{L_x^2 (\R^2)} \lesssim_{\tilde{q}, \tilde{r}} \norm{F}_{L_t^{\tilde{q}^{\prime} } L_x^{\tilde{r}^{\prime}}   (\R \times \R^2)}\\
&\norm{\int_{t^{\prime} < t} e^{i(t-t^{\prime})\Delta} F(t^{\prime}) \, dt^{\prime} }_{L_t^q  L_x^r(\R \times \R^2)} \lesssim_{q,r,\tilde{q}, \tilde{r}} \norm{F}_{L_t^{\tilde{q}^{\prime} } L_x^{\tilde{r}^{\prime}}   (\R \times \R^2)} .
\end{align*}
\end{lem}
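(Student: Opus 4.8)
The plan is to run the classical $TT^*$ argument of Ginibre--Velo and Keel--Tao, specialized to $d=2$ and away from the forbidden endpoint. First I would record the two basic facts about the free propagator $e^{it\Delta}$ on $\R^2$: the $L^2$ conservation law $\norm{e^{it\Delta}f}_{L_x^2}=\norm{f}_{L_x^2}$, which is Plancherel, and the dispersive estimate $\norm{e^{it\Delta}f}_{L_x^\infty}\lesssim \abs{t}^{-1}\norm{f}_{L_x^1}$, which follows from writing $e^{it\Delta}$ as convolution against the explicit kernel $(4\pi it)^{-1}e^{i\abs{x}^2/4t}$. Interpolating these two bounds gives, for $2\le r\le\infty$,
\begin{equation*}
\norm{e^{it\Delta}f}_{L_x^r}\lesssim \abs{t}^{-(1-\frac{2}{r})}\norm{f}_{L_x^{r'}}.
\end{equation*}

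Next I would run the $TT^*$ argument. Writing $T:L_x^2\to L_t^qL_x^r$ for $Tf=e^{it\Delta}f$, the three displayed estimates are, respectively, the boundedness of $T$, of its adjoint $T^*$, and (essentially) of $TT^*$; it suffices to bound
\begin{equation*}
\norm{\int_{\R} e^{i(t-s)\Delta}F(s)\,ds}_{L_t^qL_x^r}\lesssim \norm{F}_{L_t^{q'}L_x^{r'}}.
\end{equation*}
For fixed $t$, Minkowski's inequality together with the interpolated dispersive bound gives the pointwise-in-time estimate $\norm{\int_{\R} e^{i(t-s)\Delta}F(s)\,ds}_{L_x^r}\lesssim \int_{\R}\abs{t-s}^{-(1-\frac{2}{r})}\norm{F(s)}_{L_x^{r'}}\,ds$, and one then applies the one-dimensional Hardy--Littlewood--Sobolev inequality in the time variable. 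The HLS exponent relation $1+\frac{1}{q}=\frac{1}{q'}+(1-\frac{2}{r})$ collapses exactly to the admissibility condition $\frac{1}{q}+\frac{1}{r}=\frac{1}{2}$, while the requirement $0<1-\frac{2}{r}<1$ for HLS to apply is precisely $2<r<\infty$, which is why the endpoint $(q,r)=(2,\infty)$ must be excluded here (for $d\ge3$ the endpoint survives, see \cite{KT}). Having bounded $TT^*$, the homogeneous estimate for $T$ and the dual estimate for $T^*$ follow from the standard $TT^*$/duality lemma; the version with two distinct admissible pairs $(q,r)$ and $(\tilde q,\tilde r)$ is obtained by composing $T$ associated to $(q,r)$ with $T^*$ associated to $(\tilde q,\tilde r)$ and invoking the mapping properties already established for each.

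Finally, the third, retarded estimate replaces $\int_{\R}$ by $\int_{t'<t}$; this I would deduce from the non-retarded inhomogeneous estimate via the Christ--Kiselev lemma. That lemma applies because the relevant time exponents satisfy $\tilde q'<2<q$ (strict inequality, once more using that we stay away from the endpoint), so truncating the time integral to $\{t'<t\}$ is bounded on the appropriate mixed-norm spaces.

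I expect no serious obstacle: the statement is classical and could simply be cited (Keel--Tao \cite{KT}, Cazenave--Weissler \cite{CW1,CW2,CW3}). The only points needing genuine care are (i) checking that the HLS exponent bookkeeping reduces to the 2D admissibility identity $\frac1q+\frac1r=\frac12$, and (ii) invoking Christ--Kiselev legitimately; both hinge on strictly avoiding the $L_t^2L_x^\infty$ endpoint, which the statement already does.
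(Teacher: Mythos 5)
The paper does not prove this lemma; it simply cites the classical references \cite{GG, Y, KT, T1}. Your proposal is a correct sketch of the standard $TT^*$/Ginibre--Velo argument that those references carry out, and the exponent bookkeeping is right: the interpolated dispersive bound $\norm{e^{it\Delta}f}_{L_x^r}\lesssim \abs{t}^{-(1-\frac{2}{r})}\norm{f}_{L_x^{r'}}$ is the $d=2$ version of $\abs{t}^{-d(\frac{1}{2}-\frac{1}{r})}$, the 1D HLS scaling $1+\frac1q=\frac1{q'}+(1-\frac2r)$ collapses to the admissibility identity $\frac1q+\frac1r=\frac12$, and HLS requires $0<1-\frac2r<1$, i.e. $2<r<\infty$. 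The one small gap worth flagging is that your HLS step also formally excludes the other endpoint $(q,r)=(\infty,2)$; that case is of course trivially true (it is just $L^2$ conservation) and should be handled separately, along with the corresponding inhomogeneous/retarded endpoints $\tilde q=\infty$ (where Minkowski suffices and Christ--Kiselev is not needed). With that caveat noted, your appeal to Christ--Kiselev is legitimate since admissibility with $q,\tilde q>2$ always gives $\tilde q'<2<q$ strictly.
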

For the proofs, see \cite{GG, Y, KT} for details. Also see \cite{T1} for a proof. 

Then we also recall the following bilinear Strichartz estimate: 
\begin{lem}[Bilinear estimates in \cite{B1, B4}]\label{lem Bilinear}
If $\hat{u}_0$ is supported on $\abs{\xi} \sim N$, $\hat{v}_0$ is supported on $\abs{\xi} \sim M$, $M \ll N$,
\begin{equation*}
\norm{(e^{it\Delta} u_0) (e^{it\Delta}v_0)}_{L_t^2 L_x^2 (\R \times \R^2)} \lesssim \parenthese{\frac{M}{N}}^{\frac{1}{2}} \norm{u_0}_{L_x^2 (\R^2)} \norm{v_0}_{L_x^2 (\R^2)} .
\end{equation*}
\end{lem}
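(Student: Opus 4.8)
The bilinear estimate of Lemma~\ref{lem Bilinear} is a classical Fourier-analytic fact, and the plan is to prove it by reducing the $L^2_{t,x}$ norm of the product to an application of Plancherel in spacetime followed by a change of variables on the frequency side. First I would write $e^{it\Delta}u_0$ and $e^{it\Delta}v_0$ via their spatial Fourier representations, so that the product $(e^{it\Delta}u_0)(e^{it\Delta}v_0)$ has spacetime Fourier transform supported on the set
\begin{equation*}
\bracket{(\tau,\zeta) : \tau = -\abs{\xi}^2 - \abs{\eta}^2,\ \zeta = \xi+\eta,\ \abs{\xi}\sim N,\ \abs{\eta}\sim M}.
\end{equation*}
By Plancherel in $(t,x)$, $\norm{(e^{it\Delta}u_0)(e^{it\Delta}v_0)}_{L^2_{t,x}}^2$ equals the $L^2$ norm (in $(\tau,\zeta)$) of the convolution measure $\int \hat u_0(\xi)\,\hat v_0(\zeta-\xi)\,\delta\parenthese{\tau + \abs{\xi}^2 + \abs{\zeta-\xi}^2}\,d\xi$. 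Squaring this and expanding, one is led to bound, for fixed $(\tau,\zeta)$, the measure of the set of admissible $\xi$ (a one-dimensional curve, the intersection of the two paraboloids), with a Jacobian factor coming from the $\delta$-function; the key geometric input is that on the support where $\abs{\xi}\sim N$ and $\abs{\zeta-\xi}\sim M$ with $M\ll N$, the gradient in $\xi$ of the phase $\abs{\xi}^2+\abs{\zeta-\xi}^2$ has size comparable to $N$, which produces the gain $(M/N)^{1/2}$ after a Cauchy--Schwarz in $\xi$.

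Concretely, I would carry out the following steps in order. Step 1: By Plancherel in spacetime reduce to estimating $\norm{G}_{L^2_{\tau,\zeta}}$ where $G(\tau,\zeta) = \int_{\R^2} \hat u_0(\xi)\,\hat v_0(\zeta-\xi)\,\delta\parenthese{\tau+\abs{\xi}^2+\abs{\zeta-\xi}^2}\,d\xi$. Step 2: For fixed $(\tau,\zeta)$ apply Cauchy--Schwarz in $\xi$ to get $\abs{G(\tau,\zeta)}^2 \le \parenthese{\int \abs{\hat u_0(\xi)}^2\abs{\hat v_0(\zeta-\xi)}^2\,\delta(\cdots)\,d\xi}\cdot\parenthese{\int \delta(\cdots)\,d\xi}$, where both integrals are restricted to $\abs{\xi}\sim N$, $\abs{\zeta-\xi}\sim M$. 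Step 3: Bound the second factor: parametrizing the hypersurface $\bracket{\tau+\abs{\xi}^2+\abs{\zeta-\xi}^2=0}$ and using that on the relevant support $\abs{\nabla_\xi(\abs{\xi}^2+\abs{\zeta-\xi}^2)} = 2\abs{2\xi-\zeta}\gtrsim N$, the $\delta$-measure of that curve, intersected with the region $\abs{\zeta-\xi}\sim M$, is $\lesssim M/N$. Step 4: Integrate the first factor in $(\tau,\zeta)$, use Fubini and the fact that integrating out the $\delta$ and $\tau$ is free, to recover $\int\int \abs{\hat u_0(\xi)}^2\abs{\hat v_0(\eta)}^2\,d\xi\,d\eta = \norm{u_0}_{L^2_x}^2\norm{v_0}_{L^2_x}^2$. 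Combining Steps 2--4 gives $\norm{G}_{L^2_{\tau,\zeta}}^2 \lesssim (M/N)\norm{u_0}_{L^2_x}^2\norm{v_0}_{L^2_x}^2$, which is the claim after taking square roots.

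The main obstacle is the transversality/Jacobian estimate in Step 3: one must verify that the two paraboloid pieces $\bracket{\abs{\xi}\sim N}$ and $\bracket{\abs{\zeta-\xi}\sim M}$ really do intersect transversally with a quantitative lower bound on the relevant gradient, uniformly in $(\tau,\zeta)$, and that this is exactly what converts the measure-of-level-set bound into the $M/N$ factor. In two dimensions the level set is a curve rather than a point, so one needs the co-area formula (or an explicit parametrization of the circle $\abs{2\xi-\zeta}=\text{const}$) rather than a naive counting argument; the restriction $M\ll N$ is what guarantees $\abs{2\xi-\zeta}\gtrsim N$ on the support, since $\abs{2\xi-\zeta}=\abs{\xi-(\zeta-\xi)}$ and the two terms live at very different scales. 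Once this geometric lemma is in place the rest is bookkeeping. An alternative, if one prefers to avoid the stationary-phase flavor of the argument, is to cite the known bilinear Strichartz estimate directly (it is standard, as the statement itself attributes to \cite{B1, B4}) and simply note that the two-dimensional case follows the same scheme as higher dimensions, the dimension entering only through the power of $M/N$ in the Jacobian count, which here is the full $(M/N)^{1/2}$.
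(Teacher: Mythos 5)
The paper states Lemma~\ref{lem Bilinear} without proof, citing Bourgain \cite{B1, B4}, so there is no in-paper argument to compare against; I therefore assess your proposal on its own terms, and it is correct. Your spacetime-Plancherel approach is the standard one: after Cauchy--Schwarz in $\xi$, the key geometric factor $\int \delta\bigl(\tau + |\xi|^2 + |\zeta-\xi|^2\bigr)\,d\xi$ is bounded via the co-area formula using the gradient bound $\bigl|\nabla_\xi\bigl(|\xi|^2 + |\zeta-\xi|^2\bigr)\bigr| = 2\,|2\xi-\zeta| \gtrsim N$ (which is where $M \ll N$ enters), together with the observation that the level set --- a circle of radius $\sim N$ centered at $\zeta/2$ --- meets the annulus $\{|\zeta-\xi|\sim M\}$ in an arc of angular extent $\sim M/N$, hence of length $\sim M$. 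These two ingredients combine to give the factor $M/N$; Fubini in $(\tau,\zeta)$ then returns $\|u_0\|_{L^2}^2\,\|v_0\|_{L^2}^2$, and square roots yield the stated $(M/N)^{1/2}$, which is exactly the two-dimensional exponent. One small caution you flag yourself and handle correctly: the gradient bound alone only gives $1/N$ from the co-area factor, and the arc-length bound $\sim M$ is genuinely needed as a second input; both are present in your Step 3. For context, Bourgain's original proofs organize the same transversality estimate via an angular (Whitney) decomposition of frequency space and then apply Cauchy--Schwarz and a Jacobian bound sector by sector; the geometric content is identical and your version is the cleaner modern formulation (as in Tao's lecture notes and Killip--Visan \cite{KV1}).
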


\begin{rmk}
Lemma \ref{lem Bilinear} also holds for $(e^{it\Delta} u_0) (\overline{e^{it\Delta}v_0})$. In fact, 
\begin{align*}
\norm{(e^{it \Delta} u_0 ) (e^{it \Delta} v_0)}_{L_t^2 L_x^2}^2 & = \norm{(e^{it \Delta} u_0 )(e^{it \Delta} v_0 ) (\overline{e^{it \Delta} u_0}) (\overline{e^{it \Delta} v_0})}_{L_t^1 L_x^1}   = \norm{(e^{it \Delta} u_0 ) (\overline{e^{it \Delta} v_0})}_{L_t^2 L_x^2}^2  .
\end{align*}
\end{rmk}

\subsection{Profile decomposition}
We first define
\begin{defn}[Symmetry group]
For any position $x_0 \in \R^2$ and scaling parameter $\lambda >0$, we define a unitary transformations $g_{x_{0}, \lambda} : \dot{H}_x^{\frac{1}{2}}(\R^2) \to \dot{H}_x^{\frac{1}{2}} (\R^2)$ by
\begin{equation*}
[g_{x_0 , \lambda} f](x) : =\lambda^{-\frac{1}{2}} f(\lambda^{-1} (x-x_0)).
\end{equation*}
We let $G$ denote the group of such transformations.
\end{defn}

As we mentioned, the following profile decomposition argument is the key ingredient of Theorem \ref{thm Exist minimal}. The proof of Lemma \ref{lem Profile decomp} can be adapted to two dimensions the $\dot{H}^{\frac{1}{2}}$ setting from Theorem 1.6 in \cite{K1}.
\begin{lem}\label{lem Profile decomp}
Let $\{ u_n \}_{n \geq 1}$ be a bounded sequence in $\dot{H}^{\frac{1}{2}} (\R^2)$. After passing to a subsequence if necessary, there exist functions $\{ \phi_i \}_{j \geq 1} \subset \dot{H}^{\frac{1}{2}}(\R^2)$, group elements $g_n^j \in G$ (with parameters $x_n^j$ and $\lambda_n^j$), and times $t_n^j \in \R$ such that for all $J \geq 1$, we have the following decomposition:
\begin{equation*}
u_n = \sum_{j=1}^{J} g_n^j e^{it_n^j \Delta} \phi^j + w_n^j.
\end{equation*}
This decomposition satisfies the following properties:
\begin{enumerate}
\item
For each $j$, either $t_n^j \equiv 0 $ or $t_n^j \to \pm \infty$.
\item
For $J \geq 1$, we have the following asymptotic orthogonality condition:
\begin{equation*}
\frac{\lambda_n^j}{\lambda_n^k} +\frac{\lambda_n^k}{\lambda_n^j} + \frac{\abs{x_n^j -x_n^k}^2}{\lambda_n^j \lambda_n^k} + \frac{\abs{t_n^j (\lambda_n^j)^2 -t_n^k (\lambda_n^k)^2}}{\lambda_n^j \lambda_n^k} \to \infty \text{ as } n \to \infty .
\end{equation*}

\item
For all $n$ and all $J \geq 1$, we have $w_n^J \in \dot{H}^{\frac{1}{2}} (\R^2)$, with 
\begin{equation*}
\lim_{J \to \infty} \limsup_{n \to \infty} \norm{e^{it\Delta} w_n^J}_{L_{t}^8 L_x^8 (\R \times \R^2)} =0 .
\end{equation*}

\end{enumerate}
\end{lem}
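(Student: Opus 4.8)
The plan is to follow the standard construction of linear profile decompositions (Bahouri--G\'erard, Merle--Vega, Keraani, Killip--Visan), adapted to $\dot H^{1/2}(\mathbb R^2)$ with the quintic Strichartz exponent $L^8_{t,x}$; as the excerpt notes, the argument runs parallel to Theorem 1.6 of \cite{K1}, so the real work is checking that the two-dimensional ingredients survive. The cornerstone is an \emph{inverse Strichartz inequality}: if $\|u_n\|_{\dot H^{1/2}}\le A$ and $\liminf_n\|e^{it\Delta}u_n\|_{L^8_{t,x}(\mathbb R\times\mathbb R^2)}\ge\varepsilon$, then after passing to a subsequence there are parameters $\lambda_n>0$, $x_n\in\mathbb R^2$, $t_n\in\mathbb R$ and a profile $\phi\in\dot H^{1/2}$ with $\|\phi\|_{\dot H^{1/2}}\gtrsim\varepsilon(\varepsilon/A)^{\gamma}$ for some $\gamma>0$, such that the suitably rescaled and time-translated sequence built from $u_n$ converges weakly in $\dot H^{1/2}$ to $\phi$, together with the norm decoupling
\[
\|u_n\|_{\dot H^{1/2}}^2=\|\phi\|_{\dot H^{1/2}}^2+\|u_n-g_n e^{it_n\Delta}\phi\|_{\dot H^{1/2}}^2+o_n(1)
\]
and the Strichartz-size decoupling $\limsup_n\|e^{it\Delta}(u_n-g_n e^{it_n\Delta}\phi)\|_{L^8_{t,x}}^8\le\limsup_n\|e^{it\Delta}u_n\|_{L^8_{t,x}}^8-c\,\|\phi\|_{\dot H^{1/2}}^8$, where $g_n=g_{x_n,\lambda_n}$.

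To prove the inverse Strichartz inequality I would first establish a \emph{refined Strichartz estimate} of the shape
\[
\|e^{it\Delta}f\|_{L^8_{t,x}}\lesssim\|f\|_{\dot H^{1/2}}^{1-\theta}\,\bigl(\sup_{N}N^{1/2}\|e^{it\Delta}P_N f\|_{L^\infty_{t,x}}\bigr)^{\theta}
\]
for some $\theta\in(0,1)$ (equivalently with the last factor a negative-order Besov-type norm). In two dimensions this is obtained from a Littlewood--Paley decomposition of $e^{it\Delta}f$ together with the bilinear estimate of Lemma \ref{lem Bilinear}, whose $(M/N)^{1/2}$ frequency gain plays the role the forbidden $L^2_tL^\infty_x$ endpoint plays in higher dimensions; one uses here that all the Strichartz pairs that appear (such as $(8,8/3)$) are non-endpoint admissible, so the Strichartz lemma applies directly. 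Plugging $\liminf_n\|e^{it\Delta}u_n\|_{L^8_{t,x}}\ge\varepsilon$ and $\|u_n\|_{\dot H^{1/2}}\le A$ into this estimate produces, for each large $n$, a dyadic frequency $N_n$ and a space-time point $(t_n,x_n)$ with $|e^{it_n\Delta}P_{N_n}u_n(x_n)|\gtrsim N_n^{1/2}\varepsilon(\varepsilon/A)^{\gamma}$; rescaling by $\lambda_n:=N_n^{-1}$, centering at $(t_n,x_n)$, and extracting a weak limit $\phi$ (the bound above passes to the limit after testing against a fixed Littlewood--Paley multiplier) gives the claimed lower bound on $\|\phi\|_{\dot H^{1/2}}$. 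The two decouplings then follow from weak convergence in the Hilbert space $\dot H^{1/2}$ and from a Brezis--Lieb-type refinement of Fatou's lemma for the $L^8_{t,x}$ norm of the free evolution.

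Granted the inverse Strichartz inequality, I would iterate: with $w_n^0:=u_n$ and $\varepsilon_J:=\limsup_n\|e^{it\Delta}w_n^J\|_{L^8_{t,x}}$, apply it to $\{w_n^J\}$ (bounded in $\dot H^{1/2}$ by the norm decoupling) to extract $\phi^{J+1}$, parameters $(\lambda_n^{J+1},x_n^{J+1},t_n^{J+1})$, and $w_n^{J+1}:=w_n^J-g_n^{J+1}e^{it_n^{J+1}\Delta}\phi^{J+1}$, passing to subsequences and diagonalizing. Telescoping gives $\sum_{j\le J}\|\phi^j\|_{\dot H^{1/2}}^2\le A^2$ for all $J$, hence $\|\phi^j\|_{\dot H^{1/2}}\to 0$; combined with the Strichartz-size decoupling and the lower bound $\|\phi^{J+1}\|_{\dot H^{1/2}}\gtrsim\varepsilon_J(\varepsilon_J/A)^{\gamma}$ this forces $\varepsilon_J\to 0$, which is property (3). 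For the asymptotic orthogonality (2): if it failed for some $j<k$ along a subsequence, the operators relating the $j$- and $k$-frames (rescalings, spatial translations, and time translations, the last measured by $t_n(\lambda_n)^2$) would converge to a fixed symmetry plus time shift, and then $\phi^k$, being the weak limit of the $k$-frame version of $w_n^{k-1}$ from which the $j$-bubble has already been removed, would have to vanish --- contradicting $\|\phi^k\|_{\dot H^{1/2}}>0$; this is where the precise structure of the group $G$ and the absence of Galilean boosts in the $\dot H^{s_c}$, $s_c>0$ setting (so the frequency centers are at the origin, cf.\ the remark after Definition \ref{defn AP}) enter. Finally, for each $j$ pass to a subsequence so $t_n^j\to t_\infty^j\in[-\infty,\infty]$: if $t_\infty^j$ is finite, replace $\phi^j$ by $e^{it_\infty^j\Delta}\phi^j$, note $t_n^j-t_\infty^j\to 0$, and absorb the residual small time-translations into the error $w_n^J$ using strong continuity of $e^{it\Delta}$ on $\dot H^{1/2}$ together with $\|e^{it\Delta}f\|_{L^8_{t,x}}\lesssim\|f\|_{\dot H^{1/2}}$, so that after relabeling $t_n^j\equiv0$; if $t_\infty^j=\pm\infty$ we are in the second alternative of (1). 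The main obstacle is the refined Strichartz input in two dimensions: with no endpoint available, the single-bubble extraction must be routed through the bilinear estimate of Lemma \ref{lem Bilinear}, and verifying that this still yields a power-law lower bound $\|\phi\|_{\dot H^{1/2}}\gtrsim\varepsilon(\varepsilon/A)^{\gamma}$ is the technical heart of the proof.
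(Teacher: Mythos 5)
The paper proves Lemma \ref{lem Profile decomp} only by reference to Theorem 1.6 of \cite{K1}, and your sketch is precisely the standard inverse-Strichartz adaptation of Keraani's argument to the $\dot H^{1/2}(\R^2)$ / $L^8_{t,x}$ setting that this citation points to, including the correct observation that the bilinear estimate of Lemma \ref{lem Bilinear} replaces the missing $L^2_tL^\infty_x$ endpoint in the refinement step. One small correction: scale invariance (under $f\mapsto\lambda^{-1/2}f(\,\cdot\,/\lambda)$) forces the Besov-type refinement factor to be $\sup_{N}N^{-1/2}\norm{e^{it\Delta}P_N f}_{L^\infty_{t,x}}$ rather than $N^{+1/2}$; your subsequent extraction of a point with $\abs{e^{it_n\Delta}P_{N_n}u_n(x_n)}\gtrsim N_n^{1/2}\varepsilon(\varepsilon/A)^{\gamma}$ already uses this corrected form, so it is only a typo in the displayed estimate.
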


\subsection{Local theory and stability} \label{sec LWP and PTB}
In this section we review the local well posedness and stability theory for the Cauchy problem \eqref{NLS}. We adapt to two dimensions the $\dot{H}^{\frac{1}{2}}$- local well-posedness theory and stability in \cite{KM3}. This type of stability result was first shown in \cite{CKSTT2}. In our context, however, we rely on a slight modification of the stability result which only requires space-time bounds on the error itself rather than on its half derivative (see 
Theorem \ref{thm Stability} \eqref{eq Stability} below), for convenience. 
This modification appeared in \cite{Mu2} in dimensions four and above.



\begin{thm}[Standard local well-posedness]\label{thm LWP}
Assume $u_0 \in \dot{H}^{\frac{1}{2}}(\R^2)$, $t_0 \in I$, $\norm{ u_0}_{  \dot{H}^{\frac{1}{2}}(\R^2)  } \leq A$. Then there exists $\delta = \delta(A)$ such that if $\norm{ e^{i(t-t_0)\Delta} u_0}_{L_t^{12} L_t^{6} (I \times \R^2)} < \delta $, there exists a unique solution $u$ to \eqref{NLS}  in $I \times \R^2 $, with $u \in C(I;  \dot{H}^{\frac{1}{2}}(\R^2))$:
\begin{equation*}
\norm{\abs{\nabla}^{\frac{1}{2}} u}_{L_t^3 L_x^6 (I \times \R^2)} + \sup_{t \in I} \norm{\abs{\nabla}^{\frac{1}{2}} u}_{L_x^2} \leq CA, \quad \norm{ u}_{L_t^{12} L_x^6(I \times \R^2)} \leq 2\delta .
\end{equation*}
Moreover, if $u_{0,k} \to u_0$ in $\dot{H}^{\frac{1}{2}}(\R^2)$, obtain that the corresponding solutions $u_k \to u$ in $C(I;  \dot{H}^{\frac{1}{2}}(\R^2))$.
\end{thm}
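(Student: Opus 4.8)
The plan is to prove this standard local well-posedness result via a contraction mapping argument in a suitable Strichartz-type space, treating $\dot H^{1/2}$ as the critical regularity. I would first set up the solution map
\[
\Phi(u)(t) := e^{i(t-t_0)\Delta} u_0 - i \int_{t_0}^t e^{i(t-t')\Delta}\bigl(|u|^4 u\bigr)(t')\, dt',
\]
and work in the ball
\[
B := \Bigl\{ u : \norm{\abs{\nabla}^{1/2} u}_{L_t^3 L_x^6} + \sup_{t\in I}\norm{\abs{\nabla}^{1/2} u(t)}_{L_x^2} \leq 2CA,\ \norm{u}_{L_t^{12} L_x^6} \leq 2\delta \Bigr\},
\]
equipped with a metric using the weaker norms (e.g.\ $\norm{u-v}_{L_t^{12}L_x^6} + \norm{u-v}_{L_t^3 L_x^6}$, with no derivatives, to close the contraction). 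Note $(3,6)$ and $(12,6)$ are both related to admissible pairs: $(3,6)$ is $\dot S^0$-admissible, and $L_t^{12}L_x^6$ arises from Sobolev embedding $\dot W^{1/2,6/?}\hookrightarrow$ applied after noting the scaling $\frac{1}{12}+\frac{2}{6}\cdot\frac12$ matches the $\dot H^{1/2}$ scaling, so these norms are scale-invariant — which is exactly why the smallness must be on $\norm{e^{i(t-t_0)\Delta}u_0}_{L_t^{12}L_x^6}$ rather than on a time-length.

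The key steps, in order: (i) Apply the inhomogeneous Strichartz estimate with the $L_t^{3}L_x^6$ (dual-side an appropriate admissible pair, say $L_t^{3/2}L_x^{6/5}$ or $L_t^2 L_x^?$ — I would pick the dual of $(3,6)$, namely exponents with $\frac1q+\frac1r=\frac32$ that are Hölder-compatible) to bound $\norm{\abs{\nabla}^{1/2}\Phi(u)}_{\dot S^0}$ by $\norm{u_0}_{\dot H^{1/2}} + \norm{\abs{\nabla}^{1/2}(|u|^4 u)}_{L_t^{q'}L_x^{r'}}$. (ii) Apply the fractional product rule (Lemma on fractional product rule, or the chain rule Lemma~\ref{lem Chain rule} with $F(u)=|u|^4 u$, noting $F\in C^2$ away from zero but one handles this by the stated hypotheses or by standard approximation) to get
\[
\norm{\abs{\nabla}^{1/2}(|u|^4 u)}_{L_t^{q'}L_x^{r'}} \lesssim \norm{u}_{L_t^{12}L_x^6}^4 \norm{\abs{\nabla}^{1/2} u}_{L_t^3 L_x^6},
\]
after carefully checking that the Hölder exponents $4\cdot\frac1{12}+\frac13$ in time and $4\cdot\frac16+\frac16$ in space land on the dual admissible pair. (iii) Conclude $\norm{\abs{\nabla}^{1/2}\Phi(u)}_{\dot S^0}\lesssim A + \delta^4 \cdot CA$, and $\norm{\Phi(u)}_{L_t^{12}L_x^6} \lesssim \norm{e^{i(t-t_0)\Delta}u_0}_{L_t^{12}L_x^6} + \delta^4 CA < 2\delta$ once $\delta=\delta(A)$ is small enough (here Sobolev embedding converts the $\dot S^0$-control of $\abs{\nabla}^{1/2}$ of the Duhamel term into $L_t^{12}L_x^6$ control). (iv) A parallel estimate on the difference $\Phi(u)-\Phi(v)$, using the pointwise bound $\bigl||u|^4u - |v|^4 v\bigr| \lesssim (|u|^4+|v|^4)|u-v|$, gives contraction in the weak metric. (v) Uniqueness and $u\in C(I;\dot H^{1/2})$ follow from the fixed point and continuity of the Duhamel integral in $\dot H^{1/2}$; the continuous dependence statement $u_{0,k}\to u_0 \Rightarrow u_k\to u$ follows by applying the difference estimate with $u_0$ replaced by $u_{0,k}$ and iterating, or by the standard "divide $I$ into finitely many subintervals on which the nonlinear term is small" argument together with the stability/perturbation machinery.

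The main obstacle I expect is technical rather than conceptual: verifying that the nonlinear estimate in step (ii) closes with the correct admissible pairs in two dimensions, since the endpoint $(2,\infty)$ is forbidden, so one cannot use the most naive choice and must instead pick an off-endpoint pair (e.g.\ put a few derivatives' worth of room by choosing $r$ slightly less than $\infty$) and then recover the scale-invariant $L_t^{12}L_x^6$ bound via Bernstein/Sobolev embedding. One must also handle the mild non-smoothness of $F(u)=|u|^4 u$ at the origin when invoking the fractional chain rule — this is routine (the quintic power is $C^4$-ish: $F''$ is Hölder of order $3>1$, so $F\in C^{2}$ with the stated structural inequalities holds) but should be acknowledged. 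Everything else — the contraction, uniqueness, and continuous dependence — is standard once the single nonlinear estimate is in hand, and I would cite \cite{CW1,CW2,CW3,Ca,KM3} for the details rather than reproduce them in full.
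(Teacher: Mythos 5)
Your proposal is correct and matches the approach the paper itself takes: the paper gives no details but simply cites Kenig--Merle (``The proofs of the results above follow from those in \cite{KM1}, once we adapt their norms to our two dimensional context''), and your contraction-mapping argument is exactly that adaptation. The exponent arithmetic checks out cleanly: $(3,6)$ and $(12,\tfrac{12}{5})$ are admissible in $\R^2$, Sobolev gives $\dot W^{1/2,12/5}\hookrightarrow L^6$ so $L_t^{12}L_x^6$ is the scale-invariant control, and the nonlinear estimate lands on the dual pair $L_t^{3/2}L_x^{6/5}$ with the H\"older splitting $\tfrac{4}{12}+\tfrac13=\tfrac23$ in time and $\tfrac46+\tfrac16=\tfrac56$ in space, exactly as you wrote. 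One small precision worth noting: for the contraction metric you should use $\norm{u-v}_{L_t^3 L_x^6}$ alone (and verify the ball is closed in it by Fatou), rather than also including $\norm{u-v}_{L_t^{12}L_x^6}$, since $(12,6)$ is not admissible and estimating the Duhamel difference in $L_t^{12}L_x^6$ requires a half-derivative on $u-v$, which a derivative-free metric does not supply; but the difference estimate $\norm{\Phi(u)-\Phi(v)}_{L_t^3L_x^6}\lesssim(\norm{u}_{L_t^{12}L_x^6}^4+\norm{v}_{L_t^{12}L_x^6}^4)\norm{u-v}_{L_t^3L_x^6}$ closes cleanly as you indicate.
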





The proofs of the results above follow from those in \cite{KM1}, once we adapt their norms to our two dimensional context.

\begin{thm}[Stability]\label{thm Stability}
Let $I$ be a compact time interval, with $t_0 \in I$. Suppose $\tilde{u}$ is a solution to 
\begin{equation*}
i\partial_t u+ \Delta u = F(u) +e ,
\end{equation*}
with $\tilde{u}(t_0) =\tilde{u}_0$. 
Suppose 
\begin{equation*}
\norm{\tilde{u}}_{\dot{S}^{\frac{1}{2} } (I)} \leq E \text{ and } 
\norm{\abs{\nabla}^{\frac{1}{2}} e }_{L_t^{\frac{4}{3}} L_x^{\frac{4}{3}}(I \times \R^2)} \leq E
\end{equation*}
for some $E >0$. Let $u_0 \in \dot{H}^{\frac{1}{2}}(\R^2) $ and suppose we have the smallness conditions 
\begin{equation}\label{eq Stability}
\norm{u_0- \tilde{u}_0}_{\dot{H}^{\frac{1}{2}}(\R^2) } + \norm{e}_{L_t^{\frac{12}{5}} L_x^{\frac{6}{5}}(I \times \R^2)} \leq \varepsilon
\end{equation}
for some small $0< \varepsilon < \varepsilon_1 (E)$.

Then, there exists $u : I \times  \R^2 \to \C$ solving \eqref{NLS} with $u(t_0) =u_0$, and there exists $0< c<1$ such that 
\begin{equation*}
\norm{u- \tilde{u}}_{L_t^8 L_x^8 ( I \times \R^2)} \lesssim_E \varepsilon^c .
\end{equation*}
\end{thm}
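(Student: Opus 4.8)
The plan is to run the standard Strichartz bootstrap/continuity argument for the perturbed equation, with all norms taken in the two-dimensional admissible range (avoiding the $L_t^2 L_x^\infty$ endpoint). First I would set $w := u - \tilde u$, which should solve $i\partial_t w + \Delta w = F(\tilde u + w) - F(\tilde u) - e$ with $w(t_0) = u_0 - \tilde u_0$, so that by the Duhamel formula and the inhomogeneous Strichartz estimate,
\begin{equation*}
\norm{w}_{\dot S^{\frac12}(I)} \lesssim \norm{u_0 - \tilde u_0}_{\dot H^{\frac12}} + \norm{\abs{\nabla}^{\frac12}\bracket{F(\tilde u + w) - F(\tilde u)}}_{L_t^{\frac43}L_x^{\frac43}} + \norm{\abs{\nabla}^{\frac12} e}_{L_t^{\frac43}L_x^{\frac43}},
\end{equation*}
and similarly with the dual pair $(\frac{12}{5},\frac{6}{5})$ for the low-regularity version of the error. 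The quintic difference $F(\tilde u+w)-F(\tilde u)$ is a sum of monomials, each a product of five factors drawn from $\{\tilde u, w\}$ with at least one $w$; using the fractional product rule and chain rule (Lemma on fractional product rule and Lemma \ref{lem Chain rule}) together with Hölder and Sobolev embedding, each term is bounded by a product of a power of $\norm{\tilde u}_{\dot S^{\frac12}(I)} \le E$ and a power of $\norm{w}_{\dot S^{\frac12}(I)}$, with the pure-$w^5$ term contributing $\norm{w}_{\dot S^{\frac12}}^5$.

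Next I would subdivide $I$ into $O_E(1)$ consecutive subintervals $I_1,\dots,I_M$ on each of which the relevant space-time norms of $\tilde u$ (e.g. $\norm{\abs{\nabla}^{\frac12}\tilde u}_{L_t^3 L_x^6}$ and $\norm{\tilde u}_{L_t^{12}L_x^6}$) are smaller than a threshold $\eta = \eta(E)$ chosen so the contraction/bootstrap closes; this is possible because $\norm{\tilde u}_{\dot S^{\frac12}(I)} \le E < \infty$. On the first subinterval $I_1$ the smallness hypothesis \eqref{eq Stability} gives $\norm{w(t_0)}_{\dot H^{\frac12}} \le \varepsilon$; feeding this into the Strichartz inequality above and absorbing the terms in $w$ that carry a factor of $\eta$ into the left side via a standard continuity-in-time argument, one obtains $\norm{w}_{\dot S^{\frac12}(I_1)} \lesssim_E \varepsilon$ (for $\varepsilon$ small depending on $E$). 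Then one iterates: the bound $\norm{w(\inf I_{k+1})}_{\dot H^{\frac12}} \lesssim_E \varepsilon$ from the previous step, plus control of $\norm{e}$ on $I_{k+1}$, feeds the next step, with the implicit constant growing by a fixed factor at each of the $M = M(E)$ steps, so that in the end $\norm{w}_{\dot S^{\frac12}(I)} \lesssim_E \varepsilon$; finally $\norm{w}_{L_t^8 L_x^8(I)} \lesssim \norm{w}_{\dot S^{\frac12}(I)}$ by Sobolev embedding ($\abs{\nabla}^{\frac12}: L_x^{16/3} \hookrightarrow L_x^8$ in $\R^2$, paired with $L_t^8$), giving the claimed estimate with exponent $c$ coming from the last interpolation; the existence of $u$ on all of $I$ with $u(t_0)=u_0$ follows by the same iteration combined with the local well-posedness theory (Theorem \ref{thm LWP}).

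The main obstacle I anticipate is bookkeeping the two different error norms cleanly: the hypothesis controls $\norm{\abs{\nabla}^{\frac12} e}$ only in $L_t^{4/3}L_x^{4/3}$ (a large quantity, $\le E$) while the smallness is only on $\norm{e}_{L_t^{12/5}L_x^{6/5}}$ (no derivative), so one cannot simply put $e$ into a single Strichartz estimate — one has to run the $\dot S^{\frac12}$ estimate to get a (large but finite) a priori bound on $w$, and separately run a lower-regularity estimate controlled by $\varepsilon$, then interpolate between the two to recover the small $L_t^8 L_x^8$ bound; this is exactly the modification attributed to \cite{Mu2}, and making the interpolation exponents and the subdivision count consistent is the delicate part. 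A secondary technical point is ensuring the fractional chain rule hypotheses are met by $F(z)=\abs{z}^4 z$ (it is $C^2$ away from the origin but one works with the smooth difference structure, or invokes the standard extension of Lemma \ref{lem Chain rule} to this quintic nonlinearity), and that all Hölder exponents used stay strictly inside the non-endpoint admissible range in $\R^2$.
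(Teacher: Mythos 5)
The paper gives no detailed proof of this theorem: it is stated that it follows from \cite{KM1} adapted to two dimensions, with the modified error hypothesis (smallness on $e$ itself rather than on $\abs{\nabla}^{\frac{1}{2}}e$) taken from \cite{Mu2}. Your third paragraph correctly identifies the key structural point of that modification: the hypothesis $\norm{\abs{\nabla}^{\frac{1}{2}} e}_{L_t^{4/3}L_x^{4/3}} \leq E$ yields only a \emph{bounded} (not small) a priori $\dot S^{1/2}$ norm for $w = u - \tilde u$, while the smallness $\norm{e}_{L_t^{12/5}L_x^{6/5}} \leq \varepsilon$ feeds a lower-regularity estimate, and the $L_{t,x}^8$ smallness with exponent $c<1$ comes from interpolating the two. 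In broad strokes this is the right proof.

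However, your second paragraph contradicts this. The claim ``$\norm{w}_{\dot S^{1/2}(I_1)} \lesssim_E \varepsilon$'' cannot hold: the $\dot S^{1/2}$ Strichartz estimate necessarily picks up $\norm{\abs{\nabla}^{\frac{1}{2}}e}_{L_t^{4/3}L_x^{4/3}(I_1)}$, which the hypotheses bound only by $E$, not by $\varepsilon$. That bootstrap gives $\norm{w}_{\dot S^{1/2}(I_1)} \lesssim_E 1$, and your subsequent Sobolev step $\norm{w}_{L_t^8L_x^8} \lesssim \norm{w}_{\dot S^{1/2}}$ would then yield only boundedness, not smallness, so the interpolation in your last paragraph is not optional but the whole point. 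If you run the second paragraph as written the argument fails. Two bookkeeping corrections: (i) Sobolev in $\R^2$ gives $\norm{f}_{L_x^8}\lesssim \norm{\abs{\nabla}^{\frac{1}{2}} f}_{L_x^{8/3}}$, not $L_x^{16/3}$, and the matching admissible pair is $(8,8/3)$; $(8,16/3)$ is not admissible. (ii) The natural lower-regularity quantity to track is $\norm{w}_{L_t^{12}L_x^6}$: $(12/5,6/5)$ is dual to the $\dot H^{1/2}$-admissible pair $(12,6)$ already appearing in Theorem \ref{thm LWP}, and this is exactly where the $\varepsilon$-smallness of $e$ enters the Duhamel estimate.
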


\section{Impossibility of finite-time blow-up solutions}\label{sec No finite-time blow-up}
In this section we rule out the existence of finite-time blow-up solutions in Theorem \ref{thm 2 types}. As we mentioned before, in $\dot{H}^{\frac{1}{2}}$ critical regime, due to the scaling, the long time Strichartz estimates do not provide any additional decay as we desired in the $s_c = \frac{1}{2}$ setting. So to rule out the existence of finite-time blow-up solutions, we consider the rate of change in time of the mass of solutions restricted within a spacial bump instead. 

\begin{thm}[Impossibility of finite-time blow-up solutions]\label{thm No finite-time blow-up}
If $u$ is an almost periodic solution to \eqref{NLS} in the form of Theorem \ref{thm 2 types} and $T_{max} < \infty$, then $u \equiv 0$.
\end{thm}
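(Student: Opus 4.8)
The plan is to run a localized-mass / flux argument, while keeping in mind the one structural obstruction of the $\dot{H}^{\frac{1}{2}}$-critical regime: $u(t)$ lies in $\dot{H}^{\frac{1}{2}}(\R^2)$ but need not lie in $L^2(\R^2)$, so there is no conserved mass to invoke and every bound must be phrased through $\norm{u(t)}_{\dot{H}^{\frac{1}{2}}}$ alone. Fix a radial, radially non-increasing $\phi\in C_c^\infty(\R^2)$ with $0\le\phi\le 1$, $\phi\equiv 1$ on $B_1$, $\supp\phi\subset B_2$ (here $B_\rho$ is the ball of radius $\rho$ about the origin), and for $R>0$ put $M_R(t):=\int_{\R^2}\phi(x/R)\abs{u(t,x)}^2\,dx$ for $t\in[0,T_{max})$. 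Since $\dot{H}^{\frac{1}{2}}(\R^2)\hookrightarrow L^4(\R^2)$, Hölder gives $M_R(t)\le\abs{B_{2R}}^{1/2}\norm{u(t)}_{L^4}^2<\infty$, and the local theory shows $M_R\in C^1([0,T_{max}))$.

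The heart of the argument is a flux bound. From the equation, $M_R'(t)=\tfrac{2}{R}\int_{\R^2}(\nabla\phi)(x/R)\cdot\im(\overline{u(t)}\,\nabla u(t))\,dx$. Because $\nabla u(t)$ is not controlled in $L^2_{\mathrm{loc}}$ in this setting, the hard part is to estimate this integral without ever exposing a full derivative of $u$: I would not apply Cauchy--Schwarz, but instead shift half a derivative onto the cutoff. Writing $g:=\tfrac1R(\nabla\phi)(x/R)$, duality gives $\abs{M_R'(t)}\lesssim\norm{\abs{\nabla}^{-\frac12}(g\cdot\nabla u)}_{L^2}\norm{u(t)}_{\dot H^{\frac12}}$. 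Then, expanding $g\cdot\nabla u=\nabla\cdot(gu)-(\nabla\cdot g)u$ and using the fractional product rule, the Hardy--Littlewood--Sobolev embedding $L^{4/3}(\R^2)\hookrightarrow\dot H^{-1/2}(\R^2)$, the scaling bounds $\norm{g}_{L^\infty}\lesssim R^{-1}$, $\norm{\abs{\nabla}^{1/2}g}_{L^4}\lesssim R^{-1}$, $\norm{\nabla\cdot g}_{L^2}\lesssim R^{-1}$, together with $\dot H^{\frac12}\hookrightarrow L^4$, one obtains
\[
\abs{M_R'(t)}\ \lesssim\ \frac1R\,\norm{u(t)}_{\dot H^{\frac12}}^2\ \lesssim_u\ \frac1R ,
\]
the constant depending only on $\norm{u}_{L^\infty_t\dot H^{1/2}(I\times\R^2)}^2$.

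Next I would show $M_R(t)\to 0$ as $t\to T_{max}$ for each fixed $R$. Since $T_{max}$ is a finite endpoint of the lifespan, Corollary \ref{cor N(t) blow-up} gives $N(t)\to\infty$. Fix $\eta>0$ and decompose $u(t)=u_{\mathrm{sh}}(t)+r(t)$, where $\widehat{u_{\mathrm{sh}}(t)}=\mathbf 1_{\{c(\eta)N(t)\le\abs\xi\le C(\eta)N(t)\}}\hat u(t)$. By \eqref{eq Defn AP1}--\eqref{eq Defn AP2} the remainder satisfies $\norm{r(t)}_{\dot H^{\frac12}}^2\le 2\eta$, while, because $u_{\mathrm{sh}}(t)$ has frequencies $\gtrsim c(\eta)N(t)$, $\norm{u_{\mathrm{sh}}(t)}_{L^2}^2\le\frac{1}{c(\eta)N(t)}\norm{u(t)}_{\dot H^{\frac12}}^2$. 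Using $\phi\le 1$, $\abs{u}^2\le 2\abs{u_{\mathrm{sh}}}^2+2\abs{r}^2$, and $\dot H^{\frac12}\hookrightarrow L^4$ on the fixed ball $B_{2R}$,
\[
M_R(t)\ \le\ 2\norm{u_{\mathrm{sh}}(t)}_{L^2}^2+2\int_{B_{2R}}\abs{r(t)}^2\ \lesssim_u\ \frac{1}{c(\eta)N(t)}+R\,\eta .
\]
Letting $t\to T_{max}$ (so $N(t)\to\infty$) and then $\eta\to 0$ yields $\lim_{t\to T_{max}}M_R(t)=0$.

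Finally, since $T_{max}<\infty$ and $\abs{M_R'}\lesssim_u R^{-1}$, the integral $\int_0^{T_{max}}M_R'$ converges absolutely, so from $M_R(0)=\lim_{t\to T_{max}}M_R(t)-\int_0^{T_{max}}M_R'(t)\,dt$ we get $0\le M_R(0)\lesssim_u T_{max}/R$ for every $R>0$; letting $R\to\infty$ and using monotone convergence gives $\norm{u_0}_{L^2(\R^2)}^2=\lim_{R\to\infty}M_R(0)=0$, hence $u_0=0$ and $u\equiv 0$ by uniqueness. To summarize, the only genuinely delicate step is the flux estimate — one must distribute a half-derivative onto the cutoff and verify that it carries exactly the gain $R^{-1}$ — whereas the $L^2$-smallness in the third paragraph comes from the elementary splitting into a frequency shell (automatically $L^2$-small because $N(t)\to\infty$) plus an $\dot H^{\frac12}$-small remainder, and the hypothesis $T_{max}<\infty$ enters only to make $\int_0^{T_{max}}M_R'$ converge.
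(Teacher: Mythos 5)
Your proposal is correct and runs the same overall strategy as the paper: introduce the localized mass $M_R(t)$, bound $\abs{M_R'(t)}$ by a negative power of $R$ uniformly in $t$, show that the localized mass vanishes as $t\to T_{max}$ (this is where almost periodicity and Corollary~\ref{cor N(t) blow-up} enter), and use $T_{max}<\infty$ to conclude $M_R(0)\lesssim_u T_{max}\cdot R^{-\theta}\to 0$, hence $u_0\equiv 0$. The two places where your execution genuinely diverges from the paper's are worth noting. For the flux bound, the paper expands $\im\int\nabla\chi_R\cdot\nabla u\,\bar u$ in a Littlewood--Paley trichotomy over $N_1\sim N_2\gtrsim N_3$, $N_2\sim N_3\gtrsim N_1$, $N_3\sim N_1\gtrsim N_2$, distributes half-derivatives by Bernstein inside each case, and obtains $\lesssim R^{-1/2}$; you instead shift a full $\abs{\nabla}^{-1/2}$ onto the product by duality, use the algebraic identity $g\cdot\nabla u=\nabla\cdot(gu)-(\nabla\cdot g)u$, and finish with the fractional product rule plus the dual Sobolev embedding $L^{4/3}\hookrightarrow\dot H^{-1/2}$, which gives the sharper $\lesssim R^{-1}$. (The paper's $R^{-1/2}$ is a harmless loss coming from converting $L^\infty$ to $L^2$ over the support of $\chi_R$; both suffice.) For the vanishing of $M_R$, the paper splits the integral in \emph{physical} space inside and outside the ball $\{\abs{x-x(t)}\le c(\eta)/N(t)\}$ and uses the smallness of that ball together with \eqref{eq Defn AP1}; you instead split $u(t)$ in \emph{frequency} space into a shell $c(\eta)N(t)\le\abs\xi\le C(\eta)N(t)$ (automatically small in $L^2$ since $N(t)\to\infty$) and an $\dot H^{1/2}$-small remainder. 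The two decompositions are dual manifestations of the same compactness. Your variant is tidier in both steps and uses only inequalities already stated in the paper's preliminaries, so it stands as a legitimate alternative proof.
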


\begin{proof}
Consider the following quantity $y^2(t,R)$:
\begin{equation}\label{eq y^2}
y^2 (t,R) : = \int_{\R^2} \chi_R (x) \abs{u (t,x)}^2  \, dx,
\end{equation}
where $\chi_R (x) = \chi (\frac{x}{R})$ is a smooth cutoff function, such that
\begin{equation*}
\chi (x) =
\begin{cases}
1 , &  \text{ if } \abs{x} \leq 1\\
0 , &  \text{ if } \abs{x} >2 ,
\end{cases}
\end{equation*}
and
\begin{equation}\label{eq Chi bdd}
\norm{\abs{\nabla}^k \chi (x)}_{L_x^{\infty} (\R^2)} \leq  1 \quad \text{ for } k = \frac{3}{2} , 3.
\end{equation}
In fact, the quantity defined in \eqref{eq y^2} can be think of the mass of the solution $u$ restricted within the spacial bump with radius $2R$.  

Then we compute the rate of change in time of $y^2(t,R)$, that is, the derivative of $y^2 (t,R)$ with respect to time $t$. 
Using the NLS equation \eqref{NLS} and the properties of complex numbers, we now can pass the time derivative inside the integral and write
\begin{align*}
\frac{\partial y^2}{\partial t}  = 2 \int_{\R^2} \chi_R(x) \re [u_t \bar{u}] \, dx =  - 2\im \int_{\R^2}  \chi_R (x) \Delta u \bar{u}  \, dx .
\end{align*}

\noindent Preforming integration by parts and the product rule in calculus, we have
\begin{equation}
 - \im \int_{\R^2}  \chi_R (x) \Delta u \bar{u}  \, dx  = \im \int_{\R^2} \nabla \parenthese{\chi_R (x) \bar{u}} \cdot \nabla u  \, dx = \im \int_{\R^2} \nabla \chi_R (x) \cdot  \nabla u \bar{u}  \, dx  . \label{eq Imaginary prod}
\end{equation}
Indeed, $\im \int_{\R^2} \nabla \bar{u} \cdot \nabla u \chi_R(x)  \,  dx $ is zero, since $\nabla \bar{u} \cdot \nabla u = \abs{\nabla u}^2$ is a real number.

Now to estimate the time derivative of $y^2(t, R)$ is equivalent to estimate \eqref{eq Imaginary prod}. Notice that there is a product of three functions, hence it is natural to employ Littlewood-Paley decomposition and treat these three functions at different frequency scales separately, i.e. consider 
\begin{equation*}
\sum\limits_{N_1, N_2, N_3} \int_{\R^2}  P_{N_1} (\nabla \chi_R ) \, P_{N_2} ( \nabla u ) \, P_{N_3}  \bar{u}  \, dx .
\end{equation*}
In fact, by Littlewood-Paley decomposition, there are only the following three cases:
\begin{equation*}
\begin{cases}
\text{Case 1: } N_1 \sim N_2 \geq N_3 \\
\text{Case 2: } N_2 \sim N_3 \geq N_1 \\
\text{Case 3: } N_3 \sim N_1 \geq N_2 
\end{cases}
\end{equation*}

\noindent Next, we will deal with these three cases one by one. The remainder of the proof in this section all space norms are over $ \R^2$.

\noindent $\bullet$ {\bf Case 1:} $N_1 \sim N_2 \geq N_3$. 

First, by H\"older's, Bernstein, $N_1 \sim N_2$ and Sobolev embedding, we get
\begin{align}\label{eq Sum}
& \quad \int_{\R^2} P_{N_1} (\nabla \chi_R ) \, P_{N_2} ( \nabla u ) \, P_{N_3}  \bar{u}  \,  dx \notag \\
& \lesssim \norm{P_{N_1} \nabla \chi_R}_{L_x^{\frac{82}{21}}} \norm{P_{N_2} \nabla u}_{L_x^2 } \norm{P_{N_3} u }_{L_x^{\frac{41}{10}} } \notag \\
& \simeq N_1 \norm{P_{N_1} \nabla \chi_R}_{L_x^{2} } \norm{P_{N_2} \abs{\nabla}^{\frac{1}{2}} u}_{L_x^2 } \norm{P_{N_3} \abs{\nabla}^{\frac{1}{2}}u }_{L_x^2 } \parenthese{\frac{N_3}{N_2}}^{\frac{1}{82}} .
\end{align}
Then we are left to sum $N_1$, $N_2$ and $N_3$ over \eqref{eq Sum}. For the sums over $N_2$ and $N_3$, by Cauchy-Schwarz, we arrive at
\begin{equation*}
\sum_{N_3 \leq N_2} \norm{P_{N_2} \abs{\nabla}^{\frac{1}{2}} u}_{L_x^2   } \norm{P_{N_3} \abs{\nabla}^{\frac{1}{2}}u}_{L_x^2   } \parenthese{\frac{N_3}{N_2}}^{\frac{1}{82}}  \sim \norm{ \abs{\nabla}^{\frac{1}{2}} u }_{L_x^2   }^2 .
\end{equation*}
Note that the power of the fraction $\frac{N_3}{N_2}$ here is not necessary to be $\frac{1}{82}$. In fact, any positive power $\varepsilon$ will help us to sum the two summations above, and without the factor $(\frac{N_3}{N_2})^{\varepsilon}$, this sum would be not summable. We choose this number $\frac{1}{82}$ only to avoid the confusion caused by the notation $\varepsilon$.

Now we are left to sum $N_1$ over \eqref{eq Sum}. Splitting the sum over $N_1$ into $\sum_{N_1 < 1} $ and $\sum_{N_1 \geq 1}$, taking the component in \eqref{eq Sum} only depending on $N_1$ and applying Bernstein, Cauchy-Schwarz, H\"older inequalities and compactness of the smooth cutoff function $\chi_R (x)$, we obtain
\begin{align}
\sum_{N_1} N_1  \norm{P_{N_1} \nabla \chi_R }_{L_x^2  } 
& \lesssim \sum_{N_1 < 1 } N_1^{\frac{1}{2}} \norm{P_{N_1} \abs{\nabla}^{\frac{3}{2}} \chi_R }_{L_x^2  }  + \sum_{N_1 \geq 1 } \frac{1}{N_1} \norm{ P_{N_1}  \nabla^3 \chi_R}_{L_x^2 }  \notag\\
& \lesssim \norm{\abs{\nabla}^{\frac{3}{2}}  \chi_R}_{L_x^{\infty} } R+ \norm{\nabla^3 \chi_R}_{L_x^{\infty} } R \lesssim \frac{1}{\sqrt{R}}  \label{eq N_1}\ .\end{align}
The last inequality is true because \eqref{eq Chi bdd} implies $\norm{\abs{\nabla}^k \chi_R (x)}_{L_x^{\infty}} \leq \frac{1}{R^k}$.

\noindent Finally by putting the information above together, we obtain that
\begin{align*}
& \quad \sum_{N_1 \sim  N_2 \geq N_3}\int_{\R^2} P_{N_1} (\nabla \chi_R (x)) P_{N_2} ( \nabla u )P_{N_3}  \bar{u}  \,  dx  \\
&\lesssim \sum_{N_1} \sum_{N_3 \leq N_2} N_1 \norm{P_{N_1} \nabla \chi_R}_{L_x^{2}} \norm{P_{N_2} \abs{\nabla}^{\frac{1}{2}} u}_{L_x^2} \norm{P_{N_3} \abs{\nabla}^{\frac{1}{2}}u }_{L_x^2} \parenthese{\frac{N_3}{N_2}}^{\frac{1}{82}} \lesssim \frac{1}{\sqrt{R}} .
\end{align*}

\noindent $\bullet$ {\bf Case 2:} $ N_2 \sim N_3 \geq N_1 $.

In this case, $N_2 \sim N_3$, we can pass half derivative from $\nabla u$ to $\bar{u}$, then bound these two terms by $\dot{H}^{\frac{1}{2}}$ norm of the solution. By H\"older's and Bernstein, we have
\begin{align*}
 \int_{\R^2} P_{N_1} (\nabla \chi_R ) P_{N_2} ( \nabla u )P_{N_3}  \bar{u}  \,  dx 
& \lesssim N_1 \parenthese{\frac{N_2}{N_3}}^{\frac{1}{2}} \norm{P_{N_1} \nabla \chi_R }_{L_x^{2}} \norm{P_{N_2} \abs{\nabla}^{\frac{1}{2}} u}_{L_x^2} \norm{P_{N_3} \abs{\nabla}^{\frac{1}{2}}u}_{L_x^2}  .
\end{align*}
Again by Cauchy-Schwarz and \eqref{eq N_1}, we have
\begin{align*}
\sum_{N_2 \sim  N_3 \geq N_1}\int_{\R^2} P_{N_1} (\nabla \chi_R ) P_{N_2} ( \nabla u )P_{N_3}  \bar{u}   \, dx   \lesssim \sum_{N_1} N_1 \norm{P_{N_1} \nabla \chi_R }_{L_x^{2}}   \lesssim \frac{1}{\sqrt{R}}.
\end{align*}

\noindent $\bullet$ {\bf Case 3:} $ N_3 \sim N_1 \geq N_2 $.

In this case, $N_2 \leq N_3$, we use the same idea in Case 2, that is, we pass half derivative from $\nabla u$ to $\bar{u}$. By H\"older and Bernstein, we have that
\begin{align*}
\int_{\R^2}  P_{N_1} (\nabla \chi_R ) P_{N_2} ( \nabla u )P_{N_3}  \bar{u}  \,  dx 
& \lesssim N_1 \parenthese{\frac{N_2}{N_3} }^{\frac{1}{2}}  \norm{P_{N_1} \nabla \chi_R }_{L_x^{2}} \norm{P_{N_2} \abs{\nabla}^{\frac{1}{2}} u}_{L_x^2} \norm{P_{N_3} \abs{\nabla}^{\frac{1}{2}}u}_{L_x^2}  .
\end{align*}
Using Cauchy-Schwarz and \eqref{eq N_1} again, 
\begin{align*}
\sum_{N_3 \sim  N_1 \geq N_2}\int_{\R^2}  P_{N_1} (\nabla \chi_R ) P_{N_2} ( \nabla u )P_{N_3}  \bar{u}  \,  dx  \lesssim \frac{1}{\sqrt{R}}   .
\end{align*}

Hence, all these three cases give us the same estimate for $\frac{\partial y^2 }{\partial t}$:
\begin{equation}
\abs{\frac{\partial y^2 }{\partial t}} \lesssim \frac{1}{\sqrt{R}}  .
\end{equation}

At this point, we claim that $y^2 (T_{max}, R) =0$, that is, for any $R$ fixed,
\begin{equation}\label{eq Claim}
\lim_{t \to T_{max}} y^2 (t, R)  = \lim_{t \to T_{max}} \int_{\R^2}  \chi_R(x) \abs{u (t,x)}^2  \, dx=0  .
\end{equation}
Assuming that \eqref{eq Claim} is true, we are able to complete the proof. In fact, by the definition of limit, we fix an arbitrary small number $\varepsilon$, then there exists a $t_0$, such that $y^2 (t_0, R) < \varepsilon$. We can think of $y^2(t,R)$ as a function of $t$, and the slope of $y^2$ is bounded by $\frac{1}{\sqrt{R}}$, then $y^2$ itself should be bounded by
\begin{equation*}
y^2( t, R) \lesssim \frac{ \abs{t_0 -t} }{\sqrt{R}} + \varepsilon  \leq \frac{T_{max} }{\sqrt{R}} + \varepsilon \quad \text{ for any } t<T_{max},
\end{equation*}
especially, 
\begin{equation*}
y^2 (0,R) \lesssim \frac{T_{max} }{\sqrt{R}} + \varepsilon .
\end{equation*}
Then we let $R$ go to infinity, it is easy to see that 
\begin{equation*}
\lim_{R \to \infty }y^2(0,R) \lesssim \varepsilon \quad \text{ for any arbitrary } \varepsilon,
\end{equation*}
which implies
\begin{equation*}
\lim_{R \to \infty }y^2(0,R) =0 .
\end{equation*}
Therefore, by passing the limit inside, we have
\begin{equation*}
0= \lim_{R \to \infty }y^2(0,R) = \lim_{R \to \infty } \int_{\R^2} \chi_R (x) \abs{u(0,x)}^2  \, dx  = \int_{\R^2}  \abs{u(0,x)}^2 \,  dx = \norm{u_0}_{L_x^2}^2 .
\end{equation*}
It implies $u_0 \equiv 0 $, which contradicts with the fact that $u$ is a minimal blow-up solution, so we prove the impossibility of finite-time blow-up solutions.

However, it still remains to prove our claim \eqref{eq Claim} above:
\begin{proof}[Proof of \eqref{eq Claim}]
By the definition of limit, it is equivalent to prove that: for any $\varepsilon > 0$, there exists $t_0$, such that for any $t > t_0$, 
\begin{equation*}
\int_{\R^2} \chi_R(x) \abs{u (t,x)}^2 dx < \varepsilon  .
\end{equation*}

By the almost periodicity of the solution (Definition \ref{defn AP}), we know that for $\eta = \frac{\varepsilon}{2R}$ fixed, there exist $c(\eta)$, $N(t)$, and $x(t)$ such that 
\begin{equation*}
\parenthese{\int_{\abs{x-x(t)} > \frac{c(\eta)}{N(t)}} \abs{u(t,x )}^4  \, dx}^{\frac{1}{2}} < \eta = \frac{\varepsilon}{2R}.
\end{equation*}

Hence, if we consider the cutoff mass $y^2(t,R)$ inside the bump $\mathbf{1}_{\abs{x} \leq \frac{c(\eta)}{N(t)}}$ and outside the $\chi_R \mathbf{1}_{\abs{x} > \frac{c(\eta)}{N(t)}}$ separately, the mass inside will be small because the measure of the bump is small and the mass outside will be also small due to almost periodicity of the solution. By H\"older, the almost periodicity of the solution, Sobolev embedding and uniform $\dot{H}^{\frac{1}{2}}$ norm bound for the solution, we have
\begin{align*}
y^2(t,R) &
\leq \int \mathbf{1}_{\abs{x-x(t)} \leq \frac{c(\eta)}{N(t)}} \abs{u}^2  \, dx + \int \chi_R \mathbf{1}_{\abs{x-x(t)} > \frac{c(\eta)}{N(t)}} \abs{u}^2  \, dx \\
& \lesssim \norm{\mathbf{1}_{\abs{x-x(t)} \leq \frac{c(\eta)}{N(t)}} }_{L_x^2} \norm{u}_{L_x^4}^2 + \norm{\chi_R }_{L_x^2} \norm{u \mathbf{1}_{\abs{x-x(t)} > \frac{c(\eta)}{N(t)}}}_{L_x^4}^2 \\
& \lesssim  \frac{c(\eta) }{N(t)} \norm{u}_{L_t^{\infty} \dot{H}_x^{\frac{1}{2}}}^2 + \frac{\varepsilon}{2}   .
\end{align*}
The first term $\frac{c(\eta)}{N(t)} \norm{u}_{L_t^{\infty} \dot{H}_x^{\frac{1}{2}}([0,T_{max} ) \times \R^2)}^2 $ can be made less than $\frac{\varepsilon}{2}$. In fact $N(t)$ goes to infinity as $t $ approaches to $T_{max}$ by Corollary \ref{cor N(t) blow-up}, so it is always possible for us to choose some $t$ close enough to $T_{max}$ such that $N(t)$ is large enough. Then the claim follows.
\end{proof}

The proof of Theorem \ref{thm No finite-time blow-up} is complete.
\end{proof}

\section{Atomic spaces and $X$-norm}\label{sec Atomic spaces}

In this section, we recall some basic definitions and properties of atomic spaces, then prove a decomposition lemma, which will be used in the proof of the long time Strichartz estimates in Section \ref{sec LTS}. At the end of this section, we give the definition of $\tilde{X}_{k_0}$ norm, which will be used in defining the long time Strichartz estimates in Section \ref{sec LTS}.

\subsection{Basic definitions and properties of the atomic space}
The atomic spaces were first introduced in partial differential equations in \cite{KT1}, and then applied to KP-II equations in \cite{HHK} and nonlinear Schr\"odinger equations in \cite{KT2, KT3, HTT1, HTT2}. They are useful in many different settings, and it is worth mentioning that they are quite helpful in fixing the lack of Strichartz endpoint problems.

First, we recall some basic definitions and properties of the atomic space. Let $\mathcal{Z}$ denote the set of finite partitions 
\begin{equation*}
-\infty < t_0 < t_1 < \dots < t_K \leq \infty
\end{equation*}
of the real line. If $t_K =\infty$, we use the convention that $v(t_K) : = 0$ for all functions $v: \R \to L^2$. Let $\chi_I : \R \to \R$ denote the sharp characteristic function of a set $I \subset \R$.

\begin{defn}[$U^p$ spaces, Definition 2.1 in \cite{HHK}]
Let $1\leq p < \infty$. For $\bracket{t_k}_{k=0}^K \in \mathcal{Z}$ and $\bracket{\phi_k}_{k=0}^{K-1} \subset L^2$ with $\sum\limits_{k=0}^{K-1} \norm{\phi_k}_{L^2}^p = 1$, we call
the piecewise defined function $a : \R \to L^2$:
\begin{equation*}
a = \sum_{k=1}^K \chi_{[t_{k-1}, t_k)} \phi_{k-1}
\end{equation*}
a $U^p$-atom, and we define the atomic space $U^p(\R,L^2)$ of all functions $u : \R \to L^2$ such that
\begin{equation*}
u = \sum_{j=1}^{\infty} \lambda_{j} a_j \quad \text{for } U^p \text{-atoms }  a_j, \bracket{\lambda_j} \in l^1,
\end{equation*}
with norm
\begin{equation*}
\norm{u}_{U^p} : = \inf \bracket{\sum_{j=1}^{\infty} \abs{\lambda_j} : u = \sum_{j=1}^\infty \lambda_j a_j,\lambda_j \in \C \text{ and } a_j \text{ are }  U^p \text{-atoms}}.
\end{equation*}

\noindent If $J \subset \R$ is an interval, then we say that $u_{\lambda}$ is a $U^p(J)$-atom if $t_k \in J$ for all $1 \leq k \leq K$. Then for any $1 \leq p < \infty$, let
\begin{equation*}
\norm{u}_{U^p (J \times \R^2)} = \inf \bracket{\sum_{j=1}^{\infty} \abs{\lambda_j} : u= \sum_{j=1}^{\infty} \lambda_j a_j , \lambda \in \C, a_j \text{ are }  U^p(J) \text{-atoms}} .
\end{equation*}
For any $1 \leq p < \infty$, $U^p (J \times \R^2) \hookrightarrow L^{\infty} L^2 (J \times \R^2)$. Additionally, $U^p$-functions are continuous except at countably many points and right-continuous everywhere.
\end{defn}

\begin{defn}[$V^p$ spaces, Definition 2.3 in \cite{HHK}]
Let $1 \leq p < \infty$. 
\begin{enumerate}
\item
We define $V^p (\R, L^2)$ as the space of all functions $v : \R \to L^2$ such that
\begin{equation}\label{eq V^p}
\norm{v}_{V^p} : = \sup_{\bracket{t_k}_{k=0}^K\in \mathcal{Z}} \parenthese{\sum_{k=1}^K \norm{v(t_k)-v(t_{k-1})}_{L^2}^p}^{\frac{1}{p}} 
\end{equation}
is finite. Notice that here we use the convention $v (\infty) =0$.

\item
Likewise, let $V_{rc}^p (\R ,L^2)$ denote the closed subspace of all right-continuous functions: $v : \R \to L^2$ such that $\lim\limits_{t \to -\infty} v(t) =0$, endowed with the same norm \eqref{eq V^p}.

\item
If $J \subset \R$, then
\begin{equation*}
\norm{v}_{V^p (J \times \R^2)} : = \sup_{\bracket{t_k}_{k=0}^K\in \mathcal{Z}} \parenthese{\norm{v(t_0)}_{L^2}^p + \sum_{k=1}^K \norm{v(t_k)-v(t_{k-1})}_{L^2}^p + \norm{v(t_K)}_{L^2}^p}^{\frac{1}{p}} ,
\end{equation*}
where each $t_k$ lies in $J$. Note that $\{ t_k \}$ may be a finite or infinite sequence.
\end{enumerate}

\end{defn}

\begin{prop}[Embedding, Proposition 2.2, Proposition 2.4 and Corollary 2.6 in \cite{HHK}]
For $1\leq p \leq q < \infty$,
\begin{equation*}
U^p(\R, L^2) \hookrightarrow U^q(\R, L^2) \hookrightarrow  L^{\infty}(\R,L^2),
\end{equation*}
and functions in $U^p(\R, L^2)$ are right continuous and $\lim\limits_{t\to -\infty} u(t) = 0$ for each $u \in U^p(\R , L^2)$.

The Banach subspace of all right continuous functions endowed with $\norm{\, \cdot \,}_{V^p}$ is denoted by $V_{rc}^p (\R, L^2)$. Note that,
\begin{equation*}
U^p(\R, L^2) \hookrightarrow V_{rc}^p (\R, L^2) \hookrightarrow L^{\infty}(\R,L^2).
\end{equation*}
Moreover, for $1\leq p \leq q < \infty$,
\begin{equation*}
U^p(\R, L^2) \hookrightarrow V_{rc}^p (\R, L^2) \hookrightarrow U^q(\R, L^2) \hookrightarrow  L^{\infty}(\R,L^2) .
\end{equation*}
\end{prop}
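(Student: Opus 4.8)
The plan is to reduce each embedding to a statement about a single atom and then transfer it to arbitrary functions via the atomic norm; the only step needing real work is $V^p_{rc}\hookrightarrow U^q$, which calls for a dyadic stopping-time decomposition and, strictly speaking, forces $p<q$ (so the last chain of embeddings should be read with the strict inequality, as in Corollary 2.6 of \cite{HHK}). For $U^p\hookrightarrow U^q$ with $p\le q$: if $a=\sum_{k=1}^K\chi_{[t_{k-1},t_k)}\phi_{k-1}$ is a $U^p$-atom then $\sum_k\norm{\phi_k}_{L^2}^p=1$, and since the inclusion $\ell^p\hookrightarrow\ell^q$ has operator norm one we get $\sum_k\norm{\phi_k}_{L^2}^q\le1$; thus $a$ is at most a unit multiple of a $U^q$-atom, so $\norm{a}_{U^q}\le1$, and for a general $u=\sum_j\lambda_j a_j$ with $\{\lambda_j\}\in\ell^1$ one obtains $\norm{u}_{U^q}\le\sum_j\abs{\lambda_j}$, whence $\norm{u}_{U^q}\le\norm{u}_{U^p}$ after taking the infimum over admissible decompositions. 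Likewise $\norm{a(t)}_{L^2}=\norm{\phi_{k-1}}_{L^2}\le1$ for all $t$, so $\norm{a}_{L^{\infty}_tL^2_x}\le1$ and the same passage to the atomic norm gives $U^q\hookrightarrow L^{\infty}_tL^2_x$. Since each atom is a right-continuous step function that vanishes for $t<t_0$, and the atomic series converges absolutely, hence uniformly, in $L^{\infty}_tL^2_x$, the properties ``right-continuous'', ``$u(t)\to0$ as $t\to-\infty$'', and ``continuous off a countable set'' all survive the uniform limit.

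For $U^p\hookrightarrow V^p_{rc}$ it suffices to bound $\norm{a}_{V^p}$ for a $U^p$-atom $a$. Given any partition $\{s_\ell\}_{\ell=0}^L$ of $\R$, each value $a(s_\ell)$ is one of $\phi_0,\dots,\phi_{K-1}$ or $0$; using $(\alpha+\beta)^p\le2^{p-1}(\alpha^p+\beta^p)$ and the fact that each $\phi_k$ enters at most two of the consecutive differences $a(s_\ell)-a(s_{\ell-1})$ (the atom takes the value $\phi_k$ on a single interval), one gets $\sum_\ell\norm{a(s_\ell)-a(s_{\ell-1})}_{L^2}^p\le2^p\sum_k\norm{\phi_k}_{L^2}^p=2^p$, so $\norm{a}_{V^p}\le2$. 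Since $V^p$ satisfies the triangle inequality this yields $\norm{u}_{V^p}\le\sum_j\abs{\lambda_j}\norm{a_j}_{V^p}\lesssim\norm{u}_{U^p}$, and combined with the right-continuity and decay at $-\infty$ already established, $u\in V^p_{rc}$. The embedding $V^p_{rc}\hookrightarrow L^{\infty}_tL^2_x$ is immediate: applying the definition of $\norm{\cdot}_{V^p}$ to the partition $\{t,\infty\}$ and using $v(\infty)=0$ gives $\norm{v(t)}_{L^2}\le\norm{v}_{V^p}$.

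The one substantial point, and the expected main obstacle, is $V^p_{rc}\hookrightarrow U^q$ for $p<q$. Put $M=\norm{v}_{V^p}$. For each $n\ge0$ set $\tau^n_0=-\infty$ and let $\tau^n_{j+1}$ be the first time after $\tau^n_j$ with $\norm{v(t)-v(\tau^n_j)}_{L^2}\ge2^{-n}M$; right-continuity makes the $\tau^n_j$ well defined and forces $\norm{v(\tau^n_{j+1})-v(\tau^n_j)}_{L^2}\ge2^{-n}M$, while for $t\in[\tau^n_j,\tau^n_{j+1})$ one has $\norm{v(t)-v(\tau^n_j)}_{L^2}<2^{-n}M$ by minimality. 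Let $v_n$ be the step function equal to $v(\tau^n_j)$ on $[\tau^n_j,\tau^n_{j+1})$. Testing the definition of $\norm{\cdot}_{V^p}$ against the partition $\{\tau^n_j\}_j$ bounds the number $J_n$ of jumps by $J_n(2^{-n}M)^p\le M^p$, i.e. $J_n\le2^{np}$; moreover $\norm{v-v_n}_{L^{\infty}_tL^2_x}\le2^{-n}M$, so $v=v_0+\sum_{n\ge1}(v_n-v_{n-1})$ converges in $L^{\infty}_tL^2_x$. Each $v_n-v_{n-1}$ is a step function with $\lesssim2^{np}$ pieces, each of $L^2$-norm $\lesssim2^{-n}M$, supported away from $\pm\infty$, hence a scalar multiple of a $U^q$-atom with coefficient $\lesssim2^{-n}M\,(2^{np})^{1/q}=M\,2^{-n(1-p/q)}$, and $\norm{v_0}_{U^q}\lesssim M$ as well. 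Because $p/q<1$ the series $\sum_{n\ge1}2^{-n(1-p/q)}$ converges, so $\norm{v}_{U^q}\le\norm{v_0}_{U^q}+\sum_{n\ge1}\norm{v_n-v_{n-1}}_{U^q}\lesssim_{p,q}M=\norm{v}_{V^p}$; chaining with the previous step gives $U^p\hookrightarrow V^p_{rc}\hookrightarrow U^q\hookrightarrow L^{\infty}_tL^2_x$ for $p<q$. The care-demanding parts here are making the stopping-time scheme precise, reading the jump count $J_n\le2^{np}$ off the definition of the $V^p$-norm, and recognizing $v_n-v_{n-1}$ as a bona fide rescaled atom (finitely many steps, support bounded away from $\pm\infty$); everything else is a direct unwinding of the definitions.
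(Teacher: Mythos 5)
Your proof is correct and is essentially the argument of Hadac--Herr--Koch (Propositions~2.2, 2.4 and Corollary~2.6), which the paper invokes by citation only: atom-by-atom verification of the easy embeddings, and the dyadic stopping-time decomposition with jump count $J_n\le 2^{np}$, $L^\infty$-error $\le 2^{-n}\norm{v}_{V^p}$, and the resulting geometric series $\sum_n 2^{-n(1-p/q)}$ for $V^p_{rc}\hookrightarrow U^q$. You are also right to point out that the final chain of embeddings requires strict inequality $p<q$ (as in HHK's Corollary~2.6), since $V^p_{rc}\hookrightarrow U^p$ fails in general; the paper's blanket hypothesis ``$1\le p\le q<\infty$'' should be read with that caveat.
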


\begin{defn}[$U_{\Delta}^p$ and $V_{\Delta}^p$ spaces, Definition 2.15 in \cite{HHK}]
For $s \in \R$, we let $U_{\Delta}^p L_x^2$ (respectively $V_{\Delta}^p L_x^2$) be the space of all functions $u : \R \to L^2_x(\R^d)$ such that $t \mapsto e^{-it\Delta}u(t)$ is in $U^p(\R, L_x^2)$ (respectively in  $V^p(\R, L_x^2)$), with norms
\begin{equation*}
\norm{u}_{U_{\Delta}^p L^2} := \norm{e^{-it\Delta}u(t)}_{U^p(\R, L_x^2)}, \quad \norm{u}_{V_{\Delta}^p L^2} := \norm{e^{-it\Delta}u(t)}_{V^p(\R,L_x^2)}.
\end{equation*}
\end{defn}

\begin{prop}\label{prop Embed}
For $1\leq p \leq q < \infty$,
\begin{equation*}
U_{\Delta}^p(\R, L^2) \hookrightarrow V_{\Delta}^p (\R, L^2) \hookrightarrow U_{\Delta}^q(\R, L^2) \hookrightarrow  L^{\infty}(\R,L^2) .
\end{equation*}
\end{prop}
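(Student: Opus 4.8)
The plan is to reduce Proposition \ref{prop Embed} to the abstract embedding proposition for the $U^p$ and $V^p$ scales stated immediately above it. The only additional input is that the free propagator $e^{it\Delta}$ is a unitary operator on $L_x^2(\R^d)$. Consequently the map $u \mapsto v$ given by $v(t) := e^{-it\Delta} u(t)$ is, by the very definition of the spaces $U_\Delta^p L^2$ and $V_\Delta^p L^2$, an isometric bijection from $U_\Delta^p L^2$ onto $U^p(\R, L^2)$ and from $V_\Delta^p L^2$ onto $V^p(\R, L^2)$; moreover it preserves the $L_t^\infty L_x^2$ norm, since $\norm{u(t)}_{L_x^2} = \norm{e^{-it\Delta} u(t)}_{L_x^2}$ for every $t$.

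First I would prove the first embedding. Given $u \in U_\Delta^p L^2$, set $v(t) = e^{-it\Delta} u(t)$, so that $v \in U^p(\R, L^2)$ with $\norm{v}_{U^p} = \norm{u}_{U_\Delta^p L^2}$. The abstract proposition yields $v \in V_{rc}^p(\R, L^2)$ with $\norm{v}_{V^p} \lesssim \norm{v}_{U^p}$, and unwinding the definition of the $V_\Delta^p L^2$ norm gives $\norm{u}_{V_\Delta^p L^2} \lesssim \norm{u}_{U_\Delta^p L^2}$. The second embedding is identical: the abstract statement gives $V_{rc}^p(\R, L^2) \hookrightarrow U^q(\R, L^2)$ for $1 \leq p \leq q < \infty$, and transferring through $e^{-it\Delta}$ produces $V_\Delta^p L^2 \hookrightarrow U_\Delta^q L^2$. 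For the last embedding, $\norm{u}_{L_t^\infty L_x^2} = \norm{v}_{L_t^\infty L_x^2} \lesssim \norm{v}_{U^q(\R, L^2)} = \norm{u}_{U_\Delta^q L^2}$, where the inequality is the abstract embedding $U^q(\R, L^2) \hookrightarrow L^\infty(\R, L^2)$ and the two equalities use the unitarity of $e^{it\Delta}$ on $L_x^2$.

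The only point demanding a little care is bookkeeping with the regularity conditions: one checks that right-continuity and the vanishing condition $\lim_{t \to -\infty} v(t) = 0$ transfer correctly between $u$ and $v$, which follows from the strong continuity of $t \mapsto e^{it\Delta}$ on $L_x^2$. I do not anticipate a genuine obstacle: the statement is a purely formal consequence of the abstract $U^p/V^p$ theory combined with the unitarity of the Schr\"odinger group, and the same argument applies verbatim over an interval $J \subset \R$ using the interval versions of the norms defined above.
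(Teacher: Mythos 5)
Your proof is correct and is exactly the argument the paper implicitly relies on: conjugating by the unitary group $e^{it\Delta}$ turns the $\Delta$-version into the abstract $U^p \hookrightarrow V_{rc}^p \hookrightarrow U^q \hookrightarrow L^\infty L^2$ embeddings quoted from [HHK], with unitarity on $L_x^2$ making the conjugation norm-preserving. Your remark about tracking right-continuity through the conjugation is also the right technical point to flag, since the [HHK] embedding into $U^q$ requires $V_{rc}^p$ rather than all of $V^p$.
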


\begin{lem}[(29) in \cite{KT3}, Lemma 3.3 in \cite{D3}]\label{lem U^p sum}
Suppose $J= I_1 \cup I_2$, $I_1 =[a,b]$, $I_2 = [b,c]$, $a \leq b \leq c$. Then
\begin{equation*}
\norm{u}_{U_{\Delta}^p (J \times \R^2)}^p \leq \norm{u}_{U_{\Delta}^p (I_1 \times \R^2)}^p + \norm{u}_{U_{\Delta}^p (I_2 \times \R^2)}^p .
\end{equation*}

\end{lem}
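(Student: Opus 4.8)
\emph{Proof proposal.} The plan is to strip off the linear propagator and work purely with the atomic space $U^p$. Writing $v(t)=e^{-it\Delta}u(t)$, the inequality to be proved is exactly $\norm{v}_{U^p(J\times\R^2)}^p\le\norm{v}_{U^p(I_1\times\R^2)}^p+\norm{v}_{U^p(I_2\times\R^2)}^p$, since $\norm{u}_{U_\Delta^p(\,\cdot\,)}=\norm{v}_{U^p(\,\cdot\,)}$ by definition. One may assume both norms on the right are finite (else the bound is trivial) and that neither restriction $v|_{I_1}$, $v|_{I_2}$ vanishes identically (if one does, the inequality reduces to monotonicity of the $U^p$ norm under enlarging the interval, which is immediate). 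The conceptual point driving the proof is that a $U^p(J)$-atom is a step function whose jump points all lie in $J=[a,c]$, so a \emph{single} atom is allowed to jump across the interior point $b$; this is precisely what upgrades the naive $\ell^1$ additivity $\norm{v}_{U^p(J)}\le\norm{v}_{U^p(I_1)}+\norm{v}_{U^p(I_2)}$ to the claimed $\ell^p$ additivity.

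First I would fix $\varepsilon>0$ and choose near-optimal atomic decompositions $v=\sum_j\lambda_j a_j$ on $I_1$ and $v=\sum_k\mu_k b_k$ on $I_2$, with $U^p(I_1)$-atoms $a_j$ and $U^p(I_2)$-atoms $b_k$, such that $P:=\sum_j\abs{\lambda_j}\le\norm{v}_{U^p(I_1)}+\varepsilon$ and $Q:=\sum_k\abs{\mu_k}\le\norm{v}_{U^p(I_2)}+\varepsilon$; by the previous paragraph we may take $P,Q>0$. The key construction is, for each pair $(j,k)$, the function $c_{jk}$ on $\R$ equal to $\tfrac{\abs{\mu_k}}{Q}\,\lambda_j\,a_j$ on $[a,b)$, equal to $\tfrac{\abs{\lambda_j}}{P}\,\mu_k\,b_k$ on $[b,c)$, and zero elsewhere. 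Because $\sum_k\abs{\mu_k}/Q=1$ and $\sum_j\abs{\lambda_j}/P=1$, summing gives $\sum_{j,k}c_{jk}=\chi_{[a,b)}\,v|_{I_1}+\chi_{[b,c)}\,v|_{I_2}=v$ on $J$ (up to the null set $\{c\}$). Each $c_{jk}$ is a step function whose only new jump points are $a$ and $b$, both in $J$, so $c_{jk}=m_{jk}\,\tilde c_{jk}$ for a genuine $U^p(J)$-atom $\tilde c_{jk}$, where $m_{jk}$ is the $\ell^p$ norm of the sequence of $L^2$-norms of the coefficients of $c_{jk}$. Since the coefficient $\ell^p$-norms of $a_j|_{[a,b)}$ and of $b_k|_{[b,c)}$ are each at most $1$, and since an atom straddling $b$ adds the $[a,b)$- and $[b,c)$-contributions in $\ell^p$, one gets $m_{jk}\le\parenthese{\abs{\lambda_j}^p\abs{\mu_k}^pQ^{-p}+\abs{\mu_k}^p\abs{\lambda_j}^pP^{-p}}^{1/p}=\abs{\lambda_j}\abs{\mu_k}\parenthese{P^{-p}+Q^{-p}}^{1/p}$.

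Then I would simply sum: $\sum_{j,k}m_{jk}\le\parenthese{P^{-p}+Q^{-p}}^{1/p}\sum_j\abs{\lambda_j}\sum_k\abs{\mu_k}=PQ\parenthese{P^{-p}+Q^{-p}}^{1/p}=\parenthese{P^p+Q^p}^{1/p}$, which is finite, so $\{m_{jk}\tilde c_{jk}\}$ is a legitimate atomic decomposition of $v$ on $J$ and $\norm{v}_{U^p(J)}\le\sum_{j,k}m_{jk}\le\parenthese{(\norm{v}_{U^p(I_1)}+\varepsilon)^p+(\norm{v}_{U^p(I_2)}+\varepsilon)^p}^{1/p}$. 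Letting $\varepsilon\downarrow 0$ and raising to the $p$-th power gives the claim for $v$, hence for $u$ by re-inserting $e^{it\Delta}$.

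The substance of the argument is the product weighting $\lambda_j\abs{\mu_k}/Q$ and $\mu_k\abs{\lambda_j}/P$, which is the device that lets both restrictions of $\sum_{j,k}c_{jk}$ reconstruct $v$ while keeping the total $\ell^1$ mass of the new atoms equal to $(P^p+Q^p)^{1/p}$ rather than $P+Q$. The remaining work is routine bookkeeping that I would carry out carefully but not dwell on: verifying that truncating a $U^p(I_1)$-atom to $[a,b)$ (resp.\ a $U^p(I_2)$-atom to $[b,c)$) only lowers its coefficient $\ell^p$-norm and adds a jump point lying in $J$; that the double series $\sum_{j,k}c_{jk}$ converges in $U^p(J)$ to $v|_J$, which follows from $\sum_{j,k}m_{jk}<\infty$ together with completeness of $U^p$ and convergence of the partial sums as functions; and that the endpoints $\{b\},\{c\}$ and the half-open conventions cause no difficulty since they are null and $U^p$ functions are determined by their values. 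I expect no genuine obstacle beyond this—the only place to be attentive is that a $U^p(J)$-atom is permitted (indeed must be allowed) to jump at $b$, which is where the gain comes from.
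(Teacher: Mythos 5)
Your proof is correct, and it reproduces the standard atomic-decomposition argument. The paper itself gives no proof of this lemma, citing \cite{KT3} and \cite{D3} instead, and the mechanism you use -- combining a near-optimal decomposition $v=\sum_j\lambda_j a_j$ on $I_1$ with one $v=\sum_k\mu_k b_k$ on $I_2$ via the product weights $|\mu_k|\lambda_j/Q$ and $|\lambda_j|\mu_k/P$ so that a single $U^p(J)$-atom is allowed to jump across $b$, yielding $\sum_{j,k}m_{jk}\le(P^p+Q^p)^{1/p}$ -- is essentially the argument in those references; the minor bookkeeping you flag (that truncating an atom to a subinterval only discards coefficients, that the new jump points $a,b,c$ all lie in $J$, and that the half-open conventions at the endpoints are harmless) is handled exactly as you indicate.
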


\begin{prop}[Duality, Theorem 2.8 in \cite{HHK}]\label{prop Duality}
Let $DU_\Delta^p$ be the space of functions
\begin{equation*}
DU_\Delta^p = \bracket{(i\partial_t + \Delta) u : u \in U_\Delta^p },
\end{equation*}
and the $DU_\Delta^p = (V_\Delta^{p^{\prime}})^{\ast}$, with $\frac{1}{p} + \frac{1}{p^{\prime}} = 1$. Then $(0 \in J )$
\begin{equation*}
\norm{\int_0^t e^{i(t-t^{\prime})\Delta}F(u)(t^{\prime})\, dt^{\prime}}_{U_\Delta^p (J\times \R^d)} \lesssim \sup\bracket{\int_J \inner{ v, F}  \,  dt: \norm{v}_{V_\Delta^{p^{\prime}} }=1 }.
\end{equation*}
\end{prop}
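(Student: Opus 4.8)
The plan is to obtain both assertions from the $\langle U^p,V^{p'}\rangle$-duality of Hadac--Herr--Koch (\cite{HHK}, Theorem 2.8), transported to the $\Delta$-adapted spaces, so that everything reduces to the free-solution picture. First I would pass to the flat setting via the unitarity of $e^{-it\Delta}$ on $L^2_x$: the map $v\mapsto e^{-it\Delta}v$ is an isometry $V^{p'}_\Delta\to V^{p'}$ and $U^p_\Delta\to U^p$, and for a distribution $G$ one has $\int_J\langle v,G\rangle\,dt=\int_J\langle e^{-it\Delta}v,e^{-it\Delta}G\rangle\,dt$, while $G\in DU^p_\Delta$ precisely when $e^{-it\Delta}G=(e^{-it\Delta}w)'$ for some $w\in U^p_\Delta$. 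Hence the identity $DU^p_\Delta=(V^{p'}_\Delta)^\ast$ is equivalent to the flat statement $DU^p=(V^{p'})^\ast$, and this is exactly Theorem 2.8 of \cite{HHK}; the non-trivial half is the identification of $(V^{p'})^\ast$, i.e. that every functional on $V^{p'}$ of finite pairing-norm is realised by (the derivative of) a $U^p$-function.

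For the Duhamel estimate, set $w(t):=\int_0^t e^{i(t-t')\Delta}F(u)(t')\,dt'$. A direct computation gives $(i\partial_t+\Delta)w=iF(u)$ with $w(0)=0$, equivalently $e^{-it\Delta}w(t)=\int_0^t e^{-it'\Delta}F(u)(t')\,dt'$, which is absolutely continuous on $J$ with derivative $e^{-it'\Delta}F(u)(t')$; writing this primitive as the superposition over $t'$ of one-step $U^1$-atoms $\chi_{[t',\sup J)}\,e^{-it'\Delta}F(u)(t')$ shows $w\in U^1_\Delta\hookrightarrow U^p_\Delta$ whenever $F(u)\in L^1_tL^2_x(J\times\R^d)$ (which holds after the Littlewood--Paley truncations used later, and in general one approximates). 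Now apply the isometry $(U^p)^\ast\cong V^{p'}$ to $e^{-it\Delta}w$: since the HHK bilinear form collapses, on an absolutely continuous function $h$, to $B(h,\phi)=\int_J\langle h'(t),\phi(t)\rangle\,dt$, one gets
\begin{equation*}
\norm{w}_{U^p_\Delta(J\times\R^d)}=\sup_{\|\phi\|_{V^{p'}}=1}\left|\int_J\langle e^{-it\Delta}F(u)(t),\phi(t)\rangle\,dt\right|=\sup_{\|\phi\|_{V^{p'}}=1}\left|\int_J\langle F(u)(t),e^{it\Delta}\phi(t)\rangle\,dt\right|,
\end{equation*}
again by unitarity of $e^{-it\Delta}$; substituting $v(t)=e^{it\Delta}\phi(t)$, whose $V^{p'}_\Delta$-norm equals $\|\phi\|_{V^{p'}}$, identifies the supremum with the one over the unit ball of $V^{p'}_\Delta$ and yields the assertion (in fact with equality, modulo the normalisation of $B$, the immaterial factor $i$, and a harmless conjugation inside the pairing).

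The only genuinely non-formal ingredient is Theorem 2.8 of \cite{HHK} itself, and within it the surjectivity half of $DU^p=(V^{p'})^\ast$, which is established there by a Hahn--Banach argument using the structure of $U^p$. Everything else is bookkeeping: the integration-by-parts identity $B(h,\phi)=\int\langle h',\phi\rangle$ for absolutely continuous $h$, the verification that $w\in U^p_\Delta$ as above, and the treatment of the partition endpoints on $J$ (the boundary term at $\inf J$ drops because $w(0)=0$, and the term at $\sup J$ is absorbed by the convention $\phi(\sup J):=0$ when $\sup J=\infty$). I expect this endpoint bookkeeping, together with making the $w\in U^p_\Delta$ step precise for general $F(u)$, to be the only places requiring a little care.
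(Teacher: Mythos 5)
The paper offers no proof of this proposition; it is stated as a black-box citation of Theorem 2.8 (and the surrounding duality machinery) in [HHK], so there is nothing internal to compare against. Your reconstruction is the standard one and is essentially correct: reduce to the flat ($U^p$, $V^{p'}$) spaces by the isometry $u\mapsto e^{-it\Delta}u$, observe that the Duhamel potential $w$ satisfies $e^{-it\Delta}w(t)=\int_0^t e^{-it'\Delta}F(t')\,dt'$ and hence lies in $U^1_\Delta\hookrightarrow U^p_\Delta$ when $F\in L^1_tL^2_x$, and then compute $\norm{w}_{U^p_\Delta}$ via the isometric duality $(U^p)^\ast\cong V^{p'}$ together with the integration-by-parts identity for the bilinear form on absolutely continuous inputs. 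The two places you flag as requiring care — justifying membership of $w$ in $U^p_\Delta$ for the $F$'s that actually appear, and the endpoint bookkeeping at $\inf J,\sup J$ — are exactly the points one must check; in the paper's applications $F$ is always a Littlewood--Paley localized nonlinearity on a compact interval, so $F\in L^1_tL^2_x$ holds and the boundary terms vanish by the stated conventions. One small caution: the paper's line ``$DU_\Delta^p=(V_\Delta^{p'})^\ast$'' is a slightly informal paraphrase of the HHK duality (the clean statement there is $(U^p)^\ast\cong V^{p'}$); your argument correctly uses the latter, together with the canonical isometric embedding of $U^p$ into its bidual, to extract the supremum formula, so this does not affect the conclusion.
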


\begin{lem}[Decomposition lemma]\label{lem Decomp}
Suppose $[a,b]=J = \cup_{k=1}^K P_k$, where $P_k=[a_k, b_k]$ $(b_k=a_{k+1})$ are consecutive intervals. Also suppose that $\abs{\nabla}^{\frac{1}{2}}  F \in L_t^1 L_x^2 (J \times \R^2)$, then for any $t_0 \in J$,
\begin{equation*}
\norm{\abs{\nabla}^{\frac{1}{2}}   \int_{t_0}^t  e^{i(t-t^{\prime})\Delta} F(t^{\prime})  \, dt^{\prime}  }_{U_{\Delta}^2 (J\times \R^2)} \lesssim \sum_{k=1}^K \sup_{\norm{v}_{V_{\Delta}^2 (P_k \times \R^2) } =1} \int_{P_k} \inner{v, \abs{\nabla}^{\frac{1}{2}}  F}  \, dt  .
\end{equation*}
Note that the implicit constant will not depend on $\norm{\abs{\nabla}^{\frac{1}{2}}  F}_{L_t^1 L_x^2}$.
\end{lem}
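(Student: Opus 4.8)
The plan is to exploit the additivity of the Duhamel integral across the partition $J = \cup_{k=1}^K P_k$ together with the $\ell^p$-summability property of the $U^p_\Delta$ norm over consecutive intervals (Lemma \ref{lem U^p sum}) and the duality characterization of $DU^p_\Delta$ (Proposition \ref{prop Duality}). First I would reduce to estimating the Duhamel term $P_k$-interval by $P_k$-interval. Fix $t_0 \in J$; without loss of generality $t_0 = a_1$ (the general case follows since moving the base point only changes the solution by a linear solution $e^{i(t-t_0)\Delta}(\text{const})$, which has zero $(i\partial_t+\Delta)$-source and is harmless, or alternatively one splits $J$ at $t_0$ and runs the argument on each half). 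Writing $w(t) := \abs{\nabla}^{\frac12}\int_{t_0}^t e^{i(t-t')\Delta}F(t')\,dt'$, one has on each $P_k = [a_k,b_k]$ the decomposition
\begin{equation*}
w(t) = e^{i(t-a_k)\Delta} w(a_k) + \abs{\nabla}^{\frac12}\int_{a_k}^t e^{i(t-t')\Delta}F(t')\,dt' \qquad (t \in P_k).
\end{equation*}
The first summand is a free evolution, so its $U^2_\Delta(P_k\times\R^2)$ norm equals $\norm{w(a_k)}_{L^2_x}$, and since $U^2_\Delta \hookrightarrow L^\infty_t L^2_x$ on the preceding intervals this is already controlled by the sum of the earlier local Duhamel contributions; thus the free part telescopes and does not produce anything beyond the claimed right-hand side.

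Next, for the genuinely inhomogeneous piece on $P_k$, I would apply Proposition \ref{prop Duality} with $p=2$ (so $p' = 2$): since $\abs{\nabla}^{\frac12}F \in L^1_t L^2_x(J\times\R^2)$ it defines an element of $DU^2_\Delta = (V^2_\Delta)^*$ over $P_k$, and hence
\begin{equation*}
\norm{\abs{\nabla}^{\frac12}\int_{a_k}^t e^{i(t-t')\Delta}F(t')\,dt'}_{U^2_\Delta(P_k\times\R^2)} \lesssim \sup_{\norm{v}_{V^2_\Delta(P_k\times\R^2)}=1}\int_{P_k}\inner{v,\abs{\nabla}^{\frac12}F}\,dt,
\end{equation*}
which is exactly the $k$-th summand on the right-hand side of the lemma. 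It is important here that the hypothesis $\abs{\nabla}^{\frac12}F \in L^1_t L^2_x$ is used only to guarantee that the Duhamel integral is a well-defined $U^2_\Delta$ function with finite norm — the implicit constant coming out of Proposition \ref{prop Duality} is universal, so, as the lemma asserts, it does not depend on $\norm{\abs{\nabla}^{\frac12}F}_{L^1_tL^2_x}$. Finally I would glue the pieces together: by Lemma \ref{lem U^p sum} applied inductively over the consecutive intervals $P_1,\dots,P_K$,
\begin{equation*}
\norm{w}_{U^2_\Delta(J\times\R^2)}^2 \le \sum_{k=1}^K \norm{w}_{U^2_\Delta(P_k\times\R^2)}^2,
\end{equation*}
and then combining the per-interval free-part and duality-part bounds above, together with $(\sum a_k^2)^{1/2} \le \sum a_k$ for nonnegative $a_k$, yields the stated $\ell^1$-in-$k$ bound.

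The main obstacle I anticipate is bookkeeping the base-point/free-evolution term cleanly: because the Duhamel integral $\int_{t_0}^t$ is taken from a single fixed $t_0$ rather than restarted at each $a_k$, the restriction of $w$ to $P_k$ is not itself a Duhamel integral over $P_k$ but differs from one by the free evolution of $w(a_k)$, and one must verify that these boundary terms are absorbed (via the $U^2_\Delta \hookrightarrow L^\infty_t L^2_x$ embedding and a telescoping/summation argument) without generating a constant that depends on $\norm{\abs{\nabla}^{\frac12}F}_{L^1_tL^2_x}$. A clean way to handle this is to observe $\norm{w(a_k)}_{L^2_x} \le \sum_{j<k}\bigl(\text{$j$-th duality quantity}\bigr)$ and then note that inserting these into $\sum_k \norm{e^{i(t-a_k)\Delta}w(a_k)}_{U^2_\Delta(P_k)}^2 = \sum_k \norm{w(a_k)}_{L^2_x}^2$ still only costs a factor depending on $K$-independent combinatorics after one more application of the triangle inequality — so the final bound is genuinely the $\ell^1$ sum on the right-hand side. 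Everything else is a direct invocation of the already-established atomic-space machinery.
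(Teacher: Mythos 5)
Your proof takes a genuinely different route from the paper's and, as written, has a gap that I do not think can be patched by the "telescoping" argument you sketch. The paper applies Proposition~\ref{prop Duality} \emph{once} on the full interval $[t_0,b]$ to replace the $U^2_\Delta$ norm by $\sup_{\norm{v}_{V^2_\Delta([t_0,b])}=1}\int_{[t_0,b]}\inner{v,\abs{\nabla}^{1/2}F}\,dt$, and then splits the resulting \emph{scalar integral} as $\sum_k\int_{P_k}$. This is licensed because a $V^2_\Delta$ test function of norm~$1$ on $[t_0,b]$ restricts to a $V^2_\Delta$ test function of norm $\leq 1$ on each $P_k$ (the sup in the $V^2$ norm over partitions of $P_k$ is a sub-collection of the sup over partitions of $[t_0,b]$). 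That route never sees boundary terms. You, by contrast, first split the $U^2_\Delta$ norm via Lemma~\ref{lem U^p sum}, then on each $P_k$ decompose into a local Duhamel piece plus the free evolution $e^{i(t-a_k)\Delta}w(a_k)$, and must then absorb the boundary contribution $\norm{w(a_k)}_{L^2_x}$.

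The gap is precisely in that absorption. Writing $D_j$ for the $j$-th duality quantity, your observation $\norm{w(a_k)}_{L^2_x}\lesssim\sum_{j<k}D_j$ is correct (test against $e^{it\Delta}\phi$ with $\norm{\phi}_{L^2}\leq 1$, which is a bounded $V^2_\Delta$ function). But your claim that inserting these bounds into $\sum_k\norm{w}_{U^2_\Delta(P_k)}^2$ only "costs a $K$-independent factor" is false. One obtains $\norm{w}_{U^2_\Delta(P_k)}\lesssim\sum_{j\leq k}D_j$, hence
\begin{equation*}
\norm{w}_{U^2_\Delta(J)}^2 \leq \sum_{k=1}^K\norm{w}_{U^2_\Delta(P_k)}^2 \lesssim \sum_{k=1}^K\Bigl(\sum_{j\leq k}D_j\Bigr)^2,
\end{equation*}
and in the worst case (say all $D_j$ comparable) the right-hand side is of size $K\bigl(\sum_j D_j\bigr)^2$, not $\bigl(\sum_j D_j\bigr)^2$. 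Thus your argument yields $\norm{w}_{U^2_\Delta(J)}\lesssim K^{1/2}\sum_k D_k$, which is strictly weaker than the lemma. The factor $K^{1/2}$ is fatal for the intended application: in the proof of Lemma~\ref{lem Term2} the decomposition lemma is applied with $K$ equal to the number of intervals $G_\beta^{n_1}$ contained in $G_\alpha^i$ (or $G_k^j$), which grows like $2^{i-n_1}$, so the implicit constant must be uniform in the number of pieces. You should instead adopt the paper's order of operations: duality first on the whole interval, then decompose the resulting integral, using that restriction of $v$ to a subinterval does not increase the $V^2_\Delta$ norm.
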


\begin{proof}
We first consider $t> t_0$, and $t_0 \in P_{k^*}=[a_{k^*}, b_{k^*}]$. Then by Proposition \ref{prop Duality} and the partition of the interval $J$,  we write
\begin{align*}
& \quad \norm{\abs{\nabla}^{\frac{1}{2}}   \int_{t_0}^t  e^{i(t-t^{\prime})\Delta} F(t^{\prime})  \,  dt^{\prime}  }_{U_{\Delta}^2 ([t_0, b]\times \R^2)}  \lesssim \sup_{\norm{v}_{V_{\Delta}^2([t_0, b] \times \R^2) =1}} \int_{[t_0, b]} \inner{v, \abs{\nabla}^{\frac{1}{2}} F} \, dt \\
& \leq \sum_{k>k^*} \sup_{\norm{v}_{V_{\Delta}^2(P_k \times \R^2) }=1} \int_{ P_k} \inner{ v  , \abs{\nabla}^{\frac{1}{2}} F} \, dt + \sup_{\norm{v}_{V_{\Delta}^2([a_{k^*},b_{k^*}] \times \R^2)} =1}  \int_{ [a_{k^*},b_{k^*}] } \inner{  v , \abs{\nabla}^{\frac{1}{2}} F} \, dt \\
& = \sum_{k \geq k^*} \sup_{\norm{v}_{V_{\Delta}^2 (P_k \times \R^2) } =1} \int_{P_k} \inner{v, \abs{\nabla}^{\frac{1}{2}}  F}  \, dt  .
\end{align*}

Similar argument holds for $t < t_0$. 
Therefore, the lemma follows.
\end{proof}

\begin{prop}[Transfer Principle, Proposition 2.19 in \cite{HHK}]\label{prop TransferPrinciple}
Let 
\begin{equation*}
T_0 : L^2 \times \dots \times L^2 \to L_{loc}^{1}
\end{equation*}
be an m-linear operator. Assume that for some $1\leq p,\ q \leq \infty$
\begin{equation*}
\norm{ T_0 (e^{it\Delta} \phi_1,\dots, e^{it\Delta} \phi_m)}_{L^p(\R, L_x^q)} \lesssim \prod_{i=1}^m\norm{\phi_i}_{L^2}.
\end{equation*}
Then, there exists an extension $T : U_{\Delta}^p \times \dots \times U_{\Delta}^p \to L^p(\R, L_x^q)$ satisfying
\begin{equation*}
\norm{T(u_1, \dots, u_m)}_{L^p(\R, L_x^q)} \lesssim \prod_{i=1}^m \norm{u_i}_{U_{\Delta}^p},
\end{equation*}
and  such that $T(u_1, \dots , u_m) (t, \cdot) = T_0 (u_1(t), \dots , u_m(t))(\cdot)$, a.e.
\end{prop}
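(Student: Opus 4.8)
The plan is to \emph{define} the extension by the pointwise-in-time prescription $T(u_1,\dots,u_m)(t,\cdot):=T_0(u_1(t),\dots,u_m(t))(\cdot)$, which is meaningful for every $t$ because $U_{\Delta}^p\hookrightarrow L_t^\infty L_x^2$ (Proposition \ref{prop Embed}), so that $m$-linearity of $T$ and the asserted a.e.\ identification $T(u_1,\dots,u_m)(t,\cdot)=T_0(u_1(t),\dots,u_m(t))(\cdot)$ hold by construction. The entire content is then the space-time bound. By the definition of the $U^p$ norm, each $u_i$ admits a decomposition $u_i=\sum_j\lambda_{i,j}a_{i,j}$ with $a_{i,j}$ a $U_{\Delta}^p$-atom and $\sum_j|\lambda_{i,j}|\le\|u_i\|_{U_{\Delta}^p}+\varepsilon$; by multilinearity it therefore suffices to prove the uniform bound
\[
\|T(a_1,\dots,a_m)\|_{L_t^p L_x^q(\R\times\R^d)}\lesssim 1
\]
for arbitrary $U_{\Delta}^p$-atoms $a_1,\dots,a_m$, and then to sum the resulting absolutely convergent series in the coefficients and send $\varepsilon\to0$. (The interchange of $T_0$ with possibly infinite atomic sums is handled by first treating finite decompositions, which are dense, and passing to the limit.)

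For the atomic bound, write $e^{-it\Delta}a_i(t)=\sum_k\chi_{[t_{k-1}^i,t_k^i)}(t)\,\phi_{k-1}^i$ with $\sum_k\|\phi_{k-1}^i\|_{L^2}^p=1$, and let $s_0<s_1<\dots<s_L$ be the \emph{minimal} common refinement of the $m$ partitions, that is, the union of all their breakpoints. On each slab $Q_l=[s_{l-1},s_l)$ one has $a_i(t)=e^{it\Delta}\psi_{i,l}$, where $\psi_{i,l}$ is whichever $\phi_{\bullet}^i$ is active there; hence $T(a_1,\dots,a_m)$ restricted to $Q_l$ coincides with the restriction of $T_0(e^{it\Delta}\psi_{1,l},\dots,e^{it\Delta}\psi_{m,l})$, and the hypothesis yields
\[
\|T(a_1,\dots,a_m)\|_{L_t^p L_x^q(Q_l\times\R^d)}\le\|T_0(e^{it\Delta}\psi_{1,l},\dots,e^{it\Delta}\psi_{m,l})\|_{L_t^p L_x^q(\R\times\R^d)}\lesssim\prod_{i=1}^m\|\psi_{i,l}\|_{L^2}.
\]
Since $p<\infty$, summing $p$-th powers over the disjoint slabs gives $\|T(a_1,\dots,a_m)\|_{L_t^p L_x^q}^p\lesssim\sum_l\prod_{i=1}^m\|\psi_{i,l}\|_{L^2}^p$.

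It remains to show $\sum_l\prod_{i=1}^m\|\psi_{i,l}\|_{L^2}^p\le1$. I would argue this from the minimality of the common refinement: writing $\psi_{i,l}=\phi_{r_i(l)}^i$, the map $l\mapsto(r_1(l),\dots,r_m(l))$ is injective, since at every interior breakpoint $s_l$ at least one factor $a_i$ jumps, forcing the block-index tuple to change. Therefore
\[
\sum_l\prod_{i=1}^m\|\psi_{i,l}\|_{L^2}^p\le\sum_{(r_1,\dots,r_m)}\prod_{i=1}^m\|\phi_{r_i}^i\|_{L^2}^p=\prod_{i=1}^m\Big(\sum_r\|\phi_r^i\|_{L^2}^p\Big)=1.
\]
This closes the atomic bound; recombining the atomic decompositions as above then gives $\|T(u_1,\dots,u_m)\|_{L_t^p L_x^q}\lesssim\prod_i\|u_i\|_{U_{\Delta}^p}$, completing the proof.

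I expect the combinatorial summation in the last step to be the only real obstacle: the union of the $m$ partitions chops each atom's intervals of constancy into many pieces, which a priori inflates $\sum_l\prod_i\|\psi_{i,l}\|_{L^2}^p$, and one must exploit the minimality of the refinement — equivalently, the injectivity of the block-index map — to recover the sharp bound $1$. The remaining ingredients, namely the reduction to atoms, the slicing into slabs, and the invocation of the free-evolution hypothesis restricted to a slab, are routine, as is the density/measurability point noted above.
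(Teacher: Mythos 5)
Your proof is correct and follows the standard argument for this transfer principle (the paper cites the result from \cite{HHK} without reproducing a proof): reduce to atoms by multilinearity, slice along the minimal common refinement of the $m$ partitions, invoke the free-evolution hypothesis on each slab, sum $p$-th powers over slabs, and close with the combinatorial bound. One small point worth making explicit: the injectivity of $l\mapsto(r_1(l),\dots,r_m(l))$ needs slightly more than ``at every interior breakpoint the tuple changes,'' since that observation on its own only distinguishes \emph{consecutive} tuples. You should add that each $r_i(\cdot)$ is nondecreasing in $l$ (the breakpoints advance in time), so that for $l<l'$ the breakpoint $s_l$ forces some $i$ to satisfy $r_i(l)<r_i(l+1)\le r_i(l')$, hence the tuples differ; with that the bound $\sum_l\prod_i\|\psi_{i,l}\|_{L^2}^p\le\prod_i\sum_k\|\phi_k^i\|_{L^2}^p=1$ follows exactly as you state, and the remainder of the argument is fine.
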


\begin{cor}\label{cor U Bilinear}
Suppose that under the same condition on the supports of $\hat{u}_0$ and $\hat{v}_0$ as in Lemma \ref{lem Bilinear}, i.e. $\hat{u}_0$ is supported on $\abs{\xi} \sim N$, $\hat{v}_0$ is supported on $\abs{\xi} \sim M$, $M \ll N$, for $\frac{1}{q}+ \frac{1}{q} =1$, $2 \leq p \leq \infty$
\begin{equation*}
\norm{(e^{it\Delta} u_0) (e^{it\Delta}v_0)}_{L_t^p L_x^q (\R \times \R^2)} \lesssim \parenthese{\frac{M}{N}}^{\frac{1}{p}} \norm{u_0}_{L_x^2 (\R^2)} \norm{v_0}_{L_x^2 (\R^2)} .
\end{equation*}
Then if $\hat{u}(t, \xi)$ and $\hat{v}(t,\xi)$ are under the same conditions,
\begin{equation*}
\norm{uv}_{L_t^p L_x^q (I \times \R^2)} \lesssim \parenthese{\frac{M}{N}}^{\frac{1}{p}} \norm{u}_{U_{\Delta}^p (I \times \R^2)}\norm{v}_{U_{\Delta}^p (I \times \R^2)} .
\end{equation*}
\end{cor}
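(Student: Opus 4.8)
The plan is to deduce Corollary \ref{cor U Bilinear} from the bilinear free-wave estimate by the transfer principle (Proposition \ref{prop TransferPrinciple}), applied to the bilinear operator $T_0(\phi,\psi) := (e^{it\Delta}\phi)(e^{it\Delta}\psi)$. First I would observe that the hypothesis of the corollary is precisely that
\begin{equation*}
\norm{T_0(e^{it\Delta}u_0, e^{it\Delta}v_0)}_{L_t^p L_x^q(\R \times \R^2)} \lesssim \parenthese{\tfrac{M}{N}}^{\frac{1}{p}} \norm{u_0}_{L_x^2}\norm{v_0}_{L_x^2},
\end{equation*}
under the stated Fourier-support conditions $\supp \hat u_0 \subset \{|\xi|\sim N\}$, $\supp \hat v_0 \subset \{|\xi|\sim M\}$, $M\ll N$; this is exactly the $m=2$ input needed for the transfer principle, once one checks (as in Remark following Lemma \ref{lem Bilinear}) that the conjugation in one factor is harmless. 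The constant $(M/N)^{1/p}$ is just a fixed multiplicative factor, so it is carried through the extension unchanged.

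The key step is then to apply Proposition \ref{prop TransferPrinciple} with this $T_0$: there is an $m$-linear extension $T : U_\Delta^p \times U_\Delta^p \to L_t^p L_x^q$ with
\begin{equation*}
\norm{T(u,v)}_{L_t^p L_x^q} \lesssim \parenthese{\tfrac{M}{N}}^{\frac{1}{p}}\norm{u}_{U_\Delta^p}\norm{v}_{U_\Delta^p},
\end{equation*}
and with $T(u,v)(t,\cdot) = T_0(u(t),v(t))(\cdot) = u(t,\cdot)v(t,\cdot)$ almost everywhere, so $T(u,v) = uv$. The point requiring a word of care is that the transfer principle as stated produces an operator on the whole line, whereas the corollary is stated on an interval $I$: I would handle this by restriction, noting that $\norm{uv}_{L_t^pL_x^q(I\times\R^2)} \le \norm{uv}_{L_t^pL_x^q(\R\times\R^2)}$ and that the $U_\Delta^p(I\times\R^2)$ norm controls (indeed equals, via extension by the free evolution / truncation of atoms to $I$) the relevant $U_\Delta^p$ norm of an extension; alternatively one runs the transfer-principle proof directly with atoms whose partition points lie in $I$. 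One also has to make sure the Fourier-support hypotheses on $u,v$ are preserved: since the Littlewood–Paley projections $P_{|\xi|\sim N}$ commute with $e^{it\Delta}$ and with the atomic structure, if $\hat u(t,\cdot)$ is supported in $\{|\xi|\sim N\}$ for a.e.\ $t$ then its atoms can be taken with the same support, so the hypothesis of $T_0$ applies atom-by-atom.

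The main obstacle I anticipate is not conceptual but bookkeeping: verifying that the interval-localized $U_\Delta^p(I)$ spaces interact correctly with the transfer principle — i.e.\ that decomposing $u$ and $v$ into atoms supported on partitions of $I$ and bilinearly expanding $T_0$ over pairs of atoms reproduces $uv$ on $I$ while summing the $\ell^1$ atom-coefficients to $\norm{u}_{U_\Delta^p(I)}\norm{v}_{U_\Delta^p(I)}$ up to the fixed constant. Once the atomic expansion is set up, each pair of atoms is a (finite sum of) products of truncated free evolutions, to which the hypothesis applies with the gain $(M/N)^{1/p}$, and Hölder/$\ell^1$-summation over the atom coefficients closes the estimate. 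This is the standard transfer-principle argument (cf.\ Proposition \ref{prop TransferPrinciple} and its use in \cite{HHK}), so I would keep the proof short, citing the transfer principle and only remarking on the interval localization and the persistence of Fourier supports.
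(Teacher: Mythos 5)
Your proposal is correct and follows exactly the route the paper intends: Corollary \ref{cor U Bilinear} is stated immediately after Proposition \ref{prop TransferPrinciple} with no further proof precisely because it is a direct application of the transfer principle to $T_0(\phi,\psi)=(e^{it\Delta}\phi)(e^{it\Delta}\psi)$ (or, more precisely, to $T_0(\phi,\psi)=(e^{it\Delta}P_N\phi)(e^{it\Delta}P_M\psi)$ so that $T_0$ is genuinely bilinear on $L^2\times L^2$). Your remarks on interval localization and persistence of Fourier supports are the standard bookkeeping implicit in the paper's usage and are handled correctly.
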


Now we are ready to define the suitable atomic space where we are able to drive a long time Strichartz estimate.

\subsection{$\tilde{X}_{k_0}$-norm}
In this section, we are ready to define the long time Strichartz norm $\tilde{X}_{k_0}$-norm. We first fix some parameters and define some suitable small intervals, then give the construction of this $\tilde{X}_{k_0}$-norm.

\subsubsection{Choice of $\varepsilon_1$, $\varepsilon_2$ and $\varepsilon_3$}
Fix three constants $\varepsilon_1$, $\varepsilon_2$ and $\varepsilon_3$ satisfying
\begin{equation}\label{eq Epsilon123}
0 < \varepsilon_3 \ll \varepsilon_2 \ll \varepsilon_1 < 1 .
\end{equation}
We will add restrictions to these constants later. 

Fix a non-negative integer $k_0$ and suppose that $M = 2^{k_0}$. Let $[a,b]$ be an interval such that
\begin{equation}\label{eq Scale1}
\int_a^b \int \abs{u(t,x)}^8 \, dxdt = M,
\end{equation}
and
\begin{equation}\label{eq Scale2}
\int_a^b N(t) \, dt = \varepsilon_3 M.
\end{equation}
Note that we are always able to choose such interval $[a,b]$, since the scalings of \eqref{eq Scale1} and \eqref{eq Scale2} are different. We will choose $\varepsilon_1$ and $\varepsilon_2$ later in \eqref{eq Epsilon}.

\subsubsection{Definitions of small intervals: $J_l$ and $J^{\alpha}$}
Next, we consider the following two types of partitions of $[a,b]$ ($J_l$ small intervals and $J^{\alpha}$ small intervals):
\begin{defn}[$J_l$ small intervals]\label{defn Small intervals 1}
Let $[a,b] = \cup_{l=0}^{M-1} J_l$, $l = 0, \dots , M-1$ with 
\begin{equation*}
\norm{u}_{L_{t,x}^8 (J_l \times \R^2)}^8 =1 .
\end{equation*}
\end{defn}

\begin{defn}[$J^{\alpha}$ small intervals]\label{defn Small intervals 2}
Let $[a,b] = \cup_{\alpha=0}^{M-1} J^{\alpha}$, $\alpha = 0,\dots, M-1$ such that
\begin{equation*}
\int_{J^{\alpha}} (N(t) + \varepsilon_3 \norm{u(t)}_{L_x^8 (\R^2)}^8) \, dt = 2 \varepsilon_3  .
\end{equation*}

\end{defn}


\begin{rmk}[Differences between $J_l$ and $J^{\alpha}$ small intervals]
The total number of these two types of intervals are the same, since we have fixed $M$ at first. In fact,
\begin{equation*}
\norm{u}_{L_{t,x}^8 (J_l \cup J_{l+1} \times \R^2)}^8 =2 .
\end{equation*}
So every $J^{\alpha}$ small interval will be covered within at most three $J_l$ small intervals. A $J_l$ interval may intersect $J_{\alpha}$ intervals and a $J_{\alpha}$ interval may intersect $J_l$ intervals. The partitions $J_{\alpha}$ and $J_l$ can be quite different.
\end{rmk}

\begin{rmk}\label{rmk Strichartz pair} 
For any Strichartz pair $(q,r)$, 
\begin{equation*}
\norm{ \abs{\nabla}^{\frac{1}{2}}  u  }_{L_{t}^{q} L_{x}^{r}(J_l \times \R^2)} \lesssim_q 1	\text{ and }  \norm{ \abs{\nabla}^{\frac{1}{2}}  u  }_{L_{t}^{q} L_{x}^{r}(J^{\alpha} \times \R^2)} \lesssim_q 1  .
\end{equation*}
\end{rmk}
\begin{proof}[Proof of Remark \ref{rmk Strichartz pair}] 
Subdividing intervals $J_l$ or $J^{\alpha}$ into subintervals such that the $L_{t,x}^8$ norm is sufficiently small and running a standard continuity argument on each subinterval would give the boundness of the norms in this remark.

\end{proof}

\subsubsection{Construction of $G_k^j$ intervals and restrictions of $\varepsilon_1$, $\varepsilon_2$ and $\varepsilon_3$}
\begin{defn}
For an integer $0 \leq j < k_0$, $0 \leq k < 2^{k_0 -j}$, let
\begin{equation*}
G_k^j =\cup_{\alpha =k 2^j}^{(k+1)2^j-1} J^{\alpha}  .
\end{equation*}
For $j \geq k_0$ let $G_k^j =[a,b]$.
\end{defn}

\begin{rmk}\label{rmk N(J_l)}
After defining these two types of small intervals, we recall Remark \ref{rmk N(J)}, then we have
\begin{align*}
\frac{1}{N(J_l)} & = N(J_l ) \cdot N^{-2}(J_l ) \sim N(J_l) \abs{J_l} , \\
\sum_{J_l \subset G_k^j } \frac{1}{N(J_l)} & \sim \sum_{J_l \subset G_k^j }   N(J_l) \abs{J_l} \sim \int_{G_k^j } N(t ) \, dt \sim 2^j \varepsilon_3  .
\end{align*}
\end{rmk}

\noindent Recall \eqref{eq Defn AP2}, Lemma \ref{lem Local const N(t)} and Remark \ref{rmk N(J)}. It is possible to choose $\varepsilon_1$, $\varepsilon_2$ and $\varepsilon_3$ which satisfy \eqref{eq Epsilon123} and also the following conditions, which will be used in Section \ref{sec LTS}:
\begin{equation}\label{eq Epsilon}
\begin{cases}
\begin{aligned}
& \abs{\frac{d}{dt} N(t)} \leq  \frac{N^3(t)}{\varepsilon_1^{1/2}}   \qquad \implies \abs{\frac{d}{dt}\parenthese{\frac{1}{N(t)}}} \leq \frac{N(t)}{\varepsilon_1^{1/2}}  \\
&\int_{\abs{x-x(t)}  \leq \frac{\varepsilon_3^{1/4}}{N(t)}} \abs{ \abs{\nabla}^{\frac{1}{2}} u(t,x)}^2 \, dx + \int_{\abs{\xi} \leq  \varepsilon_3^{1/4}N(t)} \abs{\xi }\abs{\hat{u}(t, \xi )}^2 \, d\xi \leq \varepsilon_2^2 \\
& \varepsilon_3 < \varepsilon_2^{24}
\end{aligned}
\end{cases}
\end{equation}

\begin{rmk}\label{rmk Difference 1/N(t)}
Combine Definition \ref{defn Small intervals 2} and \eqref{eq Epsilon}, then the difference of $\frac{1}{N(t)}$ on $G_{\alpha}^i$ is at most,
\begin{equation*}
\int_{G_{\alpha}^i}\abs{\frac{d}{dt } \parenthese{\frac{1}{N(t)}}}  dt \leq \int_{G_{\alpha}^i} \frac{N(t)}{\varepsilon_1^{1/2}} dt \leq  \varepsilon_1^{-1/2} \varepsilon_3 2^{i}  .
\end{equation*}
\end{rmk}

\subsubsection{Definition of $\tilde{X}_{k_0}$ spaces and properties}
\noindent In the rest of the paper, the Littlewood-Paley projection $P_{2^{-i-2} \leq \cdot \leq 2^{-i+2}}  $ will be abbreviated to $P_{2^{-i}} $.

\begin{defn}[$\tilde{X}_{k_0}$ spaces]\label{defn X}
For any $G_k^j \subset [a,b]$, let
\begin{equation}\label{X}
\begin{aligned}
\norm{u}_{X(G_k^j \times \R^2)}^2   := & \sum_{i: 0 \leq i < j }  2^{i-j} \sum_{G_{\alpha}^i \subset G_k^j} \norm{\abs{\nabla}^{\frac{1}{2}}  P_{ 2^{-i}} u}_{U_{\Delta}^2 (G_{\alpha}^i \times \R^2)}^2 + \sum_{i: i \geq j}  \norm{\abs{\nabla}^{\frac{1}{2}} P_{2^{-i}} u}_{U_{\Delta}^2 (G_{k}^j \times \R^2)}^2 .
\end{aligned}
\end{equation} 
Then define $\tilde{X}_{k_0}$ to be the supremum of \eqref{X} over all intervals $G_k^j \subset [a,b]$ with $k \leq k_0$.
\begin{equation*}
\norm{u}_{\tilde{X}_{k_0} ([a,b] \times \R^2)}^2 : = \sup_{j: 0 \leq j \leq k_0} \sup_{G_k^j \subset [a,b]} \norm{u}_{X(G_k^j \times \R^2)}^2  .
\end{equation*}
Also for $0 \leq k_{\ast}  \leq k_0$, let
\begin{equation*}
\norm{u}_{\tilde{X}_{k_{\ast}} ([a,b] \times \R^2)}^2 : = \sup_{j: 0 \leq j \leq k_{\ast}} \sup_{G_k^j \subset [a,b]} \norm{u}_{X(G_k^j \times \R^2)}^2  .
\end{equation*}
$\norm{u}_{\tilde{X}_{k_{\ast}} (G_k^j \times \R^2)}$, $k_{\ast} \leq j$ is defined in a similar manner:
\begin{equation*}
\norm{u}_{\tilde{X}_{k_{\ast}} (G_k^j \times \R^2)}^2 : = \sup_{i: 0 \leq i \leq k_{\ast}} \sup_{G_{\alpha}^i \subset G_k^j} \norm{u}_{X(G_{\alpha}^i \times \R^2)}^2  .
\end{equation*}
\end{defn}
\noindent Recall that we have no Galilean transformation in our case and frequency center $\xi (t)$ is the origin.

\begin{figure}[!htp]
	\centering
	\includegraphics[height=13cm,width=13cm]{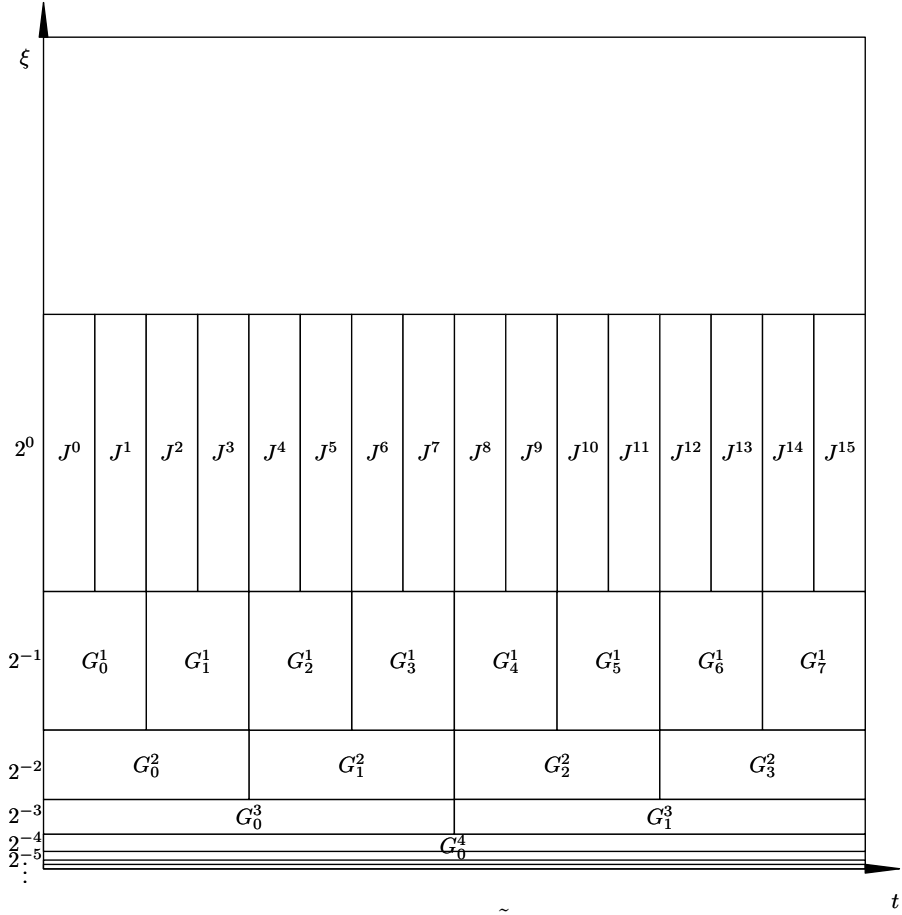}
	\vspace{-1cm}
	\caption{$\tilde{X}_{k_0}$ norm}\label{figure}
\end{figure}

\begin{rmk}[Construction of $G_k^j$ intervals]
For example, in Figure \ref{figure}, we take $k_0 =4$, $M=2^{k_0}= 16$, hence there are $16$ small intervals: $J^0, J^1, \dots , J^{15}$ at frequency level $2^0$. We may treat them as the building blocks in the process of constructing $X$-norm. Then in order to build a lower level $2^{-1}$, we combine every two consecutive small intervals into a larger interval, that is, at a lower level $2^{-1}$, the unions of two consecutive small intervals give us $G_0^1 = J^0 \cup J^1, \dots , G_7^1= J^{14} \cup J^{15}$. Note that upper indices in $G_{k}^j$ indicate the length of the time interval (more precisely, $2^j$ is the number of small intervals inside $G_k^j$) and the lower indices  in $G_{k}^j$ are the locations of the time intervals. In particular, for an interval with its upper indices $0$, it means that it is a  $J^{\alpha}$-type small interval, i.e. $G_k^0 = J^k$. Then we continue moving onto the next level $2^{-2} $ to get $G_0^2,G_1^2,G_2^2$ and $G_3^2$, and even lower levels to get $G_0^3$, $G_1^3$ and $G_0^4$. We can see that $G_0^4$ is the whole interval $[a,b]$, so we stop building intervals here. For any lever lower than $2^{-4} = 2^{k_0}$, we just take the whole interval $[a,b]$.
\end{rmk}

\begin{rmk}[Constriction of the $X$ norm]
We can see the structure of $X$ norm from the figure above. First, we localize the solution $u$ at different frequencies. Then the first term in $X$ norm is:
\begin{equation*}
\sum_{i: 0 \leq i < j }  2^{i-j} \sum_{G_{\alpha}^i \subset G_k^j} \norm{\abs{\nabla}^{\frac{1}{2}}  P_{ 2^{-i}} u}_{U_{\Delta}^2 (G_{\alpha}^i \times \R^2)}^2 .
\end{equation*}
In fact, for a fixed frequency level $2^{-i}$ (higher than $2^{-j}$), compute the average of frequency localized $U_{\Delta}^2$ norms on all the corresponding time intervals $G_{\alpha}^i$'s, then sum over all the frequencies higher than $2^{-j}$. 

\noindent The second term 
\begin{equation*}
\sum_{i: i\geq j}  \norm{\abs{\nabla}^{\frac{1}{2}} P_{2^{-i}} u}_{U_{\Delta}^2 (G_{k}^j \times \R^2)}^2 .
\end{equation*}
is the summation of  frequency localized $U_{\Delta}^2$ norms over all the frequencies lower than $2^{-j}$ on the time interval $G_k^j$.
\end{rmk}

\begin{rmk}[$\tilde{X}_{k_0}$ norm]
After we compute every $X$ norm over interval $G_k^j$, we are only two supremums away from the $\tilde{X}_{k_0}$ norm on the interval $[a,b]$. First, we fix a frequency level $2^{-j}$ and take the supremum over all the intervals at this level. This step picks out the largest candidates from each level horizontally. Then we choose the largest one from these candidates (vertically). This is the second supremum.
\end{rmk}

\begin{prop}[Some properties of $\tilde{X}_{j}(G_k^j \times \R^2)$ norm]\label{prop X properties}
We will use the following estimates in Section \ref{sec No quasi-soliton}. For $i \leq  j$, let $(q,r)$ be any admissible pair, then we have:
\begin{enumerate}
\item
$\norm{  \abs{\nabla}^{\frac{1}{2}} P_{2^{-i}} u }_{ L_{t}^{q} L_{x}^{r} (G_k^j \times \R^2)}   \lesssim_{q,r} 2^{\frac{j-i}{q}}  \norm{u}_{\tilde{X}_j (G_k^j \times \R^2)}  $,
\item
$ \norm{   P_{ > 2^{-i}} u }_{ L_{t}^{q} L_{x}^{r} (G_k^j \times \R^2)}   \lesssim 2^{\frac{j}{2}}\norm{u}_{\tilde{X}_j (G_k^j \times \R^2)} $,
\item
$ \norm{  \abs{\nabla}^{\frac{1}{2}} P_{ \leq 2^{-j}} u }_{ L_{t}^{q} L_{x}^{r} (G_k^j \times \R^2)} \lesssim \norm{u}_{\tilde{X}_j (G_k^j \times \R^2)} $.
\end{enumerate}
\end{prop}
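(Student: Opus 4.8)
The plan is to reduce all three estimates to the $U_\Delta^2$ building blocks that define the $\tilde X_j$ norm. The bridge is the combination of the Strichartz estimates, the transfer principle (Proposition~\ref{prop TransferPrinciple}), and the embedding $U_\Delta^2\hookrightarrow U_\Delta^q$ (Proposition~\ref{prop Embed}): for every admissible pair $(q,r)$, every dyadic $N$, and every time interval $J$ one has $\norm{\abs{\nabla}^{\frac12}P_N u}_{L_t^qL_x^r(J\times\R^2)}\lesssim\norm{\abs{\nabla}^{\frac12}P_N u}_{U_\Delta^2(J\times\R^2)}$. I would then record two elementary consequences of Definition~\ref{defn X}, valid in the range $0\le i\le j$: (a) for each subinterval $G_\alpha^i\subset G_k^j$ one has $\norm{\abs{\nabla}^{\frac12}P_{2^{-i}}u}_{U_\Delta^2(G_\alpha^i\times\R^2)}\le\norm{u}_{X(G_\alpha^i\times\R^2)}\le\norm{u}_{\tilde X_j(G_k^j\times\R^2)}$, since this is the $\ell=i$ summand of the second block of the $X$-norm on $G_\alpha^i$; and (b) for $0\le i<j$, $\sum_{G_\alpha^i\subset G_k^j}\norm{\abs{\nabla}^{\frac12}P_{2^{-i}}u}_{U_\Delta^2(G_\alpha^i\times\R^2)}^2\le 2^{j-i}\norm{u}_{\tilde X_j(G_k^j\times\R^2)}^2$ from the weighted first block, while $\sum_{i\ge j}\norm{\abs{\nabla}^{\frac12}P_{2^{-i}}u}_{U_\Delta^2(G_k^j\times\R^2)}^2\le\norm{u}_{\tilde X_j(G_k^j\times\R^2)}^2$ from the unweighted second block. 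Recall also that $G_k^j$ is a disjoint union of the $2^{j-i}$ intervals $G_\alpha^i$ at level $i$.

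For (1), decompose $G_k^j$ into its $2^{j-i}$ subintervals $G_\alpha^i$; since the $L_t^q$ norm is additive in the $q$-th power over a disjoint decomposition of the time interval, $\norm{\abs{\nabla}^{\frac12}P_{2^{-i}}u}_{L_t^qL_x^r(G_k^j)}^q=\sum_\alpha\norm{\abs{\nabla}^{\frac12}P_{2^{-i}}u}_{L_t^qL_x^r(G_\alpha^i)}^q\lesssim\sum_\alpha a_\alpha^q$, where $a_\alpha:=\norm{\abs{\nabla}^{\frac12}P_{2^{-i}}u}_{U_\Delta^2(G_\alpha^i)}$. The key point is $q>2$, so $\sum_\alpha a_\alpha^q\le(\max_\alpha a_\alpha)^{q-2}\sum_\alpha a_\alpha^2$; bounding the maximum by $\norm{u}_{\tilde X_j(G_k^j)}$ via (a) and the $\ell^2$ sum by $2^{j-i}\norm{u}_{\tilde X_j(G_k^j)}^2$ via (b) gives $\sum_\alpha a_\alpha^q\le 2^{j-i}\norm{u}_{\tilde X_j(G_k^j)}^q$, i.e. the sharp factor $2^{(j-i)/q}$. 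The cases $i=j$ and $q=\infty$ follow directly from (a).

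For (2), first peel off frequencies with the Bernstein inequality $\norm{P_{2^{-\ell}}u}_{L_x^r}\lesssim 2^{\ell/2}\norm{\abs{\nabla}^{\frac12}P_{2^{-\ell}}u}_{L_x^r}$, so by (1), $\norm{P_{2^{-\ell}}u}_{L_t^qL_x^r(G_k^j)}\lesssim 2^{\ell/2+(j-\ell)/q}\norm{u}_{\tilde X_j(G_k^j)}$ for $0\le\ell\le j$. Summing over the frequencies $2^{-\ell}\gtrsim 2^{-i}$, i.e. $\ell\lesssim i$, the exponent $\ell(\tfrac12-\tfrac1q)$ is increasing because $q>2$, so the geometric series is controlled by its top term $\ell\sim i$, giving $\norm{P_{>2^{-i}}u}_{L_t^qL_x^r(G_k^j)}\lesssim 2^{i/2+(j-i)/q}\norm{u}_{\tilde X_j(G_k^j)}\le 2^{j/2}\norm{u}_{\tilde X_j(G_k^j)}$ using $i\le j$ and $q\ge 2$. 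For (3), the triangle inequality over the infinitely many low-frequency blocks is not summable from the $\ell^2$ information in the $X$-norm, so I would instead use the Littlewood--Paley square function: since $2\le q\le\infty$ and $2\le r<\infty$, the Littlewood--Paley theorem in $x$ together with the triangle inequality in $L_t^{q/2}L_x^{r/2}$ applied to $\sum_N\abs{P_N F}^2$ yields $\norm{F}_{L_t^qL_x^r}\lesssim\big(\sum_N\norm{P_N F}_{L_t^qL_x^r}^2\big)^{1/2}$. Taking $F=\abs{\nabla}^{\frac12}P_{\le 2^{-j}}u$ and bounding each block by its $U_\Delta^2$ norm, $\norm{\abs{\nabla}^{\frac12}P_{\le 2^{-j}}u}_{L_t^qL_x^r(G_k^j)}\lesssim\big(\sum_{\ell\ge j}\norm{\abs{\nabla}^{\frac12}P_{2^{-\ell}}u}_{U_\Delta^2(G_k^j)}^2\big)^{1/2}\le\norm{u}_{\tilde X_j(G_k^j)}$, the last sum being exactly the unweighted second block of the $X$-norm on $G_k^j$.

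The main obstacle is part (3): naively $P_{\le 2^{-j}}u$ is an infinite sum of Littlewood--Paley pieces whose $U_\Delta^2$ norms are only known to be $\ell^2$-summable (all that the $X$-norm provides), so the triangle inequality loses; the square-function estimate is precisely what converts the $L_t^qL_x^r$ norm of the sum into the $\ell^2$ sum of the pieces' norms, and this is where $q,r\ge 2$ are used. A secondary subtlety, in (1) and (2), is obtaining the sharp powers $2^{(j-i)/q}$ and $2^{j/2}$ rather than the lossy exponent one would get from a bare Cauchy--Schwarz; this is handled by the interpolation $\sum_\alpha a_\alpha^q\le(\max_\alpha a_\alpha)^{q-2}\sum_\alpha a_\alpha^2$ and by exploiting $q>2$ to sum the geometric series in (2). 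Dyadic blocks above the unit frequency scale, if relevant, are absorbed by the same Bernstein and Strichartz bookkeeping.
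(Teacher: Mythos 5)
Your proposal is essentially correct and follows the route the paper implicitly points at by citing Dodson's (3.48)--(3.49): reduce to the $U_\Delta^2$ building blocks via Strichartz, the transfer principle (Proposition~\ref{prop TransferPrinciple}) and $U_\Delta^2\hookrightarrow U_\Delta^q$; read off facts (a) and (b) from Definition~\ref{defn X}; and use the interpolation $\sum_\alpha a_\alpha^q\le(\max_\alpha a_\alpha)^{q-2}\sum_\alpha a_\alpha^2$ to obtain the sharp factor $2^{(j-i)/q}$ in (1). The Littlewood--Paley square-function step in (3) is exactly what is needed, and you correctly identify that a bare triangle inequality over infinitely many low-frequency blocks would fail because the $X$-norm only provides $\ell^2$ information.

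The one genuine gap is in part (2), and it should be made precise rather than waved at. The sums in $\norm{u}_{X(G_k^j)}$ run over $i\ge 0$, so $\tilde X_j(G_k^j)$ controls only the Littlewood--Paley pieces with $\abs{\xi}\lesssim 1$; your geometric series runs over $0\le\ell\le i$ and therefore only accounts for $2^{-i}\lesssim\abs{\xi}\lesssim 1$. The pieces with $\abs{\xi}\gg 1$ that are still inside $P_{>2^{-i}}u$ are simply invisible to $\tilde X_j$, so they cannot be ``absorbed by the same Bernstein and Strichartz bookkeeping'' — the same bookkeeping relies on the $U_\Delta^2$ blocks, and there are none at those frequencies. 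What actually closes the gap is Remark~\ref{rmk Strichartz pair}: on each $J^\alpha$ one has $\norm{\abs{\nabla}^{\frac12}u}_{L_t^qL_x^r(J^\alpha)}\lesssim_u 1$, so Bernstein gives $\norm{P_{2^{-\ell}}u}_{L_t^qL_x^r(J^\alpha)}\lesssim 2^{\ell/2}$, summing the convergent geometric series over $\ell<0$ gives $\norm{P_{\gtrsim 1}u}_{L_t^qL_x^r(J^\alpha)}\lesssim_u 1$, and $q$-th power additivity over the $2^j$ small subintervals yields $\norm{P_{\gtrsim 1}u}_{L_t^qL_x^r(G_k^j)}\lesssim_u 2^{j/q}\le 2^{j/2}$. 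Thus the implicit constant in (2) depends on $\norm{u}_{L_t^\infty\dot H^{1/2}}$ and not only on $\norm{u}_{\tilde X_j}$; the paper's $\lesssim$ silently allows this, but you should invoke Remark~\ref{rmk Strichartz pair} explicitly for the high-frequency block rather than folding it into the $\tilde X_j$ machinery.
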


\begin{proof}
Similar calculations as in (3.48) and (3.49) in \cite{D2}, yield Proposition \ref{prop X properties}.
\end{proof}

\section{Long time Strichartz estimate}\label{sec LTS}

In this section, we recall the long time Strichartz estimate introduced in \cite{D3} and prove a long time Strichartz estimate adapted in our $\dot{H}^{\frac{1}{2}}$ setting based on the $X_{k_0}$ norm defined in Definition \ref{defn X}. This long time Strichartz estimate, giving us a good control of low frequency component of the solutions, will be used to in the proof of frequency-localized Morawetz estimates in Section \ref{sec No quasi-soliton}.

\subsection{Long time Strichartz estimate in the mass-critical regime in two dimensions in \cite{D3}}
In dimensions two, the endpoint of Strichartz estimates is false, more precisely: Let $P$ be a Fourier multiplier with symbol in $C_0^{\infty}(\R^2)$ (thus $\widehat{Pf} = \phi \hat{f}$ for some $\phi \in C_0^{\infty} (\R^2)$) which is not identically zero. Then there does not exist a constant $C > 0$ for which one has the estimate
\begin{equation*}
\norm{e^{it\Delta} Pf}_{L_t^2 L_x^{\infty}(\R \times \R^2)} \leq C \norm{f}_{L_x^2 (\R^2)}
\end{equation*}
for all $f \in L_x^2(\R^2)$. This makes us unable to choose the regular Strichartz space. 
However the long time Strichartz estimate highly relies on the double endpoint Strichartz. Therefore, we have to prove new long time Strichartz estimate adapted to two dimensions.

In two dimensional mass-critical regime, Dodson \cite{D3} defined a new space on which to compute the long time Strichartz estimates:
For any $G_k^j \subset [a,b]$, 
\begin{equation}\label{XX}
\begin{aligned}
\norm{u}_{X(G_k^j \times \R^2)}^2  := & \sum_{i : 0 \leq i < j}  2^{i-j} \sum_{G_{\alpha}^i \subset G_k^j} \norm{ P_{ \xi (G_{\alpha}^i), 2^i} u}_{U_{\Delta}^2 (G_{\alpha}^i \times \R^2)}^2  + \sum_{i : i\geq j }  \norm{ P_{\xi(G_k^j), 2^i} u}_{U_{\Delta}^2 (G_{k}^j \times \R^2)}^2.
\end{aligned}
\end{equation}
Then define 
\begin{equation*}
\norm{u}_{\tilde{X}_{k_0} ([a,b] \times \R^2)}^2 : = \sup_{j:0 \leq j \leq k_0} \sup_{G_k^j \subset [a,b]} \norm{u}_{X(G_k^j \times \R^2)}^2 .
\end{equation*}
Dodson showed that $\norm{u}_{\tilde{X}_{k_0} ([0,T] \times \R^2)} \lesssim 1$ as the new long time Strichartz estimate in dimensions two, which played a similar role as the long time Strichartz estimate in dimensions three and higher:
\begin{equation*}
\norm{P_{\abs{\xi -\xi(t)} >N} u}_{L_t^2 L_x^{\frac{2d}{d-2}} (J \times \R^d)} \lesssim \parenthese{\frac{K}{N}}^{\frac{1}{2}} +1 ,
\end{equation*}
where $J$ is an interval satisfying 
\begin{equation*}
\int_J N(t)^3 \, dt = K .
\end{equation*}

\subsection{Long time Strichartz estimate and its proof}

In contrast to the mass-critical regime, we focus on the low frequency instead of high frequency, and define that for any $G_k^j \subset [a,b]$, 
\begin{align*}
\norm{u}_{X(G_k^j \times \R^2)}^2 := & \sum_{i : 0 \leq i <j}  2^{i-j} \sum_{G_{\alpha}^i \subset G_k^j} \norm{\abs{\nabla}^{\frac{1}{2}}  P_{ 2^{-i}} u}_{U_{\Delta}^2 (G_{\alpha}^i \times \R^2)}^2 + \sum_{i : i \geq j}  \norm{\abs{\nabla}^{\frac{1}{2}} P_{2^{-i}} u}_{U_{\Delta}^2 (G_{k}^j \times \R^2)}^2 .
\end{align*}
Then define
\begin{equation*}
\norm{u}_{\tilde{X}_{k_0} ([a,b] \times \R^2)}^2 : = \sup_{j: 0 \leq j \leq k_0} \sup_{G_k^j \subset [a,b]} \norm{u}_{X(G_k^j \times \R^2)}^2 .
\end{equation*}
Similarly, we want to show $\norm{u}_{\tilde{X}_{k_0} ([0,T] \times \R^2)} \lesssim 1$, which captures the essential feature in the case that if we assume the double endpoint were true:
\begin{equation*}
\norm{\abs{\nabla}^{\frac{1}{2}}  P_{< N} u}_{L_t^2 L_x^{\infty} (J \times \R^2)} \lesssim \parenthese{KN}^{\frac{1}{2}} +1,
\end{equation*}
where $J$ is an interval satisfying 
\begin{equation*}
\int_J N(t) \, dt = K .
\end{equation*}

More precisely, we want to show that
\begin{thm}[Long time Strichartz estimate]\label{thm LTS}
If $u$ is an almost periodic solution to \eqref{NLS} then for any $M = 2^{k_0}$, $\varepsilon_1$, $\varepsilon_2$, $\varepsilon_3$ satisfying \eqref{eq Epsilon},  $\int_0^T N(t) \, dt =\varepsilon_3 M$, and $\int_0^T \int_{\R^2} \abs{u(t,x)}^8 \,dxdt = M$, we have
\begin{equation*}
\norm{u}_{\tilde{X}_{k_0} ([0,T] \times \R^2)}^2  \lesssim 1 .
\end{equation*}
\end{thm}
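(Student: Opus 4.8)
\textbf{Proof proposal for Theorem \ref{thm LTS}.}

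The plan is to run a bootstrap/induction argument on the dyadic scale $k_0$ (equivalently on $M$), exactly mirroring the strategy Dodson used in the mass-critical setting \cite{D3}, but with the Littlewood--Paley cutoffs reversed so that the $\tilde X_{k_0}$ norm measures \emph{low} frequencies. First I would establish a base case: when $M$ is bounded (say $M\le M_0$ for an absolute constant, or when $k_0$ is small so that $[a,b]$ consists of only a few $J_l$ intervals), the bound $\norm{u}_{\tilde X_{k_0}}^2\lesssim 1$ follows directly from the local-in-time Strichartz control on each $J^\alpha$ / $J_l$ interval (Remark \ref{rmk Strichartz pair}), the embedding $U_\Delta^2\hookrightarrow$ Strichartz spaces, and Lemma \ref{lem U^p sum} to patch a bounded number of intervals together. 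Then I would set up the inductive step: assume $\norm{u}_{\tilde X_{k_*}}^2\lesssim 1$ for all $k_*<k_0$ (with a constant uniform in the solution and in the choice of $[a,b]$ satisfying \eqref{eq Scale1}--\eqref{eq Scale2}), and deduce the same for $k_0$.

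For the inductive step, fix an interval $G_k^j\subset[a,b]$ and estimate $\norm{u}_{X(G_k^j\times\R^2)}^2$. The main tool is the decomposition lemma (Lemma \ref{lem Decomp}) applied to the Duhamel term: writing $u=e^{i(t-t_0)\Delta}u(t_0) - i\int_{t_0}^t e^{i(t-t')\Delta}(|u|^4u)(t')\,dt'$, the linear part is controlled by the almost-periodicity bound \eqref{eq Defn AP2} (low frequencies carry little $\dot H^{1/2}$ mass, which is precisely what the reversed cutoff exploits), and the Duhamel part is reduced, via Lemma \ref{lem Decomp} and duality (Proposition \ref{prop Duality}), to bounding $\sum_{P_k}\sup_{\|v\|_{V_\Delta^2(P_k)}=1}\int_{P_k}\langle v,\abs{\nabla}^{1/2}P_{2^{-i}}(|u|^4u)\rangle\,dt$ at each relevant frequency $2^{-i}$. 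One then performs a Littlewood--Paley decomposition of the quintic nonlinearity $|u|^4u$ into the five input frequencies, and classifies terms by how the input frequencies compare to the output frequency $2^{-i}$ and to each other. Terms where the output frequency is comparable to the top input frequency are handled by the fractional product rule / chain rule (Lemma \ref{lem Chain rule}) together with the Strichartz and bilinear estimates (Lemma \ref{lem Bilinear}, Corollary \ref{cor U Bilinear}); terms where a high-high interaction feeds into a low output frequency — which the author flags as the delicate case — require the bilinear gain $(M/N)^{1/p}$ to produce enough summability over the dyadic parameters to absorb the extra $\ell^2$-in-scale structure built into the $X$-norm, and this is where one must genuinely use $\varepsilon_3<\varepsilon_2^{24}$ and the smallness relations \eqref{eq Epsilon} to close the bootstrap rather than merely iterate.

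The mechanism for closing the argument is that the contributions naturally split into (i) a piece bounded by an absolute constant (coming from the linear evolution and from ``diagonal'' frequency interactions, using \eqref{eq Defn AP2} and Remark \ref{rmk N(J_l)} that $\sum_{J_l\subset G_k^j}N(J_l)^{-1}\sim 2^j\varepsilon_3$), plus (ii) a piece of the form $C\varepsilon\cdot\norm{u}_{\tilde X_{k_0}}^2$ with $\varepsilon$ a small power of $\varepsilon_1,\varepsilon_2,\varepsilon_3$, plus (iii) a piece controlled by the inductive hypothesis $\norm{u}_{\tilde X_{j-1}}^2\lesssim 1$ on the sub-intervals $G_\alpha^i$ of length $2^i<2^j$. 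Taking the supremum over all $G_k^j$ and choosing the $\varepsilon_i$ small enough that $C\varepsilon<\tfrac12$, the term (ii) is absorbed into the left-hand side, yielding $\norm{u}_{\tilde X_{k_0}}^2\lesssim 1$ with a constant independent of $k_0$. The main obstacle, as anticipated, is controlling the high-high-to-low frequency interactions in the nonlinearity: unlike the mass-critical case these are not automatically negligible, and one must extract an extra power of the frequency ratio (beyond what the plain bilinear Strichartz estimate gives) so that the resulting geometric series in the dyadic gaps converges against the $2^{i-j}$ weights and the $U_\Delta^2$-summations defining the $X$-norm. Careful bookkeeping of which Strichartz exponents to allocate to which factor, and using the room in \eqref{eq Epsilon} to trade a small constant for summability, is the heart of the proof.
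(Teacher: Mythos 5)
Your proposal captures the correct architecture (decompose into linear plus Duhamel, Littlewood--Paley the quintic nonlinearity, flag the high-high-into-low interaction as the hard case, close via bilinear gains and the smallness built into \eqref{eq Epsilon}), but several of the structural details are misattributed, and one substantive step is missing.

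First, the induction in the paper is on the internal level $k_*$ within the fixed norm $\tilde X_{k_0}$ on a single interval $[a,b]$, not on the scale $k_0$ (equivalently $M$). The base case is $k_*=0$ (the $J^\alpha$ partition), and Lemma \ref{lem U^p sum} gives the crude doubling $\norm{u}_{\tilde X_{k_*+1}}^2\leq 2\norm{u}_{\tilde X_{k_*}}^2$, which is only used to get to $k_*=11$; the genuine content is the bootstrap for $k_*\geq 12$. Your induction-on-$k_0$ formulation could be made to work, but it is not what is done, and you should realize that the $\ell^2$-averaging weights $2^{i-j}$ in the definition of $X(G_k^j)$ are precisely what keeps the constant uniform across $k_*$ without running a fresh induction over intervals.

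Second, the linear part is \emph{not} controlled by the compactness bound \eqref{eq Defn AP2}. For $i\geq j$ (Step 3 of the proof) the free-evolution contribution is simply $\sum_{i\geq j}\norm{\abs{\nabla}^{1/2}P_{2^{-i}}u(t_0)}_{L^2}^2\lesssim\norm{u}_{L_t^\infty\dot H^{1/2}}^2$, which is bounded by $O(1)$ but has no smallness. For $11\leq i<j$ (Step 2), the estimate exploits the weight $N(t)+\varepsilon_3\norm{u(t)}_{L^8_x}^8$ that is baked into the definition of the $J^\alpha$ partitions (Definition \ref{defn Small intervals 2}), via the choice of $t_\alpha^i$ minimizing $\norm{\abs{\nabla}^{1/2}P_{2^{-i}}u(\cdot)}_{L^2}$ and Fubini--Tonelli. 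Where \eqref{eq Defn AP2} actually enters is in the \emph{nonlinear} estimates: it gives $\norm{P_{2^{-n_4}}u}_{L^\infty_tL^4_x},\norm{P_{2^{-n_5}}u}_{L^\infty_tL^4_x}\lesssim\varepsilon_2$ for the two lowest-frequency factors in the quintic decomposition (see \eqref{eq6 Thm LTS}), and together with $\varepsilon_3<\varepsilon_2^{24}$ it supplies the $\varepsilon_2$-smallness that closes the bootstrap.

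Third, you have omitted the case split in Step 4 on whether $N(G_\alpha^i)\leq\varepsilon_3^{-1/2}2^{-i}$ or $N(G_\alpha^i)\geq\varepsilon_3^{-1/2}2^{-i}$ (and analogously for $G_k^j$). In the first case $N(t)$ stays small on $G_\alpha^i$ (via Remark \ref{rmk Difference 1/N(t)}), there are $O(1)$ many $J_l$ inside $G_\alpha^i$, and the Duhamel term is bounded directly with no bilinear estimate. The bilinear/bootstrap machinery --- Proposition \ref{prop Term 1&2} with its two lemmas for $\tilde F_1$ (output $\sim$ top input) and $\tilde F_2$ (high-high into low) --- is only invoked in the second case. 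This split is essential; without it the Duhamel estimate would not go through.

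Finally, two smaller points. Lemma \ref{lem Chain rule} (fractional chain rule) does not appear in this proof; the $\tilde F_1$ case is handled by H\"older, Bernstein, and \eqref{eq6 Thm LTS} directly, while the $\tilde F_2$ case uses the bilinear transfer estimate Corollary \ref{cor U Bilinear}. And the absorption in the bootstrap is not linear: Proposition \ref{prop Term 1&2} yields $\varepsilon_2^4\norm{u}_{\tilde X_j}^6+\varepsilon_2^2\norm{u}_{\tilde X_j}^8$, so the bootstrap closes by taking $\varepsilon_2$ small enough that $C(u)(1+\varepsilon_2^4(2C_0)^3+\varepsilon_2^2(2C_0)^4)\leq C_0$ with $C_0=2^{11}C(u)$, rather than by absorbing a term $C\varepsilon\norm{u}_{\tilde X_{k_0}}^2$ into the left-hand side.
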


\begin{rmk}
Throughout this section the implicit constant depends only on $\norm{u}_{L_t^{\infty} \dot{H}_x^{\frac{1}{2}}}$, and not on $M$, or $\varepsilon_1$, $\varepsilon_2$, $\varepsilon_3$.
\end{rmk}

\begin{proof}[Proof of Theorem \ref{thm LTS}]

We want to show that for any $0 \leq j \leq k$ and $G_k^j \subset [0,T]$ by induction on $k_*$, 
\begin{equation*}
\sum_{i : 0 \leq i < j }  2^{i-j} \sum_{G_{\alpha}^i \subset G_k^j} \norm{\abs{\nabla}^{\frac{1}{2}}  P_{2^{-i}} u}_{U_{\Delta}^2 (G_{\alpha}^i \times \R^2)}^2 + \sum_{i : i \geq j}  \norm{\abs{\nabla}^{\frac{1}{2}} P_{2^{-i}} u}_{U_{\Delta}^2 (G_{k}^j \times \R^2)}^2 \lesssim 1 .
\end{equation*}

\subsubsection{Base case}
First, we start with the base case ($k_* =0$), that is, $\norm{u}_{\tilde{X}_0 ([0,T] \times \R^2)} \lesssim 1 $.

Let $J^{\alpha} =[a_{\alpha} , b_{\alpha}]$. By the integral equation, Strichartz estimates, duality (Proposition \ref{prop Duality}), $V_{\Delta}^2 \hookrightarrow U_{\Delta}^4 \hookrightarrow L_t^4 L_x^4 $ (Theorem \ref{prop Embed}), Definition \ref{defn Small intervals 2}, and Remark \ref{rmk Strichartz pair}, we write
\begin{align*}
\norm{  \abs{\nabla}^{\frac{1}{2}} u}_{U_{\Delta}^2 (J^{\alpha} \times \R^2)}  
& \lesssim \norm{ u }_{  L_t^{ \infty}\dot{H}^{\frac{1}{2}}(J^{\alpha} \times \R^2) }+ \sup_{\norm{v}_{V_{\Delta}^2}=1} \int_{J^{\alpha}} \inner{  v,  \abs{\nabla}^{\frac{1}{2}} F(u)} \, dt 
\lesssim 1 .
\end{align*}

To compute $\norm{u}_{X(J^{\alpha} \times \R^2)}$, we know that at the base case level, the only small interval inside a small interval $J^{\alpha}$ is itself, hence we have no first term in \eqref{X}. Then by Definition \ref{defn X}, Littlewood-Paley theorem, Minkowski, H\"older and Remark \ref{rmk Strichartz pair}, we obtain
\begin{align*}
\norm{u}_{X(J^{\alpha} \times \R^2)}^2 
& \lesssim \sum_{i: i \geq 0} \parenthese{ \norm{ \abs{\nabla}^{\frac{1}{2}}  P_{2^{-i}}  u (a_{\alpha})}_{L_x^2}^2 + \norm{  \abs{\nabla}^{\frac{1}{2}}  P_{2^{-i}}  F(u) }_{L_t^{1} L_x^{2}(J^{\alpha}\times \R^2) }^2 } 
 \lesssim 1  .
\end{align*}

Note that when $k_{\ast} =0$, the supremum of \eqref{X} over all intervals $G_k^j \subset [0,T]$ becomes the the supremum of over all small intervals $J^{\alpha}\subset [0,T]$. Therefore, by Definition \ref{defn X}, 
\begin{equation}\label{eq Base}
\norm{u}_{\tilde{X}_{0} ([0,T] \times \R^2)}^2  = \sup_{ j = 0} \sup_{J^{\alpha} \subset [0,T]} \norm{u}_{X(J^{\alpha} \times \R^2)}^2 \leq C(u)  .
\end{equation}
Notice that $C(u)$ only depends on $\norm{u}_{L_t^{\infty} \dot{H}^{\frac{1}{2}}[0,T ] \times \R^2}$.

\subsubsection{Induction}
By Definition \ref{defn X}, Lemma \ref{lem U^p sum}, we have
%
%
\begin{equation}\label{eq Induction}
\norm{u}_{\tilde{X}_{k_{\ast}+1 } ([0,T] \times \R^2)}^2 \leq 2 \norm{u}_{\tilde{X}_{k_{\ast}} ([0,T] \times \R^2)}^2  .
\end{equation}
Therefore, by \eqref{eq Base} and \eqref{eq Induction},
\begin{equation}\label{eq 11step}
\norm{u}_{\tilde{X}_{11 } ([0,T] \times \R^2)}^2  \leq 2^{11} C(u)  .
\end{equation}

\subsubsection{Bootstrap}
For $j>11$ and $G_k^j \subset [0,T]$, we want to prove that $\norm{u}_{\tilde{X}_{j} (G_k^j  \times \R^2)} \leq 2^{11} C(u)$ by bootstrap argument. 

First consider the terms in the $X$ norm in \eqref{X} with frequencies localized higher than $2^{-11}$, and they are bounded due to \eqref{eq 11step} (see Step 1). For the terms in the $X$ norm with frequencies localized lower than $2^{-11}$, we can use the integral equation to rewrite $u$ into the free solution and the Duhamel term, then compute the contributions of these two terms to the first term (A) and the second term (B) of the $X$ norm respectively. As a result, we have a free solution term (AF) and a Duhamel term (AD) contributing to A, and a free solution term (BF) and a Duhamel term (BD) contributing to B. 
\begin{equation*}
X(G_k^j) \text{ norm }
\begin{cases}
\text{frequencies higher than } 2^{-11} \text{ (Step 1)}\\
\text{frequencies lower than } 2^{-11}
\begin{cases}
\text{1st term } A  
\begin{cases} \text{free solution } AF \text{ (Step 2)}\\ \text{Duhamel term }  AD \text{ (Step 4)}\end{cases} \\
\text{2nd term }B
\begin{cases} \text{free solution } BF \text{ (Step 3)}\\ \text{Duhamel term } BD \text{ (Step 4)} \end{cases} 
\end{cases}
\end{cases}
\end{equation*}

We will consider the terms with frequencies higher than $2^{-11}$ in Step 1. And estimates the free solution terms in A and B in Step 2 and Step 3 respectively. In this proof, the hardest part is to estimate AD and BD. We will bound them in Step 4.

It is worth mentioning that in Step 4, we treat two different types of $G_{\alpha}^i$ intervals in two cases (see the classification of these two cases in Step 4). For case 1, we compute directly, while for case 2, we will prove a bootstrap argument Proposition \ref{prop Term 1&2}. In the proof we decompose the nonlinear term $\abs{u}^4 u$ into different frequencies and consider them in Lemma \ref{lem Term1} and Lemma \ref{lem Term2}.
\begin{equation*}
\text{Step 4 } (AD \, \& \, BD) \to 
\begin{cases}
\text{case 1}\\
\text{case 2} \to \text{Proposition \ref{prop Term 1&2}}
\begin{cases}
\text{Lemma \ref{lem Term1}}\\
\text{Lemma \ref{lem Term2}}
\end{cases}
\end{cases}
\end{equation*}

\noindent \underline{\bf Step 1: Frequencies higher than $2^{-11}$}

Note that $G_j^k$ overlaps $2^{j-11}$ intervals $G_{\beta}^{11}$ and $G_{\beta}^{11}$ overlaps $2^{11-i}$ intervals $G_{\alpha}^{i}$. So by Fubini-Tonelli theorem, \eqref{eq 11step} implies that for any $j > 11$ and $G_k^j \subset [0,T]$,
\begin{equation*}
\sum_{i: 0 \leq i \leq 11 } 2^{i-j} \sum_{G_{\alpha}^i \subset G_k^j} \norm{\abs{\nabla}^{\frac{1}{2}}  P_{ 2^{-i}} u}_{U_{\Delta}^2 (G_{\alpha}^i \times \R^2)}^2 \leq 2^{11}C(u) .
\end{equation*}


\noindent \underline{\bf Step 2: Free solution term in A}

Fix $k_0$, $12 \leq j \leq k_0$, and $G_k^j \subset [0,T]$. For $11 \leq i < j$, Duhamel's principle implies
\begin{equation*}
\norm{\abs{\nabla}^{\frac{1}{2}}  P_{ 2^{-i}} u}_{U_{\Delta}^2 (G_{\alpha}^i \times \R^2)} 
 \lesssim  \norm{\abs{\nabla}^{\frac{1}{2}}  P_{2^{-i}} u(t_{\alpha}^i)}_{L_x^2 (\R^2)}  + \norm{\abs{\nabla}^{\frac{1}{2}}  \int_{t_{\alpha}^i}^t  e^{i(t-t^{\prime})\Delta} P_{ 2^{-i}} F(u) \, dt^{\prime}  }_{U_{\Delta}^2 (G_{\alpha}^i \times \R^2)} .
\end{equation*}

\noindent Choose $t_{\alpha}^i$ satisfying
\begin{equation}\label{eq1 Thm LTS}
\norm{\abs{\nabla}^{\frac{1}{2}}  P_{ 2^{-i}} u(t_{\alpha}^i)}_{L_x^2(\R^2)}  = \inf_{t \in G_{\alpha}^i} \norm{\abs{\nabla}^{\frac{1}{2}}  P_{2^{-i}} u(t)}_{L_x^2(\R^2)}  .
\end{equation}

\noindent Then by Definition \ref{defn Small intervals 2}, Fubini-Tonelli theorem and \eqref{eq1 Thm LTS},
\begin{align*}
& \quad \sum_{i : 11 \leq i < j } 2^{i-j} \sum_{G_{\alpha}^i \subset G_k^j} \norm{\abs{\nabla}^{\frac{1}{2}}  P_{ 2^{-i-2} \leq \cdot \leq 2^{-i+2}} u(t_{\alpha}^i)}_{L_x^2 (\R^2)}^2 \\
&  \leq \frac{1}{2\varepsilon_3} 2^{-j} \sum_{G_{\alpha}^i \subset G_k^j}  \int_{G_{\alpha}^i} \sum_{i : 11 \leq i < j }  \norm{\abs{\nabla}^{\frac{1}{2}}  P_{ 2^{-i} } u(t)}_{L_x^2 (\R^2)}^2 \parenthese{\varepsilon_3 \norm{u(t)}_{L_x^8(\R^2)}^8 + N(t)}  \, dt \lesssim 1 .
\end{align*}

\noindent \underline{\bf Step 3: Free solution term in B}

For $i \geq j$ simply take $t_0$, where $t_0$ is a fixed element of $G_k^j$, say the left endpoint. Then
\begin{equation*}
\sum_{i : i \geq j } \norm{\abs{\nabla}^{\frac{1}{2}}  P_{ 2^{-i}} e^{it\Delta} u(t_0)}_{U_{\Delta}^2 (G_k^j \times \R^2)}^2 \lesssim \sum_{i : i \geq j } \norm{\abs{\nabla}^{\frac{1}{2}}  P_{ 2^{-i}} u(t_0)}_{L_x^2(\R^2)}^2    \lesssim 1 .
\end{equation*}

\noindent Therefore, from Step 2 and Step 3, we have the following bound for the free solution terms AF and BF:
\begin{equation*}
\sum_{i : 0 \leq i < j}  2^{i-j} \sum_{G_{\alpha}^i \subset G_k^j} \norm{\abs{\nabla}^{\frac{1}{2}}  P_{ 2^{-i}} u}_{L_x^2(\R^2)}^2 
+ \sum_{i : i \geq j}  \norm{\abs{\nabla}^{\frac{1}{2}} P_{2^{-i}} u}_{L_x^2(\R^2)}^2  \lesssim 1 .
\end{equation*}

Thanks to the calculation above, we have 
\begin{align*}
\norm{u}_{X(G_k^j \times \R^2)}^2 & \lesssim 1+ \sum_{i : 11 \leq i < j } 2^{i-j}  \sum_{G_{\alpha}^i \subset G_k^j} \norm{\abs{\nabla}^{\frac{1}{2}}  \int_{t_{\alpha}^i}^t  e^{i(t-t^{\prime})\Delta} P_{ 2^{-i}} F(u) \, dt^{\prime}  }_{U_{\Delta}^2 (G_{\alpha}^i \times \R^2)}^2 \\
& + \sum_{ i : i \geq j } \norm{\abs{\nabla}^{\frac{1}{2}}  \int_{t_0}^t  e^{i(t-t^{\prime})\Delta} P_{ 2^{-i}} F(u) \, dt^{\prime}  }_{U_{\Delta}^2 (G_k^j \times \R^2)}^2  .
\end{align*}

\noindent \underline{\bf Step 4: Duhamel terms in A and B}

For these two terms, we consider the intervals $G_{\alpha}^i$ and $G_k^j$ in the following cases:
\begin{equation*}
\begin{cases}
\text{Case 1 : } G_{\alpha}^i: G_{\alpha}^i \subset G_k^j \text{ and } N(G_{\alpha}^i ) \leq \varepsilon_3^{-1/2} 2^{-i} ; \text{ and } G_k^j: N(G_k^j ) \leq \varepsilon_3^{-1/2} 2^{-j} ,\\
\text{Case 2 : } G_{\alpha}^i : G_{\alpha}^i \subset G_k^j \text{ and }  N(G_{\alpha}^i ) \geq \varepsilon_3^{-1/2} 2^{-i}; \text{ and } G_k^j: N(G_k^j ) \geq \varepsilon_3^{-1/2} 2^{-j} .
\end{cases}
\end{equation*}

\noindent {\bf $\bullet$ Case 1 in Step 4:} 
There are at most two small intervals, call them $J_1$ and $J_2$, that intersect $G_k^j$ but are not contained in $G_k^j$. Therefore, by Minkowski, H\"older and Remark \ref{rmk Strichartz pair}
\begin{equation}\label{eq2 Thm LTS}
\begin{aligned}
\sum_{i : 11 \leq i < j } 2^{i-j}  \sum_{G_{\alpha}^i \subset G_k^j} \norm{\abs{\nabla}^{\frac{1}{2}} F(u) }_{L_t^1L_x^2(G_{\alpha}^i \cap (J_1 \cup J_2) \times \R^2)}^2 
& \lesssim \sum_{i : 11 \leq i < j} 2^{i-j}  \norm{\abs{\nabla}^{\frac{1}{2}} F(u) }_{L_t^1L_x^2(J_1 \cup J_2 \times \R^2)}^2 \lesssim 1 
\end{aligned}
\end{equation}

Next observe that $N(G_{\alpha}^i) \leq \varepsilon_3^{-1/2} 2^{-i}$ implies that $N(t) \leq \varepsilon_3^{-1/2} 2^{-i+1}$ for all $t \in G_{\alpha}^i$. In fact, by Remark \ref{rmk Difference 1/N(t)} the difference of $\frac{1}{N(t)}$ on $G_{\alpha}^i$ is at most $  \varepsilon_1^{-1/2} \varepsilon_3 2^{i}  $, hence
\begin{equation*}
N(t) \leq N(G_{\alpha}^i) + \varepsilon_1^{-1/2} \varepsilon_3 2^{i}   \leq   \varepsilon_3^{-1/2} 2^{-i+1} \quad \text{ for all } t \in G_{\alpha}^i  .
\end{equation*}

Now by Definition \ref{defn Small intervals 1} and Definition \ref{defn Small intervals 2}, the number of $J_l$ intervals inside $G_{\alpha}^i$ is
\begin{equation*}
\# \{J_l : J_l \subset G_{\alpha}^i\} \sim  \int_{\cup J_l} N(t)^2 \, dt  \lesssim  \int_{G_{\alpha}^i } N(t)^2 \, dt \leq  \varepsilon_3^{-1/2} 2^{-i+1} \int_{G_{\alpha}^i} N(t) \, dt \leq \varepsilon_3^{1/2} 2^{2}   .
\end{equation*}
This implies that the number of intervals $J_l$ such that $N(t) \leq \varepsilon_3^{-1/2} 2^{-i+1}$ for all $t \in G_{\alpha}^i$ is finite and does not depend on $G_{\alpha}^i$. By \eqref{eq2 Thm LTS}, Fubini-Tonelli theorem and Remark \ref{rmk N(J_l)}, we have
\begin{align*}
& \quad \sum_{i: 11 \leq i < j} 2^{i-j} \sum_{ \substack{G_{\alpha}^i \subset G_k^j \\ N(G_{\alpha}^i) \leq \varepsilon_3^{-1/2} 2^{-i}}}  \norm{\abs{\nabla}^{\frac{1}{2}}  \int_{t_{\alpha}^i}^t  e^{i(t-t^{\prime})\Delta} P_{ 2^{-i}} F(u) \, dt^{\prime}  }_{U_{\Delta}^2 (G_{\alpha}^i \times \R^2)}^2 \\
& \lesssim \sum_{i : 11 \leq i < j } 2^{i-j}  \sum_{G_{\alpha}^i \subset G_k^j} \norm{\abs{\nabla}^{\frac{1}{2}} F(u) }_{L_t^1L_x^2(G_{\alpha}^i \cap (J_1 \cup J_2) \times \R^2)}^2 \\
& + \sum_{i : 11 \leq i < j} 2^{i-j}  \sum_{ \substack{J_l \subset G_k^j \\ N(J_l) \leq \varepsilon_3^{-1/2} 2^{-i+1} } }  \norm{\abs{\nabla}^{\frac{1}{2}} F(u) }_{L_t^1L_x^2(J_l \times \R^2)}^2  \lesssim 1 
\end{align*}

\noindent Similarly, if $N(G_k^j) \leq \varepsilon_3^{-1/2} 2^{-j} $, then $N(t) \leq \varepsilon_3^{-1/2} 2^{-j+1}$ for all $t \in G_k^j$. This implies that 
\begin{equation*}
 \norm{ u  }_{L_t^{8} L_x^{8}(G_k^j  \times \R^2)}^8  \sim \int_{G_k^j} N(t)^2 \, dt \leq \varepsilon_3^{-1/2} 2^{-j+1} \int_{G_k^j} N(t) \, dt \leq \varepsilon_3^{-1/2} 2^{-j+1} \varepsilon_3  2^j \lesssim 1  .
\end{equation*}
Hence,
\begin{equation}\label{eq3 Thm LTS}
 \norm{ \abs{\nabla}^{\frac{1}{2}} u  }_{L_t^{q} L_x^{r}(G_k^j  \times \R^2)} \lesssim 1 \quad \text{ for any admissible pair } (q,r) .
\end{equation}

\noindent Therefore, by Minkowski and H\"older and \eqref{eq3 Thm LTS}
\begin{equation*}
\sum_{\substack{ i : i \geq j \\ N(G_k^j) \leq \varepsilon_3^{-1/2} 2^{-j} }} \norm{\abs{\nabla}^{\frac{1}{2}}  P_{ 2^{-i}} F(u)  }_{L_t^1L_x^2(G_k^j  \times \R^2)}^2  \lesssim \norm{ \abs{\nabla}^{\frac{1}{2}}  P_{ \leq 2^{-j+2}} F(u)  }_{L_t^1L_x^2 (G_k^j  \times \R^2) }^2  \lesssim   1  .
\end{equation*}

Therefore, all the computation above yields
\begin{align}\label{eq4 Thm LTS}
\norm{u}_{X(G_k^j \times \R^2)}^2 & \lesssim 1+ \sum_{i ; 11 \leq i < j } 2^{i-j}  \sum_{\substack{ G_{\alpha}^i \subset G_k^j \\ N(G_{\alpha}^i) \geq \varepsilon_3^{-1/2} 2^{-i} }} \norm{\abs{\nabla}^{\frac{1}{2}}  \int_{t_{\alpha}^i}^t  e^{i(t-t^{\prime})\Delta} P_{ 2^{-i}} F(u) \, dt^{\prime}  }_{U_{\Delta}^2 (G_{\alpha}^i \times \R^2)}^2 \notag\\
& + \sum_{ \substack{ i : i \geq j\\ N(G_k^j) \geq \varepsilon_3^{-1/2} 2^{-j}} }\norm{\abs{\nabla}^{\frac{1}{2}}  \int_{t_0}^t  e^{i(t-t^{\prime})\Delta} P_{ 2^{-i}} F(u)  \,  dt^{\prime}  }_{U_{\Delta}^2 (G_k^j \times \R^2)}^2  .
\end{align}

\noindent {\bf $\bullet$ Case 2 in Step 4:} 
From now on, we take the intervals $G_{\alpha}^i$ with $N(G_{\alpha}^i) \geq \varepsilon_3^{-1/2} 2^{-i}$ and the intervals $G_k^j$ with $N(G_k^j) \geq \varepsilon_3^{-1/2} 2^{-j}$.
Continue from \eqref{eq4 Thm LTS}. To close the proof of the long time Strichartz, it suffices to prove the following proposition:
\begin{prop}\label{prop Term 1&2}
\begin{equation}\label{eq Term 1&2}
\begin{aligned}
& \quad \sum_{i : 11 \leq i < j} 2^{i-j}  \sum_{\substack{ G_{\alpha}^i \subset G_k^j \\ N(G_{\alpha}^i) \geq \varepsilon_3^{-1/2} 2^{-i} }} \norm{\abs{\nabla}^{\frac{1}{2}}  \int_{t_{\alpha}^i}^t  e^{i(t-t^{\prime})\Delta} P_{ 2^{-i}} F(u) \,  dt^{\prime}  }_{U_{\Delta}^2 (G_{\alpha}^i \times \R^2)}^2 \\
& + \sum_{ \substack{ i: i \geq j \\ N(G_k^j) \geq \varepsilon_3^{-1/2} 2^{-j}} }\norm{\abs{\nabla}^{\frac{1}{2}}  \int_{t_0}^t  e^{i(t-t^{\prime})\Delta} P_{ 2^{-i}} F(u)  \,  dt^{\prime}  }_{U_{\Delta}^2 (G_k^j \times \R^2)}^2 \\
& \lesssim  \varepsilon_2^4 \norm{u}_{\tilde{X}_j([0,T] \times \R^2)}^6 + \varepsilon_2^2 \norm{u}_{\tilde{X}_j([0,T] \times \R^2)}^8 .
\end{aligned}
\end{equation}
\end{prop}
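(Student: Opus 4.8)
The plan is to estimate both Duhamel terms on the left of \eqref{eq Term 1&2} simultaneously, by first unwinding the $U_\Delta^2$ norm with the decomposition lemma, then Littlewood--Paley decomposing the quintic nonlinearity $F(u)=|u|^4u$ according to the sizes of its five input frequencies, and finally performing the dyadic and interval sums. Concretely, for each $G_\alpha^i\subset G_k^j$ with $N(G_\alpha^i)\geq \varepsilon_3^{-1/2}2^{-i}$, I would subdivide $G_\alpha^i$ into the small intervals $J_l$ it contains and apply Lemma \ref{lem Decomp}, which reduces
\begin{equation*}
\norm{\abs{\nabla}^{\frac{1}{2}}\int_{t_\alpha^i}^t e^{i(t-t')\Delta}P_{2^{-i}}F(u)\,dt'}_{U_{\Delta}^2(G_\alpha^i\times\R^2)} \lesssim \sum_{J_l\subset G_\alpha^i}\ \sup_{\norm{v}_{V_{\Delta}^2(J_l\times\R^2)}=1}\ \int_{J_l}\inner{v,\abs{\nabla}^{\frac{1}{2}}P_{2^{-i}}F(u)}\,dt,
\end{equation*}
and likewise for the $G_k^j$ piece with $i\geq j$. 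Here Remark \ref{rmk N(J_l)}, which both counts $\#\{J_l\subset G_\alpha^i\}$ and relates $1/N(J_l)$ to $\int_{J_l}N(t)\,dt$, is what will ultimately let the outer sum over $i$ (with its $2^{i-j}$ weight) and over $\alpha$ converge.

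Next I would split $P_{2^{-i}}F(u)$ by Littlewood--Paley into two groups according to which of the five input frequencies are present: (a) the genuinely low self-interaction, where every input sits at frequency $\lesssim 2^{-i}$; and (b) the resonant piece, where at least two inputs lie at comparable frequencies $\gg 2^{-i}$ that must nearly cancel for the output to land at $2^{-i}$. These would be isolated as Lemma \ref{lem Term1} and Lemma \ref{lem Term2}. The key structural fact in both is that the standing hypothesis $N(G_\alpha^i)\geq \varepsilon_3^{-1/2}2^{-i}$, together with the oscillation estimate in Remark \ref{rmk Difference 1/N(t)}, forces $2^{-i}\lesssim \varepsilon_3^{1/2}N(t)\leq \varepsilon_3^{1/4}N(t)$ throughout $G_\alpha^i$, so that the low-frequency pieces are essentially supported where \eqref{eq Epsilon} supplies the small quantity $\varepsilon_2$. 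In case (a), dualizing against $v$, applying H\"older with the Strichartz and embedding bounds of Proposition \ref{prop X properties}, and extracting one factor of $\varepsilon_2$ from the low-frequency smallness (the remaining factors giving $\norm{u}_{\tilde{X}_j([0,T]\times\R^2)}$) produces, after squaring and summing, the $\varepsilon_2^2\norm{u}_{\tilde{X}_j([0,T]\times\R^2)}^8$ term. In case (b), I would pair the two comparable high-frequency factors via the bilinear estimate of Corollary \ref{cor U Bilinear}, which contributes a gain $(2^{-i}/N)^{1/2}$; this extra decay makes the sum over the high dyadic scale $N$ absolutely convergent, the remaining factors being controlled by $\norm{u}_{\tilde{X}_j([0,T]\times\R^2)}$ and by two further extractions of $\varepsilon_2$ from low-frequency pieces, so that the interval and frequency sums yield the $\varepsilon_2^4\norm{u}_{\tilde{X}_j([0,T]\times\R^2)}^6$ term; the constraint $\varepsilon_3<\varepsilon_2^{24}$ in \eqref{eq Epsilon} is used to absorb the powers of $\varepsilon_3$ coming from Remark \ref{rmk N(J_l)} into the $\varepsilon_2$'s.

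The main obstacle will be case (b), the high--high-into-low interactions. Because the $\tilde{X}_{k_0}$ norm is assembled from $U_\Delta^2$ pieces on a very large number of short intervals, these terms must be summed with strictly more decay than in the mass-critical analysis of \cite{D3}: one has to extract it at once from the bilinear gain $(2^{-i}/N)^{1/2}$, from Bernstein factors relating $2^{-i}$ to the high scale, and from the $\varepsilon_3$-smallness, while still retaining the $2^{i-j}$ weight and the averaging over the subintervals $G_\alpha^i\subset G_k^j$. Once \eqref{eq Term 1&2} is established, the bootstrap closes: substituting it into the bound for $\norm{u}_{X(G_k^j\times\R^2)}^2$ coming from \eqref{eq4 Thm LTS} gives an inequality of the form $\norm{u}_{\tilde{X}_j}^2\lesssim 1+\varepsilon_2^4\norm{u}_{\tilde{X}_j}^6+\varepsilon_2^2\norm{u}_{\tilde{X}_j}^8$, which combined with the a priori bound $\norm{u}_{\tilde{X}_{11}}^2\leq 2^{11}C(u)$ from \eqref{eq 11step} and a continuity argument in $j$ forces $\norm{u}_{\tilde{X}_{k_0}}^2\lesssim 1$.
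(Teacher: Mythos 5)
Your high-level decomposition of the quintic nonlinearity is the same as the paper's: you split $P_{2^{-i}}F(u)$ according to whether the highest input frequency is comparable to the output frequency $2^{-i}$ (your case (a), the paper's $\tilde{F}_1$), or whether at least two inputs sit at comparable frequencies strictly above $2^{-i}$ and must cancel (your case (b), the paper's $\tilde{F}_2$). You also correctly identify the high--high-into-low piece as the one requiring extra decay beyond the mass-critical argument. However, there are two concrete gaps.

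First, you propose to apply Lemma~\ref{lem Decomp} uniformly, subdividing every $G_{\alpha}^i$ into its smallest constituent intervals $J_l$, and similarly for $G_k^j$. The paper does not do this. For the $\tilde{F}_1$ contribution it applies the duality Proposition~\ref{prop Duality} directly on $G_{\alpha}^i$ with no subdivision at all; for $\tilde{F}_2$ it applies Lemma~\ref{lem Decomp} but subdivides into the intervals $G_{\beta}^{n_1}$, where $n_1$ is the running Littlewood--Paley index of the two comparable high-frequency inputs. This scale-matching between the time interval and the highest input frequency is essential: the bilinear estimate of Corollary~\ref{cor U Bilinear} and the definition of $\tilde{X}$ (which averages $U_{\Delta}^2$ norms over intervals $G_{\beta}^{n_1}$) reassemble correctly only when the interval decomposition tracks $n_1$. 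A uniform decomposition into $J_l$ oversubdivides: you would then pay an uncontrolled factor proportional to the number of $J_l$ inside $G_{\alpha}^i$ when re-summing, and you would be invoking $U_{\Delta}^2$ norms at a scale the $\tilde{X}$ norm does not average over.

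Second, you have the two output bounds switched, and the switch is not merely cosmetic. In the paper, $\tilde{F}_1$ (your (a)) yields $\varepsilon_2^4\norm{u}_{\tilde{X}_j}^6$: after dualizing, $v$ and the three highest inputs $P_{2^{-n_1}}u,P_{2^{-n_2}}u,P_{2^{-n_3}}u$ go to $L_t^4L_x^4$ (giving three $\tilde{X}_j$ factors), while the two lowest inputs $P_{2^{-n_4}}u,P_{2^{-n_5}}u$ go to $L_t^\infty L_x^\infty$, each costing a Bernstein factor and each producing one $\varepsilon_2$ via \eqref{eq Epsilon}; that is $\varepsilon_2^2$ on the outside and hence $\varepsilon_2^4$ after squaring. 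Conversely, $\tilde{F}_2$ (your (b)) yields $\varepsilon_2^2\norm{u}_{\tilde{X}_j}^8$: the bilinear pairing consumes \emph{four} factors in two $L^2_{t,x}$ pairs $(P_{2^{-n_1}}u)(P_{2^{-n_3}}u)$ and $(P_{2^{-n_2}}u)(P_{2^{-n_4}}u)$, and only the single remaining lowest factor $P_{2^{-n_5}}u$ is available to supply one $\varepsilon_2$. Your claim of ``two further extractions of $\varepsilon_2$'' in (b) cannot be realized: if you siphon off both $n_4$ and $n_5$ into $L^\infty$ smallness, only one bilinear pair $(n_1,n_3)$ remains and the single unpaired factor $n_2$ cannot close the time-H\"older bookkeeping (the endpoint $L_t^2L_x^\infty$ it would require is exactly the one that fails in two dimensions). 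A related point: the bilinear gain of Corollary~\ref{cor U Bilinear} is $(M/N)^{1/p}$ between the \emph{two paired input} frequencies, not between the output $2^{-i}$ and an input; here the gains are $(2^{-n_3}/2^{-n_1})^{1/2}$ and $(2^{-n_4}/2^{-n_2})^{1/2}$, and it is these together with the frequency-dependent interval decomposition that make the sum over the unbounded range $n_1<i$ converge.

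Once these two points are corrected, your outline matches the paper's argument: duality plus H\"older on the fixed intervals for $\tilde{F}_1$, and the frequency-matched decomposition plus two bilinear pairings for $\tilde{F}_2$, followed by the Cauchy--Schwarz/Fubini re-summation into the $\tilde{X}_j$ norm.
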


Indeed, assuming that Proposition \ref{prop Term 1&2} is true, we can run a bootstrap argument.

\noindent Suppose
\begin{equation}\label{eq Bootstrap}
\norm{u}_{\tilde{X}_{k_*} ([0,T] \times \R^2)}^2  \leq C_0 .
\end{equation}

\noindent Then by \eqref{eq Induction}
\begin{equation*}
\norm{u}_{\tilde{X}_{k_*+1} ([0,T] \times \R^2)}^2  \leq 2C_0 .
\end{equation*}

\noindent Then by \eqref{eq4 Thm LTS} and Proposition \ref{prop Term 1&2}, we have
\begin{equation*}
\norm{u}_{\tilde{X}_{k_*+1} ([0,T] \times \R^2)}^2  \leq C(u) \parenthese{1+ \varepsilon_2^{4} (2C_0)^{3} + \varepsilon_2^2 (2C_0)^4} .
\end{equation*}

\noindent Taking $C_0 = 2^{11}C(u)$ and $\varepsilon_2>0$ sufficiently small, it implies that \eqref{eq Bootstrap} holds for $k_* = 11$. The bootstrap argument is closed, since we obtain
\begin{equation*}
\norm{u}_{\tilde{X}_{k_*+1} ([0,T] \times \R^2)}^2  \leq C_0 .
\end{equation*}
Hence the long time Strichartz estimates follow by \eqref{eq Base}, \eqref{eq Induction} and induction on $k_*$.

Now we are left to show Proposition \ref{prop Term 1&2}.
\begin{proof}[Proof of Proposition \ref{prop Term 1&2}]
We first write $F(u)$ into the Littlewood-Paley decomposition. Without loss of generality, we assume that $2^{-n_1} \geq 2^{-n_2} \geq 2^{-n_3} \geq 2^{-n_4} \geq 2^{-n_5} $, i.e. $n_1 \leq  n_2  \leq n_3 \leq n_4  \leq n_5 $. In the proof, since we utilize Lebesgue norms $L_x^q L_x^r$ and Lebesgue norms do not see the difference between $u$ and $\bar{u}$, i.e, $\norm{P_N u}_{L_x^q L_x^r} = \norm{P_N \bar{u}}_{L_x^q L_x^r} $, it is safe for us to write
\begin{equation*}
P_{ 2^{-i}} F(u) \simeq P_{ 2^{-i}}   \parenthese{ \sum_{  n_1 \leq  n_2  \leq n_3 \leq n_4  \leq n_5  } f  }   
\end{equation*}
where $f =(P_{ 2^{-n_1}} u ) (P_{ 2^{-n_2}} u )(P_{ 2^{-n_3}} u )(P_{ 2^{-n_4}} u )(P_{ 2^{-n_5}} u )$.

Then we compare the largest frequency $2^{-n_1}$ with $2^{-i}$:
\begin{itemize}
\item
If $2^{-i} \gg 2^{-n_1}$, it is impossible since $P_{2^{-i}} ((P_{\leq 2^{-i-7}} u)^5)=0$;

\item
If $2^{-i} \sim 2^{-n_1}$(i.e. $2^{-n_1 -7} \leq 2^{-i} \leq 2^{-n_1 +7}$), then this is our fist case: 
\begin{equation*}
2^{-i} \sim 2^{-n_1} \geq 2^{-n_2} \geq 2^{-n_3} \geq 2^{-n_4} \geq 2^{-n_5} ;
\end{equation*}

\item
If $2^{-i} \ll 2^{-n_1}$, then $2^{-n_2}$ must have a similar frequency with $2^{-n_1}$, otherwise the projection to $2^{-i}$ frequency of this term will be zero. Hence the second case is 
\begin{equation*}
2^{-i} \leq  2^{-n_1} \sim 2^{-n_2} ; 2^{-n_1} \geq 2^{-n_2} \geq 2^{-n_3} \geq 2^{-n_4} \geq 2^{-n_5}  .
\end{equation*}
\end{itemize}

Therefore, it is sufficient to consider the following:
\begin{align*}
P_{ 2^{-i}} F(u) &  \simeq P_{ 2^{-i}}   \parenthese{ \sum_{  \substack{ n_1 \sim i ; \\ n_1 \leq  n_2  \leq n_3 \leq n_4  \leq n_5 } } f }   + P_{2^{-i}}   \parenthese{ \sum_{ \substack{  n_1  < i; n_1 \sim n_2 ; \\ n_1 \leq n_2 \leq n_3 \leq n_4  \leq n_5}} f  }  : = \tilde{F}_1 + \tilde{F}_2   .
\end{align*}

Next, we compute the contributions of $ \tilde{F}_1$ and $ \tilde{F}_2$ to \eqref{eq Term 1&2} in Lemma \ref{lem Term1} and Lemma \ref{lem Term2} respectively.

\begin{lem}[Contribution of $\tilde{F}_1$]\label{lem Term1}
For a fixed $G_k^j \subset [0,T]$, $j>11$,
\begin{align}
& \sum_{i : 11 \leq i <j } 2^{i-j}  \sum_{\substack{ G_{\alpha}^i \subset G_k^j \\ N(G_{\alpha}^i) \geq \varepsilon_3^{-1/2} 2^{-i} }} \norm{\abs{\nabla}^{\frac{1}{2}}  \int_{t_{\alpha}^i}^t  e^{i(t-t^{\prime})\Delta}    \tilde{F}_1   \,    dt^{\prime}  }_{U_{\Delta}^2 (G_{\alpha}^i \times \R^2)}^2 \label{eq Term1 lem term1}\\
& + \sum_{ \substack{ i : i \geq j \\ N(G_k^j) \geq \varepsilon_3^{-1/2} 2^{-j}} }\norm{\abs{\nabla}^{\frac{1}{2}}  \int_{t_0}^t  e^{i(t-t^{\prime})\Delta}   \tilde{F}_1  \,  dt^{\prime}  }_{U_{\Delta}^2 (G_k^j \times \R^2)}^2 \label{eq Term2 lem term1}\\
& \lesssim \varepsilon_2^4 \norm{u}_{\tilde{X}_j ([0,T]\times \R^2)}^6 .   \notag  
\end{align}

\end{lem}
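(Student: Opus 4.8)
The plan is to dualize, distribute the quintilinear term by Hölder together with the bilinear estimate, and then harvest the smallness built into the almost-periodicity hypothesis \eqref{eq Epsilon}. First I would fix one of the intervals $G_\alpha^i$ (for \eqref{eq Term1 lem term1}) or $G_k^j$ (for \eqref{eq Term2 lem term1}) and apply the decomposition lemma, Lemma \ref{lem Decomp}, across a partition of that interval; because the two-dimensional double endpoint is unavailable, this is exactly the device that lets one replace $L_t^2 L_x^\infty$-type control by the $U_\Delta^2$/$L_t^4 L_x^4$ machinery. Each $U_\Delta^2$-norm of the Duhamel operator is then bounded by a sum of dual pairings $\int \langle v,\, |\nabla|^{1/2}\tilde F_1\rangle\, dt$ with $\|v\|_{V_\Delta^2}=1$, where $v$ is estimated through $V_\Delta^2 \hookrightarrow U_\Delta^4 \hookrightarrow L_t^4 L_x^4$ (Proposition \ref{prop Embed}).

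Next I would expand $\tilde F_1$ into the quintilinear sum over $n_1 \sim i \le n_2 \le \cdots \le n_5$. On the output frequency $2^{-i}\sim 2^{-n_1}$ the half-derivative is comparable to $2^{-i/2}$, so it may be moved onto the top-frequency factor, whose frequency is $\sim 2^{-i}$. One then applies Hölder in spacetime together with the bilinear estimate of Corollary \ref{cor U Bilinear} --- pairing the dual variable $v$, which lives at frequency $\sim 2^{-i}$, against the genuinely lower-frequency factors so as to gain geometric factors in the frequency ratios and to recast the estimate in terms of $U_\Delta^2$-norms --- and controls the remaining factors of $u$ by $\|u\|_{\tilde X_j}$ via Proposition \ref{prop X properties} and Definition \ref{defn X}. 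The smallness is where Case 2 enters: there $2^{-i}\le \varepsilon_3^{1/2} N(G_\alpha^i) \le \varepsilon_3^{1/4} N(t)$ on $G_\alpha^i$ (by local constancy and Remark \ref{rmk Difference 1/N(t)}), so every frequency occurring in $\tilde F_1$ satisfies $2^{-n_k}\lesssim \varepsilon_3^{1/4}N(t)$, and the second bound in \eqref{eq Epsilon} gives $\||\nabla|^{1/2} P_{2^{-n_k}} u\|_{L_t^\infty L_x^2}\lesssim \varepsilon_2$ for each factor. Extracting two such factors --- using $\varepsilon_3 < \varepsilon_2^{24}$ to convert any surplus power of $\varepsilon_3$ that appears into the second power of $\varepsilon_2$ --- produces a prefactor $\varepsilon_2^2$ per pairing (hence $\varepsilon_2^4$ after squaring), while one factor is retained as a $U_\Delta^2$-norm on $G_\alpha^i$ and the remaining two are bounded crudely by $\|u\|_{\tilde X_j}$.

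Finally I would carry out the summations. The sums over $n_2,\ldots,n_5$ converge geometrically from the Bernstein and bilinear gains; the retained $U_\Delta^2$-norm at frequency $\sim 2^{-i}$, once squared and summed over $i$ and over $G_\alpha^i \subset G_k^j$ against the weights $2^{i-j}$, reconstitutes a single copy of $\|u\|_{\tilde X_j([0,T]\times\R^2)}^2$ by Lemma \ref{lem U^p sum} (together with Remark \ref{rmk N(J_l)} and Fubini-Tonelli), so the total is $\lesssim \varepsilon_2^4 \|u\|_{\tilde X_j}^{4}\cdot\|u\|_{\tilde X_j}^{2}=\varepsilon_2^4\|u\|_{\tilde X_j}^{6}$. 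The second sum, over $i\ge j$ on the single interval $G_k^j$, is handled identically, using that $N(G_k^j)\ge \varepsilon_3^{-1/2}2^{-j}\ge \varepsilon_3^{-1/2}2^{-i}$ again places all factors at low frequency. I expect the main obstacle to be precisely this last bookkeeping: the averaging weights $2^{i-j}$ and the atomic ($U_\Delta^2$) structure of $\tilde X_{k_0}$ demand strictly more summability than in the mass-critical analogue of \cite{D3}, so the bilinear gain and the surplus powers of $\varepsilon_3$ must be spent in just the right places so that both the frequency sums and the interval sums close with a constant that depends only on $\|u\|_{L_t^\infty \dot H_x^{1/2}}$ and not on $M=2^{k_0}$.
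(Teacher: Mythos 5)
Your proposal is structurally in the right direction (dualize, quintilinear expansion, extract $\varepsilon_2^2$ from the two lowest-frequency factors, retain one $U_\Delta^2$-factor at frequency $\sim 2^{-i}$ to reconstitute the $\tilde X_j$-norm, bound the remaining two crudely), but it imports machinery that the paper reserves for the companion Lemma \ref{lem Term2} (the $\tilde F_2$ contribution) and that is neither needed nor cleanly applicable here.

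Specifically, in $\tilde F_1$ the top input frequency satisfies $2^{-n_1}\sim 2^{-i}$, i.e., all five factors live at or below the output frequency, and $G_\alpha^i$ already is the natural interval associated with the retained factor $P_{2^{-n_1}}u$. So there is nothing to decompose: the paper simply applies Proposition \ref{prop Duality} on $G_\alpha^i$, with no use of Lemma \ref{lem Decomp}. That lemma is invoked only for $\tilde F_2$, where $n_1<i$ forces the pairing to be relocated onto the larger intervals $G_\beta^{n_1}\subset G_\alpha^i$ (or onto $G_k^j$) to match the $\tilde X_j$ bookkeeping.

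Likewise, no bilinear estimate is used in the $\tilde F_1$ argument. After moving the half-derivative onto the $2^{-i}$ output, the pairing is estimated by a single quintilinear H\"older: $v,\ P_{2^{-n_1}}u,\ P_{2^{-n_2}}u,\ P_{2^{-n_3}}u$ go in $L_t^4L_x^4$ (the admissible $(4,4)$ pair, with $V_\Delta^2\hookrightarrow U_\Delta^4\hookrightarrow L_t^4L_x^4$ for $v$ and $U_\Delta^2\hookrightarrow L_t^4L_x^4$ for the $u$'s), while $P_{2^{-n_4}}u,\ P_{2^{-n_5}}u$ go in $L_t^\infty L_x^\infty$ via Bernstein from $L_t^\infty L_x^4$, where Case 2's hypothesis $N(G_\alpha^i)\ge\varepsilon_3^{-1/2}2^{-i}$ combined with \eqref{eq Epsilon} delivers the $\varepsilon_2$ smallness. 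The geometric decay needed to sum over $n_2\le\cdots\le n_5$ comes entirely from the Bernstein exponents $2^{-n_4/2-n_5/2}$ against $2^{n_2/2+n_3/2}$, not from a frequency-separation gain. Invoking Corollary \ref{cor U Bilinear} here would in fact be awkward, since $v$ is only normalized in $V_\Delta^2$ (the bilinear transfer requires $U_\Delta^p$), the adjacent frequencies $n_2,n_3\sim n_1\sim i$ have no separation from $v$, and the bilinear estimate would not by itself yield the $\varepsilon_2$ factors you need. Your mention of $\varepsilon_3<\varepsilon_2^{24}$ is also out of place here; that trade is used in the $\tilde F_2$ proof for the top-frequency tail of the fifth factor, not in the $\tilde F_1$ proof. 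In short, what you have sketched is essentially the $\tilde F_2$ argument; the actual $\tilde F_1$ argument is materially simpler.
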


\begin{proof}
By Proposition \ref{prop Duality},
\begin{align}
& \quad \norm{\abs{\nabla}^{\frac{1}{2}}  \int_{t_{\alpha}^i}^t  e^{i(t-t^{\prime})\Delta}   \tilde{F}_1   \,    dt^{\prime}  }_{U_{\Delta}^2 (G_{\alpha}^i \times \R^2)}  \lesssim \sup_{\norm{v}_{V_{\Delta}^2 (G_{\alpha}^i \times \R^2)} =1} \int_{G_{\alpha}^i}  \inner{v , \abs{\nabla}^{\frac{1}{2}}  \tilde{F}_1 }  \,  dt \notag\\
& \lesssim  \sum_{  \substack{ n_1 \sim i ; \\ n_1 \leq  n_2  \leq n_3 \leq n_4  \leq n_5  }}  \sup_{\norm{v}_{V_{\Delta}^2 (G_{\alpha}^i \times \R^2)} =1} \int_{G_{\alpha}^i}  \inner{\abs{\nabla}^{\frac{1}{2}} P_{ 2^{-i}}    v ,     f}   \,  dt  . \label{eq5 Thm LTS}   
\end{align}

Lemma \ref{lem Local bounded} implies that $N(t)$ is bounded on $G_{\alpha}^i$, and recall $N(G_{\alpha}^i) = \inf_{t \in G_{\alpha}^i} N(t)$, $ 2^{-n_5} \leq 2^{-n_4} \leq \varepsilon_3^{1/2} N(t)$ and \eqref{eq Epsilon}, we know that 
\begin{equation}\label{eq6 Thm LTS}
\begin{aligned}
\norm{P_{ 2^{-n_4}} u  }_{L_{t}^{\infty} L_{x}^{ 4} (G_{\alpha}^i \times \R^2) } \lesssim \varepsilon_2 , \quad
\norm{P_{ 2^{-n_5}} u  }_{L_{t}^{\infty} L_{x}^{ 4} (G_{\alpha}^i \times \R^2) } \lesssim \varepsilon_2 .
\end{aligned}
\end{equation}
Then using H\"older and Bernstein, we write
\begin{align}
\int_{G_{\alpha}^i}  \inner{\abs{\nabla}^{\frac{1}{2}} P_{ 2^{-i}} v , f} \, dt 
 \lesssim 2^{-\frac{i}{2}} \norm{  v}_{L_t^{4} L_x^{4} (G_{\alpha}^i \times \R^2) } \norm{P_{ 2^{-n_1}} u  }_{L_{t}^{4} L_{x}^{ 4} (G_{\alpha}^i \times \R^2) } \norm{P_{ 2^{-n_2}} u  }_{L_{t}^{4} L_{x}^{ 4} (G_{\alpha}^i \times \R^2) } \notag \\
\times  \norm{P_{ 2^{-n_3}} u  }_{L_{t}^{4} L_{x}^{ 4} (G_{\alpha}^i \times \R^2) } \norm{P_{ 2^{-n_4}} u  }_{L_{t}^{\infty} L_{x}^{ \infty} (G_{\alpha}^i \times \R^2) } \norm{P_{ 2^{-n_5}} u  }_{L_{t}^{\infty} L_{x}^{ \infty} (G_{\alpha}^i \times \R^2) } .  \label{eq7 Thm LTS}
\end{align}
By $V_{\Delta}^2 \hookrightarrow U_{\Delta}^4$ (Theorem \ref{prop Embed}), Bernstein, \eqref{eq6 Thm LTS} and \eqref{eq7 Thm LTS},
then \eqref{eq5 Thm LTS} becomes
\begin{align}
\eqref{eq5 Thm LTS} & \lesssim \sum_{  \substack{ n_1 \sim i ; \\ n_1 \leq  n_2  \leq n_3 \leq n_4  \leq n_5  }}  2^{\frac{n_1}{2} -\frac{i}{2} + \frac{n_2}{2} + \frac{n_3}{2} -\frac{n_4}{2} -\frac{n_5}{2}}  \norm{\abs{\nabla}^{\frac{1}{2}}  P_{ 2^{-n_1}} u  }_{U_{\Delta}^2 (G_{\alpha}^i \times \R^2) }  \notag\\
& \qquad \times \norm{\abs{\nabla}^{\frac{1}{2}}  P_{ 2^{-n_2}} u  }_{U_{\Delta}^2 (G_{\alpha}^i \times \R^2) } \norm{\abs{\nabla}^{\frac{1}{2}}  P_{ 2^{-n_3}} u  }_{U_{\Delta}^2 (G_{\alpha}^i \times \R^2) } \varepsilon_2^2 . \label{eq7.1 Thm LTS}
\end{align}

The sum over $n_2$, $n_3$, $n_4$ and $n_5$ in \eqref{eq7.1 Thm LTS} of the component depending only on these frequencies is bounded by a multiple of $\norm{u }_{\tilde{X}_j([0,T] \times \R^2)}^2 $. The sum over $n_1$ in \eqref{eq7.1 Thm LTS} is in fact a finite sum. 
Combining \eqref{eq5 Thm LTS}, \eqref{eq7.1 Thm LTS}, Definition \ref{defn X} and the fact that the number of $G_{\alpha}^i$ such that $G_{\alpha}^i \subset G_k^j$ is at most $2^7$ , we have
\begin{align*}
\eqref{eq Term1 lem term1} 
&  \lesssim \sum_{i: 11 \leq i < j} 2^{i-j} \sum_{\substack{ G_{\alpha}^i \subset G_k^j \\ N(G_{\alpha}^i) \geq \varepsilon_3^{-1/2} 2^{-i} }} \sum_{n_1 : n_1 \sim i } 2^{n_1-i} \norm{\abs{\nabla}^{\frac{1}{2}}  P_{ 2^{-n_1}} u  }_{U_{\Delta}^2 (G_{\alpha}^i \times \R^2) }^2 \norm{u }_{\tilde{X}_j([0,T] \times \R^2)}^4 \varepsilon_2^4 \\
& \lesssim \varepsilon_2^4 \norm{u }_{\tilde{X}_j([0,T] \times \R^2)}^6
\end{align*}

Similarly, the second term \eqref{eq Term2 lem term1} in Lemma \ref{lem Term1} becomes,
\begin{align*}
\eqref{eq Term2 lem term1} & \lesssim \sum_{ \substack{ i : i \geq j \\ N(G_k^j) \geq \varepsilon_3^{-1/2} 2^{-j}}}  \norm{\abs{\nabla}^{\frac{1}{2}} P_{  2^{-i}} u }_{U_{\Delta}^2 (G_k^j \times \R^2) }^2  \varepsilon_3^4 \norm{u }_{\tilde{X}_j([0,T] \times \R^2)}^4  \lesssim \varepsilon_2^4 \norm{u}_{\tilde{X}_j ([0,T]\times \R^2)}^6   .
\end{align*}

Therefore, the proof of Lemma \ref{lem Term1} is complete.
\end{proof}

Next, we estimate the contribution of $\tilde{F}_2$ in the decomposition in Proposition \ref{prop Term 1&2}.
\begin{lem}[Contribution of $\tilde{F}_2$]\label{lem Term2}
For a fixed $G_k^j \subset [0,T]$, $j>11$,
\begin{align}
& \quad \sum_{i : 11 \leq i < j } 2^{i-j}  \sum_{\substack{ G_{\alpha}^i \subset G_k^j \\ N(G_{\alpha}^i) \geq \varepsilon_3^{-1/2} 2^{-i} }} \norm{\abs{\nabla}^{\frac{1}{2}}  \int_{t_{\alpha}^i}^t  e^{i(t-t^{\prime})\Delta} \tilde{F}_2    \,  dt^{\prime}  }_{U_{\Delta}^2 (G_{\alpha}^i \times \R^2)}^2 \label{eq Term1 lem term2}\\
& + \sum_{ \substack{ i: i\geq j \\ N(G_k^j) \geq \varepsilon_3^{-1/2} 2^{-j}} }\norm{\abs{\nabla}^{\frac{1}{2}}  \int_{t_0}^t  e^{i(t-t^{\prime})\Delta}  \tilde{F}_2   \,  dt^{\prime}  }_{U_{\Delta}^2 (G_k^j \times \R^2)}^2 \label{eq Term2 lem term2} \\
& \lesssim  \varepsilon_2^2 \norm{u}_{\tilde{X}_j([0,T] \times \R^2)}^8 .  \notag
\end{align}
\end{lem}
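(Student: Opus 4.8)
The plan is to mirror the structure of the proof of Lemma \ref{lem Term1}, but now the key frequency‑localized smallness estimate \eqref{eq6 Thm LTS} is no longer available, because in $\tilde{F}_2$ the two largest frequencies $2^{-n_1}\sim 2^{-n_2}$ are $\gg 2^{-i}$ and hence may sit \emph{above} the frequency scale $\varepsilon_3^{1/2}N(t)$, so none of the five factors is automatically small. The replacement for the small factor $\varepsilon_2^2$ will come instead from the bilinear Strichartz estimate (Corollary \ref{cor U Bilinear}): the product of the highest‑frequency factor $P_{2^{-n_1}}u$ and the output frequency projection $P_{2^{-i}}v$ gains a factor $(2^{-n_1}/2^{-i})^{1/2}=2^{(i-n_1)/2}\ll 1$ in $L^2_{t,x}$, which supplies the decay needed to sum over $n_1$ down from $i$.

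Concretely, first I would apply Proposition \ref{prop Duality} to bound each Duhamel piece by $\sup_{\norm{v}_{V_\Delta^2}=1}\int_{G_\alpha^i}\inner{\abs{\nabla}^{1/2}P_{2^{-i}}v,\,f}\,dt$, expand $f=(P_{2^{-n_1}}u)(P_{2^{-n_2}}u)(P_{2^{-n_3}}u)(P_{2^{-n_4}}u)(P_{2^{-n_5}}u)$ with $n_1<i$, $n_1\sim n_2$, and then pair the factors as follows: group $P_{2^{-i}}v$ with $P_{2^{-n_1}}u$ and estimate their product in $L^2_tL^2_x$ by Corollary \ref{cor U Bilinear}, picking up $2^{(i-n_1)/2}$; place $P_{2^{-n_2}}u$ and $P_{2^{-n_3}}u$ in $L^4_tL^4_x$ (controlled by $U_\Delta^4$ and hence by $U_\Delta^2$ via Proposition \ref{prop Embed}); and place the two lowest‑frequency factors $P_{2^{-n_4}}u$, $P_{2^{-n_5}}u$ in $L^\infty_tL^\infty_x$. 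For $n_4,n_5$, exactly one of two situations holds on $G_\alpha^i$: either $2^{-n_4}\le \varepsilon_3^{1/4}N(t)$, in which case \eqref{eq Epsilon} gives the smallness $\norm{P_{2^{-n_4}}u}_{L^\infty_tL^4_x}\lesssim\varepsilon_2$ and Bernstein upgrades to $L^\infty_x$, or $2^{-n_4}\gtrsim\varepsilon_3^{1/4}N(t)$, which forces $n_4$ (hence $n_5$) within $O(\log\varepsilon_3^{-1})$ of $i$ and one instead absorbs these into the $n_1$–sum using the extra room from the bilinear gain; a similar dichotomy handles $n_5$. Tracking the Bernstein factors $2^{-i/2}$ (from $\abs{\nabla}^{1/2}$ on $v$) against $2^{n_k/2}$ (converting $\abs{\nabla}^{1/2}P_{2^{-n_k}}u$ to $P_{2^{-n_k}}u$) and the bilinear gain $2^{(i-n_1)/2}$ should leave a geometrically summable series in all of $n_1,\dots,n_5$.

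Then I would sum over the frequencies: the factors depending on $n_2,n_3,n_4,n_5$ sum to at most $\norm{u}_{\tilde{X}_j([0,T]\times\R^2)}^3$ times $\varepsilon_2$ (one power of $\varepsilon_2$ from the lowest factor, with the other two of the five factors each costing a full $\tilde{X}_j$ norm and the highest costing a half‑derivative $U_\Delta^2$ norm), while the $n_1$‑sum converges because of the $2^{(i-n_1)/2}$ bilinear gain and yields the weight needed to run, together with Definition \ref{defn X}, the $\sum_i 2^{i-j}\sum_{G_\alpha^i\subset G_k^j}$ reassembly exactly as in Lemma \ref{lem Term1}, producing $\varepsilon_2^2\norm{u}_{\tilde{X}_j([0,T]\times\R^2)}^8$ for the first sum \eqref{eq Term1 lem term2}. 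The second sum \eqref{eq Term2 lem term2}, over $i\ge j$ on the single interval $G_k^j$, is handled identically (indeed more simply, since there is only one interval and no averaging), using that $N(G_k^j)\ge\varepsilon_3^{-1/2}2^{-j}$ ensures the relevant low frequencies are again below $\varepsilon_3^{1/4}N(t)$.

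The main obstacle I expect is the bookkeeping of the high‑high‑into‑low interaction, i.e. making the two‑parameter sum over $n_1\sim n_2$ (both $\gg i$) genuinely summable: the bilinear gain $2^{(i-n_1)/2}$ must simultaneously (i) beat the Bernstein loss $2^{n_1/2}$ incurred in dropping the half‑derivative from $P_{2^{-n_1}}u$, and (ii) leave enough surplus to dominate the entire $n_1$–tail even when $n_4,n_5$ are forced close to $i$ rather than small; this is precisely the ``more summability than the mass‑critical case'' issue flagged in the introduction, and it is where the argument diverges most from Dodson's \cite{D3} and from Lemma \ref{lem Term1}. A careful accounting shows the net exponent on $2^{n_1}$ after combining the $\abs{\nabla}^{1/2}$ weights, the $L^4$ Bernstein factors on $n_2,n_3$, and the bilinear gain is strictly negative, so the series converges; verifying this inequality in every sub‑case of the $n_4,n_5$ dichotomy is the technical heart of the lemma.
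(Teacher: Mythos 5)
The plan is recognizably in the right territory — bilinear Strichartz is indeed the engine that replaces the missing $\varepsilon_2$-smallness from Lemma~\ref{lem Term1} — but the specific pairing you propose breaks down in two concrete places, and it differs from the paper's argument in a way that matters.

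First, you never invoke the Decomposition Lemma~\ref{lem Decomp}, and you need it. Your bound keeps all five $U_{\Delta}^2$ (and $L_t^4 L_x^4$) norms on the interval $G_{\alpha}^i$, but for $n_1 < i$ the projection $P_{2^{-n_1}}u$ lives at a \emph{higher} frequency than $2^{-i}$, so Definition~\ref{defn X} only controls $\norm{\abs{\nabla}^{\frac{1}{2}}P_{2^{-n_1}}u}_{U_\Delta^2}$ after averaging over the shorter intervals $G_\beta^{n_1}$, not on $G_\alpha^i$. Passing from $G_\alpha^i$ to $\cup G_\beta^{n_1}$ via Lemma~\ref{lem U^p sum} costs a factor of roughly $2^{i-n_1}$ in the reassembled sum, and this eats exactly the gain $2^{(n_1-i)}$ you have (squared) from a single bilinear estimate; the ledger comes out borderline, not summable. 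The paper instead applies Lemma~\ref{lem Decomp} \emph{before} the duality pairing, so that the test function $v$ is normalized in $V_\Delta^2(G_\beta^{n_1})$ and every subsequent norm already sits on the correctly scaled interval; this is what makes the final $2^{(n_1-i)\frac{5}{3}-}$ decay (and then $2^{(n_1-i)\frac{2}{3}-}$ after reassembly) genuinely summable.

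Second, your bilinear pair is $(P_{2^{-i}}v,\,P_{2^{-n_1}}u)$, but $v$ is only in $V_\Delta^2$, not $U_\Delta^2$, and Corollary~\ref{cor U Bilinear} at the endpoint $p=2$ requires both functions in $U_\Delta^2$. Embedding $V_\Delta^2\hookrightarrow U_\Delta^p$ forces $p>2$, weakening the gain to $(M/N)^{1/p}$ and changing the Lebesgue exponents in the Hölder split in a way your plan does not account for. The paper sidesteps this entirely: $v$ goes into $L_t^\infty L_x^\infty$ via $V_\Delta^2\hookrightarrow L_t^\infty L_x^2$ and Bernstein (picking up $2^{-3i/2}$), while \emph{two} bilinear Strichartz estimates are applied to the $u$-only pairs $(P_{2^{-n_1}}u,\,P_{2^{-n_3}}u)$ and $(P_{2^{-n_2}}u,\,P_{2^{-n_4}}u)$, both of which are legitimately in $U_\Delta^2$; the remaining factor $P_{2^{-n_5}}u$ carries the $\varepsilon_2$ smallness (with a single dichotomy on $n_5$, using the Case-2 constraint $N(t)\geq\varepsilon_3^{-1/2}2^{-i}$ to handle the regime $2^{-n_5}\gtrsim\varepsilon_3^{1/4}N(t)$). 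This two-bilinear choice is precisely what produces the strictly positive decay rate $2^{n_1+n_2-n_5/2-3i/2}$ needed to close the sum, which a single bilinear pair does not give you with room to spare.

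As a minor point, the displayed gain $(2^{-n_1}/2^{-i})^{1/2}=2^{(i-n_1)/2}$ is the reciprocal of what you want (the low-over-high ratio is $(2^{-i}/2^{-n_1})^{1/2}=2^{(n_1-i)/2}$); the $\ll 1$ claim next to it suggests a typo rather than a conceptual error, but it is worth flagging since the sign of this exponent is the whole point of the estimate.
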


\begin{proof}
First, we consider \eqref{eq Term1 lem term2}. By Lemma \ref{lem Decomp}, we decompose  
\begin{align}
& \quad \norm{\abs{\nabla}^{\frac{1}{2}}  \int_{t_{\alpha}^i}^t  e^{i(t-t^{\prime})\Delta} \tilde{F}_2  \,    dt^{\prime}  }_{U_{\Delta}^2 (G_{\alpha}^i \times \R^2)} \notag\\
& \leq \sum_{n_1 : 0 \leq n_1 \leq i  }  \sum_{\substack{n_2 , n_3, n_4 , n_5  : \\ n_1 \sim  n_2 \leq n_3 \leq  n_4 \leq n_5 }} \sum_{ G_{\beta}^{n_1} \subset G_{\alpha}^i }  \sup_{\norm{v}_{V_{\Delta}^2(G_{\beta}^{n_1} \times \R^2) }=1}    \int_{G_{\beta}^{n_1}} \inner{\abs{\nabla}^{\frac{1}{2}} P_{ 2^{-i}}  v ,  f  } \, dt . \label{eq Term1 decomp Thm LTS}
\end{align}

By H\"older,
\begin{equation}\label{eq8 Thm LTS}
\begin{aligned}
\int_{G_{\beta}^{n_1}} \inner{\abs{\nabla}^{\frac{1}{2}} P_{2^{-i}}v ,   f  } \, dt  \lesssim \norm{ \abs{\nabla}^{\frac{1}{2}} P_{2^{-i}} v}_{L_t^{\infty} L_x^{\infty} (G_{\beta}^{n_1} \times \R^2) } \norm{ \parenthese{P_{2^{-n_1}} u} \parenthese{P_{2^{-n_3}} u}}_{L_t^{2} L_x^{2} (G_{\beta}^{n_1} \times \R^2)} \\
\times \norm{\parenthese{P_{2^{-n_2}} u } \parenthese{P_{2^{-n_4}} u} }_{L_t^{2} L_x^{2} (G_{\beta}^{n_1} \times \R^2) }  \norm{ P_{2^{-n_5}} u}_{L_t^{\infty} L_x^{\infty} (G_{\beta}^{n_1} \times \R^2)}   .
\end{aligned}
\end{equation}
By Bernstein, Proposition \ref{prop Embed} and $\norm{v}_{V_{\Delta}^2 (G_{\beta}^{n_1} \times \R^2)} =1$,
\begin{equation*}
\norm{ \abs{\nabla}^{\frac{1}{2}} P_{2^{-i}} v}_{L_t^{\infty} L_x^{\infty} (G_{\beta}^{n_1} \times \R^2) }  \lesssim 2^{-\frac{3i}{2}} \norm{ P_{2^{-i}} v}_{L_t^{\infty} L_x^{2} (G_{\beta}^{n_1} \times \R^2) }   \lesssim 2^{-\frac{3i}{2}}   .
\end{equation*}

Next, we employ Bourgain's bilinear estimates (Lemma \ref{lem Bilinear}) and Bernstein to obtain the following bounds:
\begin{align*}
\norm{ \parenthese{P_{2^{-n_1}} u} \parenthese{P_{2^{-n_3}} u}}_{L_t^{2} L_x^{2} (G_{\beta}^{n_1} \times \R^2)}  \lesssim 2^{n_1} \norm{  \abs{\nabla}^{\frac{1}{2}} P_{ 2^{-n_1}} u }_{U_{\Delta}^2 (G_{\beta}^{n_1} \times \R^2) } \norm{  \abs{\nabla}^{\frac{1}{2}} P_{ 2^{-n_3}} u }_{U_{\Delta}^2 (G_{\beta}^{n_1} \times \R^2) }  , \\
 \norm{ \parenthese{P_{2^{-n_2}} u} \parenthese{P_{2^{-n_4}} u}}_{L_t^{2} L_x^{2} (G_{\beta}^{n_1} \times \R^2)}  \lesssim 2^{n_2} \norm{  \abs{\nabla}^{\frac{1}{2}} P_{ 2^{-n_2}} u }_{U_{\Delta}^2 (G_{\beta}^{n_1} \times \R^2) } \norm{  \abs{\nabla}^{\frac{1}{2}} P_{ 2^{-n_4}} u }_{U_{\Delta}^2 (G_{\beta}^{n_1} \times \R^2) } .
\end{align*}

For $P_{2^{-n_5}} u$ term, we treat $2^{-n_5} \leq \varepsilon_3^{1/4} N(t)$ and $2^{-n_5} \geq \varepsilon_3^{1/4} N(t)$ separately:
\begin{itemize}
\item

If $2^{-n_5} \leq \varepsilon_3^{1/4} N(t)$, then by Bernstein and \eqref{eq Epsilon}, 
\begin{equation*}
\norm{  P_{2^{-n_5}} u}_{L_t^{\infty} L_x^{\infty} (G_{\beta}^{n_1} \times \R^2) } \lesssim  2^{-\frac{n_5}{2}} \norm{  P_{2^{-n_5}} u}_{L_t^{\infty} L_x^{4} (G_{\beta}^{n_1} \times \R^2) } \varepsilon_2   \lesssim  2^{-\frac{n_5}{2}} \varepsilon_2  .
\end{equation*}

\item
If $2^{-n_5}  \geq \varepsilon_3^{1/4} N(t) $, then the assumption $N(t) \geq \varepsilon_3^{-1/2} 2^{-i}$ in {\bf Case 2}  implies $2^{n_5 } \leq \varepsilon_3^{1/4} 2^i$. Then by Bernstein, we have
\begin{equation*}
\norm{  P_{2^{-n_5}} u}_{L_t^{\infty} L_x^{\infty} (G_{\beta}^{n_1} \times \R^2) } \lesssim 2^{-\frac{n_5}{2}} \norm{ P_{2^{-n_5}} u}_{L_t^{\infty} L_x^{4} (G_{\beta}^{n_1} \times \R^2) } \lesssim 2^{\frac{n_5}{6}}2^{-\frac{2n_5}{3}} \leq  \varepsilon_3^{1/24} 2^{\frac{i}{6}-\frac{2n_5}{3}}   .
\end{equation*}

\end{itemize}

Using \eqref{eq Epsilon} again, we have the following bound for $P_{2^{-n_5}} u$ term,
\begin{equation*}
\norm{  P_{2^{-n_5}} u}_{L_t^{\infty} L_x^{\infty} (G_{\beta}^{n_1} \times \R^2) } \lesssim \varepsilon_2 2^{-\frac{n_5}{2}} \parenthese{1+ 2^{\frac{1}{6}(i-n_5)} } .
\end{equation*}

Putting the computations above together, combining with $\norm{v}_{V_{\Delta}^2 (G_{\beta}^{n_1} \times \R^2)} =1$ and \eqref{eq Epsilon}, we obtain
\begin{align}
\eqref{eq8 Thm LTS} &  \lesssim  2^{n_1 + n_2 -\frac{n_5}{2} -\frac{3i}{2}}   \varepsilon_2 \norm{  \abs{\nabla}^{\frac{1}{2}} P_{ 2^{-n_1}} u }_{U_{\Delta}^2  (G_{\beta}^{n_1} \times \R^2) } \norm{  \abs{\nabla}^{\frac{1}{2}} P_{ 2^{-n_2}} u }_{U_{\Delta}^2  (G_{\beta}^{n_1} \times \R^2) } \notag\\
& \qquad \times \norm{  \abs{\nabla}^{\frac{1}{2}} P_{ 2^{-n_3}} u }_{U_{\Delta}^2 (G_{\beta}^{n_1} \times \R^2) }  \norm{  \abs{\nabla}^{\frac{1}{2}} P_{ 2^{-n_4}} u }_{U_{\Delta}^2 (G_{\beta}^{n_1} \times \R^2) }  \label{eq Term1 in sum}\\
&  +  2^{n_1 + n_2 -\frac{n_5}{2} -\frac{3i}{2}}  2^{\frac{1}{6}(i-n_5)}  \varepsilon_2 \norm{  \abs{\nabla}^{\frac{1}{2}} P_{ 2^{-n_1}} u }_{U_{\Delta}^2  (G_{\beta}^{n_1} \times \R^2) } \norm{  \abs{\nabla}^{\frac{1}{2}} P_{ 2^{-n_2}} u }_{U_{\Delta}^2  (G_{\beta}^{n_1} \times \R^2) } \notag\\
& \qquad \times \norm{  \abs{\nabla}^{\frac{1}{2}} P_{ 2^{-n_3}} u }_{U_{\Delta}^2 (G_{\beta}^{n_1} \times \R^2) }  \norm{  \abs{\nabla}^{\frac{1}{2}} P_{ 2^{-n_4}} u }_{U_{\Delta}^2 (G_{\beta}^{n_1} \times \R^2) } . \label{eq Term2 in sum} 
\end{align}

Next, we consider the summations in \eqref{eq Term1 decomp Thm LTS}.

The sum over $n_3$, $n_4$ and $n_5$ acting on the first term \eqref{eq Term1 in sum} and the component depending on these frequencies is bounded by a multiple of $2^{\frac{3n_1}{2}  - \frac{3i}{2}}  \norm{u}_{\tilde{X}_j([0,T] \times \R^2)}^2 $. Then the sum over $n_2$ in \eqref{eq Term1 decomp Thm LTS} is a finite sum. So by Fubini-Tonelli theorem, Cauchy-Schwarz, Definition \ref{defn X}, \eqref{eq8 Thm LTS}, \eqref{eq Term1 in sum}, \eqref{eq Term2 in sum} and the fact that $G_{\beta}^{n_1}$ overlaps $2^{i-n_1}$ intervals $G_{\alpha}^{i}$, we obtain
\begin{align}
\eqref{eq Term1 decomp Thm LTS} 
& \lesssim \varepsilon_2 \norm{u}_{\tilde{X}_j([0,T] \times \R^2)}^3  \parenthese{\sum_{n_1 : 0 \leq n_1 \leq i  }  \sum_{ G_{\beta}^{n_1} \subset G_{\alpha}^i} 2^{(n_1 -i)\frac{5}{3}-}   \norm{  \abs{\nabla}^{\frac{1}{2}} P_{ 2^{-n_1}} u }_{U_{\Delta}^2 (G_{\beta}^{n_1} \times \R^2) }^2 }^{\frac{1}{2}}  .  \label{eq9 Thm LTS}
\end{align}

Therefore, by Cauchy-Schwarz, Fubini-Tonelli theorem, Definition \ref{defn X}, \eqref{eq Term1 decomp Thm LTS} and \eqref{eq9 Thm LTS}, we obtain
\begin{align}
\eqref{eq Term1 lem term2} & 
\lesssim  \varepsilon_2^2 \norm{u}_{\tilde{X}_j([0,T] \times \R^2)}^6 \sum_{n_1 : 0 \leq n_1 <j} 2^{n_1 -j } \sum_{i : n_1 \leq i <j } 2^{(n_1 -i)\frac{2}{3}-}   \sum_{ G_{\beta}^{n_1} \subset G_{\alpha}^i}   \norm{  \abs{\nabla}^{\frac{1}{2}} P_{ 2^{-n_1}} u }_{U_{\Delta}^2 (G_{\beta}^{n_1} \times \R^2) }^2  \notag\\
& \lesssim  \varepsilon_2^2 \norm{u}_{\tilde{X}_j([0,T] \times \R^2)}^8  .  \label{eq10 Thm LTS}
\end{align}

Then take \eqref{eq Term2 lem term2}.
Again by Lemma \ref{lem Decomp}, we decompose  
\begin{align}
& \quad \norm{\abs{\nabla}^{\frac{1}{2}}  \int_{t_{\alpha}^i}^t  e^{i(t-t^{\prime})\Delta} \tilde{F}_2  \,    dt^{\prime}  }_{U_{\Delta}^2 (G_{k}^j \times \R^2)} \notag\\
& \lesssim \sum_{n_1 : 0 \leq n_1 \leq j  }  \sum_{\substack{n_2 , n_3, n_4 ,n_5  : \\n_1 \sim  n_2 \leq n_3 \leq  n_4 \leq n_5 }} \sum_{ G_{\beta}^{n_1} \subset G_{k}^j }  \sup_{\norm{v}_{V_{\Delta}^2(G_{\beta}^{n_1} \times \R^2) }=1}    \int_{G_{\beta}^{n_1}} \inner{\abs{\nabla}^{\frac{1}{2}} P_{ 2^{-i}} v ,   f  } \, dt \label{eq Term2 decomp Thm LTS}\\
& + \sum_{n_1 : j \leq n_1 \leq i  }  \sum_{\substack{n_2 , n_3, n_4, n_5  : \\ n_1 \sim n_2 \leq n_3 \leq  n_4 \leq n_5}}  \sup_{\norm{v}_{V_{\Delta}^2(G_{k}^j \times \R^2) }=1}    \int_{G_{k}^j} \inner{\abs{\nabla}^{\frac{1}{2}} P_{ 2^{-i}} v ,   f  } \, dt .  \label{eq Term3 decomp Thm LTS}
\end{align}
Next we estimate these two terms separately.

By the same calculation from \eqref{eq8 Thm LTS} to \eqref{eq9 Thm LTS}, we have
\begin{align*}
\eqref{eq Term2 decomp Thm LTS} &\lesssim \varepsilon_2 \norm{u}_{\tilde{X}_j([0,T] \times \R^2)}^3  \parenthese{\sum_{n_1 : 0 \leq n_1 \leq j  }  \sum_{ G_{\beta}^{n_1} \subset G_{k}^j} 2^{(n_1 -i)\frac{5}{3}-}   \norm{  \abs{\nabla}^{\frac{1}{2}} P_{ 2^{-n_1}} u }_{U_{\Delta}^2 (G_{\beta}^{n_1} \times \R^2) }^2 }^{\frac{1}{2}} ,\\
\eqref{eq Term3 decomp Thm LTS}&  \lesssim \varepsilon_2 \norm{u}_{\tilde{X}_j([0,T] \times \R^2)}^3  \parenthese{\sum_{n_1 : j \leq n_1 \leq i  }  2^{(n_1 -i)\frac{5}{3}-}   \norm{  \abs{\nabla}^{\frac{1}{2}} P_{ 2^{-n_1}} u }_{U_{\Delta}^2 (G_{k}^{j} \times \R^2) }^2 }^{\frac{1}{2}} .
\end{align*}

Then by Fubini-Tonelli theorem and Definition \ref{defn X} and the similar calculation as in \eqref{eq10 Thm LTS}, we have
\begin{align*}
& \quad \eqref{eq Term2 lem term2} \\
& \lesssim  \varepsilon_2^2 \norm{u}_{\tilde{X}_j([0,T] \times \R^2)}^6 \sum_{ \substack{ i: i\geq j \\ N(G_k^j) \geq \varepsilon_3^{-1/2} 2^{-j}} }   \sum_{n_1 : 0 \leq n_1 \leq j  }  \sum_{ G_{\beta}^{n_1} \subset G_{k}^j} 2^{(n_1 -i)\frac{5}{3}-}   \norm{  \abs{\nabla}^{\frac{1}{2}} P_{ 2^{-n_1}} u }_{U_{\Delta}^2 (G_{\beta}^{n_1} \times \R^2) }^2  \\
& + \varepsilon_2^2 \norm{u}_{\tilde{X}_j([0,T] \times \R^2)}^6 \sum_{ \substack{ i: i\geq j \\ N(G_k^j) \geq \varepsilon_3^{-1/2} 2^{-j}} }   \sum_{n_1 : j \leq n_1 \leq i  }  2^{(n_1 -i)\frac{5}{3}-}   \norm{  \abs{\nabla}^{\frac{1}{2}} P_{ 2^{-n_1}} u }_{U_{\Delta}^2 (G_k^j \times \R^2) }^2  \\
& \lesssim \varepsilon_2^2 \norm{u}_{\tilde{X}_j([0,T] \times \R^2)}^8 .
\end{align*}
Therefore, we complete the proof of Lemma \ref{lem Term2}.
\end{proof}

Then Proposition \ref{prop Term 1&2} follows from Lemma \ref{lem Term1} and Lemma \ref{lem Term2}.  
\end{proof}
Now the proof of Theorem \ref{thm LTS} is complete.
\end{proof}

\begin{rmk}[Main differences with
 \cite{D3}]
After using Littlewood-Paley to decompose the nonlinearity in the Duhamel term, we should be very careful with the high frequency and high frequency interaction into low frequency terms (the worst case is five high frequencies interaction into low frequency). The reason here is that instead of proving Theorem \ref{thm LTS} directly, we are doing a bootstrap argument, that is, we wish to prove 
\begin{equation*}
\norm{u}_{\tilde{X}_{k_0} ([0,T] \times \R^2)}^2 \lesssim 1 + \varepsilon \norm{u}_{\tilde{X}_{k_0} ([0,T] \times \R^2)}^2 ,
\end{equation*}
then it is necessary to control some components of the left-hand side by some small number times itself. From the construction of the atomic $X$ norm, we can see that the high frequency terms require more summability than the others. Therefore, we should gain more decay than the mass-critical case to sum over the high frequency terms, and hence close the bootstrap argument as desired. In contrast, these terms were not problematic in mass-critical \cite{D3}, because the cutoff in the mass-critical problem and the cutoff in $\dot{H}^{\frac{1}{2}}$ are opposite, hence the worse case was all low frequencies interaction into high frequency. However, this case never happens since the contribution of all low frequencies remains low. 
\end{rmk}

\section{Impossibility of quasi-solition solutions}\label{sec No quasi-soliton}
After proving a suitable long time Strichartz estimate in Section \ref{sec LTS}. We now, in this section, are able to prove a frequency-localized interaction Morawetz estimate and use it to preclude the existence of quasi-soliton solutions.

\subsection{Interaction Morawetz estimate in 2D}\label{sec ME}
We first recall the interaction Morawetz estimate in dimensions two, with modified nonlinear terms, that is, we consider equations 
\begin{equation*}
 i\partial_t w + \Delta w =\abs{w}^4 w+ \NN ,
\end{equation*}
instead of 
\begin{equation*}
i\partial_t w + \Delta w =\abs{w}^4 w .
\end{equation*}
in \cite{PV}. We will use the following interaction Morawetz estimate to derive a frequency-localized interaction Morawetz estimate in the next subsection.

\begin{thm}\label{thm ME}
If $w$ solves the same equation
\begin{equation*}
i \partial_t w + \Delta w =  \abs{w}^4w + \NN  ,
\end{equation*}
then 
\begin{align*}
\frac{d}{dt} M_{\omega} (t)  & =4 \iint_{x_{\omega} = y_{\omega}} \abs{\partial_{\omega} (\overline{w(t,y)} w(t,x))}^2  \, dxdy  +  \frac{4}{3}\iint_{x_{\omega} = y_{\omega}} \abs{w(t,x)}^2 \abs{w(t,y)}^6  \, dxdy  \\
& \quad + 4\iint \abs{w(t,x)}^2  \frac{(x-y)_{\omega}}{\abs{(x-y)_{\omega}}} \re[ \NN \partial_{\omega}\bar{ w} -w \partial_{\omega}\bar{ \NN} ](t,y) \, dxdy \\
& \quad + 4  \iint \im [\bar{w} \partial_{\omega} w] (t,x) \frac{(x-y)_{\omega}}{\abs{(x-y)_{\omega}}}  \im [ \bar{w} \NN] (t,y) \, dxdy  .
\end{align*}
\end{thm}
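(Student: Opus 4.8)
The plan is to compute the time derivative of the interaction Morawetz functional directly, using the equation satisfied by $w$ and integration by parts. Recall the standard interaction Morawetz functional in one direction $\omega \in S^1$, namely
\begin{equation*}
M_{\omega}(t) = \iint |w(t,x)|^2 \frac{(x-y)_{\omega}}{|(x-y)_{\omega}|} \im[\overline{w}\partial_{\omega} w](t,y)\, dxdy + (x \leftrightarrow y),
\end{equation*}
where $x_{\omega} = x\cdot\omega$ and $\partial_{\omega} = \omega\cdot\nabla$; this is essentially a tensor-product Morawetz action in the $\omega$-direction built from the momentum density. The first step is to differentiate in $t$, passing the derivative inside the integrals, and to substitute $\partial_t |w|^2$ and $\partial_t\im[\overline{w}\partial_{\omega}w]$ using the equation $i\partial_t w + \Delta w = |w|^4 w + \NN$. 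The local conservation laws get corrected by the extra nonlinearity $\NN$: the mass density obeys $\partial_t |w|^2 = -2\nabla\cdot\im[\overline{w}\nabla w] + 2\im[\overline{w}\NN]$, and the momentum density obeys a transport-plus-stress-tensor identity with an extra forcing term coming from $\NN$, schematically $\partial_t\im[\overline{w}\partial_j w] = (\text{stress tensor divergence}) + (\text{nonlinear pressure from }|w|^4w) + \re[\NN\partial_j\overline{w} - w\partial_j\overline{\NN}]$.

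Second, I would assemble these pieces. The terms with no $\NN$ reproduce, after integration by parts in $x$ and $y$ and using the one-dimensional structure (so that $\Delta$ restricted to the $\omega$-direction produces a delta function $\delta(x_{\omega}-y_{\omega})$ from the kernel $\frac{(x-y)_{\omega}}{|(x-y)_{\omega}|}$, whose $\omega$-derivative is $2\delta$), exactly the two positive terms
\begin{equation*}
4\iint_{x_{\omega}=y_{\omega}} |\partial_{\omega}(\overline{w(t,y)}w(t,x))|^2\, dxdy \quad\text{and}\quad \frac{4}{3}\iint_{x_{\omega}=y_{\omega}} |w(t,x)|^2|w(t,y)|^6\, dxdy,
\end{equation*}
just as in the $\NN=0$ case treated in \cite{PV}. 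The coefficient $\tfrac43$ on the potential term is where the quintic nonlinearity ($p=5$) enters through the algebraic identity $\frac{p-1}{p+1} = \frac{2}{3}$, doubled by the symmetrization. The new contributions involving $\NN$ are exactly the last two double integrals in the statement: one arising from feeding $2\im[\overline{w}\NN]$ into the mass-density slot and pairing with the momentum density at the other point (giving the $\im[\overline{w}\partial_{\omega}w](t,x)\cdot\im[\overline{w}\NN](t,y)$ term), and one arising from the $\NN$-forcing term in the momentum equation paired with $|w|^2$ at the other point (giving the $|w(t,x)|^2\re[\NN\partial_{\omega}\overline{w} - w\partial_{\omega}\overline{\NN}](t,y)$ term).

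The main obstacle, as usual for interaction Morawetz identities, is the careful bookkeeping of the integration by parts in the two-dimensional physical space when the multiplier depends only on the $\omega$-component of $x-y$: one must correctly track which derivatives fall on the kernel versus on $w$, handle the distributional identity $\partial_{\omega}^2\left(\frac{z_{\omega}}{|z_{\omega}|}\right) = 2\delta(z_{\omega})$ (tensored against the transverse directions), and verify that the transverse ($\omega^{\perp}$) contributions cancel by symmetry so that only the $x_{\omega}=y_{\omega}$ slice survives in the positive terms. A secondary point is confirming the precise constants; I would do this by first recording the two local conservation laws with their $\NN$-corrections as lemmas, then substituting and collecting, rather than expanding everything at once. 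Since the $\NN=0$ version is already established in \cite{PV}, the cleanest route is to observe that the derivation there is linear in the pair of conservation laws, so adding the $\NN$-forcing terms simply appends the two extra integrals without disturbing the rest of the computation; this reduces the proof to tracking only the new terms.
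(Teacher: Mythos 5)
Your plan matches what the paper relies on: Theorem \ref{thm ME} is not re-derived in the paper but is treated as the extension of the $\NN=0$ interaction Morawetz identity of \cite{PV}, obtained precisely by feeding the two local conservation laws (mass density and momentum density) with their $\NN$-forcing corrections into the derivative of the interaction functional. Your mass-density correction $\partial_t\abs{w}^2 = -2\nabla\cdot\im[\bar w\nabla w]+2\im[\bar w\NN]$ and the momentum-density forcing $\re[\NN\partial_j\bar w - w\partial_j\bar\NN]$ are both correct, and your observation that the computation is linear in this pair of identities, so that the $\NN$-terms simply append, is exactly the right reduction.

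One bookkeeping caveat worth flagging: your $M_\omega$ does not match the paper's. The paper takes the unsymmetrized functional $M_\omega(t)=\iint\abs{w(t,y)}^2\frac{(x-y)_\omega}{\abs{(x-y)_\omega}}\im[\bar w\partial_\omega w](t,x)\,dxdy$, whereas you wrote the version with mass at $x$, momentum at $y$, plus its $x\leftrightarrow y$ swap. Since the kernel $\frac{(x-y)_\omega}{\abs{(x-y)_\omega}}$ is antisymmetric, the swap equals the original term after relabeling dummy variables, so your functional is (up to sign) twice the unsymmetrized one and in fact equals $-2\,M_\omega^{\text{paper}}$; that sign would flip the positive terms in the resulting identity if carried through unchecked. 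Relatedly, the coefficient $\tfrac{4}{3}$ on the $\abs{w}^2\abs{w}^6$ term comes from the pressure identity $\{\abs{w}^4 w, w\}_p = -\tfrac{2}{3}\nabla(\abs{w}^6)$ together with a factor of $2$ from the distributional identity $\partial_\omega\big(\tfrac{z_\omega}{\abs{z_\omega}}\big)=2\delta(z_\omega)$, not from symmetrizing. Since you explicitly reserve the step of verifying constants, this is a repair rather than a flaw in the method, but to land exactly on the stated identity you should work with the paper's unsymmetrized $M_\omega$.
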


\begin{lem}\label{lem H^1/2}
Define 
\begin{equation*}
M_y [w](t)= \int_{\R^2} \frac{x-y}{\abs{x-y}} \cdot \im [\bar{w} \nabla w](t,x)  \, dx ,
\end{equation*}
then
\begin{equation*}
\abs{M_y [w](t)} \lesssim \norm{w(t)}_{\dot{H}_x^{\frac{1}{2}}(\R^2)}^2  .
\end{equation*}
\end{lem}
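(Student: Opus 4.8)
The plan is to prove the bound on $M_y[w](t)$ by exhibiting the quantity as a bilinear pairing that can be controlled by Sobolev duality. The key observation is that $\frac{x-y}{\abs{x-y}}$ is a bounded vector field (modulus one almost everywhere), so one expects the estimate to reduce to bounding $\int_{\R^2} \bar{w}\,\nabla w\, dx$ in terms of $\norm{w}_{\dot H^{1/2}}^2$, with the unit vector field absorbed harmlessly. Concretely, I would write, for each fixed $y$,
\begin{equation*}
\abs{M_y[w](t)} \leq \int_{\R^2} \abs{w(t,x)}\,\abs{\nabla w(t,x)}\, dx = \norm{\,\abs{w}\,\abs{\nabla w}\,}_{L^1_x(\R^2)},
\end{equation*}
which removes the $y$-dependence entirely and shows the bound is in fact uniform in $y$.

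Next I would apply duality for the homogeneous Sobolev pairing: since $\abs{\nabla w} = \abs{\abs{\nabla}w}$ pointwise in the sense that $\norm{\nabla w}_{L^2} = \norm{\abs{\nabla} w}_{L^2}$, and more usefully $\int \abs{w}\abs{\nabla w}\,dx \leq \norm{w}_{L^p}\norm{\nabla w}_{L^{p'}}$ for a suitable conjugate pair, I would choose $p$ so that both factors land on $\dot H^{1/2}$. In $\R^2$ the Sobolev embedding $\dot H^{1/2}(\R^2) \hookrightarrow L^4(\R^2)$ (the case $\frac1p - \frac1q = \frac{s}{d}$ with $s=\frac12$, $d=2$, $p=2$, $q=4$, which is exactly the Sobolev embedding lemma recalled in the excerpt) controls $\norm{w}_{L^4}$ by $\norm{w}_{\dot H^{1/2}}$. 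For the gradient factor I would instead use Hölder with exponents $(4/3, 4)$: write $\int \abs{w}\abs{\nabla w}\,dx$ — but this pairs $\abs{\nabla w}\in L^{4/3}$ against $\abs{w}\in L^4$, and $L^{4/3}$ is not directly a Sobolev norm of $\dot H^{1/2}$ type. The cleaner route is to pass a half-derivative symmetrically: by Plancherel,
\begin{equation*}
\int_{\R^2} \abs{w}\abs{\nabla w}\, dx \lesssim \norm{\abs{\nabla}^{1/2} w}_{L^2_x}^2 = \norm{w}_{\dot H^{1/2}(\R^2)}^2,
\end{equation*}
where the inequality comes from writing $\int \bar w\,\nabla w\,dx = \int \overline{\abs{\nabla}^{1/2}w}\; \abs{\nabla}^{1/2}(\text{Riesz transform of }w)\,dx$ and using that the Riesz transforms are bounded on $L^2$. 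This is the step I would spell out carefully: moving half a derivative from $\nabla w$ onto $\bar w$ via the Fourier multiplier identity $\widehat{\nabla w}(\xi) = i\xi\,\hat w(\xi)$ and $\abs{\xi} = \abs{\xi}^{1/2}\cdot\abs{\xi}^{1/2}$, then Cauchy–Schwarz in frequency.

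The main obstacle, such as it is, is bookkeeping rather than genuine difficulty: one must be slightly careful that $\frac{x-y}{\abs{x-y}}$ is only bounded, not smooth, so the naive integration-by-parts manipulations that one might want (e.g. symmetrizing in $x$) are not available; this is why I deliberately bound things by absolute values at the very first step, before any derivative juggling. The only real content is then the frequency-space inequality $\abs{\int \bar w\,\nabla w\,dx} \lesssim \norm{w}_{\dot H^{1/2}}^2$, which follows from Cauchy–Schwarz applied to $\int \abs{\xi}^{1/2}\abs{\hat w(\xi)}\cdot \abs{\xi}^{1/2}\abs{\hat w(\xi)}\,d\xi$ after dominating $\abs{\xi\,\hat w\,\overline{\hat w}}$ pointwise by $\abs{\xi}\abs{\hat w}^2$. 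Since the resulting bound is uniform in $y$, Lemma \ref{lem H^1/2} follows, and it is exactly this uniformity that will be exploited when $M_y[w]$ appears inside the interaction Morawetz functional in the next subsection.
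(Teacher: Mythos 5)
The very first step already loses too much: once you replace $\frac{x-y}{|x-y|}$ by $1$ and take absolute values inside the integral, you are claiming that $\int_{\R^2}|w|\,|\nabla w|\,dx \lesssim \norm{w}_{\dot{H}^{1/2}}^2$, and this inequality is false. Take $w = N^{1/4}\phi + e^{iNx_1}\psi$ with $\phi,\psi$ fixed Schwartz bumps of unit $L^2$ mass and overlapping supports; the cross term alone gives $\int|w|\,|\nabla w|\,dx \gtrsim N^{5/4}$, while $\norm{w}_{\dot{H}^{1/2}}^2 \sim N$, so the ratio blows up as $N\to\infty$. The Plancherel calculation you sketch does prove $\bigl|\int_{\R^2}\bar w\,\nabla w\,dx\bigr| \lesssim \norm{w}_{\dot{H}^{1/2}}^2$, but there the absolute value sits \emph{outside} the integral and the argument uses the oscillation of $\xi\,|\hat{w}(\xi)|^2$; once you take absolute values pointwise in $x$ before pairing frequencies, the natural bound is $\norm{w}_{L^2}\norm{\nabla w}_{L^2} = \norm{w}_{\dot{H}^0}\norm{w}_{\dot{H}^1}$, which dominates $\norm{w}_{\dot{H}^{1/2}}^2$ in the wrong direction, not the right one.

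What makes the lemma true is exactly the structure you discard. The proof referenced from \cite{CKSTT3} decomposes $w = \sum_N P_N w$. For pieces $\im[\overline{P_{N_1}w}\,\nabla P_{N_2}w]$ with $N_2 \lesssim N_1$ the derivative falls on the lower frequency, so absolute values are harmless and produce a factor $(N_2/N_1)^{1/2}$ that sums by Schur's test to $\norm{w}_{\dot{H}^{1/2}}^2$. For $N_2 \gg N_1$ one cannot take absolute values: instead one uses the algebraic identity $\im[\overline{P_{N_1}w}\,\nabla P_{N_2}w] = \im[\overline{P_{N_2}w}\,\nabla P_{N_1}w] + \nabla\,\im[\overline{P_{N_1}w}\,P_{N_2}w]$ to rewrite the dangerous piece as a good piece plus a total gradient, then integrates the gradient against $\frac{x-y}{|x-y|}$ by parts, exploiting that $\nabla\cdot\frac{x-y}{|x-y|} = \frac{1}{|x-y|}$ in $\R^2$ and applying Hardy's inequality with exponents $\frac{1}{2}\pm\epsilon$ so as to recover a $(N_1/N_2)^\epsilon$-summable factor. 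The worry you raise about $\frac{x-y}{|x-y|}$ not being smooth enough for integration by parts is backwards: the vector field has a locally integrable weak divergence in dimension two, and this integration by parts onto the Hardy singularity is precisely the step that rescues the high-frequency-derivative interactions, not a forbidden maneuver. There is no route to the lemma that avoids both Littlewood--Paley decomposition and the cancellation in $\im[\bar w\nabla w]$.
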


\begin{proof}
This proof is followed from the proof of Lemma 6.9 in \cite{CKSTT3}.
\end{proof}

\subsection{Frequency-localized interaction Morawetz estimate}\label{sec FLME}
In this subsection, we prove a frequency-localized interaction Morawetz estimate and use it to preclude the existence of quasi-soliton solutions.
\begin{thm}[Frequency-localized interaction Morawetz estimate]\label{thm FLIME}
If $u$ is an almost periodic solution to \eqref{NLS} on $[0,T]$ with $\int_0^T N(t) \, dt =K$, then
\begin{equation}
\norm{ \abs{\nabla}^{\frac{1}{2}} \abs{P_{\geq \varepsilon_3 K^{-1}} u(t,x)}^2 }_{L_t^2 L_x^2 ([0,T] \times \R^2)} \lesssim o(K) ,
\end{equation}
where $o(K) $ is a quantity, such that $\frac{o(K)}{K} \to 0$ as $K \nearrow \infty$.
\end{thm}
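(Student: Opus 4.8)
The strategy is the standard frequency-localized interaction Morawetz argument adapted to 2D and to the $\dot H^{1/2}$-critical setting. Set $N_0 = \varepsilon_3 K^{-1}$ and work with the low-frequency-truncated function $w := P_{\geq N_0} u$, which solves $i\partial_t w + \Delta w = |w|^4 w + \NN$ with $\NN = P_{\geq N_0}(|u|^4 u) - |w|^4 w$. Apply Theorem~\ref{thm ME} with the averaging over directions $\omega$ and integrate in time over $[0,T]$. The two "good" terms on the right-hand side are nonnegative, and after averaging in $\omega$ the first of them reproduces (up to constants) $\norm{|\nabla|^{1/2}|w|^2}_{L^2_{t,x}}^2$ by the standard identity relating $\iint_{x_\omega=y_\omega}|\partial_\omega(\bar w(y)w(x))|^2\,dxdy$ averaged over $\omega$ to $\iint |w(x)|^2 |w(y)|^2 \Delta(\cdots)$, hence to $\norm{|\nabla|^{1/2}|w|^2}_{L^2}^2$ in two dimensions. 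So the target quantity is bounded by $\sup_t |M_\omega(t)|$ plus the space-time integral of the two error terms involving $\NN$.

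\textbf{Controlling $M_\omega(t)$.} The boundary term $M_\omega(t)$ is, after averaging, a superposition of $\int |w(y)|^2 M_y[w](t)\,dy$ type quantities, so Lemma~\ref{lem H^1/2} gives $|M_\omega(t)| \lesssim \norm{w(t)}_{L^2_x}^2 \norm{w(t)}_{\dot H^{1/2}_x}^2 \lesssim \norm{w(t)}_{L^2_x}^2$. By Bernstein on the truncation, $\norm{w(t)}_{L^2_x} = \norm{P_{\geq N_0} u(t)}_{L^2_x} \lesssim N_0^{-1/2}\norm{u(t)}_{\dot H^{1/2}_x} \lesssim N_0^{-1/2} = \varepsilon_3^{-1/2} K^{1/2}$. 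Therefore $\sup_t |M_\omega(t)| \lesssim \varepsilon_3^{-1} K = o(K)\cdot (\text{something})$ — wait, this is only $O(K)$, not $o(K)$, so one must gain the extra decay: here the point is that the compactness (Definition~\ref{defn AP}, estimate~\eqref{eq Defn AP2}) forces the $L^2$-mass of $w$ at the truncation scale to be small relative to the naive Bernstein bound, because most of $\dot H^{1/2}$-mass sits at frequencies $\sim N(t)$ and $\int_0^T N(t)\,dt=K$ means $N(t)$ is, on average, $\gg N_0 = \varepsilon_3 K^{-1}$; one extracts a factor $\varepsilon_3^{1/2}$ or an $o(1)$ from the frequency-localized almost periodicity, giving $\sup_t|M_\omega(t)| = o(K)$.

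\textbf{Controlling the error terms.} The two $\NN$-dependent terms in Theorem~\ref{thm ME}, integrated over $[0,T]$, must be shown to be $o(K)$. Write $\NN = P_{\geq N_0}(|u|^4u) - |P_{\geq N_0}u|^4 P_{\geq N_0}u$ and decompose via Littlewood-Paley: the dangerous pieces are those where some low frequency $\leq N_0$ appears, i.e. the commutator-type terms. Using the long time Strichartz estimate Theorem~\ref{thm LTS} (i.e. $\norm{u}_{\tilde X_{k_0}}\lesssim 1$) together with Proposition~\ref{prop X properties} to control $\norm{|\nabla|^{1/2}P_{\leq N_0}u}$ and $\norm{P_{>N_0}u}$ in the relevant Strichartz norms, and Bernstein to convert frequency gains into powers of $N_0 K = \varepsilon_3$, one bounds these error integrals by a negative power of $\varepsilon_3$ times $K$ against a positive power of $\varepsilon_3$ gained from the frequency separation, net giving $\lesssim \varepsilon_3^{c} K$ for some $c>0$, hence $o(K)$. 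The integration-by-parts structure of the error terms (the derivative $\partial_\omega$ hitting $\NN$ or $\bar\NN$) is handled by moving half a derivative onto $w$ and using Hölder with the bilinear/Bernstein estimates.

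\textbf{Main obstacle.} The crux is the error-term estimate: one must carefully track, in the Littlewood-Paley expansion of $\NN$ and in the two Morawetz error terms, that every term carrying the truncation mismatch gains enough powers of $N_0/N(t)$ (equivalently of $\varepsilon_3$) — via Bernstein and the bilinear Strichartz estimate Lemma~\ref{lem Bilinear}/Corollary~\ref{cor U Bilinear} and the $\tilde X_{k_0}$-bound — to beat the $O(K)$ scaling, while the terms with no mismatch vanish. Getting the bookkeeping to yield a strictly positive power of $\varepsilon_3$ (rather than merely $O(1)$) uniformly over the dyadic sums is the delicate point, and it is exactly where the construction of the $\tilde X_{k_0}$ norm with its extra summability (the $2^{i-j}$ weights) is used. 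The $M_\omega(t)$ term is comparatively easy once one invokes~\eqref{eq Defn AP2} to upgrade the Bernstein bound on $\norm{w(t)}_{L^2_x}$ to $o(K^{1/2})$.
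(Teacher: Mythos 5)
Your high-level scaffolding matches the paper: apply Theorem~\ref{thm ME}, average over $\omega\in S^1$ so the positive terms reproduce $\norm{\abs{\nabla}^{1/2}\abs{w}^2}_{L^2_{t,x}}^2$, then integrate in time and control the boundary term $M_\omega(t)$ by Lemma~\ref{lem H^1/2} and the two $\NN$-error integrals using the $\tilde X_{k_0}$ bound of Theorem~\ref{thm LTS} together with Proposition~\ref{prop X properties}, Bernstein, and bilinear estimates. You also correctly put your finger on the mechanism for the boundary term: $\norm{w(t)}_{L^2_x}$ must be shown $o(K^{1/2})$ using the compactness of the orbit (Definition~\ref{defn AP}/estimate~\eqref{eq Defn AP2}), not just the Bernstein bound $N_0^{-1/2}$. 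The paper makes this quantitative by choosing $\eta=\eta(K)$ so that $Kc(\eta)\eta=1$, giving $\norm{w}_{L^\infty_t L^2_x}^2\lesssim\eta(K)\,2^{k_0}$.

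However, there is a genuine gap in your treatment of the error terms. You claim the error integrals are bounded by $\varepsilon_3^{c}K$ for some $c>0$ and call this $o(K)$. It is not: $\varepsilon_3$ is a \emph{fixed} constant (chosen once in~\eqref{eq Epsilon} depending only on the almost-periodic solution, not on $K$), so $\varepsilon_3^{c}K=O(K)$. With such a bound the conclusion of the theorem fails, and the subsequent quasi-soliton exclusion argument in Section~\ref{sec No quasi-soliton} does not close --- it needs $o(K)$, i.e.\ a constant that tends to $0$ as $K\nearrow\infty$. In the paper's proof the $o(K)$ gain in the error terms does \emph{not} come from powers of $\varepsilon_3$; it comes from the fact that each error term carries at least two factors of $w$ measured in a norm interpolating against $\norm{w}_{L^\infty_t L^2_x}$ (Remark~\ref{rmk Interpolation}: $\norm{w}_{L^q_tL^r_x}\lesssim\eta^{1/(2r)}2^{k_0/2}\norm{u_\lambda}_{\tilde X_{k_0}}^{1-1/r}$), so the same almost-periodicity-generated factor $\eta(K)$ that controls $M_\omega$ propagates into Lemmas~\ref{lem M_2} and~\ref{lem M_3}, giving $\eta^{1/4}2^{k_0}(1+\norm{u_\lambda}_{\tilde X_{k_0}})^C\lesssim\eta(K)^{1/4}2^{k_0}$. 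Your bookkeeping of $\varepsilon_3$-powers alone cannot produce this. A secondary omission: to invoke Theorem~\ref{thm LTS} one must first rescale by $\lambda=K/(\varepsilon_3 2^{k_0})$ so that both normalizations $\int\!\int|u_\lambda|^8=2^{k_0}$ and $\int N_\lambda=\varepsilon_3 2^{k_0}$ hold simultaneously; you skipped this step, and without it the long time Strichartz estimate is not directly applicable to $[0,T]$.
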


\begin{proof}
Suppose $[0,T]$ is an interval such that for some integer $k_0$,
\begin{equation*}
\int_0^T \int_{\R^2} \abs{u(t,x)}^8 \, dxdt = 2^{k_0} .
\end{equation*}

\noindent Note that $\int_0^T N(t) \, dt =K$. Hence in order to apply Theorem \ref{thm LTS}, we need to do the scaling $u_{\lambda}(x)= \sqrt{\lambda} u( \lambda^2 t, \lambda x)$, where $\lambda = \frac{K}{\varepsilon_3 2^{k_0}}$. Since $\int N(t)^2 \, dt $ scales like $\norm{u}_{L_t^8 L_x^8}^8$, under the same scaling $N(t)$ should be $N_{\lambda} (t) = \lambda N (\lambda^2 t)$, therefore
\begin{equation*}
\int_0^{\frac{T}{\lambda^2}} N_{\lambda} (t) \, dt = \varepsilon_3 2^{k_0} .
\end{equation*}

\noindent Now we can apply Theorem \ref{thm LTS}, and have 
\begin{equation*}
\norm{u_{\lambda}}_{\tilde{X}_{k_0} ([0,\frac{T}{\lambda^2}] \times \R^2)} \lesssim 1 .
\end{equation*}
Note that in Theorem \ref{thm LTS} we only care about the low frequency component of the solution $u$, and already had a good upper bound for it. From now on, we will focus on the high frequency component of $u$.

Let $w = P_{\geq 2^{-k_0}} u_{\lambda}$, hence $w$ satisfies the following equation:
\begin{equation*}
i \partial_t w + \Delta w =P_{\geq 2^{-k_0}} F(u_{\lambda}) : = F(w) + \NN ,
\end{equation*}
where 
\begin{equation*}
\NN= P_{\geq 2^{-k_0}} F(u_{\lambda}) -F(w) .
\end{equation*}

\noindent Let 
\begin{equation*}
M_{\omega} (t)  = \iint_{\R^2 \times \R^2}  \abs{w(t,y)}^2 \frac{(x-y)_{\omega}}{\abs{(x-y)_{\omega}}} \cdot \im [\bar{w} \partial_{\omega} w] (t,x) \, dxdy .
\end{equation*}
By Corollary \ref{thm ME}, we get
\begin{align}\label{eq dM_omega}
\frac{d}{dt} M_{\omega} (t) & =4 \iint_{x_{\omega} = y_{\omega}} \abs{\partial_{\omega} (\overline{w(t,y)} w(t,x))}^2 \, dxdy  +  \frac{4}{3}\iint_{x_{\omega} = y_{\omega}} \abs{w(t,x)}^2 \abs{w(t,y)}^6 \, dxdy  \notag\\
& + 4\iint \abs{w(t,x)}^2  \frac{(x-y)_{\omega}}{\abs{(x-y)_{\omega}}} \re[ \NN \partial_{\omega}\bar{ w} -w \partial_{\omega}\bar{ \NN}](t,y) \, dxdy \\
& + 4  \iint \im [\bar{w} \partial_{\omega} w] (t,x) \frac{(x-y)_{\omega}}{\abs{(x-y)_{\omega}}}  \im [ \bar{w} \NN] (t,y) \, dxdy .  \notag
\end{align}

\noindent Recall $\omega \in S^1$, then we can write $\partial_\omega = \nabla \cdot \omega = \cos (\omega) \partial_1 + \sin (\omega) \partial_2$. Hence there exists a constant $C$ such that 
\begin{equation}\label{eq Int omega}
\int_{\omega \in S^1} \frac{x_{\omega}}{\abs{x_{\omega}}} (\nabla \cdot \omega) \, d\omega = C \frac{x}{\abs{x}} \cdot \nabla  .
\end{equation}
Therefore, 
\begin{equation}\label{eq M(t)}
M(t) = \int_{\omega \in S^1} M_{\omega} (t) \, d \omega = C \iint \abs{w(t,y)}^2 \frac{x-y}{\abs{x-y}} \im [\bar{w} \nabla w] (t,x) \, dxdy .
\end{equation}

Now we move the last two terms in \eqref{eq dM_omega} to the left hand side and integrate on both sides over $\omega$. The properties of the Radon transform in \cite{PV} imply:
\begin{align}\label{eq Radon}
& \iiint_{x_{\omega} = y_{\omega}} \abs{\partial_{\omega} (\overline{w(t,y)} w(t,x))}^2 \, dxdyd\omega  +\iiint_{x_{\omega} = y_{\omega}} \abs{w(t,y)}^2 \abs{w(t,x)}^6 \, dxdyd\omega  \notag \\
& \gtrsim \norm{ \abs{\nabla}^{\frac{1}{2}} \abs{w(t,x)}^2 }_{L_x^2 (\R^2)}^2 ,
\end{align} 
combining \eqref{eq Int omega}, 
we obtain
\begin{align}\label{eq dM(t)}
\norm{ \abs{\nabla}^{\frac{1}{2}} \abs{w(t,x)}^2 }_{L_x^2 (\R^2)}^2 & \lesssim \frac{d}{dt} M (t) \notag\\
& - \iint \abs{w(t,y)}^2   \frac{x-y}{\abs{x-y}}  \re[\NN  \nabla\bar{ w} -w\nabla \bar{ \NN} ](t,x) \, dxdy \\
& -   \iint \im [\bar{w} \nabla w] (t,y)  \frac{x-y}{\abs{x-y}}  \im [ \bar{w}\NN] (t,x) \, dxdy   \notag
\end{align}
where $M(t)$ is calculated in \eqref{eq M(t)}.

Then we integrate on both sides of \eqref{eq dM(t)} over time $t$, and the fundamental theorem of calculus in time yields,
\begin{align}
& \quad \norm{ \abs{\nabla}^{\frac{1}{2}} \abs{w(t,x)}^2 }_{L_t^2 L_x^2 ([0, \frac{T}{\lambda^2}] \times \R^2)}^2 \notag \\
& \lesssim \sup_{t \in [0, \frac{T}{\lambda^2}] } \abs{ \iint \abs{w(t,y)}^2 \frac{x-y}{\abs{x-y}} \im [\bar{w} \nabla w] (t,x) \, dxdy  } \label{eq M_1}\\ 
&+ \abs{\int_{0}^{\frac{T}{\lambda^2}} \iint \abs{w(t,y)}^2   \frac{x-y}{\abs{x-y}}  \re[\NN  \nabla\bar{ w} -w\nabla \bar{ \NN} ](t,x) \, dxdydt } \label{eq M_2} \\
& + \abs{ \int_{0}^{\frac{T}{\lambda^2}} \iint \im [\bar{w} \nabla w] (t,y)  \frac{x-y}{\abs{x-y}}  \im [ \bar{w}\NN] (t,x) \, dxdydt } . \label{eq M_3}
\end{align}

Next, we will estimate the terms \eqref{eq M_1} in Lemma \ref{lem M_1}, \eqref{eq M_2} in Lemma \ref{lem M_2} and \eqref{eq M_3} in Lemma \ref{lem M_3}. In the remainder of the proof all spacetime norms are over $[0, \frac{T}{\lambda^2}] \times \R^2$, unless indicated otherwise.

\begin{lem}\label{lem M_1}
There exists $\eta = \eta(K) > 0$ satisfying
\begin{equation*}
\eqref{eq M_1} \lesssim \eta 2^{k_0}.
\end{equation*}
\end{lem}

\begin{proof}
By Lemma \ref{lem H^1/2}, Bernstein and \eqref{eq Epsilon}, we obtain
\begin{equation*}
\eqref{eq M_1}  = \sup_{t \in [0, \frac{T}{\lambda^2}] } \abs{ \int \abs{w(t,y)}^2 \parenthese{\int \frac{x-y}{\abs{x-y}} \im [\bar{w} \nabla w] (t,x) \, dx} dy }  \lesssim \norm{w}_{L_t^{\infty} L_x^2}^2   \norm{w}_{L_t^{\infty} \dot{H}_x^{\frac{1}{2}}}^2   .
\end{equation*}
Now we claim 
\begin{equation}\label{eq w<eta}
\norm{w}_{L_t^{\infty} L_x^2}^2  \lesssim \eta 2^{k_0}.
\end{equation}
Assuming the claim is true, it is easy to see Lemma \ref{lem M_1} holds.

Then we are left to show the claim \eqref{eq w<eta}.
\begin{proof}[Proof of \eqref{eq w<eta}]
By Definition \ref{defn AP} and the fact that $N(t) \geq 1$, we know that for any $\eta$, there exists $c(\eta)$ such that
\begin{equation*}
\norm{\abs{\nabla}^{\frac{1}{2}} P_{\leq c(\eta) } u}_{L_t^{\infty} L_x^2}^2 \leq \eta .
\end{equation*}

\noindent Therefore under the scaling $u_{\lambda}(x)= \sqrt{\lambda} u( \lambda^2 t, \lambda x)$, $N_{\lambda} (t) = \lambda N (\lambda^2 t)$, we have $N_{\lambda}(t) \geq \lambda$, 
\begin{equation*}
\norm{\abs{\nabla}^{\frac{1}{2}} P_{\leq c(\eta) \lambda } u_{\lambda}}_{L_t^{\infty} L_x^2}^2 \leq \eta .
\end{equation*}
Now using Bernstein, Definition \ref{defn AP} and $\lambda= \frac{K}{\varepsilon_3 2^{k_0}}$, we can write
\begin{align*}
\norm{w}_{L_t^{\infty} L_x^2} 
&\lesssim 2^{\frac{k_0}{2}} \norm{\abs{\nabla}^{\frac{1}{2}} P_{2^{-k_0} < \cdot < c(\eta)\lambda} u_{\lambda}}_{L_t^{\infty} L_x^2} + c(\eta)^{-\frac{1}{2}}\lambda^{-\frac{1}{2}} \norm{\abs{\nabla}^{\frac{1}{2}} P_{> c(\eta)\lambda}u_{\lambda}}_{L_t^{\infty} L_x^2} \\
& \lesssim 2^{\frac{k_0}{2}}  \eta^{\frac{1}{2}} +  c(\eta)^{-\frac{1}{2}} \lambda^{-\frac{1}{2}}  = 2^{\frac{k_0}{2}}  \eta^{\frac{1}{2}} +  \parenthese{\frac{\varepsilon_3 2^{k_0}}{c(\eta)K}}^{\frac{1}{2}} .
\end{align*}
To prove \eqref{eq w<eta}, we only need to demand $\frac{\varepsilon_3}{c(\eta) K } < \eta$. And it is possible for us to choose suitable $\eta=\eta(K)$ such that $K c(\eta)\eta =1$, then we complete the proof of \eqref{eq w<eta}.
\end{proof}
\end{proof}

\begin{rmk}\label{rmk Interpolation}
For any admissible pair $(q,r)$, by H\"older, \eqref{eq w<eta} and Proposition \ref{prop X properties}, we can write
\begin{equation}\label{eq Interpolation}
\begin{aligned}
\norm{w}_{L_t^{q} L_x^{r}}  \lesssim \norm{w}_{L_t^{\infty} L_x^{2}}^{\frac{1}{r}}  \norm{w}_{L_t^{q(1-\frac{1}{r})} L_x^{2r-2}}^{1-\frac{1}{r}} & 
\lesssim \eta^{\frac{1}{2r}} 2^{\frac{k_0}{2}} \norm{u_{\lambda}}_{\tilde{X}_{k_0}([0,\frac{T}{\lambda^2}]\times \R^2)}^{1-\frac{1}{r}} .
\end{aligned}
\end{equation}
\end{rmk}

\begin{lem}\label{lem M_2}
There exists $\eta = \eta(K) > 0$ satisfying
\begin{equation*}
\eqref{eq M_2} \lesssim \eta 2^{k_0}  \parenthese{1+ \norm{u_{\lambda}}_{\tilde{X}_{k_0}([0,\frac{T}{\lambda^2}]\times \R^2)}}^6 .
\end{equation*}

\end{lem}

\begin{proof}
We first define the momentum bracket: 
\begin{equation*}
\bracket{ a,b}_p = \re [a \nabla \bar{b} - b \nabla \bar{a}].
\end{equation*}
Realizing that
\begin{equation*}
\bracket{ F(u) ,u}_p = \abs{u}^4 u \nabla \bar{u} - u \nabla (\abs{u}^4 \bar{u}) = - \abs{u}^2 \nabla (\abs{u}^4) = -\frac{2}{3} \nabla (\abs{u}^6) ,
\end{equation*}
we can rewrite the factor $\re[\NN  \nabla\bar{ w} -w\nabla \bar{ \NN} ]$ in \eqref{eq M_2} into
\begin{align*}
\bracket{ \NN, w}_p 
& = -\frac{2}{3} \nabla \parenthese{ \abs{u_{\lambda}}^6- \abs{u_{lo}}^6 -\abs{u_{hi}}^6}  -\bracket{ F(u_{\lambda}) -F(u_{lo}), u_{lo}}_p -\bracket{ P_{lo} F(u_{\lambda}), u_{hi}}_p 
\end{align*}
where $u_{hi} = P_{\geq 2^{-k_0}} u_{\lambda}= w$ and $u_{lo}=  P_{< 2^{-k_0}} u_{\lambda}$.

We obtain the contributions to \eqref{eq M_2} by integration by parts,
\begin{equation}\label{eq1 M_2}
\begin{aligned}
& \quad \sum_{i=1}^5 \int_{0}^{\frac{T}{\lambda^2}} \iint \abs{w(t,y)}^2   \frac{1}{\abs{x-y}}  \abs{u_{hi}(t,x)}^i \abs{u_{lo}(t,x)}^{6-i} \, dxdydt \\
& +  \int_{0}^{\frac{T}{\lambda^2}} \iint \abs{w(t,y)}^2   \frac{1}{\abs{x-y}}   \abs{u_{hi}(t,x)} \abs{P_{lo} F(u_{\lambda}(t,x))}  \, dxdydt \\
& + \sum_{i=1}^5  \int_{0}^{\frac{T}{\lambda^2}} \iint \abs{w(t,y)}^2  \abs{u_{hi}(t,x)}^i  \abs{u_{lo}(t,x)}^{5-i}  \abs{\nabla u_{lo}(t,x)}  \, dxdydt \\
& +  \int_{0}^{\frac{T}{\lambda^2}} \iint \abs{w(t,y)}^2  \abs{u_{hi}(t,x)} \abs{\nabla P_{lo} F (u_{\lambda}(t,x))}  \, dxdydt .
\end{aligned}
\end{equation}

Then using H\"older, Hardy-Littlewood-Sobolev and Bernstein, we have
\begin{align}
 \eqref{eq1 M_2} &  \lesssim \sum_{i=1}^4 \norm{w}_{L_t^{6} L_x^{3} }^2 \norm{u_{hi}^i u_{lo}^{6-i}}_{L_t^{\frac{3}{2}} L_x^{\frac{6}{5}} } \label{eq P_1}\\
&  +  \iint \abs{w(t,y)}^2   \frac{1}{\abs{x-y}}  \abs{u_{hi}(t,x)}^5\abs{u_{lo}(t,x)} \, dxdydt  \label{eq P_2}\\
& + \norm{w}_{L_t^{6} L_x^{3} }^2 \norm{u_{hi} P_{lo}(u_{hi}^5)}_{L_t^{\frac{3}{2}} L_x^{\frac{6}{5}} } \label{eq P_3}\\
&  + \sum_{i=1}^5  \norm{ w}_{L_t^{\infty} L_x^{2} }^2 \norm{u_{hi}^i u_{lo}^{5-i} \nabla u_{lo}}_{L_t^{1} L_x^{1} }\label{eq P_4}\\
&+ \sum_{i=0}^5 \norm{ w}_{L_t^{\infty} L_x^{2} }^2  \norm{u_{hi}\nabla P_{lo} \mathcal{O}(u_{hi}^i u_{lo}^{5-i})}_{L_t^{1} L_x^{1} } . \label{eq P_5}
\end{align}
Note that for \eqref{eq P_1}, \eqref{eq P_4} and \eqref{eq P_5}, it is sufficient to estimate the first and last summands in the terms.

\begin{enumerate}[(i)]
\item
First, consider \eqref{eq P_1}. By H\"older, Sobolev embedding, Bernstein and Proposition \ref{prop X properties}, we have
\begin{align*}
\norm{\mathcal{O}(u_{hi} u_{lo}^5)}_{L_t^{\frac{3}{2}} L_x^{\frac{6}{5}} }  & \lesssim \norm{u_{hi}}_{L_{t}^{3} L_{x}^{6}} \norm{u_{lo}}_{L_{t}^{15} L_{x}^{\frac{15}{2}} }^5  
\lesssim \norm{u_{\lambda}}_{\tilde{X}_{k_0}([0,\frac{T}{\lambda^2}]\times \R^2)} ^6\\
\norm{\mathcal{O}(u_{hi}^4 u_{lo}^2) }_{L_t^{\frac{3}{2}} L_x^{\frac{6}{5}} } & \lesssim \norm{u_{hi}}_{L_{t}^{\infty} L_{x}^{4}}^{2} \norm{u_{hi}}_{L_{t}^{3} L_{x}^{6}}^{2} \norm{u_{lo}}_{L_{t}^{\infty} L_{x}^{\infty}}^2 
\lesssim \norm{u_{\lambda}}_{\tilde{X}_{k_0}([0,\frac{T}{\lambda^2}]\times \R^2)}^4  .
\end{align*}

Then by Remark \ref{rmk Interpolation}, we have
\begin{equation*}
\eqref{eq P_1}  = \sum_{i=1}^4 \norm{w}_{L_t^{6} L_x^{3}  }^2 \norm{u_{hi}^i u_{lo}^{6-i}}_{L_t^{\frac{3}{2}} L_x^{\frac{6}{5}}  }  \lesssim \eta^{\frac{1}{3}} 2^{k_0}  \parenthese{1+\norm{u_{\lambda}}_{\tilde{X}_{k_0}([0,\frac{T}{\lambda^2}]\times \R^2)} }^{\frac{22}{3}} .
\end{equation*}

\item
Next, take \eqref{eq P_2}. In fact, we consider the following two scenarios: 
\begin{itemize}
\item
If $\abs{u_{lo}} \leq \delta \abs{u_{hi}}$ for some small $\delta >0$, this contribution will be absorbed into the following term
\begin{equation*}
\iiint_{x_{\omega} = y_{\omega}} \abs{w(t,x)}^2 \abs{w(t,y)}^6 dxdy d \omega  \simeq  \iint_{x_{\omega} = y_{\omega}} \frac{1}{\abs{x-y}} \abs{w(t,x)}^2 \abs{w(t,y)}^6 \,  dxdy  .
\end{equation*}

\item
If $\abs{u_{hi}} \leq \delta^{-1}\abs{u_{lo}} $, we can estimate the contribution of this term by 
\begin{equation*}
\int_{0}^{\frac{T}{\lambda^2}} \iint \abs{w(t,y)}^2   \frac{1}{\abs{x-y}}  \abs{u_{hi}(t,x)}^4 \abs{u_{lo}(t,x)}^2  \, dxdydt .
\end{equation*}

\end{itemize}

\item
Take \eqref{eq P_3} and apply H\"older, \eqref{eq Interpolation}, Bernstein and Proposition \ref{prop X properties}. 
\begin{align*}
\eqref{eq P_3} 
& \lesssim \eta^{\frac{1}{3}} 2^{k_0} \norm{u_{\lambda}}_{\tilde{X}_{k_0}([0,\frac{T}{\lambda^2}]\times \R^2)}^{\frac{4}{3}} 2^{-\frac{5k_0}{6}} \norm{u_{hi}}_{L_t^{\frac{5}{2}} L_x^{10}  }^{\frac{5}{3}} \norm{u_{hi}}_{L_t^{\infty} L_x^{4}  }^{\frac{10}{3}} \lesssim \eta^{\frac{1}{3}} 2^{k_0} \norm{u_{\lambda}}_{\tilde{X}_{k_0}([0,\frac{T}{\lambda^2}]\times \R^2)}^3  .
\end{align*}

\item
Take \eqref{eq P_4} and use H\"older, Sobolev embedding, Bernstein and Proposition \ref{prop X properties}
\begin{align*}
\norm{\mathcal{O} (u_{hi} u_{lo}^4) \nabla u_{lo}}_{L_t^{1} L_x^{1} } & \lesssim \norm{u_{hi} }_{L_t^{4} L_x^{4} }\norm{u_{lo}}_{L_t^{8} L_x^{8} }^4 2^{-\frac{k_0}{2}}\norm{ \abs{\nabla}^{\frac{1}{2}} u_{lo}}_{L_t^{4} L_x^{4} }   \lesssim \norm{u_{\lambda}}_{\tilde{X}_{k_0}([0,\frac{T}{\lambda^2}]\times \R^2)}^6 \\
\norm{\mathcal{O} (u_{hi}^5) \nabla u_{lo}}_{L_t^{1} L_x^{1}} & \lesssim \norm{u_{hi} }_{L_t^{3} L_x^{6} }^3 \norm{u_{hi} }_{L_t^{\infty} L_x^{4}}^2 2^{-\frac{k_0}{2}}\norm{ \abs{\nabla}^{\frac{1}{2}} u_{lo}}_{L_t^{\infty} L_x^{\infty}} 
\lesssim \norm{u_{\lambda}}_{\tilde{X}_{k_0}([0,\frac{T}{\lambda^2}]\times \R^2)}^3  .
\end{align*}

\noindent Therefore, put the calculations above together
\begin{equation*}
\eqref{eq P_4}  =  \sum_{i=1}^5  \norm{ w}_{L_t^{\infty} L_x^{2} }^2 \norm{u_{hi}^i u_{lo}^{5-i} \nabla u_{lo}}_{L_t^{1} L_x^{1} }  \lesssim \eta 2^{k_0} \parenthese{ 1+\norm{u_{\lambda}}_{\tilde{X}_{k_0}([0,\frac{T}{\lambda^2}]\times \R^2)}}^6  .
\end{equation*}

\item
Finally, take \eqref{eq P_5}. Apply H\"older, Sobolev embedding, Bernstein and Proposition \ref{prop X properties}
\begin{align*}
\norm{u_{hi}\nabla P_{lo} \mathcal{O}(u_{lo}^5)}_{L_t^{1} L_x^{1} } 
& \lesssim 2^{\frac{k_0}{2}}\norm{\abs{\nabla}^{\frac{1}{2}} u_{hi}}_{L_t^{\infty} L_x^{2}} \norm{\nabla u_{lo}}_{L_t^{4} L_x^{4} } \norm{u_{lo}}_{ L_t^{\frac{16}{3}} L_x^{16} }^4 
\lesssim \norm{u_{\lambda}}_{\tilde{X}_{k_0}([0,\frac{T}{\lambda^2}]\times \R^2)}^5 \\
\norm{u_{hi}\nabla P_{lo} \mathcal{O}(u_{hi}^5)}_{L_t^{1} L_x^{1} } 
& \lesssim \norm{u_{hi}}_{L_t^{4} L_x^{4} }  2^{-\frac{3k_0}{2}} \norm{ u_{hi}}_{L_t^{\infty} L_x^{4} }^3 \norm{ u_{hi}}_{L_t^{\frac{8}{3}} L_x^{8} }^2 
\lesssim \norm{u_{\lambda}}_{\tilde{X}_{k_0}([0,\frac{T}{\lambda^2}]\times \R^2)}^3  .
\end{align*}

\noindent Therefore,
\begin{equation*}
\eqref{eq P_5}  = \sum_{i=0}^5 \norm{ w}_{L_t^{\infty} L_x^{2} }^2  \norm{u_{hi}\nabla P_{lo} \mathcal{O}(u_{hi}^i u_{lo}^{5-i})}_{L_t^{1} L_x^{1} } 
\lesssim \eta 2^{k_0}  \parenthese{1+ \norm{u_{\lambda}}_{\tilde{X}_{k_0}([0,\frac{T}{\lambda^2}]\times \R^2)}}^5  .
\end{equation*}

\end{enumerate}

Hence, collect all the estimates, then we have
\begin{equation*}
\eqref{eq M_2} \lesssim \eta 2^{k_0} \parenthese{1+ \norm{u_{\lambda}}_{\tilde{X}_{k_0}([0,\frac{T}{\lambda^2}]\times \R^2)}}^7  .
\end{equation*}
\end{proof}

\begin{lem}\label{lem M_3}
There exists $\eta = \eta(K) > 0$ satisfying
\begin{equation*}
\eqref{eq M_3} \lesssim \eta^{\frac{1}{4}} 2^{k_0}  \parenthese{1+\norm{u_{\lambda}}_{\tilde{X}_{k_0}([0,\frac{T}{\lambda^2}]\times \R^2)}}^6 .
\end{equation*}
\end{lem}

\begin{proof}
By H\"older and Lemma \ref{lem H^1/2},
\begin{align*}
\eqref{eq M_3} & = \abs{ \int_{0}^{\frac{T}{\lambda^2}} \iint \im [\bar{w} \nabla w] (t,x) \frac{x-y}{\abs{x-y}} \im [ \bar{w}\NN] (t,y)  \, dxdy dt }\\
& \lesssim \norm{\abs{\nabla}^{\frac{1}{2}} w}_{L_t^{\infty} L_x^{2} }^2 \norm{\im [ \bar{w}\NN]}_{L_t^1 L_x^1 } \lesssim  \norm{\im [ \bar{w}\NN]}_{L_t^1 L_x^1}  .
\end{align*}

Then we reduce to estimating $\im [ \bar{w}\NN]$, that is,
\begin{equation*}
\im [ \bar{w}\NN] \lesssim \eta^{\frac{1}{4}} 2^{k_0} \parenthese{1+\norm{u_{\lambda}}_{\tilde{X}_{k_0}([0,\frac{T}{\lambda^2}]\times \R^2)}}^6 .
\end{equation*}

We first write
\begin{equation*}
\im [ \bar{w}\NN]= \mathcal{O}(u_{hi}^5 u_{lo}) + \mathcal{O}(u_{hi}^4 u_{lo}^2) + \mathcal{O}(u_{hi}^3 u_{lo}^3) + \mathcal{O}(u_{hi}^2 u_{lo}^4) + \mathcal{O}(u_{hi} P_{hi}(u_{lo}^5)) .
\end{equation*}

By H\"older, Sobolev embedding, Bernstein, \eqref{eq Interpolation} and Proposition \ref{prop X properties}
\begin{align*}
\norm{\mathcal{O}(u_{hi}^5 u_{lo})  }_{L_t^1 L_x^1 } & \lesssim \norm{u_{hi}}_{L_{t}^{\infty} L_{x}^{4} }^3 \norm{u_{hi}}_{L_{t}^{\frac{16}{7}} L_{x}^{16} }^2 \norm{u_{lo}}_{L_{t}^{8} L_{x}^{8} }  \lesssim \eta^{\frac{1}{16}} 2^{k_0} \norm{u_{\lambda}}_{\tilde{X}_{k_0}([0,\frac{T}{\lambda^2}]\times \R^2)}^{\frac{23}{8}} \\
\norm{\mathcal{O}(u_{hi} P_{hi} (u_{lo})^5)}_{L_{t}^1L_x^1} & 
\lesssim 2^{k_0}\norm{ u_{hi}}_{L_{t}^{4} L_{x}^{4} } \norm{\abs{\nabla}^{\frac{1}{2}} u_{lo}}_{L_{t}^{4} L_{x}^{4} }   \norm{ u_{lo}}_{L_{t}^{8} L_{x}^{8} }^4 \lesssim \eta^{\frac{1}{8}} 2^{k_0}\norm{u_{\lambda}}_{\tilde{X}_{k_0}([0,\frac{T}{\lambda^2}]\times \R^2)}^{\frac{23}{4}}   .
 \end{align*}
Therefore, 
\begin{equation*}
\norm{\im [\bar{w}\NN]}_{L_t^1 L_x^1} \lesssim \eta^{\frac{1}{4}} 2^{k_0}  \parenthese{1+\norm{u_{\lambda}}_{\tilde{X}_{k_0}([0,\frac{T}{\lambda^2}]\times \R^2)}}^6 .
\end{equation*}
\end{proof}

Then, combining Lemmas \ref{lem M_1}, \ref{lem M_2} and \ref{lem M_3} together, and by long time Strichartz estimates, we have 
\begin{equation*}
\norm{ \abs{\nabla}^{\frac{1}{2}} \abs{w(t,x)}^2 }_{L_t^2 L_x^2 ([0, \frac{T}{\lambda^2}] \times \R^2)}^2  \lesssim \eta^{\frac{1}{4}} 2^{k_0}  \parenthese{1+\norm{u_{\lambda}}_{\tilde{X}_{k_0}([0,\frac{T}{\lambda^2}]\times \R^2)}}^6 \lesssim \eta^{\frac{1}{4}} 2^{k_0}  .
\end{equation*}

Undoing the scaling $u(t,x) \mapsto \sqrt{\lambda} u( \lambda^2 t,\lambda x)$, $\lambda =\frac{K}{\varepsilon_3 2^{k_0}}$, we have
\begin{equation*}
\norm{ \abs{\nabla}^{\frac{1}{2}} \abs{P_{\geq \varepsilon_3 K^{-1}}u(t,x)}^2 }_{L_t^2 L_x^2 ([0, T] \times \R^2)}^2  \lesssim \varepsilon_3^{-1} \eta(K)^{\frac{1}{2}} K .
\end{equation*}
Now we complete the proof of Theorem \ref{thm FLIME}.
\end{proof}

\begin{rmk}
Realizing that Sobolev embedding gives us
\begin{equation*}
 \norm{u}_{L_t^4 L_x^8}^2  = \norm{\abs{u}^2}_{L_t^2 L_x^4}^2  \lesssim \norm{\abs{\nabla} ^{\frac{1}{2} } \abs{u}^2}_{L_t^2 L_x^2}^2 ,
\end{equation*}
hence Theorem \ref{thm FLIME} implies 
\begin{equation*}
\norm{ P_{ \geq \varepsilon_3 K^{-1}} u(t,x)}_{ L_t^4 L_x^8 ([0,T] \times \R^2)}^4  \lesssim \eta(K)^{\frac{1}{2}} K .
\end{equation*}

\end{rmk}

\subsection{Impossibility of quasi-soliton solutions}
We first state a concentration lemma:
\begin{lem}\label{lem Cmpt}
There is an $R_0= R_0(T)>0$
\begin{equation*}
\int_{\abs{x-x(t)} \leq \frac{R_0}{N(t)}}  \abs{P_{\geq \varepsilon_3 K^{-1}} u(t,x)}^4 dx  \gtrsim 1
\end{equation*}
uniformly for any $t \in [0,T]$
\end{lem}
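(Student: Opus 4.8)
The plan is to deduce this concentration statement from the almost periodicity of the original solution $u$ combined with the lower bound on the frequency-localized scattering norm furnished by Theorem \ref{thm FLIME} (via the remark immediately following it). First I would recall that $u$ is almost periodic modulo symmetries with $N(t)\ge 1$, so by Definition \ref{defn AP} (equation \eqref{eq Defn AP1}) together with Bernstein, for each $\eta>0$ there is a compactness radius $C(\eta)$ such that, at each time $t$,
\begin{equation*}
\int_{\abs{x-x(t)}\ge C(\eta)/N(t)} \abs{u(t,x)}^4\,dx \lesssim \eta
\end{equation*}
(using the Sobolev embedding $\dot H^{1/2}(\R^2)\hookrightarrow L^4(\R^2)$ applied to the tail of $\abs{\nabla}^{1/2}u$). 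The idea is that the $L^4_x$ mass of $u(t)$ — and hence, up to a harmless low-frequency error, of $P_{\ge \varepsilon_3 K^{-1}}u(t)$ — lives in the ball of radius $\sim 1/N(t)$ around $x(t)$, so it suffices to show this mass is bounded below uniformly in $t$.

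Next I would establish the uniform lower bound $\norm{P_{\ge \varepsilon_3 K^{-1}} u(t)}_{L^4_x(\R^2)}\gtrsim 1$ for $t\in[0,T]$. For this, combine: (i) Theorem \ref{thm LTS} (long time Strichartz), which controls $\norm{P_{<\varepsilon_3 K^{-1}}u}_{L^4_t L^8_x}$ and more generally the low-frequency piece after rescaling, hence bounds the low-frequency contribution to the scattering norm by $o(K)$ after undoing scaling; and (ii) the contradiction structure: if on a set of times of positive measure the high-frequency $L^4_x$ norm were smaller than any fixed constant, then on $[0,T]$ the full solution $u$ would have $\norm{u}_{L^4_t L^8_x}$ — equivalently $\norm{u}_{L^8_{t,x}}$ after interpolation with the uniform $\dot H^{1/2}$ bound — controlled by a quantity that is $o(\int_0^T N(t)^2\,dt)$, contradicting Lemma \ref{lem Strichartz norms N(t)}, which asserts $\int_0^T N(t)^2\,dt \lesssim \norm{u}_{L^8_{t,x}([0,T]\times\R^2)}^8$. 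Concretely, the pointwise-in-time lower bound follows because $\norm{u(t)}_{L^4_x}^4 \gtrsim 1$ uniformly in $t$ (this is a standard consequence of almost periodicity together with the nontriviality of $u$ and the blow-up hypothesis — see Lemma \ref{lem Local bounded} and the argument ruling out vanishing), while the low-frequency part $P_{<\varepsilon_3 K^{-1}}u(t)$ is small because, after the rescaling $u_\lambda$ used in the proof of Theorem \ref{thm FLIME} with $\lambda = K/(\varepsilon_3 2^{k_0})$, equation \eqref{eq w<eta} shows $\norm{P_{<2^{-k_0}}u_\lambda}_{L^\infty_t L^2_x}$ is small; translating back and interpolating against the uniform $\dot H^{1/2}$ bound controls $\norm{P_{<\varepsilon_3 K^{-1}}u(t)}_{L^4_x}$ by a small constant depending on $\eta(K)$. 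Hence $\norm{P_{\ge\varepsilon_3 K^{-1}}u(t)}_{L^4_x}^4 \ge \norm{u(t)}_{L^4_x}^4 - C\eta(K)^{1/4} \gtrsim 1$ for $K$ large.

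Finally I would localize in space: choosing $\eta$ a small fixed multiple of the lower bound just obtained and setting $R_0 = C(\eta)$, split
\begin{equation*}
\int_{\R^2}\abs{P_{\ge\varepsilon_3 K^{-1}}u(t,x)}^4\,dx = \int_{\abs{x-x(t)}\le R_0/N(t)} + \int_{\abs{x-x(t)}> R_0/N(t)},
\end{equation*}
and bound the second integral by $\int_{\abs{x-x(t)}>R_0/N(t)}\abs{u(t,x)}^4\,dx + (\text{low-frequency error}) \lesssim \eta + \eta(K)^{1/4}$, which is at most half of the lower bound when $\eta$ is chosen small and $K$ large. Therefore the integral over the ball is $\gtrsim 1$, uniformly in $t\in[0,T]$, giving the claim with $R_0 = R_0(T)$ (the dependence on $T$ entering only through $K = \int_0^T N(t)\,dt$ and the choice $\eta = \eta(K)$). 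The main obstacle I anticipate is making the high-frequency $L^4_x$ lower bound genuinely uniform in $t$: one must carefully separate the low-frequency error (controlled by the long time Strichartz estimate and \eqref{eq w<eta}) from the essential high-frequency mass, and ensure the threshold $\varepsilon_3 K^{-1}$ is low enough relative to the compactness modulus $c(\eta)N(t)$ in \eqref{eq Defn AP2} — this is exactly where the hypothesis $N(t)\ge 1$ and the precompactness-below estimate \eqref{eq Defn AP2} are needed.
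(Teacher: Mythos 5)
Your overall plan is the right one and is the same in spirit as what the paper delegates to (the Kenig--Merle concentration argument, KM3 Lemma 4.2): obtain a uniform-in-$t$ lower bound $\norm{u(t)}_{L^4_x}\gtrsim 1$ by compactness, show the low-frequency piece is small in $L^4_x$ so the high-frequency piece still has $L^4_x$ norm $\gtrsim 1$, and then use the spatial-tail decay coming from almost periodicity to localize to a ball of radius $R_0/N(t)$. However, one of your intermediate steps is wrong as stated, and it is exactly the step you need.

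You assert that ``equation \eqref{eq w<eta} shows $\norm{P_{<2^{-k_0}}u_\lambda}_{L^\infty_t L^2_x}$ is small.'' That is not what \eqref{eq w<eta} says: it controls $\norm{w}_{L^\infty_t L^2_x}$ with $w=P_{\geq 2^{-k_0}}u_\lambda$, i.e.\ the \emph{high}-frequency $L^2$ norm. More importantly, the low-frequency $L^2$ norm is precisely the quantity that is \emph{not} controlled in the $\dot H^{1/2}$-critical setting (this is the whole reason the cutoff in this paper is the ``upside-down'' one), so no estimate of that kind can be true. The correct and much shorter route to the smallness of $\norm{P_{<\varepsilon_3 K^{-1}}u(t)}_{L^4_x}$ does not pass through \eqref{eq w<eta} or Theorem~\ref{thm LTS} at all: by Sobolev embedding $\dot H^{1/2}\hookrightarrow L^4(\R^2)$,
\begin{equation*}
\norm{P_{<\varepsilon_3 K^{-1}}u(t)}_{L^4_x}\lesssim \norm{\abs{\nabla}^{1/2}P_{<\varepsilon_3 K^{-1}}u(t)}_{L^2_x},
\end{equation*}
and since $N(t)\ge 1$, once $\varepsilon_3 K^{-1}\le c(\eta_0)$ (i.e.\ $K$ large, which is the regime in which the lemma is used in the quasi-soliton argument), the frequency-localized compactness estimate \eqref{eq Defn AP2} gives $\norm{\abs{\nabla}^{1/2}P_{<\varepsilon_3 K^{-1}}u(t)}_{L^2_x}^2\lesssim \eta_0$ uniformly in $t$. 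This is the ingredient you allude to at the very end but never actually deploy.

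Two smaller comments. First, the detour through Lemma~\ref{lem Strichartz norms N(t)} to get $\norm{u(t)}_{L^4_x}\gtrsim 1$ is unnecessary (and the ``set of positive measure'' phrasing would not even give a pointwise-in-$t$ bound): the clean argument is the direct compactness one you also mention, namely that if $\norm{u(t_n)}_{L^4_x}\to 0$ then by precompactness modulo symmetries and $\dot H^{1/2}\hookrightarrow L^4$ a subsequence of $g_n u(t_n)$ converges to $0$ in $\dot H^{1/2}$, contradicting blow-up via the small-data theory. Second, passing from the tail bound on $\abs{\nabla}^{1/2}u$ in \eqref{eq Defn AP1} to a tail bound on $\int\abs{u}^4$ is not literal Sobolev embedding (which is not local); the standard way is to use precompactness of $\bracket{N(t)^{-1/2}u(t,x(t)+\cdot/N(t))}$ directly together with the fact that each fixed $\dot H^{1/2}$ function has $L^4$ mass concentrated in a ball, then take a finite $\varepsilon$-net. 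Both of these are routine, but the \eqref{eq w<eta} misstep is a real error and should be replaced by the \eqref{eq Defn AP2}+Sobolev argument.
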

\begin{proof}
The proof is followed from the proof of Lemma 4.2 in \cite{KM3}.
\end{proof}

%
%
%
%

\begin{thm}[Impossibility of quasi-soliton]
If $u$ is an almost periodic solution to \eqref{NLS} and $\int_0^{\infty} N(t) \, dt = \infty$, then $u \equiv 0$.
\end{thm}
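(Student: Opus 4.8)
The plan is to combine the frequency-localized interaction Morawetz estimate (Theorem \ref{thm FLIME}) with the concentration Lemma \ref{lem Cmpt}, exactly as in the energy- and mass-critical settings. Suppose for contradiction that $u\not\equiv 0$. By Theorem \ref{thm 2 types} we may assume $N(0)=1$, $N(t)\geq 1$ on $[0,\infty)$, $\abs{\frac{d}{dt}N(t)}\lesssim N(t)^3$, and $\int_0^{\infty} N(t)\,dt=\infty$. For each $T>0$ I would set $K=K(T):=\int_0^T N(t)\,dt$; since $\int_0^{\infty} N(t)\,dt=\infty$ we have $K(T)\to\infty$ as $T\to\infty$. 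The goal is to show that $K(T)$ cannot grow, which is the desired contradiction.

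First I would record the two bounds on $[0,T]$. From Theorem \ref{thm FLIME} together with the Sobolev-embedding remark immediately following it,
\begin{equation*}
\norm{P_{\geq \varepsilon_3 K^{-1}} u}_{L_t^4 L_x^8 ([0,T]\times\R^2)}^4 \lesssim \eta(K)^{1/2} K = o(K),
\end{equation*}
where $\frac{o(K)}{K}\to 0$ as $K\nearrow\infty$. On the other hand, Lemma \ref{lem Cmpt} furnishes a radius $R_0$ with
\begin{equation*}
\int_{\abs{x-x(t)}\leq R_0/N(t)} \abs{P_{\geq \varepsilon_3 K^{-1}} u(t,x)}^4\,dx \gtrsim 1 \qquad \text{for all } t\in[0,T].
\end{equation*}
Applying H\"older's inequality (with exponents $2$ and $2$) on the ball $\{\abs{x-x(t)}\leq R_0/N(t)\}$, whose Lebesgue measure is $\sim (R_0/N(t))^2$, converts this into the pointwise-in-time lower bound $N(t)\lesssim R_0\,\norm{P_{\geq \varepsilon_3 K^{-1}} u(t)}_{L_x^8}^4$. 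Integrating over $t\in[0,T]$ and invoking the Morawetz bound gives
\begin{equation*}
K = \int_0^T N(t)\,dt \lesssim R_0 \int_0^T \norm{P_{\geq \varepsilon_3 K^{-1}} u(t)}_{L_x^8}^4\,dt = R_0\,\norm{P_{\geq \varepsilon_3 K^{-1}} u}_{L_t^4 L_x^8 ([0,T]\times\R^2)}^4 \lesssim R_0\, o(K).
\end{equation*}
Dividing by $K$ and letting $T\to\infty$ (so $K\to\infty$) forces $1\lesssim R_0\,\frac{o(K)}{K}\to 0$, a contradiction; hence $u\equiv 0$.

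The main obstacle is not any single estimate above — the real work was already done in Theorem \ref{thm LTS} and Theorem \ref{thm FLIME} — but rather ensuring that $R_0$, and all the implicit constants, are \emph{uniform in $T$} (equivalently in $K$). If $R_0=R_0(T)$ were allowed to grow at a rate comparable to $K/o(K)$, the final comparison would be vacuous. The point to verify, tracking the proof of Lemma \ref{lem Cmpt} (following Lemma 4.2 in \cite{KM3}), is that for $T$ large one has $\varepsilon_3 K^{-1}\leq\varepsilon_3<1\leq N(t)$, so the truncation $P_{\geq \varepsilon_3 K^{-1}}$ removes only frequencies well below the scale $N(t)$ at which $u(t)$ concentrates; the concentration is therefore inherited from the almost periodicity of $u$ with a constant depending only on $u$, which makes $R_0$ uniform in $T$ and closes the argument.
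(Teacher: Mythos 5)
Your argument is the same as the paper's: combine the concentration Lemma \ref{lem Cmpt} with H\"older on the ball of radius $R_0/N(t)$, integrate against $N(t)\,dt$, and invoke the frequency-localized Morawetz bound from Theorem \ref{thm FLIME} to contradict $K\to\infty$. Your closing paragraph flagging and resolving the uniformity of $R_0$ in $T$ is a worthwhile clarification, since the statement of Lemma \ref{lem Cmpt} allows $R_0=R_0(T)$ and the paper's displayed proof glosses over the fact that one needs $R_0(T)\,\eta(K)^{1/2}\to 0$ (indeed the ball radius $\eta(K)/N(t)$ appearing in the paper's display should read $R_0/N(t)$).
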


\begin{proof}
Recall $K= \int_0^T N(t) \, dt$. By Lemma \ref{lem Cmpt}, the frequency-localized interaction Morawetz estimates and H\"older, we have that
\begin{align*}
1 
& \lesssim \lim_{k \nearrow \infty} \frac{1}{K} \int_0^T  N(t)  \parenthese{\int_{\abs{x-x(t)} \leq \frac{\eta(K)}{N(t)}}  \abs{P_{\geq \varepsilon_3 K^{-1}} u(t,x)}^4  \, dx  } dt \\
& \lesssim  \lim_{k \nearrow \infty}\frac{\eta(K)}{K}  \norm{P_{\geq \varepsilon_3 K^{-1} } u}_{L_t^4L_x^8 ([0,T]\times \R^2)}^4   \lesssim \lim_{K \nearrow \infty} \eta(K)^{3/2} = 0  .
\end{align*}
\noindent Therefore, $u \equiv 0$, contradiction.
\end{proof}

At this point, we have ruled out the existence of both finite-time blow-up solutions and quasi-soliton solutions, hence we complete the proof of Theorem \ref{thm Main}.

{\bf Acknowledgements} The author is very grateful to her advisor, Andrea R. Nahmod, for suggesting this problem and her patient guidance, encouragement and advice. The author would like to thank Benjamin Dodson for his helpful discussions and comments on a preliminary draft of this paper. The author acknowledges support from the National Science Foundation through her advisor Andrea R. Nahmod's grants \MakeUppercase{NSF}-\MakeUppercase{DMS} 1201443 and \MakeUppercase{NSF}-\MakeUppercase{DMS} 1463714.


\begin{thebibliography}{99}
\bibitem{BG} H. Bahouri and P. G\'erard, {\em High frequency approximations of solutions to critical nonlinear wave equations}, American Journal of Mathematics, {\bf 121} no. 1 (1999) 131-175.

\bibitem{B1} J. Bourgain, {\em Refinements of Strichartz’ inequality and applications to 2D-NLS with critical nonlinearity}, International Mathematical Research Notices, {\bf 5} (1998) 253-283.

\bibitem{B2} J. Bourgain, {\em Global well-posedness of defocusing critical nonlinear Schr\"odinger equation in the radial case}, Journal of the American Mathematical Society, {\bf 12} (1999) 145-171. 

\bibitem{B3} J. Bourgain, {\em Global solutions of nonlinear Schr\"odinger equations}, American Mathematical Society, Colloquium Publications {\bf 46}  American Mathematical Society Colloquium Publications, Providence, RI, 1999.

\bibitem{B4} J. Bourgain, {\em Nonlinear Schr\"odinger equations}, IAS/ Park City Math. Series {\bf 5} pp. 3-157, 1999.



\bibitem{Ca} T. Cazenave,  {\em Semilinear Schr\"odinger equations}, Courant Lecture Notes in Mathematics {\bf 10}, New York University, Courant Institute of Mathematical Sciences, AMS, Providence, RI, 2003.
  
  
\bibitem{CW1} T. Cazenave and F. Weissler, {\em The Cauchy problem for the nonlinear Schr\"odinger equation in $H^1$}, Manuscripta Math., {\bf 61} (1988) 477-494.
 
\bibitem{CW2} T. Cazenave and F. Weissler, {\em Some remarks on the nonlinear Schr\"odinger equation in the subcritical case (New methods and results in nonlinear field equations (Bielefeld, 1987))}, Lecture Notes in Physics {\bf 347}, Springer, Berlin, 1989.
 
\bibitem{CW3} T. Cazenave and F. Weissler, {\em The Cauchy problem for the critical nonlinear Schr\"odinger equation in $H^s$}, Nonlinear Anal., {\bf 14} (1990), 807-836.

\bibitem{CW} M. Christ and M. Weinstein, {\em Dispersion of small amplitude solutions of the generalized Korteweg-de Vries equation}, J. Funct. Anal. {\bf 100} (1991), 87-109. 

\bibitem{CGT1} J. Colliander, M. Grillakis and N. Tzirakis, {\em Improved interaction Morawetz inequalities for the cubic nonlinear Schr\"odinger equation on $\R^2$}, International Mathematics Research Notices. IMRN, {\bf 23} (2007) 90-119.

\bibitem{CGT2} J. Colliander, M. Grillakis and N. Tzirakis, {\em Tensor products and correlation estimates with applications to nonlinear Schr\"odinger equations}, Communications on Pure and Applied Mathematics, {\bf 62} no. 7 (2009) 920-968.

\bibitem{CKSTT1} J. Colliander, M. Keel, G. Staffilani, H. Takaoka, and T. Tao, {\em Global existence and scattering for rough solutions of a nonlinear Schr\"odinger equation on $\R^3$}, Comm. Pure Appl. Math. {\bf 57} (2004), 987-1014.


\bibitem{CKSTT2}  J. Colliander, M. Keel, G. Staffilani, H. Takaoka and T. Tao, {\em Global well-posedness and scattering for the energy-critical nonlinear Schr\"odinger equation in $ \R^3$}, Ann. of Math. (2) {\bf 167} (2008), no. 3, 767-865.



\bibitem{D1} B. Dodson, {\em Global well-posedness and scattering for the defocusing $L^2$-critical nonlinear Schr\"odinger equation when $d \geq 3$}, Journal of the American Mathematical Society, {\bf 25} no. 2 (2012) 429-463.

\bibitem{D2} B. Dodson, {\em Global well-posedness and scattering for the defocusing $L^2$-critical nonlinear Schr\"odinger equation when $d = 1$}, Amer. J. Math. {\bf 138} (2016), 531-569.

\bibitem{D3} B. Dodson, {\em Global well-posedness and scattering for the defocusing $L^2$-critical nonlinear Schr\"odinger equation when $d = 2$}, Duke Mathematical Journal, {\bf 165}, no. 18 (1 December 2016), 3435-3516.

\bibitem{DMMZ} B. Dodson, C. Miao, J. Murphy, and J. Zheng, {\em The defocusing quintic NLS in four space dimensions}, Ann. Inst. H. Poincar\'e Anal. Non Lin\'eaire {\bf 34} (2017), no. 3, 759-787. 

\bibitem{Ge} P. G\'erard, {\em Description du d\'efaut de compacit\'e de l’injection de Sobolev}, ESAIM. Control, Optimization, and Calculus of Variations {\bf 3} (1998) 213-233.

\bibitem{GG} J. Ginibre and G. Velo, {\em Smoothing properties and retarded estimates for some dispersive evolution equations}, Comm. Math. Phys. {\bf 144} (1992), no. 1, 163-188.


\bibitem{G1} M. Grillakis, {\em Regularity and asymptotic behaviour of the wave equation with a critical nonlinearity}, Annals of Mathematics. Second Series, {\bf 132} no. 3 (1990) 485-509.

\bibitem{G2} M. Grillakis, {\em Regularity for the wave equation with a critical nonlinearity}, Communications in Pure and Applied Mathematics {\bf 45} no. 6 (1992) 749-774.

\bibitem{G3} M. Grillakis, {\em On nonlinear Schr\"odinger equations}, Communications in Partial Differential Equations, {\bf 25} no. 9-10 (2000) 1827-1844.




\bibitem{HHK} M. Hadac, S. Herr and H. Koch, {\em
Well-posedness and scattering for the KP-II equation in a critical space}, Ann. Inst. H. Poincar\'e Anal. Non Lin\'eaire {\bf 26} (2009), no. 3, 917-941. 




\bibitem{HTT1} S. Herr, D. Tataru, and N. Tzvetkov, {\em Global well-posedness of the energy-critical nonlinear Schr\"odinger
equation with small initial data in $H^1(\Bbb T^3)$}, Duke Math. J. {\bf 159} (2011), no. 2, 329-349.



\bibitem{HTT2} S. Herr, D. Tataru and N. Tzvetkov, {\em Strichartz estimates for partially periodic solutions to Schr\"odinger equations in $4d$ and applications}, J. Reine Angew. Math. {\bf 690} (2014), 65-78. 






\bibitem{KT} M. Keel and T. Tao, {\em Endpoint Strichartz Estimates}, American Journal of Mathematics {\bf 120} no. 4 - 6 (1998) 945-957.


\bibitem{KM1} C. Kenig and F. Merle, {\em Global well-posedness, scattering, and blow-up for the energy-critical, focusing nonlinear Schr\"odinger equation in the radial case}, Inventiones Mathematicae {\bf 166} no. 3 (2006) 645-675.

\bibitem{KM2} C. Kenig and F. Merle, {\em Global well-posedness, scattering and blow-up for the energy-critical focusing non-linear wave equation}, Acta Mathematicae {\bf 201} no. 2 (2008) 147-212.

\bibitem{KM3} C. Kenig and F. Merle, {\em Scattering for $\dot{H}^{1/2}$ bounded solutions to the cubic, defocusing NLS in 3 dimensions}, Transactions of the American Mathematical Society {\bf 362} no. 4 (2010) 1937-1962.

\bibitem{K1} S. Keraani, {\em On the defect of compactness for the Strichartz estimates of the Schr\"odinger equations}, Journal of Differential Equations {\bf 175} no. 2 (2001) 353-392.

\bibitem{K2} S. Keraani, {\em On the blow up phenomenon of the critical nonlinear Schr\"odinger equation}, Journal of Functional Analysis {\bf 235} no. 1 (2006) 171-192.


\bibitem{KV1} R. Killip and M. Visan, {\em Nonlinear Schr\"odinger Equations at Critical Regularity}, Clay Lecture Notes (2008).



\bibitem{KV2} R. Killip and M. Visan, {\em Energy-supercritical NLS: critical $\dot{H}^s$-bounds imply scattering}, Comm. Partial Differential Equations {\bf 35} (2010), no. 6, 945-987. 

\bibitem{KV3} R. Killip and M. Visan, {\em Global well-posedness and scattering for the defocusing quintic NLS in three dimensions}, Analysis and PDE {\bf 5} no. 4 (2012) 855-885.

\bibitem{KTV} R. Killip, T. Tao and M. Visan, {\em The cubic nonlinear Schr\"odinger equation in two dimensions with radial data}, Journal of the European Mathematical Society {\bf 11} no. 6 (2009) 1203-1258.

\bibitem{KVZ} R. Killip, M. Visan, and X. Zhang, {\em The mass-critical nonlinear Schr\"odinger equation with radial data in dimensions three and higher}, Analysis and PDE {\bf 1}, no. 2 (2008) 229-266.


\bibitem{KT1} H. Koch and D. Tataru, {\em Dispersive estimates for principally normal pseudodifferential operators}, Communications on Pure and Applied Mathematics {\bf 58} no. 2 (2005) 217-284.

\bibitem{KT2} H. Koch and D. Tataru, {\em A priori bounds for the 1D cubic NLS in negative Sobolev spaces}, International Mathematics Research Notices IMRN {\bf 16} (2007) Art. ID rnm053, 36 pp.



\bibitem{KT3} H. Koch and D. Tataru, {\em Energy and local energy bounds for the 1-d cubic NLS equation in $H^{1/4}$}. Ann. Inst. H. Poincar\'e Anal. Non Lin\'eaire 29 (2012), no. {\bf 6}, 955-988. 


\bibitem{KTVb} H. Koch, D. Tataru, and M. Visan, {\em Nonlinear Dispersive equations in Dispersive equations and nonlinear waves}, Oberwolfach Seminars, Birkh\"auser, 2014.


\bibitem{CKSTT3} G. Staffilani, {\em The Theory of Nonlinear Schr\"odinger Equations}. Clay Mathematics Institute Summer School “Evolution Equations”, Eidgen\"ossische Technische Hochschule, Z\"urich Switerland, June 23 - July 18, 2008.

\bibitem{LS} J. Lin and W. Strauss, {\em Decay and Scattering of solutions of a nonlinear Schr\"odinger equation}, Journal of Functional Analysis {\bf 30} no. 2 (1978) 245-263.


\bibitem{MV} F. Merle and L. Vega, {\em Compactness at blow-up time for $L^2$ solutions of the critical nonlinear Schrödinger equation in 2D}, International Mathematics Research Notices, Volume 1998, Issue {bf 8}, 1 (1998) 399-425

\bibitem{MMZ} C. Miao, J. Murphy, and J. Zheng, {\em The defocusing energy-supercritical NLS in four space dimensions}, J. Funct. Anal. {\bf 267} (2014), no. 6, 1662–1724.


\bibitem{Mo} C. Morawetz, {\em Time decay for the nonlinear Klein - Gordon equation}, Proceedings of the Royal Society of London Series A {\bf 306} no. 1486 (1968) 291-296.

\bibitem{Mu1} J. Murphy, {\em Inter-critical NLS: critical $\dot{H}^s$ bounds imply scattering}, SIAM Journal of Mathematical Analysis {\bf 46} no. 1 (2014) 939-997.

\bibitem{Mu2} J. Murphy, {\em The defocusing $\dot{H}^{1/2}$-critical NLS in high dimensions}, Discrete and Continuous Dynamical Systems. Series A {\bf 34} no. 2 (2014) 733-748.


\bibitem{Mu3} J. Murphy, {\em The radial defocusing nonlinear Schr\"odinger equation in three space dimensions}, Comm. Partial Differential Equations {\bf 40} (2015), no. 2, 265-308. 


\bibitem{PV} F. Planchon and L. Vega, {\em Bilinear virial identities and applications}, Annales Scientifiques de l'\'Ecole Normale Sup\'erieure Quatri\`eme S\'erie {\bf 42} no. 2 (2009) 261-290.

\bibitem{RV} E. Ryckman and M. Visan, {\em Global well-posedness and scattering for the defocusing energy-critical nonlinear Schr\"odinger equation in $\R^{1+4}$}, American Journal of Mathematics {\bf 129}, no. 1 (2007) 1-60.

\bibitem{S} R. S. Strichartz, {\em Restrictions of Fourier transforms to quadratic surfaces and decay of solutions of wave equations}, Duke Mathematical Journal {\bf 44} no. 3 (1977) 705-714.

\bibitem{T1} T. Tao, {\em Nonlinear Dispersive Equations. Local and Global Analysis}, CBMS Regional Conference Series in Mathematics {\bf 104} Published for the Conference Board of the Mathematical Sciences, Washington, DC, 2006.

\bibitem{T2} T. Tao, {\em Spherically averaged endpoint Strichartz estimates for the two-dimensional Schr\"odinger equation}, Communications in Partial Differential Equations, {\bf 25} no. 7-8 (2000) 1471-1485.

\bibitem{T3} T. Tao, {\em Global well - posedness and scattering for the higher - dimensional energy-critical nonlinear Schr\"odinger equation for radial data}, New York Journal of Mathematics {\bf 11} (2005) 57-80.

\bibitem{T4} T. Tao, {\em A counterexample to an endpoint bilinear Strichartz inequality}, Electronic Journal of Differential Equations, no. 151 (2006) 1-6.


\bibitem{TVZ1} T. Tao, M. Visan, and X. Zhang, {\em Global well-posedness and scattering for the defocusing mass-critical nonlinear Schr\"odinger equation for radial data in high dimensions}, Duke Mathematical Journal, {\bf 140} no. 1 (2007) 165-202.

\bibitem{TVZ2} T. Tao, M. Visan, and X. Zhang, {\em Minimal-mass blowup solutions of the mass-critical NLS}, Forum Mathematicum, {\bf 20} no. 5 (2008) 881-919.


\bibitem{Ta} M. E. Taylor, {\em Tools for PDE. Mathematical Surveys and Monographs}, {\bf 81}. American Mathematical Society, Providence, RI, 2000. 



\bibitem{V} M. Visan, {\em The defocusing energy-critical nonlinear Schr\"odinger equation in higher dimensions}, Duke Mathematical Journal {\bf 138} (2007) 281-374.


\bibitem{XF} J. Xie, D. Fang, {\em Global well-posedness and scattering for the defocusing $\dot{H}^s$-critical NLS}, Chin. Ann. Math. Ser. B {\bf 34} (2013), no. 6, 801-842.

\bibitem{Y} K. Yajima, {\em Existence of solutions for Schr\"odinger evolution equations}, Comm. Math. Phys. {\bf 110} (1987), no. 3, 415-426.

\bigskip 

\end{thebibliography}
\end{document}